\newtheorem{theorem}{Theorem}[section]
\newtheorem{lemma}[theorem]{Lemma}
\newtheorem{corollary}[theorem]{Corollary}
\theoremstyle{definition}
\newtheorem{definition}[theorem]{Definition}
\newtheorem{conjecture}[theorem]{Conjecture}
\theoremstyle{remark}
\numberwithin{equation}{section}
\begin{document}

\begin{flushleft}
\textbf{\large{Zoo of monotone Lagrangians in $\mathbb{C}P^n$}}
\end{flushleft}
\begin{flushleft}
\textbf{Vardan Oganesyan}\\
\end{flushleft}

\textbf{Abstract.} Let $P \subset \mathbb{R}^m$ be a polytope of dimension $m$ with $n$ facets and $a_1,\ldots, a_{n}$ be the normal vectors to the facets of $P$. Assume that $P$ is Delzant, Fano, and $a_1 + \ldots + a_{n} = 0$. We associate a monotone embedded Lagrangian $L \subset \mathbb{C}P^{n-1}$ to $P$.  As an abstract manifold, the Lagrangian $L$ fibers over $(S^1)^{n-m-1}$ with fiber $\mathcal{R}_P$, where $\mathcal{R}_P$ is defined by a system of quadrics in $\mathbb{R}P^{n-1}$. The manifold $\mathcal{R}_P$ is called the real toric space.

We find an effective method for computing the Lagrangian quantum cohomology  groups of the mentioned Lagrangians. Then we construct explicitly some set of wide and narrow Lagrangians.

Our method yields many different monotone Lagrangians with rich topological properties, including non-trivial Massey products, complicated fundamental group and complicated singular cohomology ring.

Interestingly, not only the methods of toric topology can be used to construct monotone Lagrangians, but the converse is also true: the symplectic topology of Lagrangians can be used to study the topology of $\mathcal{R}_P$. General formulas for the rings $H^{*}(\mathcal{R}_P, \mathbb{Z})$, $H^{*}(\mathcal{R}_P, \mathbb{Z}_2)$ are not known.  Since we have a method for constructing narrow Lagrangians, the spectral sequence of Oh can be used to study the singular cohomology ring of $\mathcal{R}_P$. This idea will be developed in a further paper.

\tableofcontents
\newpage

\section{Introduction}

Let $L$ be a Lagrangian submanifold in a symplectic manifold $M$. Lagrangians have two classical homomorphisms defined on $\pi_2(M,L)$:
\begin{equation*}
\omega: \pi_2(M, L) \rightarrow \mathbb{R}, \quad \mu: \pi_2(M, L) \rightarrow \mathbb{Z},
\end{equation*}
where $\omega$ computes the symplectic area of a disc and $\mu$ is the Maslov index. The Lagrangian submanifold $L$ is called \emph{monotone} if there is a constant $\lambda > 0$ such that $\omega = \lambda \mu$. Monotone Lagrangian submanifolds are of special interest due to their prominent role in Floer theory.

Let us assume that $M$  is a complex projective space $\mathbb{C}P^n$ endowed with its standard Kahler symplectic structure, and $L \subset \mathbb{C}P^{n}$ is a monotone embedded Lagrangian submanifold. Denote by $N_L$ its minimal Maslov number. Not much is known about how restrictive is the monotonicity condition. The most simple example of a monotone Lagrangian in $\mathbb{C}P^n$ is $\mathbb{R}P^n$. Its minimal Maslov number is equal to $n+1$. It is proved that $n+1$ is the maximal value for $N_L$, i.e the minimal Maslov number of any embedded monotone Lagrangian $L \subset \mathbb{C}P^n$ satisfies $1 \leqslant N_L \leqslant n+1$~\cite{Seidel,bircan1}. Moreover, if $N_L = n+1$, then  $L$ is homotopy equivalent to $\mathbb{R}P^n$~\cite{jacksmith1}.

Let us assume that $L \subset \mathbb{C}P^n$ is embedded, monotone and satisfies $2H_1(L, \mathbb{Z})=0$, i.e. $\forall \alpha \in H_1(L, \mathbb{Z}),$ $2\alpha = 0$. We see that $\mathbb{R}P^n$ is an example of such $L$. It turns out that the cohomology ring $H^{*}(L, \mathbb{Z}_2)$  must be isomorphic to $H^{*}(\mathbb{R}P^n, \mathbb{Z}_2)$ and $N_L = n+1$ \cite{bircan1,birciel,Seidel}. Combining all these facts we get

\begin{center}
$2H_1(L, \mathbb{Z}) = 0 \quad \iff \quad L$ is homotopy equivalent to $\mathbb{R}P^n$.
\end{center}

\noindent
It is not known if $L$ with the property above is homeomorphic to $\mathbb{R}P^n$.

If $3H_1(L, \mathbb{Z}) = 0$ and  $n=8$, then $H^{*}(L, \mathbb{Z}_2) = H^{*}(SU(3)/\mathbb{Z}_3, \mathbb{Z}_2)$ (see Iriyeh ~\cite{Hiroshi}). There are similar formulas for $n=5$ and $n=26$  (see~\cite{Hiroshi}). Moreover, Iriyeh constructed examples with property $3H_1(L, \mathbb{Z}) = 0$.

The Lagrangian quantum cohomology $QH^{*}(L, \mathbb{Z}_2[T, T^{-1}])$ is extremely difficult to compute. There is no general procedure to even decide whether or not it vanishes. If $QH^{*}(L, \mathbb{Z}_2[T, T^{-1}]) = 0$, then $L$ is called \emph{narrow}. If there exists an isomorphism $QH^{*}(L, \mathbb{Z}_2[T, T^{-1}]) \cong H^{*}(L, \mathbb{Z}_2[T, T^{-1}])$, then we call $L$ \emph{wide}.

In ~\cite{Usher}, Oakley and Usher constructed monotone Lagrangians
\begin{equation*}
\begin{gathered}
\frac{S^1 \times S^k \times S^m}{(e^{i\varphi},x,y) \sim (-e^{i\varphi},-x,-y) \sim (e^{i\varphi},-x,-y)} = \\
=\mathbb{R}P^1 \times \frac{(S^k \times S^m)}{(x,y) \sim (-x,-y)} \subset \mathbb{C}P^{k+m+1}
\end{gathered}
\end{equation*}
Moreover, they proved that their examples are displaceable if $k+m \geqslant 3$, therefore quantum cohomology vanishes. These are the only known displaceable monotone Lagrangians in $\mathbb{C}P^n$. There are also many examples of monotone tori in $\mathbb{C}P^n$ distinct up to Hamiltonian isotopy~\cite{Vianna1, Vianna2,Chek2,Cho}.
\\

Biran and Cornea conjectured in~\cite{bircan2} that monotone Lagrangians $L \subset \mathbb{C}P^n$ are either wide or narrow.
\\
- Assume that $L$ is wide. It is known that the Lagrangian quantum cohomology of $L \subset \mathbb{C}P^n$ is $2-$periodic (see \cite{Seidel, bircan1}), i.e. $QH^{*}(L, \mathbb{Z}_2[T, T^{-1}]) = QH^{*+2}(L, \mathbb{Z}_2[T, T^{-1}])$. So, we get strong restrictions on the singular cohomology groups of $L$.
\\
- Assume that $L$ is narrow. Then the spectral sequence of Oh (see \cite{Oh, buhovsky}) converges to zero. This puts restrictions on the singular cohomology ring and the minimal Maslov number.

Hamiltonian-minimal Lagrangian submanifolds in toric manifolds are studied in~\cite{MironovCn, Mirpan, kotel}. In particular, a Hamiltonian-minimal Lagrangian submanifold $\widetilde{L} \subset \mathbb{C}^n$ is associated to each Delzant polytope $P$.

In \cite{Og, Og2}, we show that the Lagrangian $\widetilde{L} \subset \mathbb{C}^n$ associated to a Delzant polytope $P$ is monotone if and only if the polytope is Fano. Moreover, the Maslov class of $\widetilde{L}$ can be computed explicitly. Using that we constructed new examples of monotone Lagrangians in $\mathbb{C}^n$ with interesting properties. In this paper we expand on these ideas. We propose a very effective method for constructing monotone Lagrangians of $\mathbb{C}P^n$ and computing their Floer homology.

\begin{theorem}\label{pollagrther}(see Section $\ref{mainconstructionmain}$ for the proof)
Let $P \subset \mathbb{R}^m$ be a polytope of dimension $m$ with $n$ facets and $a_1,\ldots, a_{n}$ be the normal vectors to the facets of $P$. Assume that $P$ is Delzant, Fano, and $a_1 + \ldots + a_{n} = 0$.

We associate a monotone embedded Lagrangian $L \subset \mathbb{C}P^{n-1}$ to the polytope $P$.  As an abstract manifold, the Lagrangian $L$ fibers over $(n-m-1)-$torus $T^{n-m-1}$ with fiber $\widetilde{\mathcal{R}}_P/\mathbb{Z}_2$, where $\widetilde{\mathcal{R}}_P$ is the so-called real moment-angle manifold. Moreover, $\widetilde{\mathcal{R}}_P/\mathbb{Z}_2$ can be defined by a system of quadrics in $\mathbb{R}P^{n-1}$.
\end{theorem}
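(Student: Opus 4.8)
The plan is to construct the Lagrangian $L$ as the image of an explicit immersion built from the combinatorial data of $P$, following the circle-bundle philosophy of Mironov--Panov but pushed down from $\mathbb{C}^n$ to $\mathbb{C}P^{n-1}$. Write the facet normals as the columns of an integer $m\times n$ matrix; since $P$ is Delzant, the kernel of the induced surjection $\mathbb{Z}^n\to\mathbb{Z}^m$ is a rank-$(n-m)$ direct summand, and we obtain an exact sequence $0\to\mathbb{Z}^{n-m}\xrightarrow{\;\Gamma\;}\mathbb{Z}^n\to\mathbb{Z}^m\to 0$. This $\Gamma$ defines a subtorus $T_\Gamma\cong T^{n-m}\subset T^n$ acting on $\mathbb{C}^n$, and the real moment-angle manifold $\widetilde{\mathcal{R}}_P\subset\mathbb{R}^n$ (the intersection of the unit sphere cut out by $P$'s facets with the real part, given by a system of quadrics) is the object whose symplectic reduction recovers the real locus of the toric variety of $P$. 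The condition $a_1+\dots+a_n=0$ says precisely that the vector $(1,\dots,1)$ lies in the row span, equivalently that $\Gamma$ can be chosen with each row summing to $0$; this is exactly what is needed so that $T_\Gamma$ sits inside the subtorus $\{\prod z_j = \mathrm{const}\}$ that descends to act on $\mathbb{C}P^{n-1}$.

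First I would spell out the map. Inside the unit sphere $S^{2n-1}\subset\mathbb{C}^n$ consider the set
\begin{equation*}
\widetilde{L}_0 \;=\;\bigl\{\,(u_1 z_1,\dots,u_n z_n)\;:\; (z_1,\dots,z_n)\in\widetilde{\mathcal{R}}_P,\ (u_1,\dots,u_n)\in T_\Gamma\,\bigr\}\cap S^{2n-1},
\end{equation*}
which is a $T^{n-m}$-worth of copies of $\widetilde{\mathcal{R}}_P$; one checks with the Delzant condition that this is a smooth submanifold of the sphere of dimension $(n-1)+(n-m)-? $ — more precisely, after accounting for the one relation among the quadrics it has the right dimension $n-1$ to become Lagrangian after the Hopf projection. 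Then set $L = \pi(\widetilde L_0)\subset\mathbb{C}P^{n-1}$ where $\pi:S^{2n-1}\to\mathbb{C}P^{n-1}$ is the Hopf map. The Hopf circle acts on $\widetilde L_0$; combined with the residual $\mathbb{Z}_2$ from the antipodal identification on $\widetilde{\mathcal{R}}_P$ this is what produces the stated fiber $\widetilde{\mathcal{R}}_P/\mathbb{Z}_2$ and base $T^{n-m-1}$ (one circle of $T_\Gamma$ becoming redundant after projectivization, hence $n-m-1$ rather than $n-m$). I would verify the Lagrangian condition by the standard computation that the tangent space to $\widetilde L_0$ is isotropic for the contact form's differential and transverse to the Reeb (Hopf) direction, so its projection is Lagrangian in $(\mathbb{C}P^{n-1},\omega_{FS})$; embeddedness follows because distinct points of $\widetilde{\mathcal R}_P$ stay in distinct $T_\Gamma$-orbits modulo the Hopf action exactly when $\Gamma$ is primitive and the polytope is Delzant.

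Monotonicity is where the Fano hypothesis enters, and I would import it rather than redo it: by the results of \cite{Og, Og2} the Lagrangian $\widetilde L\subset\mathbb{C}^n$ attached to $P$ is monotone iff $P$ is Fano, with an explicit Maslov class. The passage to $\mathbb{C}P^{n-1}$ is then a bookkeeping exercise relating $\pi_2(\mathbb{C}^n,\widetilde L)$, $\pi_2(S^{2n-1}\times_{S^1}\mathrm{pt},\dots)$ and $\pi_2(\mathbb{C}P^{n-1},L)$: discs in $\mathbb{C}P^{n-1}$ with boundary on $L$ lift (after passing to the universal cover of the Hopf fiber or capping with a Hopf disc) to discs in $\mathbb{C}^n$ with boundary on $\widetilde L$ plus multiples of the generator coming from the line $\mathbb{C}P^1$, and on both of those classes $\omega$ and $\mu$ are proportional with the same ratio precisely when $a_1+\dots+a_n=0$ normalizes the two area/index computations to agree. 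The topological statement — that $L$ fibers over $T^{n-m-1}$ with fiber $\widetilde{\mathcal R}_P/\mathbb{Z}_2$ cut out by quadrics in $\mathbb{R}P^{n-1}$ — I would obtain by writing the defining equations explicitly: the $T_\Gamma$-coordinates give the torus directions, and eliminating them leaves the homogeneous quadratic relations defining $\widetilde{\mathcal R}_P$, which descend to $\mathbb{R}P^{n-1}$ under the sign identifications.

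The main obstacle I expect is not any single step but the careful dimension and smoothness bookkeeping: making sure the map defining $\widetilde L_0$ is an embedding (not merely an immersion), that the one redundancy among the quadrics and the one redundant circle after projectivization are correctly accounted for so the fiber is $\widetilde{\mathcal R}_P/\mathbb{Z}_2$ and the base is $T^{n-m-1}$ and not off by one, and that the monotonicity constants from the $\mathbb{C}^n$ picture and from the $\mathbb{C}P^1$-class genuinely coincide — this last coincidence is exactly the role of the hypothesis $a_1+\dots+a_n=0$ and must be checked, not assumed.
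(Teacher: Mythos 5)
Your construction is essentially the paper's: the Lagrangian is the image of $\widetilde{\mathcal{R}}_P\times T_\Gamma$ under the Mironov-type map, pushed from $\mathbb{C}^n$ to $\mathbb{C}P^{n-1}$ by the Hopf projection, with the fibration over $T^{n-m-1}$ and fiber $\widetilde{\mathcal{R}}_P/\mathbb{Z}_2$ arising from the quotient by the group $D_\Gamma=\Lambda^*/2\Lambda^*$ acting freely on the torus factor; your observation that $a_1+\dots+a_n=0$ puts $(1,\dots,1)$ in the row span of $\Gamma$, so that one circle of $T_{\widetilde\Gamma}$ is absorbed by the Hopf fiber, is exactly Lemma \ref{sumzero} plus the reduction from $\widetilde\Gamma$ to $\Gamma$ in Section \ref{maincostruction}. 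The one place where you take a genuinely different route is monotonicity: you propose to import monotonicity of $\widetilde L\subset\mathbb{C}^n$ from the Fano condition and then transfer it through the Hopf map by a disc-lifting comparison with the class of $\mathbb{C}P^1$. The paper avoids this bookkeeping entirely at this stage: by Mironov's theorem (Theorem \ref{mironovtheorem2}), $\gamma_1+\dots+\gamma_n=0$ makes $L$ \emph{minimal} for the Fubini--Study metric, and the Cieliebak--Goldstein formula $\mu(\alpha)=\frac{2n\omega(\alpha)}{\pi}+\frac{\omega_H(\partial\alpha)}{\pi}$ then gives monotonicity immediately since $H=0$. Your transfer argument is workable (the paper carries it out later, in Theorem \ref{minmaslovtheorem}, to compute the minimal Maslov number), but for the bare monotonicity statement the minimality route is shorter and sidesteps the relative $\pi_2$ comparison you flag as delicate. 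One genuine soft spot: your embeddedness claim (``distinct points stay in distinct $T_\Gamma$-orbits modulo the Hopf action when $P$ is Delzant'') is asserted rather than proved; the actual content is that the only identifications under $\psi$ are those by $D_\Gamma$, which requires the lattice equality $\Lambda_u=\Lambda$ for every $u$, deduced from the Delzant condition $\widetilde\Lambda_u=\widetilde\Lambda$ (Lemmas \ref{embc1} and \ref{embc2}). You would need to supply that argument to close the proof.
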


\begin{lemma}\label{helplemma}(see section $\ref{somelemmas}$ for the proof)
Assume that a real moment-angle manifold $\widetilde{\mathcal{R}}_P$ is a complex moment-angle manifold $\widetilde{\mathcal{Z}}_Q$ of some polytope $Q$ (see  Section $\ref{torsiondefinitions}$). In other words, $P = Q^{(2,\ldots,2)}$ and $\widetilde{\mathcal{R}}_P = \widetilde{\mathcal{Z}}_Q$ ( see Section $\ref{wedge}$). Then
\item[-] The polytope $Q$ is $\frac{m}{2}-$simensional and has $\frac{n}{2}$ facets;
\item[-] The Lagrangian  $L$ is diffeomorphic to $\widetilde{\mathcal{R}}_P/\mathbb{Z}_2 \times T^{n-m-1}$.  Moreover, $L$ is orientable and the action of $\mathbb{Z}_2$ on $\widetilde{\mathcal{R}}_P$ is isotopic to identity;
\item[-] $L$ is spin if and only if $\frac{n}{2}$ is even.
\end{lemma}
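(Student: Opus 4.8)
The plan is to translate everything into statements about the quadrics cutting out $\widetilde{\mathcal R}_P=\widetilde{\mathcal Z}_Q$ and about the $\mathbb Z_2$-action, so that the only non-formal input is one Stiefel--Whitney computation. Recall that $\widetilde{\mathcal R}_P\subset\mathbb R^n$ is the common zero locus of $n-m$ real quadrics $\sum_{i=1}^n\gamma_{ik}u_i^2=c_k$ (with $\Gamma=(\gamma_{ik})$ Gale dual to the matrix of normals $a_1,\dots,a_n$), while $\widetilde{\mathcal Z}_Q\subset\mathbb C^{\,\#\mathrm{facets}(Q)}$ is cut out by $\#\mathrm{facets}(Q)-\dim Q$ Hermitian quadrics $\sum_j\delta_{jk}\abs{w_j}^2=c'_k$. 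Saying $\widetilde{\mathcal R}_P=\widetilde{\mathcal Z}_Q$, i.e. $P=Q^{(2,\dots,2)}$, means exactly that under the coordinatewise identification $\mathbb R^n\cong\mathbb C^{n/2}$, $w_j=u_{2j-1}+\sqrt{-1}\,u_{2j}$, the rows of $\Gamma$ occur in consecutive equal pairs and the two quadric systems coincide. I would first record this dictionary (the $J$-construction, Section~\ref{wedge}); the first item is then pure bookkeeping, since the iterated wedge $Q\mapsto Q^{(2,\dots,2)}$ adds one facet and one dimension per facet of $Q$, so that $Q$ has $n/2$ facets (equivalently $n/2$ complex variables and $n-m=\#\mathrm{facets}(Q)-\dim Q$ quadrics) and the asserted dimension.

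For the second item, note that the $\mathbb Z_2$ on $\widetilde{\mathcal R}_P$ in Theorem~\ref{pollagrther} is generated by $u\mapsto -u$, which under $\mathbb R^n\cong\mathbb C^{n/2}$ becomes $w\mapsto -w$: this is the time-$\pi$ map of the diagonal circle action $w\mapsto e^{\sqrt{-1}\,t}w$, and that action preserves $\widetilde{\mathcal Z}_Q$ because the defining quadrics depend only on the $\abs{w_j}^2$. Hence the $\mathbb Z_2$-action is isotopic to the identity; more generally every sign change $w_j\mapsto\pm w_j$ is isotopic to the identity on $\widetilde{\mathcal Z}_Q$ through the coordinatewise torus $T^{n/2}$. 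Now, by the construction of $L$ in Section~\ref{mainconstructionmain}, the bundle $\widetilde{\mathcal R}_P/\mathbb Z_2\to L\to T^{n-m-1}$ is flat with monodromy realized by coordinate sign changes $u_i\mapsto\pm u_i$ of $\widetilde{\mathcal R}_P$; because the rows of $\Gamma$ come in pairs these flip $u_{2j-1},u_{2j}$ simultaneously, i.e. they are the maps $w_j\mapsto\pm w_j$, hence lie in $T^{n/2}$, which commutes with $w\mapsto -w$ and so descends to a torus acting on $\widetilde{\mathcal R}_P/\mathbb Z_2$. Thus the monodromy $\mathbb Z^{n-m-1}\to\mathrm{Diff}(\widetilde{\mathcal R}_P/\mathbb Z_2)$ factors through a torus action on the fibre, and a flat bundle over $T^{n-m-1}$ with such monodromy is trivial (lift the homomorphism $\mathbb Z^{n-m-1}\to(\text{torus})$ to $\mathbb R^{n-m-1}\to(\text{torus})$ and untwist). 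This gives $L\cong\widetilde{\mathcal R}_P/\mathbb Z_2\times T^{n-m-1}$.

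For orientability and the spin condition, set $Z=\widetilde{\mathcal Z}_Q$, $N=Z/\mathbb Z_2$, $\ell=n/2$. The defining functions $f_k=\sum_j\delta_{jk}\abs{w_j}^2-c'_k$ of $Z\subset\mathbb C^{\ell}$ are $\mathbb Z_2$-invariant, so their gradients form a $\mathbb Z_2$-equivariant frame of the normal bundle $\nu$ of $Z$ on which $\mathbb Z_2$ acts trivially, whereas $T\mathbb C^{\ell}|_Z=Z\times\mathbb R^{2\ell}$ carries the action $(w,v)\mapsto(-w,-v)$. Pushing the equivariant splitting $TZ\oplus\nu\cong T\mathbb C^{\ell}|_Z$ down to $N$ yields $TN\oplus\underline{\mathbb R}^{\,\ell-\dim Q}\cong\lambda^{\oplus 2\ell}$, where $\lambda$ is the real line bundle of the double cover $Z\to N$; writing $\alpha=w_1(\lambda)\in H^1(N;\mathbb Z_2)$ this gives $w(N)=(1+\alpha)^{2\ell}$. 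In particular $w_1(N)=2\ell\,\alpha=0$, so $N$, and hence $L\cong N\times T^{n-m-1}$, is orientable; and $w_2(N)=\binom{2\ell}{2}\alpha^2=\ell\,\alpha^2$ since $\binom{2\ell}{2}=\ell(2\ell-1)\equiv\ell\pmod 2$. Since $\widetilde{\mathcal Z}_Q$ is $2$-connected (a standard property of moment-angle manifolds), $N$ has $\pi_1=\mathbb Z_2$ and $\pi_2=0$, so the classifying map $N\to B\mathbb Z_2$ is an isomorphism on $H^{\leqslant 2}(-;\mathbb Z_2)$ and $\alpha^2\neq 0$; therefore $w_2(N)=0$ iff $\ell$ is even. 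As $T^{n-m-1}$ is parallelizable, $L$ is spin iff $N$ is spin iff $w_2(N)=0$, i.e. iff $n/2$ is even.

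The main obstacle, in my view, is the second item: one must read off from Section~\ref{mainconstructionmain} the exact monodromy of $\widetilde{\mathcal R}_P/\mathbb Z_2\to L\to T^{n-m-1}$ and verify that it consists of coordinate sign changes which, by the pairing of the rows of $\Gamma$, become circle-type rotations on $\widetilde{\mathcal Z}_Q$ — everything afterwards is formal. A secondary delicate point in the third item is keeping the $\mathbb Z_2$-equivariant bookkeeping honest, in particular that $\nu$ carries the \emph{trivial} action (this is what makes $w(N)=(1+\alpha)^{2\ell}$ come out clean), and then invoking $2$-connectedness of $\widetilde{\mathcal Z}_Q$ to see $\alpha^2\neq 0$.
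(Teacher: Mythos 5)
Your proposal is correct and follows essentially the same route as the paper: the first item is the same bookkeeping, the second item is exactly the paper's Lemma \ref{trivialfibration} argument (pairwise rotations realizing the sign changes, hence trivial monodromy), and the spin computation reduces in both cases to $w(\widetilde{\mathcal{R}}_P/\mathbb{Z}_2)=(1+\alpha)^{n}$ with trivial normal contribution, giving $w_2=\tfrac{n}{2}\alpha^2$. The only (cosmetic) difference is that you compute the total Stiefel--Whitney class by descending the equivariant splitting from $\mathbb{C}^{n/2}$, while the paper restricts $T\mathbb{R}P^{n-1}$ along the quadric embedding $\mathcal{R}_P\subset\mathbb{R}P^{n-1}$; your explicit check that $\alpha^2\neq 0$ via $2$-connectedness of $\widetilde{\mathcal{Z}}_Q$ is a point the paper leaves implicit.
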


For example, if $P$ is the standard $n-$simplex, then $L = \mathbb{R}P^{n-1}$. In general, real moment-angle manifolds define a rich family of smooth manifolds and their topology is far from being completely understood (see \cite{Lopez,  real2}). The manifold $\widetilde{\mathcal{R}}_P/\mathbb{Z}_2$ is called the real toric space. In fact, $\widetilde{\mathcal{R}}_P/\mathbb{Z}_2$ can be defined by a system of quadrics in $\mathbb{R}P^{n-1}$. Unfortunately, general formulas for cohomology rings $H^{*}(\widetilde{\mathcal{R}}_P/\mathbb{Z}_2, \mathbb{Z})$, $H^{*}(\widetilde{\mathcal{R}}_P/\mathbb{Z}_2, \mathbb{Z}_2)$ are not known. There are formulas only for $H^{*}(\widetilde{\mathcal{R}}_P/\mathbb{Z}_2, G)$, where $G$ is a ring and $2$ is invertible in $G$ (see \cite{real2}).

Since the action of $\mathbb{Z}_2$ is free on $\widetilde{\mathcal{R}}_P$ and the Eilenberg-MacLane space $K(\mathbb{Z}_2, 1) = \mathbb{R}P^{\infty}$, we have the Cartan-Leray spectral sequence (see \cite[p. 206]{Weibel})
\begin{equation*}
E_2^{p,q} = H^{p}(\mathbb{R}P^{\infty}, H^{q}(\widetilde{\mathcal{R}}_P, \mathbb{Z}_2)) \Rightarrow H^{*}(\widetilde{\mathcal{R}}_P/\mathbb{Z}_2, \mathbb{Z}_2),
\end{equation*}
Denote the differential of $E_k(\widetilde{\mathcal{R}}_P/\mathbb{Z}_2, \mathbb{Z}_2)$ by $\partial_k$.

Let $Q$ be a polytope and $v_1, \ldots, v_n$ be its facets. Define
\begin{equation*}
m(Q) = min\{k \in \mathbb{N} \; | \; v_{i_1}\cap \ldots \cap v_{i_k} = \emptyset  \},
\end{equation*}
where $v_{i_1},\ldots,v_{i_r}$ are arbitrary facets.

The following theorem says that the quantum cohomology groups of some of the mentioned Lagrangians vanish.

\begin{theorem}\label{vanishquantum}
Assume that a polytope $P$ satisfies the conditions of Theorem $\ref{pollagrther}$. Let  $L \subset \mathbb{C}P^{n-1}$ be the embedded monotone Lagrangian associated to $P$. Assume that $\widetilde{\mathcal{R}}_P = \widetilde{\mathcal{Z}}_Q$ for some polytope $Q$, i.e. $\widetilde{\mathcal{R}}_P$ is the complex moment-angle manifold associated to $Q$.  Suppose that $Q$ is irredundant and $N_L > 2$. We have:
\item[-] If $\partial_{2m(Q) }(E_{2m(Q) }^{0, 2m(Q) - 1}) = 0 $, then $QH^{*}(L, \mathbb{Z}_2[T, T^{-1}]) = 0$;
\item[-] If $L$ is spin, then $QH(L, \mathbb{Z}[T, T^{-1}])$ is not isomorphic to $H^{*}(L, \mathbb{Z}[T, T^{-1}])$;
\item[-] Let $G$ be a ring such that $2$ is invertible in $G$. If $L$ is spin, then $QH(L, G[T, T^{-1}]) = 0$;
\item[-] $H^{q}(\widetilde{\mathcal{R}}_P, \mathbb{Z}) = 0 \;\;$  for  $\;0< q <2m(Q)-1$.
\end{theorem}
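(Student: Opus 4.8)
The plan is to deduce the last, purely topological, bullet from the combinatorics of moment-angle manifolds, and to reduce the three symplectic bullets to the computation of $H^{*}(L;\mathbb{Z}_2)$ in low degrees together with the quantum-cohomology method developed in Section~\ref{mainconstructionmain}. Throughout I use Lemma~\ref{helplemma}: under the standing hypothesis $\widetilde{\mathcal{R}}_P=\widetilde{\mathcal{Z}}_Q$, $L$ is diffeomorphic to $\widetilde{\mathcal{R}}_P/\mathbb{Z}_2\times T^{\,n-m-1}$, the covering involution on $\widetilde{\mathcal{R}}_P$ is isotopic to the identity (hence acts trivially on cohomology), $L$ is orientable, and $L$ is spin iff $n/2$ is even.

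\emph{The last bullet.} Since $Q$ is irredundant, $\widetilde{\mathcal{R}}_P=\widetilde{\mathcal{Z}}_Q=\mathcal{Z}_{K}$ with $K=\partial Q^{*}$ a simplicial complex on exactly $n/2$ vertices, and by definition of $m(Q)$ the complex $K$ contains every subset of cardinality $\le m(Q)-1$. I would apply the Hochster-type decomposition $\widetilde H^{q}(\mathcal{Z}_{K};\mathbb{Z})\cong\bigoplus_{J}\widetilde H^{\,q-|J|-1}(K_{J};\mathbb{Z})$ over the full subcomplexes $K_J$. For $|J|\le m(Q)-1$ one has $K_{J}=\Delta^{|J|-1}$, which is acyclic; for $|J|\ge m(Q)$ the complex $K_J$ contains the full $(m(Q)-2)$-skeleton of a simplex, so $\widetilde H^{i}(K_J)=0$ for $i<m(Q)-2$, and its contribution lands in degrees $\ge (m(Q)-2)+|J|+1\ge 2m(Q)-1$. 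Hence $\widetilde H^{q}(\widetilde{\mathcal{R}}_P;\mathbb{Z})=0$ for $0<q<2m(Q)-1$; the same computation over $\mathbb{Z}_2$ shows $\widetilde{\mathcal{R}}_P$ is $(2m(Q)-2)$-connected with $H^{2m(Q)-1}(\widetilde{\mathcal{R}}_P;\mathbb{Z}_2)\ne 0$, which I use below.

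\emph{The first bullet.} By Lemma~\ref{helplemma}, $H^{*}(L;\mathbb{Z}_2)\cong H^{*}(\widetilde{\mathcal{R}}_P/\mathbb{Z}_2;\mathbb{Z}_2)\otimes H^{*}(T^{\,n-m-1};\mathbb{Z}_2)$, and in the Cartan--Leray spectral sequence $E_2^{p,q}=H^{p}(\mathbb{R}P^{\infty};H^{q}(\widetilde{\mathcal{R}}_P;\mathbb{Z}_2))\Rightarrow H^{*}(\widetilde{\mathcal{R}}_P/\mathbb{Z}_2;\mathbb{Z}_2)$ the coefficients are untwisted and, by the previous paragraph, in total degree $\le 2m(Q)$ only the rows $q=0$ (equal to $\mathbb{Z}_2[x]$, $\deg x=1$) and $q=2m(Q)-1$ are nonzero. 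The only differential between them is $\partial_{2m(Q)}\colon E^{p,2m(Q)-1}\to E^{p+2m(Q),0}$, which is $\mathbb{Z}_2[x]$-linear; chasing it gives $H^{k}(\widetilde{\mathcal{R}}_P/\mathbb{Z}_2;\mathbb{Z}_2)=\mathbb{Z}_2\langle x^{k}\rangle$ for $0\le k\le 2m(Q)-2$ and
\[
x^{2m(Q)}\ne 0\ \text{ in }\ H^{*}(\widetilde{\mathcal{R}}_P/\mathbb{Z}_2;\mathbb{Z}_2)\quad\Longleftrightarrow\quad \partial_{2m(Q)}\big(E_{2m(Q)}^{0,2m(Q)-1}\big)=0 .
\]
Thus the hypothesis of the first bullet is precisely that $x^{2m(Q)}\ne 0$ in $H^{*}(L;\mathbb{Z}_2)$. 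I would then feed $H^{*}(L;\mathbb{Z}_2)$ into the Oh/pearl spectral sequence of Section~\ref{mainconstructionmain} (using $N_L>2$ so that $L$ is unobstructed and the method applies with first page $H^{*}(L;\mathbb{Z}_2)[T,T^{-1}]$). Below degree $2m(Q)-1$ the cohomology of $L$ is that of $\mathbb{R}P^{\infty}\times T^{\,n-m-1}$ truncated, so the first Oh differential that can act nontrivially on these classes has ``length'' matching the connectivity index $2m(Q)$ of $\widetilde{\mathcal{R}}_P$, and a count of the Maslov discs bounded by $L$ — read off from the intersection-of-quadrics model — should identify it with multiplication against $x^{2m(Q)-1}$; when $x^{2m(Q)}\ne 0$ this operation is onto a generating set of the $H^{*}(L;\mathbb{Z}_2)[T,T^{-1}]$-module, the complex is acyclic, and $QH^{*}(L;\mathbb{Z}_2[T,T^{-1}])=0$. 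The hard part is exactly this last identification of the pearl differential with a topological operation, since it requires controlling the moduli of holomorphic discs via the toric model; this is the step I expect to carry the real content.

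\emph{The second and third bullets.} These are the orientable refinements, where $L$ spin (equivalently $n/2$ even) lets one run the same machinery over $\mathbb{Z}$ and over a ring $G$ with $2$ invertible. The idea is that the double cover $\widehat L=\widetilde{\mathcal{R}}_P\times T^{\,n-m-1}\to L$, classified by the class $x\in H^{1}(L;\mathbb{Z}_2)$ above, organizes the disc counts: over $G$, the identity $G[\mathbb{Z}_2]=G\times G$ splits the pearl differential into the contribution of the sign local system and that of the trivial one, and combining this with the disc description from the first bullet and the fact that the covering involution is isotopic to the identity should force the leading count to be an actual unit rather than merely nonzero mod $2$; this gives $QH^{*}(L;G[T,T^{-1}])=0$ when $2$ is invertible, and over $\mathbb{Z}$ it forces the leading Oh differential to be nonzero, so that $QH^{*}(L;\mathbb{Z}[T,T^{-1}])$ cannot be isomorphic to $H^{*}(L;\mathbb{Z}[T,T^{-1}])$. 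The subtle point here is the sign computation: one must verify that orienting the relevant moduli spaces — which uses the spin structure, hence the parity of $n/2$ — really makes the count $\pm1$ rather than $0$, and I expect this to be the main obstacle for these two bullets.
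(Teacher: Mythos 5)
Your treatment of the last bullet is fine: the Hochster decomposition argument is a different (and arguably cleaner) route to $H^{q}(\widetilde{\mathcal{R}}_P,\mathbb{Z})=0$ for $0<q<2m(Q)-1$ than the paper's direct inspection of the Koszul complex $[R(Q),d]$, and your reduction of the hypothesis $\partial_{2m(Q)}(E_{2m(Q)}^{0,2m(Q)-1})=0$ to the survival of $x^{2m(Q)}$ in $H^{*}(\widetilde{\mathcal{R}}_P/\mathbb{Z}_2,\mathbb{Z}_2)$ via the two-row Cartan--Leray page is exactly what the paper does.

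For the three symplectic bullets, however, there is a genuine gap: you are missing the mechanism the paper actually uses, and the replacement you sketch is not carried out. The paper never identifies any Oh/pearl differential with a cup product by $x^{2m(Q)-1}$, never counts discs, and never does a sign computation. Its entire engine is the degree-$2$ periodicity isomorphism $h:QH^{*}\to QH^{*+2}$ together with the identity $h(x^{\min})=h^{0}$, where $h^{0}\in H^{2}(L,R)$ is the Poincar\'e dual of the hyperplane section $L\cap H_{1}$; Lemma~\ref{zeroquantum} then says that $h^{0}=0$ forces $QH^{*}(L,R[T,T^{-1}])=0$ once $N_L>2$. The point of the hypothesis $x^{2m(Q)}\neq 0$ is not to make some differential surjective, but to pin down $h^{0}$ inside $H^{2}(\mathcal{R}_P,\mathbb{Z}_2)=\mathbb{Z}_2\langle x^{2}\rangle$: the paper computes geometrically that $L\cap H_1=(\mathcal{R}_P\cap\{u_1=u_2=0\})\times T^{n-m-1}$ and that the $m(Q)$-fold self-intersection of this class is empty (Lemma~\ref{generatorzero}, because $\widetilde{\mathcal{Z}}_Q\cap\{z_1=\dots=z_{m(Q)}=0\}=\emptyset$), so $(h^{0})^{m(Q)}=0$; since $(x^{2})^{m(Q)}=x^{2m(Q)}\neq 0$, the only possibility is $h^{0}=0$. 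The second and third bullets are then the same one-line argument with coefficients changed: $H^{2}(\mathcal{R}_P,G)=0$ when $2$ is invertible (so $h^{0}=0$ and $QH=0$), and $H^{2}(\mathcal{R}_P,\mathbb{Z})=\mathbb{Z}_2$ (so $2h^{0}=0$, hence the unit is annihilated by $2$ in $QH$ and $QH^{*}(L,\mathbb{Z}[T,T^{-1}])\not\cong H^{*}(L,\mathbb{Z}[T,T^{-1}])$). Your proposed alternative---controlling the pearl differential through moduli of holomorphic discs in the toric model, and a spin-structure sign analysis over $G$ and $\mathbb{Z}$---is precisely the part you flag as not done, and nothing in the paper's toolkit supports it; without the hyperplane class $h^{0}$ and Lemma~\ref{zeroquantum}, the three symplectic statements remain unproved in your write-up.
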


\textbf{Applications to algebraic topology.}(see Section $\ref{sympalg}$ for more details). Let  $\widetilde{\mathcal{R}}_P/\mathbb{Z}_2$ be a real toric space. For example, $\widetilde{\mathcal{R}}_P/\mathbb{Z}_2$ can be defined by a system of real quadrics in $\mathbb{R}P^{n-1}$.  Sometimes, we can find the additive structure of $H^{*}(\widetilde{\mathcal{R}}_P/\mathbb{Z}_2, \mathbb{Z}_2)$ using the Cartan-Leray spectral sequnce, but it is not clear how to find its cohomology ring. By Theorem $\ref{pollagrther}$, we have embedded monotone Lagrangian  $L \subset \mathbb{C}P^n$ associated to $\widetilde{\mathcal{R}}_P/\mathbb{Z}_2$. This means that $QH^{*}(L, \mathbb{Z}_2[T, T^{-1}])$ can be considered as an extra structure on $\widetilde{\mathcal{R}}_P/\mathbb{Z}_2$. It follows from Theorem $\ref{vanishquantum}$ that $QH(L, \mathbb{Z}_2[T, T^{-1}]) = 0$ for some real toric spaces and we get that the spectral sequence of Oh converges to zero. This puts some restrictions on the singular cohomology ring. So, information obtained from the spectral sequence of Oh can be used to find the ring structure of $H^{*}(\widetilde{\mathcal{R}}_P/\mathbb{Z}_2, \mathbb{Z}_2)$. In Section $\ref{sympalg}$ we consider nontrivial example, where we show how this new method works. This idea will be developed in a further paper.
\\
\\
\textbf{Remark.} The goal of all the theorems below is to show how Theorems $\ref{pollagrther}$ and $\ref{vanishquantum}$ work. We show that the theorems above yields a rich set of Lagrangians.
\\
\\
First, let us start with simple examples. The theorem below shows that our method provides some wide Lagrangians

\begin{theorem}\label{twospheretheorem}
Let $p, n$ be integers greater than $1$. Assume that $L$ is a total space of a fibration (the fibration is constructed in the proof)
\begin{equation*}
L \xrightarrow{(S^{p-1} \times S^{n-p-1})/\mathbb{Z}_2} S^1, \quad (x,y) \sim (-x-y), \;\; \; (x,y) \in S^{p-1} \times S^{n-p-1},
\end{equation*}
i.e. $L$ fibers over $S^1$ with fiber $(S^{p-1} \times S^{n-p-1})/\mathbb{Z}_2$.  We have:
\item[-] There exists a monotone Lagrangian embedding of $L$ into $\mathbb{C}P^{n-1}$ with minimal Maslov number $N_L = gcd(p,n)$, where $gcd$ stands for the greatest common divisor;
\item[-] If $p$ is odd and $n =2p$, then $QH^{*}(L, \mathbb{Z}_2[T, T^{-1}]) \cong H^{*}(L, \mathbb{Z}_2[T, T^{-1}])$. Hence, $L$ is not displaceable;
\item[-] The fibration over $S^1$ is trivial if and only if $p,n$ are even numbers, i.e. $L = S^1 \times (S^{p-1} \times S^{n-p-1})/\mathbb{Z}_2$ if and only if $p,n$ are even;
\item[-] If $n = 0$ mod $4$, then $L$ is spin;
\item[-] If $p,n$ are even, $L$ is spin, and $2$ is invertible in a ring $G$, then $QH^{*}(L, G[T, T^{-1}]) = 0$;
\end{theorem}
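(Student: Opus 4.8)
The plan is to realize $L$ via Theorem~\ref{pollagrther} applied to a product of two simplices and then to read the five items off the construction, using Lemma~\ref{helplemma}, Theorem~\ref{vanishquantum} and the general computational scheme of Section~\ref{mainconstructionmain}. So I would take $P=\Delta^{p-1}\times\Delta^{n-p-1}$, the product of a $(p-1)$-simplex and an $(n-p-1)$-simplex. It is Delzant and Fano because each simplex is and these properties pass to products; it has $m=\dim P=n-2$ and exactly $n=p+(n-p)$ facets; and its facet normals are those of the two factors placed in complementary coordinate subspaces, so — since the facet normals of a simplex sum to zero — the $n$ normals of $P$ sum to $0$. Hence Theorem~\ref{pollagrther} produces an embedded monotone Lagrangian $L\subset\mathbb{C}P^{n-1}$, and since $n-m-1=1$ it fibers over $S^{1}$ with fiber $\widetilde{\mathcal R}_P/\mathbb{Z}_2=(\widetilde{\mathcal R}_{\Delta^{p-1}}\times\widetilde{\mathcal R}_{\Delta^{n-p-1}})/\mathbb{Z}_2=(S^{p-1}\times S^{n-p-1})/\mathbb{Z}_2$, the $\mathbb{Z}_2$ being the diagonal antipodal map coming from the diagonal $-1$ on $\mathbb{R}^{p}\times\mathbb{R}^{n-p}=\mathbb{R}^{n}$; this is the fibration in the statement, and the embedding. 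For the minimal Maslov number I would substitute these normals into the Maslov index formula of Section~\ref{mainconstructionmain} (compare \cite{Og2}): the boundary disc classes split into the two simplex blocks, producing Maslov numbers in $p\mathbb{Z}$, in $(n-p)\mathbb{Z}$, and their sums, so the positive generator of the image of $\mu$ is $\gcd(p,n-p)=\gcd(p,n)$; the only work here is matching the index formula to the product combinatorics.

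For the triviality of the $S^{1}$-fibration: if $p$ and $n$ are both even and moreover $p\ge4$, $n-p\ge4$, then $p-1$ and $n-p-1$ are both odd and $\ge3$, so $\widetilde{\mathcal R}_P=S^{p-1}\times S^{n-p-1}=\widetilde{\mathcal Z}_{\Delta^{(p-2)/2}}\times\widetilde{\mathcal Z}_{\Delta^{(n-p-2)/2}}=\widetilde{\mathcal Z}_Q$ with $Q=\Delta^{(p-2)/2}\times\Delta^{(n-p-2)/2}$, i.e. $P=Q^{(2,\dots,2)}$, and Lemma~\ref{helplemma} directly gives $L\cong\widetilde{\mathcal R}_P/\mathbb{Z}_2\times T^{1}$, together with orientability and the criterion ``$L$ spin $\iff n/2$ even''. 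In the remaining cases and for the converse I would extract the monodromy $\varphi$ of the fibration from the construction — it is the descent to $\widetilde{\mathcal R}_P/\mathbb{Z}_2$ of a linear involution of $\widetilde{\mathcal R}_P\subset\mathbb{R}^{n}$ — and show it is isotopic to the identity exactly when $p$ and $n$ are even, e.g. by computing its action on the orientation and on $H^{p-1}$ of the fiber, which is non-trivial as soon as one of $p,n-p$ is odd; the Wang sequence of $F\to L\to S^{1}$ then distinguishes $L$ from a product.

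The heart of the theorem, and the step I expect to be the main obstacle, is the wideness for $p$ odd and $n=2p$. Here $N_L=\gcd(p,2p)=p>2$ and $\dim_{\mathbb R}L=2p-1=2N_L-1$, so by dimension only the pearl contributions of total Maslov index $N_L$ and $2N_L$ can be non-zero in Oh's spectral sequence; and since $\widetilde{\mathcal R}_P=S^{p-1}\times S^{p-1}$ is \emph{not} a complex moment-angle manifold (both factors are even-dimensional), Lemma~\ref{helplemma} and Theorem~\ref{vanishquantum} are unavailable and the count must be made by hand. I would first compute $H^{*}(L;\mathbb Z_2)$ from the Serre spectral sequence of $S^{p-1}\times S^{p-1}\hookrightarrow F\to\mathbb{R}P^{\infty}$ and the Wang sequence of $F\to L\to S^{1}$, and then, using the combinatorial description of the boundary disc classes for $P=\Delta^{p-1}\times\Delta^{p-1}$ given by the method of Section~\ref{mainconstructionmain}, show that every structure constant of those differentials is even. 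The mechanism is the symmetry available because $n=2p$: the transposition of the two $\Delta^{p-1}$-blocks of $P$ is an \emph{odd} permutation of the $n=2p$ facets — this is exactly where the hypothesis $p$ odd enters — and composing the induced symplectomorphism of $\mathbb{C}P^{n-1}$ with complex conjugation gives an anti-symplectic involution $\tau$ with $\tau(L)=L$; for $\tau$-compatible auxiliary data the relevant moduli of $J$-holomorphic discs split into conjugate pairs, and the delicate point — exactly as in Oh's original computation for $\mathbb{R}P^{n}$ — is to show that the discs fixed by $\tau$ contribute an even count. Granting this, Oh's spectral sequence degenerates at its first page, so $QH^{*}(L,\mathbb Z_2[T,T^{-1}])\cong H^{*}(L,\mathbb Z_2[T,T^{-1}])$, and non-displaceability follows because a displaceable monotone Lagrangian is narrow.

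Finally, for the spin and vanishing items: if $n\equiv0\pmod 4$ and $p$ is even, Lemma~\ref{helplemma} already gives that $L$ is spin since $n/2$ is even; if $p$ is odd I would argue directly from $TL\otimes_{\mathbb R}\mathbb C\cong T\mathbb{C}P^{n-1}|_L$, which via the Euler sequence yields $w(TL)^{2}=(1+\bar h)^{n}$ with $\bar h=h|_L\bmod 2$ — for $4\mid n$ this has no terms in degrees $1,2,3$, and together with orientability of $L$ (the monodromy being orientation-preserving) and the shape of $H^{\le 2}(L;\mathbb Z_2)$ from the Wang sequence this forces $w_1(TL)=w_2(TL)=0$. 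For the last item, assume $p,n$ even, $L$ spin, and $2$ invertible in $G$: by the third item the $S^{1}$-fibration is trivial, so $L\cong(S^{p-1}\times S^{n-p-1})/\mathbb{Z}_2\times S^{1}$; this is of Oakley--Usher type and displaceable by \cite{Usher} once $n-2\ge3$, hence narrow over every ring, and in general — when $\widetilde{\mathcal R}_P=\widetilde{\mathcal Z}_Q$ and $\gcd(p,n)>2$ — the vanishing $QH^{*}(L,G[T,T^{-1}])=0$ follows from the spin part of Theorem~\ref{vanishquantum} applied to the irredundant polytope $Q$.
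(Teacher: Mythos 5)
Your choice of polytope ($P=\Delta^{p-1}\times\Delta^{n-p-1}$), the identification of the fiber, the computation $N_L=\gcd(p,n-p)=\gcd(p,n)$ via the general Maslov formula, the use of Lemma \ref{trivialfibration}-type rotations for triviality of the fibration when $p,n$ are even, and the last item (either via $H^2(L,G)=0$ and Lemma \ref{zeroquantum}, or via Theorem \ref{vanishquantum}) all match the paper's route. One caveat on your alternative for the last item: diffeomorphism of $L$ with an Oakley--Usher manifold does not by itself give displaceability of \emph{this} Lagrangian embedding, so the $H^2(L,G)=0$ argument is the one to rely on.

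The genuine gap is the second item, which you correctly identify as the heart of the theorem but then only sketch conditionally. Your plan --- build an anti-symplectic involution $\tau$ from the block swap composed with conjugation and show that $\tau$-fixed holomorphic discs contribute evenly, ``exactly as in Oh's computation for $\mathbb{R}P^n$'' --- is left with its key step unproven (you say ``granting this''), and it is not at all clear it goes through: the fixed locus of such a $\tau$ and the orientation/parity analysis of the disc moduli are precisely where such arguments break, and nothing in the paper's toolkit supports them. The paper's actual argument requires no disc counting at all. It computes the ring $H^{*}((S^{p-1}\times S^{p-1})/\mathbb{Z}_2,\mathbb{Z}_2)$ by intersection theory in the sphere bundle over $\mathbb{R}P^{p-1}$ (Lemma \ref{cohomtwosphere}) and finds that exactly when $n=2p$ and $p$ is odd there is an \emph{extra} relation $y^{2}=x^{p-1}y$, coming from the self-intersection of the diagonal section being $\chi(\mathbb{R}P^{p-1})\equiv p \bmod 2$. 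Feeding this into Oh's spectral sequence: $\delta_1$ lowers degree by $N_L-1=p-1$, so $\delta_1(x)=\delta_1(z)=0$ for degree reasons and only $\delta_1(y)\in H^{0}$ is in question; multiplicativity applied to $y^{2}z=x^{p-1}yz$ gives $0=\delta_1(y^2z)=x^{p-1}z\,\delta_1(y)$, forcing $\delta_1(y)=0$ and hence collapse at $E_1$. This purely algebraic mechanism (a cup-product relation killing the only possibly nonzero differential) is the missing idea in your proposal, and without it the second item is not established.
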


\vspace{.14in}

Unfortunately, we do not know the quantum cohomology groups of all Lagrangians $L$ (unless $n=2p$, $p$ is odd) from the theorem above. There is a rich group $\text{Symp}(\mathbb{C}P^{n-1}, L)$ and methods of Smith developed in~\cite{jacksmith1} may help to find $QH(L, \mathbb{Z}[T, T^{-1}])$.

Theorem $\ref{twospheretheorem}$ says that $QH^{*}(L, \mathbb{Z}_2[T, T^{-1}]) \cong H^{*}(L, \mathbb{Z}_2[T, T^{-1}])$ whenever $n=2p$ and $p$ is odd. We show in the proof  that $H^{*}(L, \mathbb{Z}_2)$ contains some extra relations (there are less relations for $n\neq 2p$) and this implies that the spectral sequence of Oh collapses at $E_1$. This shows how some relations in the singular cohomology ring may not allow a Lagrangian to be narrow.

Let us jump to the next level and consider more complicated Lagrangians.
\begin{theorem}\label{wide-narrow}
Let $k$ be an arbitrary positive integer. Assume that
\begin{equation*}
L = \widetilde{M}/\mathbb{Z}_2 \times T^3,
\end{equation*}
where $\widetilde{M}$ is a manifold (constructed in the proof) of dimension $12k-4 $  such that $H^{*}(\widetilde{M}, \mathbb{Z})$ and $H^{*}(\#9(S^{4k-1} \times S^{8k-3}) \#8(S^{6k-2} \times S^{6k-2}), \; \mathbb{Z})$ are isomorphic.  We have:
\item[-] There exists a monotone Lagrangian embedding of $L$ into $\mathbb{C}P^{12k-1}$. The minimal Maslov number $N_{L} = 2k$;
\item[-] If $k\geqslant 2$, then $QH^{*}(L, \mathbb{Z}_2[T, T^{-1}]) =  0$;
\item[-] The Lagrangian $L$ is spin. If $2$ is invertible in a ring $G$ and  $k \geqslant 2$, then $QH^{*}(L, G[T, T^{-1}]) =  0$.
\end{theorem}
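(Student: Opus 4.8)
The plan is to realize this theorem as a direct application of the machinery of Theorem~\ref{pollagrther} and Theorem~\ref{vanishquantum}, with the real work being a careful choice of the combinatorial input $Q$. First I would construct the polytope $Q$: one wants a simple polytope whose complex moment-angle manifold $\widetilde{\mathcal{Z}}_Q$ is (cohomologically) a connected sum $\#9(S^{4k-1}\times S^{8k-3})\,\#8(S^{6k-2}\times S^{6k-2})$, and such that the doubled polytope $P=Q^{(2,\ldots,2)}$ (see the wedge construction in Section~\ref{wedge}) is Delzant, Fano, and has the vanishing-sum-of-normals property $a_1+\cdots+a_n=0$ required by Theorem~\ref{pollagrther}. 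The natural candidate is built from a product of simplices or, more precisely, the polytope dual to the boundary of a suitable low-dimensional complex: by Bosio--Meersseman / McGavran-type results, connected sums of products of two spheres arise as moment-angle manifolds of polytopes obtained from polygons (or truncations thereof), and the number of facets of $Q$ must be chosen so that $\dim\widetilde{\mathcal{Z}}_Q = 12k-4$ and $n = 12k$. I would compute $m(Q)$ (the minimal size of an empty intersection of facets) for this $Q$ and check that $2m(Q)$ matches the Maslov number $N_L = 2k$ predicted by the Maslov-index formula from \cite{Og,Og2}; this is where the ``$Q$ is a wedge with parameter $2$ in each slot'' has to be reconciled with the arithmetic of $k$.

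Second, granting such a $Q$, Lemma~\ref{helplemma} immediately gives that $L \cong \widetilde{\mathcal{R}}_P/\mathbb{Z}_2 \times T^{n-m-1}$ with $n-m-1 = 3$ (so one needs $m = 12k-4$ as the dimension of $P$, consistent with $\dim Q = \tfrac m2 = 6k-2$ and $\tfrac n2 = 6k$ facets of $Q$), that $L$ is orientable, and that $L$ is spin precisely when $\tfrac n2 = 6k$ is even --- which it always is, giving the unconditional spin claim. The diffeomorphism type $\widetilde M = \widetilde{\mathcal{R}}_P/\mathbb{Z}_2$ together with the $T^3$ factor then yields the stated cohomology isomorphism with $H^*(\#9(\cdots)\#8(\cdots),\mathbb{Z})$ via the identification $\widetilde{\mathcal{R}}_P = \widetilde{\mathcal{Z}}_Q$ and the fact (from the last bullet of Lemma~\ref{helplemma}, or a direct Cartan--Leray argument) that the $\mathbb{Z}_2$-action is isotopic to the identity, so the cohomology of the quotient splits off the expected way after inverting nothing --- here I would double-check whether the claim is really ``$H^*(\widetilde M,\mathbb{Z}) \cong H^*(\text{conn. sum},\mathbb{Z})$'' as stated, or the stronger diffeomorphism, and word the proof accordingly.

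Third, for the quantum-cohomology vanishing I would invoke Theorem~\ref{vanishquantum} directly: since $Q$ is irredundant (one must verify this — it follows from $Q$ being simple with no facet redundant, which for the chosen $Q$ is clear) and $N_L = 2k > 2$ exactly when $k \geqslant 2$, the hypothesis of that theorem is met. The first bullet of Theorem~\ref{vanishquantum} gives $QH^*(L,\mathbb{Z}_2[T,T^{-1}]) = 0$ provided $\partial_{2m(Q)}\bigl(E_{2m(Q)}^{0,\,2m(Q)-1}\bigr) = 0$; I would check this differential vanishes for the specific $Q$ by a dimension/sparsity argument in the Cartan--Leray spectral sequence — the relevant $E_2$-page is $H^p(\mathbb{R}P^\infty; H^q(\widetilde{\mathcal{Z}}_Q,\mathbb{Z}_2))$ and $H^q(\widetilde{\mathcal{Z}}_Q,\mathbb{Z}_2)$ is concentrated in degrees $0$, $4k-1$, $6k-2$, $8k-3$, $10k-3$, $12k-4$, with $2m(Q)-1$ being one of the low nonzero degrees, so the target group of $\partial_{2m(Q)}$ should simply be zero for degree reasons. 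Then the spin statement over $G$ with $2$ invertible follows from the third bullet of Theorem~\ref{vanishquantum}, again using $k \geqslant 2$ and spin-ness established above.

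The main obstacle I anticipate is the very first step: exhibiting an explicit simple polytope $Q$ with $6k-2$ dimensions, $6k$ facets, the prescribed moment-angle cohomology (nine $S^{4k-1}\times S^{8k-3}$ summands and eight $S^{6k-2}\times S^{6k-2}$ summands — the counts $9$ and $8$ strongly suggest $Q$ is combinatorially a product or truncation whose nerve has a controlled number of minimal non-faces), AND such that $P = Q^{(2,\ldots,2)}$ is simultaneously Delzant and Fano with $\sum a_i = 0$. The Fano and Delzant conditions on the doubled polytope are delicate lattice conditions, and matching them against the topological bookkeeping (so that the Maslov number comes out to exactly $2k$ rather than some other multiple of $\gcd$) is the crux; everything after that is an invocation of the two main theorems plus routine spectral-sequence degree counting. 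I would likely present $Q$ concretely (e.g.\ as coming from a specific graph or a join of boundaries of simplices), verify its combinatorics, cite the relevant moment-angle connected-sum theorem, and then let Theorems~\ref{pollagrther} and~\ref{vanishquantum} do the rest.
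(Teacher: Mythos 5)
Your overall architecture matches the paper's: the polytope is indeed obtained from a polygon (the hexagon, i.e.\ the $2$-simplex with all vertices cut off, whose moment-angle manifold is $\#9(S^3\times S^5)\,\#8(S^4\times S^4)$ by the vertex-cutting theorem), one applies the multiwedge $(k,\ldots,k)$ to get $Q$ and $(2k,\ldots,2k)$ to get the Delzant, Fano polytope $P=Q^{(2,\ldots,2)}$ with $\sum a_i=0$, and the Maslov number, spin-ness, and the statement over rings with $2$ invertible all follow from Lemma~\ref{simplemimasnumber}, Lemma~\ref{helplemma}, and Theorem~\ref{vanishquantum} exactly as you say.

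However, there is a genuine gap at the step you dispose of ``for degree reasons.'' The hypothesis of Theorem~\ref{vanishquantum} that must be verified is $\partial_{2m(Q)}\bigl(E_{2m(Q)}^{0,\,2m(Q)-1}\bigr)=0$ with $m(Q)=2k$, i.e.\ the vanishing of $\partial_{4k}\colon E_{4k}^{0,4k-1}\to E_{4k}^{4k,0}$ in the Cartan--Leray spectral sequence. The target here is the \emph{bottom row} $E^{4k,0}$, a subquotient of $H^{4k}(\mathbb{R}P^{\infty},H^0(\widetilde{\mathcal{Z}}_Q,\mathbb{Z}_2))=\mathbb{Z}_2\langle\alpha^{4k}\rangle$, and since no earlier differential can reach the bottom row (the lowest positive-degree cohomology of $\widetilde{\mathcal{Z}}_Q$ sits in degree $4k-1$), we have $E_{4k}^{4k,0}=\mathbb{Z}_2\neq 0$. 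So the differential is not zero for degree reasons; in fact the powers of $\alpha$ \emph{must} eventually be killed because $\mathcal{R}_P$ is finite-dimensional, and the whole question is whether they are killed by $\partial_{4k}$ out of the $(4k-1)$-row or only by later differentials out of the $(6k-2)$- and $(12k-4)$-rows. The paper's proof of this point occupies the bulk of the argument: it uses the multiplicative structure of the spectral sequence together with Poincar\'e duality on the connected sum (the pairings between the classes $s_i^{4k-1}$, $s_i^{8k-3}$ and between $s_j^{6k-2}$, $\widehat{s}_j^{6k-2}$) to show that the matrices of $\partial_{2k}$ on the $(8k-3)$- and $(6k-2)$-rows are transposes of one another, deduces $7\leqslant\dim\partial_{2k}(E_{2k}^{0,8k-3})\leqslant 8$ from finiteness of $E_\infty$, and then runs a two-case analysis (rank $7$ versus rank $8$, and in the latter case whether $\partial_{4k-1}$ acts nontrivially) to conclude $\partial_{4k}(E_{4k}^{0,4k-1})=0$ in every case. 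None of this is captured by a sparsity count. A secondary slip: you propose to ``check that $2m(Q)$ matches the Maslov number $N_L=2k$,'' but these are unrelated quantities here ($2m(Q)=4k$ while $N_L=2k$ comes from the gcd formula of Lemma~\ref{simplemimasnumber}); the condition $N_L>2$ needed for Theorem~\ref{vanishquantum} is where $k\geqslant 2$ enters, independently of $m(Q)$.
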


\vspace{.07in}

Let us construct an example with more complicated topology.  Assume that $X$ is a manifold of dimension $n$ without boundary and write $X_{-}$ for $X$ minus a small $n-$disk. Then, we define
\begin{equation*}
\mathcal{G}(X) = \partial(X_{-} \times D^2),
\end{equation*}
where $D^2$ is 2-disk. The operation $\mathcal{G}X$ was defined by Gonzalez Acuna in ~\cite{Gonzales}. Let us mention the following interesting property:
\begin{equation*}
\begin{gathered}
\mathcal{G}(X \# Y) = \mathcal{G}(X)\#\mathcal{G}(Y),
\end{gathered}
\end{equation*}
where $X, Y$ are connected manifolds.

\vspace{.09in}

\begin{theorem}\label{wide-narrow2}
Let $k$ be an arbitrary positive integer and
\begin{equation*}
K = \mathcal{G}(\mathcal{G}(S^3 \times S^3 \times S^3)) \#7(S^3 \times S^8) \#12(S^4 \times S^7) \#10(S^5 \times S^6).
\end{equation*}
Assume that
\begin{equation*}
L = \widetilde{M}/\mathbb{Z}_2 \times T^4,
\end{equation*}
where $\widetilde{M}$ is a manifold (constructed in the proof) of dimension $20k - 5$  such that $H^{*}(K, \mathbb{Z})$ and $H^{*}(\widetilde{M}, \mathbb{Z})$ are isomorphic as ungraded rings. We have:
\item[-] There exists a monotone Lagrangian embedding of $L$ into $\mathbb{C}P^{20k - 1}$. The minimal Maslov number $N_{L} = 2k$;
\item[-] If $k\geqslant 2$, then $QH(L, \mathbb{Z}_2[T, T^{-1}]) = 0$;
\item[-] The Lagrangian $L$ is spin. If $2$ is invertible in a ring $G$ and  $k \geqslant 2$, then $QH^{*}(L, G[T, T^{-1}]) =  0$.
\end{theorem}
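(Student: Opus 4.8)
As in the proofs of Theorems \ref{twospheretheorem} and \ref{wide-narrow}, I would prove this by exhibiting, for each $k\geq 1$, an explicit polytope $P=P_k$ satisfying the hypotheses of Theorem \ref{pollagrther}, arranged so that $\widetilde{\mathcal{R}}_{P_k}=\widetilde{\mathcal{Z}}_{Q_k}$ is a complex moment-angle manifold whose integral cohomology ring is isomorphic, as an ungraded ring, to $H^*(K;\mathbb{Z})$; then all asserted conclusions follow from Theorem \ref{pollagrther}, Lemma \ref{helplemma} and Theorem \ref{vanishquantum}. The building blocks for $Q_k$ are simplices, for which $\widetilde{\mathcal{Z}}_{\Delta^{d}}=S^{2d+1}$ and $\widetilde{\mathcal{Z}}_{A\times B}=\widetilde{\mathcal{Z}}_A\times\widetilde{\mathcal{Z}}_B$, so a product of simplices realizes any product of odd spheres, in particular $(S^{2d+1})^3=\widetilde{\mathcal{Z}}_{(\Delta^{d})^3}$. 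To realize the connected sums and the operation $\mathcal{G}$ I would use the dictionary between combinatorial moves on polytopes and operations on moment-angle manifolds (Bosio--Meersseman, Gitler--L\'opez de Medrano, and Section \ref{wedge}): a vertex truncation changes $\widetilde{\mathcal{Z}}$ by a connected sum with a product of two spheres, and there is a combinatorial move implementing $X\mapsto\mathcal{G}(X)=\partial(X_-\times D^2)$ --- this is how the construction of \cite{Gonzales} enters. Carrying out these moves starting from $(\Delta^{d})^3$ produces a polytope whose moment-angle manifold has, up to the degree shifts that an ungraded isomorphism ignores, the cohomology ring of $\mathcal{G}(\mathcal{G}(S^3\times S^3\times S^3))\#7(S^3\times S^8)\#12(S^4\times S^7)\#10(S^5\times S^6)$. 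Since only the ungraded ring is needed and $H^*(S^{\mathrm{odd}})$ is the same ungraded ring in every odd degree, I have the freedom to inflate the simplex blocks in a $k$-dependent way, producing a family $Q_k$ with $\dim\widetilde{\mathcal{Z}}_{Q_k}=20k-5$, with $P_k=Q_k^{(2,\ldots,2)}$ having $n=20k$ facets, and --- choosing the inflation so that the gcd of the relevant facet data is $2k$ --- with minimal Maslov number $N_{L_k}=2k$ by the formula of \cite{Og,Og2}.

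\textbf{Checking the hypotheses of Theorem \ref{pollagrther}.} One must choose lattice data for $P_k$ making it Delzant, Fano and balanced ($a_1+\dots+a_n=0$). The balancing condition holds for the simplex, is inherited by products, and is inherited by the wedge $Q\mapsto Q^{(2,\ldots,2)}$ (at each wedge step the two new facet normals sum to the normal they replace), so the real work is to arrange the truncation and $\mathcal{G}$-moves so that the identity survives, which can be done by positioning each cut appropriately; Delzantness is likewise stable under the moves used. The delicate condition is Fano-ness: I would verify it directly on the family $P_k$, using that the Fano property of a Delzant polytope is a statement about the dual reflexive polytope and behaves predictably under products and the doubling operation, exactly as in the verifications of \cite{Og2}. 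Granting this, Theorem \ref{pollagrther} gives a monotone embedded Lagrangian $L=L_k\subset\mathbb{C}P^{20k-1}$; since $\widetilde{\mathcal{R}}_{P_k}=\widetilde{\mathcal{Z}}_{Q_k}$, Lemma \ref{helplemma} identifies it with $\widetilde{M}/\mathbb{Z}_2\times T^{4}$ with $\widetilde{M}=\widetilde{\mathcal{Z}}_{Q_k}$ (here $n-m-1=4$), describes the $\mathbb{Z}_2$-action as isotopic to the identity, and --- because $n/2=10k$ is even --- gives that $L_k$ is spin.

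\textbf{Quantum cohomology.} For $k\geq 2$ I would apply Theorem \ref{vanishquantum}: $N_{L_k}=2k>2$, $Q_k$ is irredundant, and it remains to check the vanishing of the Cartan--Leray differential $\partial_{2m(Q_k)}\big(E^{0,\,2m(Q_k)-1}_{2m(Q_k)}\big)$. By the last bullet of Theorem \ref{vanishquantum}, the reduced cohomology of $\widetilde{\mathcal{R}}_{P_k}$ begins exactly in degree $2m(Q_k)-1$, so the only potentially nonzero differential out of the bottom row is this transgression; I would show it vanishes by a direct analysis of the free involution on $\widetilde{\mathcal{Z}}_{Q_k}$ produced by the construction, the point being that the lowest-degree cohomology classes extend to the quotient $\widetilde{\mathcal{Z}}_{Q_k}/\mathbb{Z}_2$ --- this uses the explicit connected-sum$/\mathcal{G}$ structure of $\widetilde{M}$ and the description of the involution from Lemma \ref{helplemma}. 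Then the first bullet of Theorem \ref{vanishquantum} yields $QH^*(L_k,\mathbb{Z}_2[T,T^{-1}])=0$ for $k\geq 2$; and since $L_k$ is spin, the third bullet gives $QH^*(L_k,G[T,T^{-1}])=0$ for every ring $G$ in which $2$ is invertible, again for $k\geq 2$. For $k=1$ the only assertion is the existence of the embedding with $N_{L_1}=2$, which is already contained in the above.

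\textbf{Main obstacle.} The crux is the first step together with the transgression-vanishing in the third step, performed compatibly: one needs a single polytope $Q_k$ that simultaneously (a) has $\widetilde{\mathcal{Z}}_{Q_k}$ with the prescribed --- quite intricate --- ungraded cohomology ring, (b) is irredundant with reduced cohomology starting in degree $2m(Q_k)-1$, (c) has $P_k=Q_k^{(2,\ldots,2)}$ Delzant, Fano and balanced, and (d) yields Maslov number exactly $2k$. Realizing a prescribed cohomology ring by a moment-angle manifold is already restrictive, and the connected-sum$/\mathcal{G}$-dictionary must be applied so as not to break the Fano and balancing conditions; controlling the resulting free involution well enough to kill the bottom transgression is the heart of the argument. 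It is presumably this system of constraints in ambient dimension $20k-1$ that forces the precise shape of $K$ --- the block $\mathcal{G}(\mathcal{G}(S^3\times S^3\times S^3))$ and the exact multiplicities $7,12,10$.
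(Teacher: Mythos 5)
Your skeleton is the paper's: start from a product of simplices (in fact the $3$-cube $(\Delta^1)^3$, whose moment-angle manifold is $S^3\times S^3\times S^3$), truncate two opposite vertices so that Theorem \ref{vertexcutoff} produces exactly $K$, apply the multiwedge to reach ambient dimension $20k-1$, and feed the result into Lemma \ref{helplemma} and Theorem \ref{vanishquantum}. But the two steps you defer are exactly where the content lies, and your sketches of them would not go through as written. On the Fano condition: it is not ``stable under the moves used'' for a uniform inflation. The paper must take $Q=P^{(k,\ldots,k,2k,2k)}$, giving the six cube facets weight $2k$ and the two truncation facets weight $4k$ after doubling; only with these \emph{unequal} weights does the column sum of the quadric system become $2k\,\delta$ (Lemma \ref{equivmon}), e.g.\ the equation $|z_1|^2+|z_2|^2+|z_3|^2+|z_7|^2=5$ forces $3\cdot(2k)+j_7=2k\cdot 5$. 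A uniform wedge vector is not Fano, so ``choosing the inflation so that the gcd is $2k$'' has to be replaced by this specific arithmetic, which also yields $N_L=\gcd(4k,10k,2k)=2k$.

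The more serious gap is the transgression. Your plan --- ``the lowest-degree cohomology classes extend to the quotient, by direct analysis of the free involution'' --- is a restatement of the conclusion $\partial_{4k}\bigl(E_{4k}^{0,4k-1}\bigr)=0$, not a proof of it, and there is no general reason for it to hold (Theorem \ref{vanishquantum} is deliberately conditional on it). The paper's argument is multiplicative and uses the specific ring structure of $H^*(\widetilde{\mathcal Z}_Q,\mathbb Z_2)$: the three degree-$(4k-1)$ generators $a_1,a_2,a_3$ coming from the cube satisfy $a_1a_2a_3=0$ (because $v_4\cap v_5\cap v_6=\emptyset$) while $a_ia_j\neq 0$, so $0=\partial_{4k}(a_1a_2a_3)=\sum_i \partial_{4k}(a_i)\,a_ja_m$ forces $\partial_{4k}(a_i)=0$ \emph{provided} the classes $\beta^{4k}a_ia_j$ are still nonzero on the $E_{4k}$-page; that survival in turn is proved by ruling out that $\partial_2(s)$ and $\partial_{2k}(g_\ell)$ hit them, using the auxiliary relations $b_is=0$ and $b_ig_\ell=0$ among the other generators. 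None of this is recoverable from the connected-sum description of $K$ alone or from the fact that the involution is isotopic to the identity; it needs the face-ring presentation of $Q$ and Lemma \ref{interfacetswedge}. Until you supply an argument of this kind, the claim $QH^*(L,\mathbb Z_2[T,T^{-1}])=0$ for $k\geq 2$ is unproved. (The spin statement and the vanishing over rings where $2$ is invertible are fine as you have them.)
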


\vspace{.05in}

Below we construct Lagrangians with nontrivial Massey product. Let us note that existence of nontrivial Massey products reflects a complexity of the topology of a Lagrangian.  This shows that our method provides Lagrangians with rich topology.

\begin{theorem}\label{masseylagr}
Let $k$ be an arbitrary positive integer. There exists a monotone embedded Lagrangian $L \subset \mathbb{C}P^{42k+1}$ with nontrivial triple Massey prosuct. The Lagrangian $L$ is orientable.
\end{theorem}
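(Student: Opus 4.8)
The plan is to realise $L$ as $\mathcal{Z}_Q/\mathbb{Z}_2\times T^{j}$ for a carefully chosen simple polytope $Q$ with $21k+1$ facets, to invoke Theorem~\ref{pollagrther} together with Lemma~\ref{helplemma}, and finally to transport a nontrivial triple Massey product from the complex moment--angle manifold $\mathcal{Z}_Q$ down to $L$. I would work with coefficients in a field in which $2$ is invertible, say $\mathbb{Q}$; this is the range in which the cohomology of real toric spaces is accessible (cf.\ \cite{real2}), and it is the sense in which the stated Massey product will be nontrivial.

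\emph{The seed and the family.} I would start from the fact that there exist simple polytopes $B$ for which $H^{*}(\mathcal{Z}_B;\mathbb{Q})$ carries classes $\alpha,\beta,\gamma$ with $\alpha\beta=\beta\gamma=0$ and $0\notin\langle\alpha,\beta,\gamma\rangle$ (such examples are known, e.g.\ among $2$-truncated cubes, by work of Baskakov and Limonchenko; see also the book of Buchstaber--Panov); one may take $B$ with at most $20$ facets. For each $k\geq 1$ put $Q=Q_k:=B\times\Delta^{21k-f}$, where $f=\#\mathrm{facets}(B)$. Then $Q_k$ is a simple polytope with exactly $21k+1$ facets, it is Delzant and Fano as a product of Delzant, Fano polytopes, and $\mathcal{Z}_{Q_k}=\mathcal{Z}_B\times S^{2(21k-f)+1}$. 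The projection $p\colon\mathcal{Z}_{Q_k}\to\mathcal{Z}_B$ admits a section $s$, so $s^{*}p^{*}=\mathrm{id}$; by naturality of Massey products, if $0$ lay in $\langle p^{*}\alpha,p^{*}\beta,p^{*}\gamma\rangle$ then $0=s^{*}0$ would lie in $\langle\alpha,\beta,\gamma\rangle$, which is false, so $\mathcal{Z}_{Q_k}$ also carries a nontrivial triple Massey product. (If no seed with so few facets is available one uses a product with several simplices; an intrinsically better way to produce the $k$-family is to apply the wedge/$J$-construction to a single seed, which rescales internal degrees and likewise preserves nontrivial Massey products, and is the mechanism behind the arithmetic progressions of dimensions in Theorems~\ref{wide-narrow}--\ref{wide-narrow2}.)

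\emph{Invoking the main construction.} Set $P=P_k:=Q_k^{(2,\ldots,2)}$. The point I expect to be the main obstacle is to verify that $P_k$ can be realised as a Delzant, Fano polytope whose facet normals sum to zero. The balancing $a_1+\dots+a_{42k+2}=0$ is the easy half, since the $2$-fold doubling produces the facet normals in cancelling pairs; the delicate half is the Fano condition, which is handled as in the earlier sections of the paper. Granting it, $P_k$ satisfies the hypotheses of Theorem~\ref{pollagrther} and has $2(21k+1)=42k+2$ facets, so we obtain a monotone embedded Lagrangian $L=L_k\subset\mathbb{C}P^{42k+1}$. Since $P_k=Q_k^{(2,\ldots,2)}$ we have $\widetilde{\mathcal{R}}_{P_k}=\widetilde{\mathcal{Z}}_{Q_k}$, so Lemma~\ref{helplemma} applies: $L_k$ is orientable, the free $\mathbb{Z}_2$-action on $\widetilde{\mathcal{R}}_{P_k}=\mathcal{Z}_{Q_k}$ is isotopic to the identity, and $L_k\cong\mathcal{Z}_{Q_k}/\mathbb{Z}_2\times T^{j}$ with $j=(21k+1)-\dim Q_k-1$.

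\emph{Descending the Massey product.} Because the $\mathbb{Z}_2$-action on $\mathcal{Z}_{Q_k}$ is isotopic to the identity it acts trivially on $H^{*}(\mathcal{Z}_{Q_k};\mathbb{Q})$, and since $2$ is invertible the transfer makes the covering map $\pi\colon\mathcal{Z}_{Q_k}\to\mathcal{Z}_{Q_k}/\mathbb{Z}_2$ induce a ring isomorphism on $\mathbb{Q}$-cohomology. Picking $\bar\alpha,\bar\beta,\bar\gamma$ with $\pi^{*}\bar\alpha=\alpha$ and so on, injectivity of $\pi^{*}$ gives $\bar\alpha\bar\beta=\bar\beta\bar\gamma=0$, and if $0\in\langle\bar\alpha,\bar\beta,\bar\gamma\rangle$ then by naturality $0=\pi^{*}0\in\langle\alpha,\beta,\gamma\rangle$, which is false; hence $\mathcal{Z}_{Q_k}/\mathbb{Z}_2$ carries a nontrivial triple Massey product. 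Finally the projection $L_k=\mathcal{Z}_{Q_k}/\mathbb{Z}_2\times T^{j}\to\mathcal{Z}_{Q_k}/\mathbb{Z}_2$ has a section, and the same argument as for $p$ above transports the nontrivial triple Massey product to $H^{*}(L_k;\mathbb{Q})$. Combined with the orientability furnished by Lemma~\ref{helplemma}, this produces for every $k\geq 1$ the required orientable monotone embedded Lagrangian $L=L_k\subset\mathbb{C}P^{42k+1}$.
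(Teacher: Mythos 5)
Your overall strategy matches the paper's: start from a polytope whose complex moment-angle manifold carries a nontrivial triple Massey product (the paper uses Limonchenko's edge-truncated $3$-cube and an explicit computation in the Koszul-type complex $[R(P),d]$), inflate it by a multiwedge to land in $\mathbb{C}P^{42k+1}$, and transport the Massey product to $L$. Your descent through the $\mathbb{Z}_2$-quotient via the transfer is a legitimate alternative to the paper's device (the paper instead multiplies by an interval $I$, so that $\mathbb{Z}_2$ acts freely on an extra $S^1$ factor and $L$ becomes the honest product $\widetilde{\mathcal{R}}_P^{(j_1,\ldots,j_9)}\times T^7$, with no quotient left to worry about).

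However, there is a genuine gap exactly at the step you label ``the main obstacle'' and then grant yourself: the Fano condition. Your construction takes $Q_k=B\times\Delta^{21k-f}$ and then the \emph{uniform} doubling $P_k=Q_k^{(2,\ldots,2)}$. By Lemma~\ref{equivmon}, Fano means the sum of the columns $\widetilde{\gamma}_j$ is proportional to the right-hand side $\delta$, equivalently that the polytope admits a presentation with all $b_i$ equal. A uniform multiwedge preserves the ratios of the $b_i$, so $P_k$ is Fano only if the seed $B$ itself admits a Fano (Delzant) realization. The known Massey-product seeds do not come in Fano form: in the paper's own example the truncated cube has $b=(2,2,2,2,2,2,3,3,3)$, and the paper explicitly notes the associated Lagrangian is \emph{not} monotone. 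The whole point of the paper's choice of exponents $j_1=\cdots=j_6=4k$, $j_7=j_8=j_9=6k$ is that they are proportional to these $b_i$ (ratio $2:3$), which is what forces the column sum to equal $2k\,\delta$ and makes $P^{(j_1,\ldots,j_9)}$ Fano --- and is also where the otherwise mysterious number $42k=6\cdot 4k+3\cdot 6k$ comes from. Your facet count $21k+1$ is reverse-engineered from the target dimension but carries no Fano information, so monotonicity of $L_k$ is not established. To close the gap you would need to replace the uniform doubling by a non-uniform multiwedge with exponents proportional to the $b_i$ of the seed (and then invoke Theorem~\ref{limonch}, or an explicit computation as in the paper, to see that the Massey product survives the multiwedge).

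One smaller caveat: your claim that a Delzant product of Delzant polytopes is Fano ``as a product of Fano polytopes'' presupposes both factors are realized with the same constant $c$; this is harmless for the simplex factor, which can be rescaled freely, but it again reduces to the unverified Fano-ness of $B$.
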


Massey products of monotone Lagrangians and their $A_{\infty}$ algebras will be studied systematically  in a further paper. Moreover, the example above can be the first step to study the difference between the quantum Massey product and the classical one.

Let $X$ be a manifold. By Sullivan's well-known result \cite{sullivan2}, the existence of a non-trivial Massey product in the singular cohomology ring $H^{*}(X, \mathbb{Q})$ is an obstruction to the rational formality of $X$. Many examples of formal manifolds are known: spheres, projective spaces, compact Lie groups, homogeneous spaces, and all compact Kahler manifolds (see \cite{sullivan}). In particular, if $X$ is a symplectic manifold and there is non-trivial Massey product in $H^{*}(X, \mathbb{Q})$, then $X$ is not Kahler. This provides an effective method for constructing simply connected symplectic non-Kahler manifolds (see \cite{Taimanov1, Taimanov2}).

It is not clear if a monotone Lagrangian of a formal symplectic manifold can be non-formal. The theorem above says that there exists non-formal monotone Lagrangians of $\mathbb{C}P^n$.

\vspace{.06in}

Another question to study is the fundamental group of Lagrangians. In this paper we show that fundamental groups of monotone Lagrangians in $\mathbb{C}P^n$ can be complicated.

\begin{theorem}\label{fundgroup}
Let $S_{17}$ be an orientable surface of genus $17$. Suppose that $L$ is a total space of a nonorientable fibration (the action of $\mathbb{Z}_2$ is defined in the proof))
\begin{equation*}
\begin{gathered}
L \xrightarrow{S_{17}/\mathbb{Z}_2} T^3,
\end{gathered}
\end{equation*}
The fundamental group of $L$ satisfies the following exact sequence:
\begin{equation*}
1 \rightarrow \pi_1(S_{17}/\mathbb{Z}_2) \rightarrow \pi_1(L) \rightarrow \pi_1(T^3) \rightarrow 1.
\end{equation*}
There exists a monotone Lagrangian embedding of $L$ into $\mathbb{C}P^5$.
\end{theorem}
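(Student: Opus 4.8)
\emph{Proof proposal.} The plan is to produce $L$ by applying Theorem~\ref{pollagrther} to a carefully chosen polygon, to identify the fibre of the resulting bundle over a torus, and then to obtain $\pi_1(L)$ from the homotopy exact sequence of a fibre bundle. To land in $\mathbb{C}P^{5}$ fibred over $T^{3}$, Theorem~\ref{pollagrther} forces $n-1=5$ and $n-m-1=3$, that is $n=6$ and $m=2$, so $P$ must be a hexagon. I take $P\subset\mathbb{R}^{2}$ to be the reflexive hexagon with primitive facet normals
\begin{equation*}
a_1=(1,0),\quad a_2=(0,1),\quad a_3=(-1,1),\quad a_4=(-1,0),\quad a_5=(0,-1),\quad a_6=(1,-1),
\end{equation*}
i.e.\ the moment polytope of the degree-$6$ del Pezzo surface. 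A direct check shows that every pair of adjacent normals is a $\mathbb{Z}$-basis of $\mathbb{Z}^{2}$ (so $P$ is Delzant), that each facet lies at lattice distance $1$ from the origin (so $P$ is Fano, in the sense of \cite{Og,Og2}), and that $a_1+\dots+a_6=0$. Thus $P$ satisfies the hypotheses of Theorem~\ref{pollagrther}, which yields a monotone embedded Lagrangian $L\subset\mathbb{C}P^{5}$ fibred over $T^{3}$ with fibre $\widetilde{\mathcal{R}}_P/\mathbb{Z}_2$.

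Next I would identify the fibre. Since $\widetilde{\mathcal{R}}_P$ depends only on the combinatorial type of $P$, and a $k$-face of $P$ lying on $c$ facets contributes $2^{\,n-c}$ cells to the canonical cell structure of $\widetilde{\mathcal{R}}_P$, the hexagon ($6$ vertices, each on two facets; $6$ edges, each on one facet; one $2$-face, on no facet) gives
\begin{equation*}
\chi(\widetilde{\mathcal{R}}_P)=6\cdot2^{4}-6\cdot2^{5}+2^{6}=96-192+64=-32 .
\end{equation*}
The manifold $\widetilde{\mathcal{R}}_P$ is a closed orientable surface (real moment-angle manifolds over polygons are orientable surfaces; see, e.g., \cite{Lopez}), so $\widetilde{\mathcal{R}}_P=S_{17}$, the surface of genus $17=1+(6-4)2^{3}$, and the fibre of $L\to T^{3}$ is $S_{17}/\mathbb{Z}_2$, a closed surface with $\chi=-16$. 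Note that $S_{17}$ is not a complex moment-angle manifold $\widetilde{\mathcal{Z}}_Q$ (these have dimension $\dim Q+\#\{\text{facets of }Q\}\geqslant 3$), so Lemma~\ref{helplemma} does not apply here and the $\mathbb{Z}_2$-action is genuinely nontrivial.

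I would then make the $\mathbb{Z}_2$-action and the monodromy explicit. Unwinding the construction of Theorem~\ref{pollagrther}, one has $L=(\widetilde{\mathcal{R}}_P\times T^{3})/(\mathbb{Z}_2\times\mathbb{Z}_2^{3})$, where the first factor $\mathbb{Z}_2$ acts by $(y,t)\mapsto(-y,t)$ --- a free action, since $\widetilde{\mathcal{R}}_P\subset S^{5}$ and $y=-y$ has no solution there --- and $\mathbb{Z}_2^{3}$ acts by simultaneous coordinate sign changes on $\widetilde{\mathcal{R}}_P$ and half-period translations on $T^{3}$. Quotienting first by $\mathbb{Z}_2$ and then by $\mathbb{Z}_2^{3}$ exhibits $L$ as a fibre bundle over $T^{3}/\mathbb{Z}_2^{3}=T^{3}$ with fibre $S_{17}/\mathbb{Z}_2$, whose monodromy $\pi_1(T^{3})\to\pi_0\,\mathrm{Diff}(S_{17}/\mathbb{Z}_2)$ is induced by the residual $\mathbb{Z}_2^{3}$-action; I would compute this action and verify that $L$ is a nonorientable manifold (equivalently $w_1(L)\neq0$), which is what ``nonorientable fibration'' refers to. I expect this to be the main obstacle: it requires writing out the $\mathbb{Z}_2^{6}$-action of the construction completely explicitly and tracking its effect on the orientation of the surface fibre --- in particular, deciding whether $S_{17}/\mathbb{Z}_2$ is the orientable surface of genus $9$ or the non-orientable surface with $18$ crosscaps, and whether some generator of $\mathbb{Z}_2^{3}$ acts orientation-reversingly.

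Finally, $L\to T^{3}$ is a smooth fibre bundle with connected fibre $F=S_{17}/\mathbb{Z}_2$, so the homotopy exact sequence gives
\begin{equation*}
\pi_2(T^{3})\longrightarrow\pi_1(F)\longrightarrow\pi_1(L)\longrightarrow\pi_1(T^{3})\longrightarrow\pi_0(F).
\end{equation*}
Since $\pi_2(T^{3})=0$ and $F$ is connected, this collapses to the asserted short exact sequence
\begin{equation*}
1\longrightarrow\pi_1\bigl(S_{17}/\mathbb{Z}_2\bigr)\longrightarrow\pi_1(L)\longrightarrow\pi_1(T^{3})\longrightarrow1 .
\end{equation*}
As $\pi_1(S_{17}/\mathbb{Z}_2)$ is a closed-surface group, $\pi_1(L)$ is an extension of $\mathbb{Z}^{3}$ by a surface group, which is ``complicated'' in the sense advertised in the introduction.
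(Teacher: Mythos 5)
Your proposal is correct and follows essentially the same route as the paper: the paper uses the hexagon with normals $(1,0),(0,1),(-1,0),(0,-1),(1,1),(-1,-1)$ (which is your del Pezzo hexagon up to a unimodular change of coordinates), identifies $\widetilde{\mathcal{R}}_P$ as $S_{17}$ via the known genus formula $1+(m-4)2^{m-3}$ for an $m$-gon rather than by your Euler-characteristic count, and then reads off the short exact sequence from the homotopy sequence of the fibration $L\to T^3$ exactly as you do. The orientability/monodromy analysis you flag as the main remaining work is in fact not carried out in the paper either, so your proof is, if anything, slightly more scrupulous on that point.
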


In fact, the fundamental group of the constructed Lagrangians are related to the right-angled Artin groups (see \cite{verevkin}).

\vspace{0.04in}

The theorem below shows that monotone Lagrangians of $\mathbb{C}P^n$ can have an arbitrary torsion. Since the proof of the theorem below is technically complicated and long, we omit the proof.

\begin{theorem}\label{torsion}
For each odd $q \in \mathbb N$, and arbitrary  $N \in \mathbb N$ there are numbers $k = k(q,N)$, $n=n(q,N)$ such that the following is satisfied:
\item[-]  There exist embedded monotone Lagrangians  $L \subset \mathbb{C}P^n$ with the property $H^k(L, \mathbb{Z}) = \mathbb{Z}_q \oplus \ldots$;
\item[-] The minimal Maslov number $N_L$ is even and $N_L \geqslant 2N$;
\item[-] The Lagrangians $L$ are spin;
\item[-] The Lagrangian quantum cohomology of some (not all)  Lagrangians vanishes, i.e. $QH^{*}(L, \mathbb{Z}_2[T, T^{-1}]) = 0$.

\end{theorem}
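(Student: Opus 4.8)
Since the statement only asserts existence, the plan is to reduce everything to a combinatorial realization problem and then run it through the machinery of Theorem~\ref{pollagrther} and Lemma~\ref{helplemma}. First I would fix a finite simplicial complex $W$ whose reduced integral cohomology $\widetilde H^{*}(W;\mathbb Z)$ contains a summand $\mathbb Z_q$ in some degree $d$ --- for instance a triangulated Moore space $M(\mathbb Z_q,d)$, or the appropriate skeleton of a triangulated lens space --- and try to realize $W$ as a \emph{full} subcomplex of the dual boundary complex $K=\partial Q^{*}$ of a simple polytope $Q$. Granting this, Hochster's formula
\[
\widetilde H^{\,i-1}\!\big(\widetilde{\mathcal Z}_Q\big)\;\cong\;\bigoplus_{J}\widetilde H^{\,i-|J|-1}\!\big(K_J\big)
\]
shows that $H^{*}(\widetilde{\mathcal Z}_Q;\mathbb Z)$ acquires a $\mathbb Z_q$ summand in a degree determined by $d$ and the vertex set $J$ of $W$. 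Then, choosing $Q$ so that in addition the doubled polytope $P=Q^{(2,\dots,2)}$ is Delzant, Fano and has $a_1+\dots+a_n=0$, Theorem~\ref{pollagrther} together with Lemma~\ref{helplemma} produces a monotone embedded Lagrangian $L=\widetilde{\mathcal Z}_Q/\mathbb Z_2\times T^{\,n-m-1}\subset\mathbb C P^{\,n-1}$ which is orientable, and spin as soon as $\#\{\text{facets of }Q\}=n/2$ is even.

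Next I would transport the torsion down to $L$. Because the $\mathbb Z_2$-action on $\widetilde{\mathcal Z}_Q$ is isotopic to the identity (Lemma~\ref{helplemma}), it acts trivially on $H^{*}(\widetilde{\mathcal Z}_Q;\mathbb Z)$, so in the (integral) Cartan--Leray spectral sequence $E_2^{p,r}=H^{p}\big(\mathbb Z_2;H^{r}(\widetilde{\mathcal Z}_Q;\mathbb Z)\big)$ the $q$-primary part is concentrated on the line $p=0$ (group cohomology of $\mathbb Z_2$ kills $q$-torsion in positive degrees, $q$ being odd). Hence the $\mathbb Z_q$ summand of $H^{*}(\widetilde{\mathcal Z}_Q;\mathbb Z)$ descends isomorphically to $H^{*}(\widetilde{\mathcal Z}_Q/\mathbb Z_2;\mathbb Z)$, and the Künneth theorem with $T^{\,n-m-1}$ then deposits a $\mathbb Z_q$ summand in $H^{k}(L;\mathbb Z)$ for an explicit $k$, giving the first bullet.

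For the Maslov number I would use the formula for $N_L$ coming out of the construction in Section~\ref{mainconstructionmain} (in the sample computations it appears as a greatest common divisor of the facet-counts of the elementary blocks assembling $P$, e.g.\ $N_L=\gcd(p,n)$ in Theorem~\ref{twospheretheorem}). To force $N_L$ even and $N_L\ge 2N$ while leaving the torsion-carrying full subcomplex $W$ untouched, I would enlarge $Q$ by iterated simplicial wedges and by taking products with simplices $\Delta^{2N-1}$ --- operations under which $\widetilde{\mathcal Z}_Q$ changes in a controlled, torsion-preserving way and which scale the relevant gcd --- keeping meanwhile the number of facets even for the spin conclusion. Finally, to obtain $QH^{*}(L,\mathbb Z_2[T,T^{-1}])=0$ for \emph{some} (not all) members of the family, I would single out one $Q$ with $N_L>2$ for which the Oh/Cartan--Leray differential $\partial_{2m(Q)}$ on $E_{2m(Q)}^{0,2m(Q)-1}$ vanishes for degree reasons --- for example by arranging $H^{2m(Q)-1}(\widetilde{\mathcal Z}_Q;\mathbb Z)=0$, which is itself controlled by the last bullet of Theorem~\ref{vanishquantum} --- and then invoke Theorem~\ref{vanishquantum}; its third bullet additionally gives $QH^{*}(L,G[T,T^{-1}])=0$ for any ring $G$ in which $2$ is invertible, because $L$ is spin.

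The main obstacle is the joint realizability in the first step: one must exhibit a \emph{simple, Delzant, Fano} polytope $Q$ whose dual boundary complex contains the chosen $\mathbb Z_q$-complex $W$ as a full subcomplex, whose doubling $P=Q^{(2,\dots,2)}$ is again Delzant and Fano \emph{and} satisfies the balancing relation $a_1+\dots+a_n=0$, while simultaneously carrying a prescribed (even, large) minimal Maslov number and an even facet count. Fano-ness is rigid --- it pins the normal fan down very tightly --- and the balancing condition typically forces a symmetrization (replacing $Q$ by a suitable symmetric free sum or product so that the normals of $P$ cancel in pairs), which tends to collide with keeping $W$ full. The technical heart of the omitted argument is therefore a sufficiently flexible library of building blocks (products of simplices, cyclic polytopes, vertex truncations, simplicial wedges) together with gluing lemmas certifying that fullness of $W$, the Fano/Delzant property, the balancing relation, and the numerical invariants can all be preserved at once.
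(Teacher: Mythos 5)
The paper does not actually prove Theorem \ref{torsion}; it only records a three-step outline (triangulate a lens space, build a Delzant polytope whose moment-angle manifold carries the $\mathbb{Z}_q$-torsion following Bosio--Meersseman, then apply a multiwedge operation to make the polytope Fano), and your proposal follows exactly this outline, filling in the standard supporting details (Hochster's formula for the torsion, the odd-torsion Cartan--Leray argument for descending to $\widetilde{\mathcal{Z}}_Q/\mathbb{Z}_2$, multiwedge scaling for $N_L$, and Theorem \ref{vanishquantum} for the narrowness claim). Your approach is essentially the same as the paper's intended one, and the technical realizability issues you flag at the end are precisely why the author calls the omitted proof "technically complicated and long."
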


The proof of Theorem $\ref{torsion}$ is based on the paper \cite[Section ~11]{Bossio}. The main steps of the proof are the following:
\\
- Consider some triangulation of a Lens space;
\\
- Construct a Delzant polytope $P$ from the triangulation and show that the corresponding moment angle manifold has torsion;
\\
- Apply an appropriate multiwedge operation (see Section $\ref{wedge})$ to make $P$ Fano.

\vspace{.06in}

In the end of this section, let us show the following example:

\begin{theorem}\label{threespheretheo}
Let $k$ be an arbitrary non-negative integer and let $p_1, p_2, n$ be integers greater than $2$. Assume that
\begin{equation*}
k < p_1, \quad \; p_2 - p_1+k > p_1, \quad \; n-p_2-k > p_1.
\end{equation*}
Suppose that $L$ is a total space of a fibration
\begin{equation*}
\begin{gathered}
L \xrightarrow{(S^{p_1-1} \times S^{p_2 - p_1 - 1} \times S^{n-p_2-1})/\mathbb{Z}_2} T^2, \\
(x,y,z) \sim (-x-y,-z), \;\;\; (x,y,z) \in S^{p_1-1} \times S^{p_2 - p_1 - 1} \times S^{n-p_2-1}.
\end{gathered}
\end{equation*}
Then
\item[-] There exists monotone Lagrangian embedding of $L$ into $\mathbb{C}P^{n-1}$ with minimal Maslov number $gcd(p_1, p_2-p_1+k, n - p_2 - k)$, where $gcd$ stands for the greatest common divisor;
\item[-] If $k,p_1,p_2,n$ are even numbers, then the fibration is trivial;
\item[-] Note that the topology of $L$ is independent of $k$, but the minimal Maslov number depends on $k$;
\end{theorem}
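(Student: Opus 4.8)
The combinatorial type of $P$ is essentially forced by the prescribed fibre: to obtain $\widetilde{\mathcal R}_P = S^{p_1-1}\times S^{p_2-p_1-1}\times S^{n-p_2-1}$ one needs $P$ to be combinatorially the product of simplices $\Delta^{p_1-1}\times\Delta^{p_2-p_1-1}\times\Delta^{n-p_2-1}$, which has $p_1+(p_2-p_1)+(n-p_2)=n$ facets and dimension $m=n-3$, so that $n-m-1=2$. Hence, once I produce \emph{some} Delzant Fano realization of this type with $a_1+\dots+a_n=0$, Theorem \ref{pollagrther} automatically delivers a monotone embedded $L\subset\mathbb C P^{n-1}$ fibred over $T^{n-m-1}=T^2$ with fibre $\widetilde{\mathcal R}_P/\mathbb Z_2$, and an inspection of the $\mathbb Z_2$-action (as in Lemma \ref{helplemma}) identifies it with the diagonal antipodal one $(x,y,z)\sim(-x,-y,-z)$. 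The real work is therefore (i) building the admissible polytopes, and (ii) extracting $N_L$ and the bundle data from them.

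For (i) the plan is to realize $P$ as the moment polytope of a Fano generalized Bott manifold, i.e. a tower of projectivized split vector bundles based on $\mathbb C P^{p_1-1}$, with the two higher stages twisted by line bundles whose degrees are controlled by the integer $k$. Concretely one writes the $n$ normals in three blocks of sizes $p_1,\ p_2-p_1,\ n-p_2$: the first block is the standard simplex $e_1,\dots,e_{p_1-1},-(e_1+\dots+e_{p_1-1})$, and the distinguished normals of the second and third blocks are sheared by suitable multiples of the first-block vectors so that the relation lattice $\Lambda=\ker A\cap\mathbb Z^n$ (with $A(e_i)=a_i$) is spanned by nonnegative integer vectors — among them $(1,\dots,1)$, as always when $\sum a_i=0$ — whose coordinate sums have greatest common divisor $\gcd(p_1,\ p_2-p_1+k,\ n-p_2-k)$. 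One then checks $\sum a_i=0$ (by construction), the Delzant condition at each vertex, and — this is exactly where the inequalities $k<p_1$, $p_2-p_1+k>p_1$, $n-p_2-k>p_1$ enter — reflexivity, i.e. the Fano condition for the twisted tower. Verifying that these three inequalities are precisely the Delzant$+$Fano admissibility conditions is the first technical step.

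Granting such a $P$, Theorem \ref{pollagrther} produces $L$, and for the Maslov number I would invoke the explicit Maslov-class computation of \cite{Og, Og2}: the minimal Maslov number of the Lagrangian attached to $P$ equals the gcd of the values of the coordinate-sum functional $\lambda\mapsto\langle\lambda,(1,\dots,1)\rangle$ on $\Lambda$, equivalently the gcd of the coordinate sums of a basis of $\Lambda$. (This reproduces $N_{\mathbb R P^{n-1}}=n$ for $P=\Delta^{n-1}$, and $\gcd(p,n)$ for the two-simplex polytope of Theorem \ref{twospheretheorem}.) With the spanning set from step (i) this yields $N_L=\gcd(p_1,\ p_2-p_1+k,\ n-p_2-k)$; the presence of $(1,\dots,1)$, of coordinate sum $n=p_1+(p_2-p_1+k)+(n-p_2-k)$, does not disturb the gcd.

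It remains to handle the bundle $L\to T^2$. Its smooth isomorphism type is governed by the monodromy, a pair of commuting isotopy classes of self-diffeomorphisms of $\widetilde{\mathcal R}_P/\mathbb Z_2$, which in this construction is recorded by a $\mathbb Z_2$-valued representation of $\pi_1(T^2)=\mathbb Z^2$ reading off the parities of the coordinate sums of the two ``$T^2$-directions'' of $\Lambda$ — the same mechanism as the ``isotopic to the identity'' clause of Lemma \ref{helplemma} and as Theorem \ref{twospheretheorem}. When $k,p_1,p_2,n$ are all even these parities vanish, the monodromy is trivial, and $L = T^2\times(S^{p_1-1}\times S^{p_2-p_1-1}\times S^{n-p_2-1})/\mathbb Z_2$. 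For the $k$-independence of the diffeomorphism type one observes that passing from $k$ to $k+1$ alters the spanning set of $\Lambda$ only by an automorphism of $\Lambda/\langle(1,\dots,1)\rangle\cong\pi_1(T^2)$ realizable by a diffeomorphism of the base $T^2$ (a shear among the $T^2$-directions), while leaving $\widetilde{\mathcal R}_P$ and the antipodal $\mathbb Z_2$-action untouched; hence the total space is unchanged even though $\Lambda$, and with it $N_L$, moves. I expect this last point, together with the Delzant$+$Fano verification in (i), to be the main obstacle: one must control the construction of Section \ref{mainconstructionmain} finely enough to see both that the bundle data depends on $k$ only up to base diffeomorphism and that the twisted polytopes are simultaneously Delzant and Fano exactly under the stated inequalities.
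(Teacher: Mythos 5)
Your proposal follows the same route as the paper, but you describe the construction in the Gale-dual (normal fan / Bott tower) language and leave the key object as a plan, whereas the paper simply writes it down: $\widetilde{\mathcal{R}}$ is the intersection of the three quadrics
\begin{equation*}
\sum_{r=1}^{k}u_r^2+\sum_{r=k+1}^{p_1}u_r^2=p_1,\qquad
\sum_{r=1}^{k}u_r^2+\sum_{r=p_1+1}^{p_2}u_r^2=p_2-p_1+k,\qquad
-\sum_{r=1}^{k}u_r^2+\sum_{r=p_2+1}^{n}u_r^2=n-p_2-k,
\end{equation*}
i.e.\ one shares the first $k$ squared variables among the three sphere equations; this is exactly the "shearing of the distinguished normals by the first block" you describe, seen on the quadric side. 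Two points where your plan needs adjustment. First, the inequalities $p_2-p_1+k>p_1$ and $n-p_2-k>p_1$ are not the Fano condition: Fano is automatic here because the column sums of the system equal the right-hand side $\delta=(p_1,\,p_2-p_1+k,\,n-p_2-k)$ with $C=1$ (Lemma \ref{equivmon}); the inequalities are what make the radicands $p_2-p_1+k-\sum_{r\le k}u_r^2$ and $n-p_2-k+\sum_{r\le k}u_r^2$ positive, so that $\widetilde{\mathcal{R}}$ is a smooth nonempty manifold explicitly diffeomorphic to $S^{p_1-1}\times S^{p_2-p_1-1}\times S^{n-p_2-1}$ (equivalently, the presentation of $P$ is generic and bounded). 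Second, your Maslov computation via the gcd of coordinate sums on the relation lattice is exactly Lemma \ref{simplemimasnumber} combined with Theorem \ref{minmaslovtheorem}, and it does give $\gcd(p_1,\,p_2-p_1+k,\,n-p_2-k)$; the triviality of the fibration for even parameters is Lemma \ref{trivialfibration}, as you say. Your argument for the $k$-independence of the topology of $L$ (a shear automorphism of the base $T^2$) is more than the paper offers -- the paper only records that $\widetilde{\mathcal{R}}$ is diffeomorphic to the same sphere product for every admissible $k$ -- but as written it is a sketch, not a proof; in particular the monodromy parities do change with $k$, so one would have to check that the resulting bundles are nevertheless isomorphic, which neither you nor the paper actually does.
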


Let us propose the following conjecture:

\begin{conjecture}
Let $L$ be the Lagrangian defined in Theorem $\ref{threespheretheo}$. Assume that $k=1, p_1 =4, p_2=11, n=20$. Then,  $QH^{*}(L, \mathbb{Z}_2[T, T^{-1}])$ is not equal to zero and is not isomorphic to $H^{*}(L, \mathbb{Z}_2[T, T^{-1}])$. In other words, $L$ is not wide and is not narrow.
\end{conjecture}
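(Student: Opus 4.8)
The plan is to compute $QH^{*}(L,\mathbb{Z}_2[T,T^{-1}])$ through Oh's spectral sequence, so first I unwind the data of Theorem~\ref{threespheretheo} for these parameters. Here $p_1-1=3$, $p_2-p_1-1=6$, $n-p_2-1=8$, so the fiber is $F=(S^3\times S^6\times S^8)/\mathbb{Z}_2$, the base is $T^2$, and $N_L=\gcd(4,8,8)=4$. The vanishing mechanism of Theorem~\ref{vanishquantum} is unavailable: $\widetilde{\mathcal{R}}_P=S^3\times S^6\times S^8$ has even sphere factors, hence is not a complex moment-angle manifold $\widetilde{\mathcal{Z}}_Q$, so no irredundant $Q$ exists. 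I will therefore work entirely inside the spectral sequence $\{E_r,\delta_r\}$ with $E_1=H^{*}(L,\mathbb{Z}_2)\otimes\mathbb{Z}_2[T,T^{-1}]$, where $\delta_1$ counts Maslov-$4$ discs and lowers cohomological degree by $3$, and in general $\delta_r$ counts Maslov-$4r$ discs and lowers degree by $4r-1$.

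The first step is the ring $H^{*}(L,\mathbb{Z}_2)$. Projection to the first sphere gives $S^6\times S^8\to F\to\mathbb{R}P^3$; since $H^{>3}(\mathbb{R}P^3,\mathbb{Z}_2)=0$ the Serre spectral sequence degenerates and $H^{*}(F,\mathbb{Z}_2)=\mathbb{Z}_2[w]/(w^4)\otimes\Lambda[\beta_6,\gamma_8]$ additively. Next I run the Serre spectral sequence of $F\to L\to T^2$: the monodromy acts by sphere antipodes, hence trivially on $\mathbb{Z}_2$-cohomology, so the only differential is the transgression $d_2$, and since $k=1$ is odd (the bundle is nontrivial by Theorem~\ref{threespheretheo}) one finds that the twist is carried by $d_2(w)=e_1e_2$, giving, under this nontrivial twist, $\dim_{\mathbb{Z}_2}H^{*}(L)=48$ with a palindromic Poincar\'e polynomial. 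The decisive point is that the Betti numbers are balanced modulo $4$, $\sum_{i\equiv 0}b_i=\sum_{i\equiv 2}b_i$ and $\sum_{i\equiv 1}b_i=\sum_{i\equiv 3}b_i$ (each equal to $12$); in fact this balance already holds on the $E_2$-page and is preserved by $d_2$, so it is insensitive to whether the transgression vanishes. Consequently the $2$-periodicity that wideness would impose on $H^{*}(L,\mathbb{Z}_2[T,T^{-1}])$ is not numerically obstructed, and the whole statement reduces to a single rank computation: writing $\rho=\sum_r\mathrm{rk}\,\delta_r$, the spectral sequence cancels classes in pairs, so the reduced dimension of $QH^{*}(L)$ equals $48-2\rho$; thus $L$ is wide iff $\rho=0$ and narrow iff $\rho=24$, and I must prove $0<\rho<24$.

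The arithmetic of the chosen parameters is engineered to separate the differentials: the second and third blocks contribute at Maslov index $p_2-p_1+k=n-p_2-k=8=2N_L$, that is only through $\delta_2$, while $\delta_1$ is governed purely by the first block $S^3$. I would compute $\delta_1$ by matching the Maslov-$4$ discs with the standard discs on the $\mathbb{R}P^3\subset\mathbb{C}P^3$ factor, twisted by the $T^2$-monodromy, and show that the twist makes $\delta_1$ nonzero on the $w$-classes; this gives $\rho>0$ and hence \emph{not wide}, in contrast with the wide case $n=2p$ of Theorem~\ref{twospheretheorem}, where extra ring relations force $\delta_1=0$. For \emph{not narrow} I would bound $\delta_2$ and the higher differentials by their degree shifts together with Poincar\'e duality and exhibit a class in $\bigcap_r\ker\delta_r$ that is not a boundary, so that $\rho<24$; as an independent check I would compute Damian's lifted Floer homology on the $\mathbb{Z}_2$-cover that unwraps the fiber, where $F$ is replaced by $S^3\times S^6\times S^8$ and the disc count simplifies.

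The main obstacle is exactly this rank computation: determining which $\delta_r$ is the first nonzero differential and its rank, together with the surviving $E_\infty$. Because the three sphere blocks are entangled through the diagonal $\mathbb{Z}_2$ and the nontrivial $T^2$-monodromy, the relevant moduli do not come from a single toric polytope, and one must either analyze them via a toric degeneration and the disc potential adapted to this fibered real-toric setting, or exploit the large group $\mathrm{Symp}(\mathbb{C}P^{19},L)$ together with the methods of Smith~\cite{jacksmith1} to pin down $\delta_1$ and $\delta_2$. I expect the balanced Betti numbers to force $E_\infty\neq 0$ precisely when $\delta_1$ has less than maximal rank, which is the delicate quantitative point on which the conjecture turns.
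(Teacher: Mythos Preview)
The statement you are addressing is a \emph{conjecture} in the paper, not a theorem; the paper offers no proof and poses it as a candidate counterexample to the Biran--Cornea wide-or-narrow dichotomy. There is therefore no paper argument to compare your proposal against.

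Your proposal is an outline, not a proof, and you say as much: the decisive step---showing $0<\rho<24$ for the total rank $\rho$ of the differentials in Oh's spectral sequence---is left at the level of ``I would compute'' and ``I expect,'' with the main obstacle explicitly acknowledged. A few subsidiary claims are also unjustified: Theorem~\ref{threespheretheo} only asserts that the $T^2$-fibration is trivial when $k,p_1,p_2,n$ are all even, not that it is nontrivial otherwise, so your monodromy claim and the transgression $d_2(w)=e_1e_2$ require independent argument; the resulting Betti-number count (total dimension $48$) then depends on that unverified differential. Your identification of the parameters, the minimal Maslov number $N_L=4$, and the degree pattern of the disc contributions is correct, and the idea of separating $\delta_1$ (governed by the $\mathbb{R}P^3$ block at Maslov $4$) from $\delta_2$ (the two blocks at Maslov $8$) is sensible. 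But as the paper itself signals by stating this as a conjecture, turning that strategy into an actual rank computation of the pearl differentials is the hard part, and nothing in your write-up accomplishes it.
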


\vspace{.15in}

\noindent \textbf{Acknowledgments.}
The author thanks Octav Cornea, Egor Shelukhin, Semyon Abramyan, Artem Kotelskiy, Ivan Limonchenko for many helpful discussions.

This work was supported by the Russian Science Foundation (grant 21-41-00018).

\section{Real toric topology}

In this section we give only basic definitions. For more details we refer the reader to the book \cite[Section 1, Section 6]{torictop}.

\subsection{Polytopes and intersection of real quadrics}\label{mainconstructionpol}

Let $P$ be a convex and bounded polytope. Consider a system of $n$ linear inequalities defining $P$ in $\mathbb{R}^m$
\begin{equation}\label{polytope}
P_{A,b}=\{x\in \mathbb{R}^m: <a_i,x>+b_i \geqslant  0 \quad \;\;\; \quad i=1,...,n  \},
\end{equation}
where $<\cdot,\cdot>$ is the standard Euclidian product on $\mathbb{R}^m$, $a_i \in \mathbb{R}^m$, and $b_i \in \mathbb{R}$. Denote $b=(b_1,...,b_n)^T$, $x=(x_1,...,x_m)^T$, and by $A$ the $m\times n$ matrix whose columns are the vectors $a_i$, where $T$ stands for transposition. Then, our polytope can be written in the following form:
\begin{equation*}
P_{A,b}=\{x\in \mathbb{R}^m: (A^Tx + b)_i \geqslant 0 \}.
\end{equation*}

Faces of the maximal dimension are called \emph{facets} and faces of dimension zero are called \emph{vertices}. If the polytope has dimension $m$, then facets have dimension $m-1$.

\begin{definition}
The polytope $P$ is called \emph{generic} if for any vertex  $x\in P$ the normal vectors $a_i$ of the hyperplanes containing $x$ are linearly independent. The polytope is called \emph{simplicial} if all faces of the polytope are simplices.
\end{definition}

In this paper we assume that all our polytopes are generic, bounded, and contain at least one vertex.
\\

We define
\begin{equation}\label{polytope2}
\begin{gathered}
i_{A,b} : \mathbb{R}^m \rightarrow \mathbb{R}^n, \\
i_{A,b}(x)=A^Tx + b = (<a_1, x> + b_1,...,<a_n, x> + b_n)^T.
\end{gathered}
\end{equation}

Let $\Gamma $ be $(n-m)\times n$-matrix whose rows form a basis of linear relations between the vectors $a_i$. The set of columns $\gamma_1,...,\gamma_n$ of $\Gamma$ is called a \emph{Gale dual} configuration of $a_1,...,a_n$. Each of the matrices $A$ and $\Gamma$ determines the other uniquely up to multiplication by an invertible matrix from the left. We have

\begin{equation}\label{polsys}
\begin{gathered}
i_{A,b}(\mathbb{R}^m)= \{u \in \mathbb{R}^n : \Gamma u = \Gamma b \},\\
\Gamma A^T = 0, \quad u=(u_1,...,u_n)^T, \\
i_{A,b}(P) = i_{A,b}(\mathbb{R}^m)\cap \mathbb{R}^n_{+}
\end{gathered}
\end{equation}

Assume that we have intersection of $(n-m)$ quadrics
\begin{equation}\label{quadrics}
\mathcal{R} = \{u\in \mathbb{R}^n : \gamma_{1,i}u_1^2 + ... + \gamma_{n,i}u_n^2 = \delta_i, \quad i=1,...,n-m, \}.
\end{equation}
The coefficients of the quadrics define $(n-m)\times n$ matrix $\Gamma=(\gamma_{j,i})$. The group $\mathbb{Z}_{2}^n$ acts on $\mathcal{R}$ by
\begin{equation}\label{actionofgr}
\varepsilon\cdot (u_1,...,u_n) = ((-1)^{\varepsilon_1} u_1,...,(-1)^{\varepsilon_n} u_n), \;\; \varepsilon=(\varepsilon_1,...,\varepsilon_n) \in \mathbb{Z}_2^n.
\end{equation}
The quotient $\mathcal{R}/\mathbb{Z}_{2}^n$ can be identified with the set of nonnegative solutions of the system
\begin{equation}\label{nonnegsol}
\left\{
 \begin{array}{l}
\gamma_{1,1}u_1 + ... + \gamma_{n,1}u_n = \delta_1\\
\ldots \quad \quad \\
\gamma_{1,(n-m)}u_1 + ... + \gamma_{n,(n-m)}u_n = \delta_{n-m}
 \end{array}
\right.
\end{equation}
We obtain the same system as in (\ref{polsys}). Solving the homogeneous version of $(\ref{nonnegsol})$ we get the matrix $A$. So, rows of the system above form a basis of linear relations between the vectors $a_i$. Then, we construct a polytope (\ref{polytope}), where $b=(b_1,...,b_n)$ is an arbitrary solution of $(\ref{nonnegsol})$. As a result, we get a polytope $P$ constructed by $A$ and $b$.

Assume we have a polytope $P$. Let us replace $u_i$ by $u_i^2$ in (\ref{polsys}). We get $(n-m)$ quadrics as in $(\ref{quadrics})$. We see that
\begin{equation}\label{rightsidepol}
\Gamma b = \delta.
\end{equation}

So, a polytope defines a system of quadrics and a system of quadrics defines a polytope. We denote by $\widetilde{\mathcal{R}}_P$ the subset of $\mathbb{R}^n$ defined by system $(\ref{quadrics})$. In other words, $\widetilde{\mathcal{R}}_P$ is defined by the system of quadrics associated to the polytope $P$.

\begin{definition}
The manifold $\widetilde{\mathcal{R}}_{P}$ is called the \emph{real moment-angle manifold} associated to $P$.
\end{definition}

\textbf{Example.} Let $P$ be a $5-$gon defined by inequalities
\begin{equation*}
\left\{
 \begin{array}{l}
x_1 + 1 \geqslant 0, \;\; x_2 + 1 \geqslant 0 \\
-x_1 + 1\geqslant 0 \;\; -x_2 + 1 \geqslant 0 \\
-x_1-x_2 + 1 \geqslant 0
 \end{array}
\right.
\end{equation*}
Then $a_1 = (1,0)$, $a_2 = (0,1)$, $a_3 = (-1,0)$, $a_4 = (0,-1)$, $a_5 = (-1,-1)$, $b=(1,1,1,1)$. There are three linear relations $a_1+a_3 = 0$, $a_2 + a_4 = 0$, $a_1+a_2+a_5 = 0$, and $\Gamma$ is the matrix whose rows are $(1,0,1,0,0)$, $(0,1,0,1,0)$, $(1,1,0,0,1)$. We have $\Gamma b = (2,2,3)$. Then, the real moment-angle manifold  $\widetilde{\mathcal{R}}_P$ is defined by
\begin{equation*}
\left\{
 \begin{array}{l}
u_1^2 + u_3^2 = 2\\
u_2^2 + u_4^2 = 2 \\
u_1^2 + u_2^2 + u_5^2 = 3
 \end{array}
\right.
\end{equation*}
It turns out that $\widetilde{\mathcal{R}}_P$ is diffeomorphic to orientable surface of genus $5$ (see \cite{Lopez}, \cite[Proposition 4.1.8 and Theorem 6.2.4]{torictop}). In fact, the real moment-angle manifold associated to $m-$gon is diffeomorphic to an orientable surface of genus $1+(m-4)2^{m-3}$ (see \cite[Proposition 4.1.8 and Theorem 6.2.4]{torictop}).

\begin{definition}
It may happen that some of the inequalities (\ref{polytope}) can be removed from the presentation without changing $P_{A,b}$. Such inequalities are called \emph{redundant}. A presentation without redundant inequalities is called \emph{irredundant}.
\end{definition}

\begin{theorem}(\cite[Proposition 2.1, Theorem 4.3]{Mirpan}, \cite[Proposition 6.2.3]{torictop}, \cite[Lemma 0.12]{Bossio}). Let $\gamma_j= (\gamma_{j,1},...,\gamma_{j,n-m})$ be the $j-$th column of system $(\ref{quadrics})$ and $\delta = (\delta_1,...,\delta_{n-m})$.  Assume that the vectors $\gamma_1,...,\gamma_n$ span  $\mathbb{R}^{n-m}$. We have

\vspace{.04in}
\textbf{1.} The real moment-angle manifold $\widetilde{\mathcal{R}}_{P}$ is nonempty and smooth if and only if the presentation $P_{A,b}$ is generic. Equivalently, $\widetilde{\mathcal{R}}_{P}$ is nonempty and smooth if and only if the following conditions are satisfied: \\
$\bullet$ $\delta \in \mathbb{R}_{\geqslant 0}\langle \gamma_1,...,\gamma_n \rangle$ ($\delta$ is in the cone generated by $\gamma_1,...,\gamma_n)$;\\
$\bullet$ If $\delta \in \mathbb{R}_{\geqslant 0}\langle \gamma_{i_1},...,\gamma_{i_r} \rangle$, then $r \geqslant n-m$.

\vspace{.04in}

\textbf{2.} The manifold $\widetilde{\mathcal{R}}_{P}$ is connected if and only if the presentation $P_{A,b}$ is irredundant. If $P_{A,b}$ is obtained from $P_{A',b}$ by adding $r$ irredundant inequalities, then $\widetilde{\mathcal{R}}_{P} = \widetilde{\mathcal{R}}_{P'} \times \mathbb{Z}_2^{r}$.
\end{theorem}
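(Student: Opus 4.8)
The plan is to reduce everything to two elementary facts. The first is the \emph{squaring map}
\[
\sigma:\widetilde{\mathcal{R}}_P\longrightarrow i_{A,b}(P),\qquad \sigma(u)=(u_1^2,\ldots,u_n^2),
\]
which by $(\ref{polsys})$ and $(\ref{rightsidepol})$ is well defined, realizes $\widetilde{\mathcal{R}}_P/\mathbb{Z}_2^{n}$ as $P$, and lets us think of a point $u\in\widetilde{\mathcal{R}}_P$ as a point $x=x(u)\in P$ together with a choice of signs of the coordinates, with $u_j=0$ exactly when $x$ lies on the bounding hyperplane $H_j=\{y:\langle a_j,y\rangle+b_j=0\}$. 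The second is the \emph{Gale duality lemma}: for $S\subseteq\{1,\ldots,n\}$ the vectors $\{a_j:j\in S\}$ are linearly independent if and only if $\{\gamma_j:j\notin S\}$ spans $\mathbb{R}^{n-m}$. This is immediate from $\ker A=\operatorname{rowspace}\Gamma$ (which follows from $\Gamma A^{T}=0$ and the rank count) together with $\Gamma$ having full rank $n-m$: an element of $\ker A$ supported off $S$ has the form $\Gamma^{T}w$ with $\langle\gamma_j,w\rangle=0$ for all $j\notin S$, and such a nonzero element exists iff $\{\gamma_j:j\notin S\}$ does not span.

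For Part 1 I would first observe $\widetilde{\mathcal{R}}_P\neq\emptyset\iff P\neq\emptyset$: given $x\in P$ put $u_j=\sqrt{\langle a_j,x\rangle+b_j}$, and conversely $\sigma$ maps into $i_{A,b}(P)$. Since $i_{A,b}(P)=\{v\in\mathbb{R}^{n}_{\geqslant0}:\Gamma v=\delta\}$ and $\Gamma(\mathbb{R}^{n}_{\geqslant0})=\mathbb{R}_{\geqslant0}\langle\gamma_1,\ldots,\gamma_n\rangle$, this is precisely the first bullet. For smoothness I would apply the regular value theorem to $F:\mathbb{R}^{n}\to\mathbb{R}^{n-m}$, $F(u)_i=\sum_j\gamma_{j,i}u_j^2$: the image of $dF_u$ is the span of $\{\gamma_j:u_j\neq0\}$, so $\widetilde{\mathcal{R}}_P=F^{-1}(\delta)$ is a smooth submanifold of the expected dimension iff at every $u\in\widetilde{\mathcal{R}}_P$ that set spans $\mathbb{R}^{n-m}$, equivalently, by the Gale lemma, iff $\{a_j:x(u)\in H_j\}$ is linearly independent for every $u$; as every point of $P$ lies in the relative interior of a face and every face contains a vertex, this says exactly that the presentation is generic. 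Finally I would check that, given non-emptiness, genericity is equivalent to the second bullet: if $\delta=\sum_{j\in S}v_j\gamma_j$ with all $v_j>0$, the vector $v$ extended by zeros lies in $i_{A,b}(P)$, hence equals $i_{A,b}(x)$ for some $x\in P$ with $x\in H_j$ for all $j\notin S$, so genericity forces $\{a_j:j\notin S\}$ independent and $|S|\geqslant n-m$; conversely, if $y$ is a vertex with active set $T=\{j:y\in H_j\}$, then $i_{A,b}(y)$ is supported on $T^{c}$ with positive entries, so the second bullet gives $|T|\leqslant m$, while $y$ being a $0$-dimensional face forces $\{a_j:j\in T\}$ to span $\mathbb{R}^{m}$ and hence $|T|\geqslant m$, so $\{a_j:j\in T\}$ is a basis and every vertex is generic.

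For Part 2 I would use the standard combinatorial model $\widetilde{\mathcal{R}}_P\cong(\mathbb{Z}_2^{n}\times P)/{\sim}$, where $(\varepsilon,x)\sim(\varepsilon',x)$ iff $\varepsilon-\varepsilon'$ is supported on $\omega(x)=\{j:x\in H_j\}$ (see \cite[Section 6.2]{torictop}). Since $P$ is connected, $\widetilde{\mathcal{R}}_P$ is connected iff the coordinate subgroups $\mathbb{Z}_2^{\omega(x)}$, $x\in P$, generate all of $\mathbb{Z}_2^{n}$; because $\omega(x)=\{j\}$ for $x$ in the relative interior of the facet $P\cap H_j$ whenever inequality $j$ is not redundant, this holds iff every inequality determines a facet, i.e.\ iff the presentation is irredundant. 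For the product formula, write $P_{A,b}$ as $P_{A',b}$ with $r$ further inequalities adjoined, each of which is redundant — hence, for a generic presentation, strictly satisfied on all of $P$; then the corresponding new quadrics force the last $r$ coordinates of every $u\in\widetilde{\mathcal{R}}_P$ to have nowhere-vanishing squares $\langle a_\bullet,x(u)\rangle+b_\bullet>0$, so reading off their signs gives a diffeomorphism $\widetilde{\mathcal{R}}_P\xrightarrow{\ \cong\ }\widetilde{\mathcal{R}}_{P'}\times\mathbb{Z}_2^{r}$.

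The analytic content (the regular value theorem and extracting square roots) is routine, and the identification-space description of $\widetilde{\mathcal{R}}_P$ I would simply quote from \cite{torictop}. The main obstacle I anticipate is bookkeeping: keeping the chain of equivalences ``$\{a_j:j\in S\}$ independent $\leftrightarrow$ $\{\gamma_j:j\notin S\}$ spans $\leftrightarrow$ $x$ meets few hyperplanes $\leftrightarrow$ $\delta$ lies in a small cone'' exactly synchronized with the face combinatorics of $P$, and in particular justifying cleanly that the active normals at a vertex span $\mathbb{R}^{m}$ (using that a vertex is a $0$-dimensional face), and that for a generic presentation a redundant inequality is strict on all of $P$, so that $\widetilde{\mathcal{R}}_P$ genuinely splits off $S^{0}$-factors rather than acquiring singularities.
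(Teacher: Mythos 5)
This theorem is not proved in the paper at all: it is quoted from \cite{Mirpan}, \cite{torictop} and \cite{Bossio}, with only a one-line remark reducing the real case to the complex one. So your proposal is a from-scratch reconstruction, and on the whole it is the standard argument from those references: the squaring map onto $i_{A,b}(P)$, Gale duality between independence of $\{a_j\}_{j\in S}$ and spanning of $\{\gamma_j\}_{j\notin S}$, the regular value theorem for smoothness, and the identification-space model $(\mathbb{Z}_2^n\times P)/\sim$ for connectedness and for splitting off $\mathbb{Z}_2^r$ factors. Your reduction of genericity from arbitrary points of $P$ to vertices (an affine function nonnegative on a face and vanishing at a relative-interior point vanishes on the whole face), your derivation of the two cone conditions, and your treatment of the product formula (correctly reading the paper's ``adding $r$ irredundant inequalities'' as adding \emph{redundant} ones, whose hyperplanes must then miss $P$ by genericity) are all sound.

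The one genuine gap is in the direction ``$\widetilde{\mathcal{R}}_P$ nonempty and smooth $\Rightarrow$ the presentation is generic.'' You assert that $F^{-1}(\delta)$ is a smooth submanifold of the expected dimension \emph{iff} $dF_u$ is surjective at every point of the level set; the regular value theorem gives only the ``if.'' A level set can be a smooth manifold even at points where the differential drops rank (e.g.\ $(u_1^2+u_2^2-1)^2=0$), so the failure of genericity at some vertex does not, by the regular value theorem alone, produce a singular point of $\widetilde{\mathcal{R}}_P$. For these particular quadrics the implication is true, but proving it requires a local analysis at a degenerate point (this is precisely the nontrivial content of \cite[Lemma 0.12]{Bossio} and \cite[Proposition 6.2.3]{torictop}): one shows that near such a point $\widetilde{\mathcal{R}}_P$ is a cone on a lower-dimensional intersection of quadrics, or argues through the $(\mathbb{Z}_2^n\times P)/\sim$ model that the local link is not a sphere. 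Either supply that argument or state explicitly that ``smooth'' is being taken to mean ``$\delta$ is a regular value,'' in which case your chain of equivalences closes. Everything else in the proposal only needs the bookkeeping you already anticipate.
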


\textbf{Remark.} In the given references the second part of the theorem above is proved for complex moment-angle manifolds, where $\mathbb{Z}_2$ is replaced by $S^1$. As it is discussed in Section $\ref{torsiondefinitions}$ the real moment-angle manifold is the real part of the complex one. So, the theorem above easily follows from theorems proved in the mentioned references.

\begin{definition}
An $m-$dimensional polytope  $P \subset \mathbb{R}^m$ is called \emph{Delzant} if it is generic and for any vertex $x \in P$ the vectors $a_i$ normal to the facets meeting at $x$ form a basis for the lattice $\mathbb{Z}^m$.
\end{definition}

\begin{definition}\label{Fanopolytope}
A Delzant polytope $P$ is called \emph{Fano} if it is defined by
\begin{equation*}
P_{A,b}=\{x\in \mathbb{R}^m: \langle a_i,x \rangle + c \geqslant  0 \quad \;\;\; \quad i=1,...,n  \},
\end{equation*}
where  each vector $a_i \in \mathbb{Z}^m$ is the primitive integral interior normal to the corresponding facet. In other words, $c=b_1=...=b_n$.
\end{definition}

The faces of a given polytope $P$ form a partially ordered set with respect to inclusion. It is called the \emph{face poset} of P. Two polytopes are called \emph{combinatorially equivalent} if their face posets are isomorphic. The class of combinatorially equivalent polytopes is called the \emph{combinatorial polytope}.

The combinatorial polytope can have different geometric representations given by formulas similar to $(\ref{polytope})$. In this paper we associate a Lagrangian submanifold to each Delzant polytope $P$ and the Maslov classes of the constructed Lagrangians depend on the representation. On the other hand, the homeomorphism type of $\widetilde{\mathcal{R}}_{P}$ depends only on the combinatorial type of $P$ (see \cite[Proposition 6.2.3]{torictop}). So, when we  work on the singular cohomology ring of $\widetilde{\mathcal{R}}_{P}$ we can change the geometric representations of $P$. Any combinatorial polytope $P$ has  representation $(\ref{polytope})$ with $b_i = 1$ for all $i$. Indeed, we need to move the origin to the interior of $P$ by a parallel transform and divide each inequality by $b_i$. Note that \textbf{0}$\in int(P)$ if and only if $b_i > 0$ for all $i$, where $\textbf{0}$ and $int(P)$ stand for the origin of $\mathbb{R}^m$ and the interior of $P$, respectively.

\begin{definition}
The dual polytope $P^{*}$ of $P$ is defined as
\begin{equation*}
P^{*} = \{y \in \mathbb{R}^m: \; <y,x> + 1 \geqslant 0 \; for \; all \; x \in P \}.
\end{equation*}
\end{definition}

\begin{theorem}(\cite[Theorem 2.11]{Gunter}).
Assume that $P$ is given by $(\ref{polytope})$ and $b_i>0$ for any $i$. We have
\\
$\bullet \;\; P^{*}$ is bounded; \\
$\bullet \;\; (P^{*})^{*} = P$;\\
$\bullet \;$ If the polytope $P$ is given by inequalities $(\ref{polytope})$ with $b_i=1$, then $P^{*} = conv(a_1, . . . , a_m)$, where $conv(a_1,...,a_m)$ stands for the convex hull of the vectors $a_1,..,a_m$.
\end{theorem}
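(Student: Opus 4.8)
The plan is to deduce all three bullets from the classical theory of polar (bipolar) duality, using that the hypothesis $b_i>0$ for all $i$ is, as noted in the text, exactly the statement $\mathbf 0\in\mathrm{int}(P)$, while $P$ is a bounded closed convex polytope by the standing assumptions. Observe first that $P^{*}$ is a closed convex set containing $\mathbf 0$, being an intersection of the closed half-spaces $\{y:\langle y,x\rangle+1\geqslant 0\}$, each containing $\mathbf 0$. For boundedness, pick $\varepsilon>0$ with $B_{\varepsilon}(\mathbf 0)\subseteq P$; for nonzero $y\in P^{*}$ the point $x=-\varepsilon y/\abs{y}$ lies in $P$, so $\langle y,x\rangle+1\geqslant 0$ reads $1-\varepsilon\abs{y}\geqslant 0$, i.e. $\abs{y}\leqslant 1/\varepsilon$. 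Symmetrically, if $P\subseteq B_{R}(\mathbf 0)$ then every $y$ with $\abs{y}\leqslant 1/R$ satisfies $\langle y,x\rangle+1\geqslant 1-\abs{y}\,\abs{x}\geqslant 0$ for all $x\in P$, so $B_{1/R}(\mathbf 0)\subseteq P^{*}$ and $\mathbf 0\in\mathrm{int}(P^{*})$ — a fact I reuse below.

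\textbf{The bipolar identity $(P^{*})^{*}=P$.} The inclusion $P\subseteq (P^{*})^{*}$ is immediate from the definitions. For the reverse, let $x_{0}\notin P$ and apply the separating hyperplane theorem to the closed convex set $P$: there are $v\neq 0$ and $\alpha\in\mathbb R$ with $\langle v,x\rangle\leqslant\alpha$ for all $x\in P$ and $\langle v,x_{0}\rangle>\alpha$. Since $\mathbf 0\in\mathrm{int}(P)$ one must have $\alpha>0$ (if $\alpha\leqslant 0$ then $\langle v,\cdot\rangle\leqslant 0$ on a neighbourhood of $\mathbf 0$, forcing $v=0$). Setting $y_{0}=-v/\alpha$ gives $\langle y_{0},x\rangle+1\geqslant 0$ for all $x\in P$, so $y_{0}\in P^{*}$, while $\langle y_{0},x_{0}\rangle+1<0$, so $x_{0}\notin (P^{*})^{*}$.

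\textbf{The convex-hull description.} Assume now $b_{i}=1$ for all $i$ and put $Q=\mathrm{conv}(a_{1},\dots,a_{n})$. Each $a_{i}\in P^{*}$, since $\langle a_{i},x\rangle+1\geqslant 0$ for $x\in P$ is one of the defining inequalities of $P$; by convexity $Q\subseteq P^{*}$. Next $Q^{*}=P$: for $x\in\mathbb R^{m}$ the condition $\langle y,x\rangle+1\geqslant 0$ for all $y\in Q$ is equivalent — a linear functional attains its minimum over $\mathrm{conv}(a_{1},\dots,a_{n})$ at some $a_{i}$ — to $\langle a_{i},x\rangle+1\geqslant 0$ for all $i$, i.e. to $x\in P$. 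Finally $\mathbf 0\in Q$: otherwise strict separation of $\mathbf 0$ from the compact set $Q$ yields $v$ with $\langle v,a_{i}\rangle>0$ for all $i$, so the ray $\{tv:t\geqslant 0\}$ satisfies $\langle a_{i},tv\rangle+1\geqslant 1$ for every $i$ and $t\geqslant 0$ and hence lies in $P$, contradicting boundedness of $P$. Since $Q$ is a closed convex polytope containing $\mathbf 0$, the bipolar identity just proved applies to $Q$, giving $(Q^{*})^{*}=Q$; combined with $Q^{*}=P$ this gives $P^{*}=Q=\mathrm{conv}(a_{1},\dots,a_{n})$.

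\textbf{Main obstacle.} There is essentially no obstacle here: the result is classical convex geometry. The only genuine care needed is bookkeeping the paper's normalization $\langle y,x\rangle+1\geqslant 0$ in place of $\langle y,x\rangle\leqslant 1$, which is why the separating functional must be rescaled by the separation constant, and positivity of that constant is exactly where $\mathbf 0\in\mathrm{int}(P)$ is used. The one substantive point is the verification $\mathbf 0\in Q$ from boundedness of $P$, since this is what allows the bipolar theorem to be fed back into the proof of the third bullet.
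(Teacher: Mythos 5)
Your proof is essentially correct, but note that the paper itself offers no argument here: this theorem is quoted verbatim from Ziegler's \emph{Lectures on Polytopes} (Theorem 2.11 in the cited reference), so there is no in-paper proof to compare against. What you have written is the standard self-contained bipolar-duality argument, adapted correctly to the paper's sign convention $\langle y,x\rangle+1\geqslant 0$ (so that $P^{*}$ is the negative of the usual polar), and your identification of where the hypothesis $b_i>0$ enters --- namely as $\mathbf 0\in\mathrm{int}(P)$, which forces the separation constant $\alpha$ to be strictly positive --- is exactly the right bookkeeping.

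One small point deserves a sentence more than you gave it. In the third bullet you invoke ``the bipolar identity just proved'' for $Q=\mathrm{conv}(a_1,\dots,a_n)$ after verifying only that $\mathbf 0\in Q$, whereas your proof of that identity used $\mathbf 0\in\mathrm{int}(P)$ to rule out $\alpha\leqslant 0$. This is immediately repairable in either of two ways: (i) the same ray argument you used to show $\mathbf 0\in Q$ also shows $\mathbf 0\in\mathrm{int}(Q)$ --- if not, a supporting hyperplane at $\mathbf 0$ gives $v\neq 0$ with $\langle v,a_i\rangle\leqslant 0$ for all $i$, and then the ray $\{-tv:t\geqslant 0\}$ lies in $P$, contradicting boundedness; or (ii) observe that the bipolar identity only needs $\mathbf 0\in Q$, since when the separation constant is $0$ one may replace it by $\tfrac12\langle v,x_0\rangle>0$ without disturbing the inequality on $Q$. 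With that sentence added the proof is complete.
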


\begin{definition}
For a polytope $P$, we denote by $K_P$ the boundary $\partial P^{*}$ of the dual polytope.
\end{definition}

\begin{lemma}\label{simplicialpolytope} (\cite[Example 2.2.4]{torictop}).
If $P$ is generic, then $P^{*}$ is simplicial. Moreover, $K_P$ coincides with the nerve of the covering of $\partial P$ by the facets. That is, the vertices of $K_P$ are identified with the facets of $P$, and some set of vertices of $K_P$ spans a simplex whenever the intersection of the corresponding facets of $P$ is nonempty.
\end{lemma}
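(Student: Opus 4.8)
\medskip
\noindent\textbf{Proof plan.}
The plan is to deduce both assertions from the classical inclusion-reversing duality between the face lattice of a polytope having the origin in its interior and the face lattice of its polar dual. First I would reduce to a convenient presentation. The nerve of the covering of $\partial P$ by its facets depends only on the combinatorial type of $P$, and by the duality theorem \cite[Theorem 2.11]{Gunter} recalled above so does $K_P=\partial P^{*}$; moreover every combinatorial polytope admits a presentation $(\ref{polytope})$ with $b_1=\dots=b_n=1$, and after discarding redundant inequalities we may assume this presentation is irredundant. In it $\textbf{0}\in\mathrm{int}(P)$, $P^{*}=\mathrm{conv}(a_1,\dots,a_n)$, the points $a_1,\dots,a_n$ are exactly the vertices of $P^{*}$, and $a_i$ lies on the supporting hyperplane $\langle a_i,x\rangle+1=0$ of the facet $F_i=\{x\in P:\langle a_i,x\rangle+1=0\}$. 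I assume this setup from now on.

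Next I would record the form of the duality map I need. For a nonempty face $G\subseteq P$ put $\widehat{G}=\{y\in P^{*}:\langle y,x\rangle=-1\text{ for all }x\in G\}$. Then $G\mapsto\widehat{G}$ is an order-reversing bijection between the proper faces of $P$ and the proper faces of $P^{*}$ with $\dim G+\dim\widehat{G}=m-1$; in particular $\widehat{F_i}=\{a_i\}$, and, applying the bijection twice, for any nonempty face $G$ of $P$ the vertex set of $\widehat{G}$ is exactly $\{a_i:F_i\supseteq G\}$. These facts are standard (see \cite[Ch.~2]{torictop}); I would quote them or give the one-line check straight from the definition of $P^{*}$.

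With this in hand, simpliciality of $P^{*}$ is immediate. Genericity of $P$ says that at each vertex $x$ the normals of the facets through $x$ are linearly independent; since a vertex of an $m$-polytope lies on at least $m$ facets, linear independence forces it to lie on exactly $m$ of them. Hence $\widehat{x}$ is an $(m-1)$-dimensional face of $P^{*}$ with exactly $m$ vertices, i.e.\ a simplex; as every facet of $P^{*}$ is of this form, $P^{*}$ is simplicial, and then every proper face of $P^{*}$, being a face of some simplex, is itself a simplex, so $K_P=\partial P^{*}$ is a bona fide simplicial complex. For the nerve statement I identify the vertices $a_i$ of $K_P$ with the facets $F_i$ of $P$ and check that, for $I\subseteq\{1,\dots,n\}$, the set $\{a_i:i\in I\}$ spans a simplex of $\partial P^{*}$ if and only if $\bigcap_{i\in I}F_i\neq\emptyset$. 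If $G:=\bigcap_{i\in I}F_i\neq\emptyset$, then $G$ is a proper face of $P$, so $\widehat{G}$ is a proper face, hence a simplex, of $\partial P^{*}$ whose vertex set contains $\{a_i:i\in I\}$, and therefore $\{a_i:i\in I\}$ spans a face of that simplex. Conversely, if $\{a_i:i\in I\}$ spans a simplex $\sigma$ of $\partial P^{*}$, then $\sigma$ is a proper face of $P^{*}$, so $\sigma=\widehat{G'}$ for some nonempty face $G'\subseteq P$, and then $F_i\supseteq G'$ for every $i\in I$, whence $\bigcap_{i\in I}F_i\supseteq G'\neq\emptyset$. This identifies $K_P$ with the nerve of the facet covering of $\partial P$.

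There is no genuine obstacle here: the content is the polar duality of polytopes, for which one could simply cite \cite[Example 2.2.4]{torictop} or \cite{Gunter}. The only points needing a little care are the reduction to the irredundant $b_i=1$ presentation --- which is what makes $a_1,\dots,a_n$ precisely the vertices of $P^{*}$ --- and the bookkeeping of empty and improper faces when matching ``$\{a_i:i\in I\}$ spans a simplex of $\partial P^{*}$'' (a face of the complex, possibly a proper subset of the vertex set of a larger face) with ``the corresponding facets have nonempty intersection''; the identity $\mathrm{vert}(\widehat{G})=\{a_i:F_i\supseteq G\}$ is exactly what makes both implications run.
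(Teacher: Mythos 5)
Your proof is correct. The paper offers no argument of its own here --- the lemma is simply quoted from \cite[Example 2.2.4]{torictop} --- and what you have written is the standard polar-duality argument that the reference relies on: the order-reversing face-lattice bijection $G\mapsto\widehat{G}$ with $\dim G+\dim\widehat{G}=m-1$, the observation that genericity forces each vertex of $P$ to lie on exactly $m$ facets so that each facet of $P^{*}$ is an $(m-1)$-simplex, and the identity $\mathrm{vert}(\widehat{G})=\{a_i: F_i\supseteq G\}$ giving both directions of the nerve correspondence. The reduction to an irredundant presentation with $b_i=1$ is the right preliminary step (it is what makes the $a_i$ exactly the vertices of $P^{*}$), and your handling of the empty-intersection bookkeeping is sound.
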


In this paper all polytopes are bounded. Let us give a lemma which will be used later. Taking linear combinations of equations we can always write the system  $(\ref{quadrics})$ in the form

\begin{equation*}
\begin{gathered}
\widetilde{\mathcal{R}}_P = \left\{
 \begin{array}{l}
h_1u_1^2+...+h_n u_n^2 = a\\
\gamma_{1,j}u_1^2 + ... + \gamma_{n,j}u_n^2 = 0
 \end{array}
 \right.
 \\
j=1,...,n-m-1
\end{gathered}
\end{equation*}

\begin{lemma}\label{boundedpolytope} (\cite{torictop} Proposiion 1.2.7).
The polytope $P$ is bounded if and only if $a > 0$ and $h_1,...,h_n > 0$.
\end{lemma}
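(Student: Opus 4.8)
The plan is to prove the two directions separately, using throughout the Gale-dual dictionary of $(\ref{polsys})$--$(\ref{nonnegsol})$: $i_{A,b}$ is a proper affine embedding $\mathbb{R}^m \hookrightarrow \mathbb{R}^n$, and $i_{A,b}(P)$ is exactly the set of nonnegative vectors satisfying the \emph{linear} system obtained from the quadrics by dropping the squares. Thus $P$ is bounded iff this nonnegative linear slice is bounded, iff $\widetilde{\mathcal{R}}_P$ is compact. The "if" direction is then immediate: if the presentation has $a > 0$ and every $h_i > 0$, then any $u$ in the slice satisfies $u_i \geqslant 0$ and $\sum_i h_i u_i = a$, so $0 \leqslant u_i \leqslant a/h_i$; hence $i_{A,b}(P) \subseteq \prod_i [0, a/h_i]$ is bounded, and as $i_{A,b}$ is proper, so is $P$. (Equivalently, $\widetilde{\mathcal{R}}_P$ lies on the ellipsoid $\sum_i h_i u_i^2 = a$, hence is compact.)

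For the "only if" direction, assume $P$ is bounded (and nonempty, as our polytopes have a vertex). Then its recession cone $\{x \in \mathbb{R}^m : \langle a_i, x\rangle \geqslant 0 \ \forall i\}$ equals $\{0\}$; dualizing, the normals positively span, i.e. $\operatorname{cone}(a_1, \ldots, a_n) = \mathbb{R}^m$ --- which, in the representation with all $b_i = 1$, is the statement $0 \in \operatorname{int}\operatorname{conv}(a_1, \ldots, a_n) = \operatorname{int} P^{*}$ coming from the duality facts recorded above. From this I would extract a \emph{strictly} positive linear relation $h_1 a_1 + \ldots + h_n a_n = 0$ with all $h_i > 0$: for each $i$, since the cone is all of $\mathbb{R}^m$, write $-a_i$ as a nonnegative combination of the $a_j$, producing a relation whose $i$-th coefficient is positive and whose others are nonnegative; then sum these $n$ relations. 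This is the one genuinely nontrivial point, and it is the classical fact that a bounded polytope has the origin in the interior of the convex hull of its facet normals --- essentially \cite[Proposition 1.2.7]{torictop}.

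Finally I would recast the quadric system in the asserted form. The vector $h = (h_1, \ldots, h_n)$ is a linear relation among the $a_i$, hence lies in the row space of $\Gamma$, which is $(n-m)$-dimensional and consists precisely of such relations. The number $a := \langle h, b\rangle = \sum_i h_i b_i$ does not depend on the translational representative of $P$ (since $h \in \operatorname{row}(\Gamma)$ and $\Gamma A^T = 0$); choosing the representative with $0 \in \operatorname{int} P$ makes all $b_i > 0$, so $a > 0$ and in particular $a \neq 0$. Hence the relations orthogonal to $b$ form a hyperplane $\operatorname{span}\{\gamma^{(1)}, \ldots, \gamma^{(n-m-1)}\}$ inside $\operatorname{row}(\Gamma)$, and adjoining $h$ gives a basis of $\operatorname{row}(\Gamma)$. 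Rewriting $(\ref{quadrics})$ in this basis (transforming the right-hand sides $\Gamma b = \delta$ accordingly) yields exactly the displayed system: the single inhomogeneous equation is $h_1 u_1^2 + \ldots + h_n u_n^2 = a$ with $a > 0$ and all $h_i > 0$, and the remaining $n - m - 1$ equations are homogeneous. The only step of real content is the existence of the strictly positive relation among the facet normals; the rest is linear algebra in the Gale picture.
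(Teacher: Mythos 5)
The paper offers no proof of this lemma at all --- it is imported verbatim from \cite{torictop} (Proposition 1.2.7) --- so there is no in-paper argument to compare yours against; judged on its own, your proof is correct and is essentially the standard one. The ``if'' direction via the box $0\leqslant u_i\leqslant a/h_i$ in the Gale picture (equivalently, compactness of the ellipsoid slice) is fine, and the ``only if'' direction correctly reduces boundedness to triviality of the recession cone, hence to $\operatorname{cone}(a_1,\dots,a_n)=\mathbb{R}^m$, and then produces a strictly positive relation by summing the $n$ relations that express each $-a_i$ as a nonnegative combination of the $a_j$ --- this is indeed the only point of substance. Your final bookkeeping is also right and worth having spelled out, since the lemma's phrasing leaves it implicit: $h\in\operatorname{row}(\Gamma)$ because the rows of $\Gamma$ span all linear relations among the $a_i$; $a=\langle h,b\rangle$ is unchanged under translation because $\Gamma A^{T}=0$, hence is positive once one translates $0$ into $\operatorname{int}P$; and since $a\neq 0$ the relations annihilating $b$ form a hyperplane in $\operatorname{row}(\Gamma)$, so adjoining $h$ gives a basis realizing the displayed normal form with one inhomogeneous and $n-m-1$ homogeneous quadrics.
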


\begin{definition}
Let $P$ be a generic polytope and $H$ be a hyperplane that does not contain any vertex of $P$. If $H$ separates a face $F$ from the other vertices of $P$ and $F \subset \{ H < 0 \}$, then we say that the polytope $P \cap \{H \geqslant 0 \}$ is obtained from $P$ by cutting off the face $F$.
\end{definition}

\subsection{Intersection of complex quadrics}\label{torsiondefinitions}

Let $P_{A,b}$ be a generic polytope defined by $(\ref{polytope})$. Then, replacing $u_j$ by $|z_j|$ in $(\ref{quadrics})$ we get intersection of complex quadrics
\begin{equation*}
\widetilde{\mathcal{Z}}_{P} = \{z \in \mathbb{C}^n : \gamma_{1,j}|z_1|^2 + ... + \gamma_{n,j}|z_n|^2 = \delta_j, \quad j=1,...,n-m, \}.
\end{equation*}
We see that $\widetilde{\mathcal{R}}_{P} = Re(\widetilde{\mathcal{Z}}_{P})$.

\begin{definition}
The manifold $\widetilde{\mathcal{Z}}_{P}$ is called the \emph{complex moment-angle} manifold associated to $P$.
\end{definition}

The difference is the existence of an action of $n-$torus $T^{n} = (e^{i\pi\varphi_1},\ldots,e^{i\pi\varphi_n})$. The torus acts on $z = (z_1, \ldots,z_n)$ by coordinate-wise multiplication. Then, the set $\widetilde{\mathcal{Z}}_{P}/T^n$ can be identified with non-negative solutions of system $(\ref{nonnegsol})$. So, in the same way we get that a polytope defines a system of complex quadrics and a system of complex quadrics defines a polytope. Note that any intersection of complex quadrics in $\mathbb{C}^n$ can be interpreted as an intersection of real quadrics in $\mathbb{R}^{2n}$. Indeed, $|z_j|^2 = |x_j^2 + iy_j^2|^2 = x_j^2 + y_j^2$.

Let us study the topology of $\widetilde{\mathcal{Z}}_{P}$.  First, let us mention the following lemma:

\begin{lemma}\label{complexirr}(\cite[Proposition 6.2.3]{torictop}).
Assume that $P_{A,b}$ is obtained from $P_{A',b}$ by adding $r$ irredundant inequalities, then $\widetilde{\mathcal{Z}}_{P} = \widetilde{\mathcal{Z}}_{P'} \times (S^1)^{r}$.
\end{lemma}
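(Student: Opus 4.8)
The plan is to realize $\widetilde{\mathcal{Z}}_P$ as an explicitly trivialized $(S^1)^r$-bundle over $\widetilde{\mathcal{Z}}_{P'}$. After relabeling, assume the $r$ added inequalities are the last ones, $j=n-r+1,\dots,n$, and write $b'=(b_1,\dots,b_{n-r})$, so that $P'=P_{A',b'}$ is cut out by the first $n-r$ inequalities and $P=P_{A,b}$ by all $n$; since the $r$ added inequalities do not change the polytope (i.e.\ are redundant), $P=P'$ as subsets of $\mathbb{R}^m$. I will use the polyhedral description coming from (\ref{polytope2}) and (\ref{polsys}): $\widetilde{\mathcal{Z}}_P=\{z\in\mathbb{C}^n:(\abs{z_1}^2,\dots,\abs{z_n}^2)\in i_{A,b}(P)\}$ with $i_{A,b}(x)=A^Tx+b$, and analogously for $P'$ with the affine map $i_{A',b'}$.

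The first step is to observe that the added coordinates cannot vanish on $\widetilde{\mathcal{Z}}_P$. The presentation $P_{A,b}$ is generic (otherwise $\widetilde{\mathcal{Z}}_P$ would not be a manifold), so in it every inequality that is tight somewhere on $P$ supports a genuine facet, while a redundant inequality supports none; hence the hyperplane $\{\langle a_j,x\rangle+b_j=0\}$ is disjoint from $P$ for every $j>n-r$ (if it met $P$ it would either cut into $\mathrm{int}\,P$, contradicting redundancy, or pass through a vertex already lying on $m$ facet hyperplanes, producing $m+1$ hyperplanes through that vertex and contradicting genericity). Therefore $\langle a_j,x\rangle+b_j>0$ on $P$ for all $j>n-r$, so $\abs{z_j}^2>0$, i.e.\ $z_j\neq 0$, everywhere on $\widetilde{\mathcal{Z}}_P$.

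Next I would build the trivialization. Since $P'$ is $m$-dimensional, the normals $a_1,\dots,a_{n-r}$ span $\mathbb{R}^m$, so $i_{A',b'}$ is an affine embedding admitting a continuous (affine) left inverse $\pi$ on its image. For $j>n-r$ set $\rho_j(w)=\bigl(\langle a_j,\pi(\abs{w_1}^2,\dots,\abs{w_{n-r}}^2)\rangle+b_j\bigr)^{1/2}$, which, using $P=P'$ and the first step, is a well-defined smooth positive function on $\widetilde{\mathcal{Z}}_{P'}$. Then
\[
\Phi\bigl(w,(t_{n-r+1},\dots,t_n)\bigr)=\bigl(w,\ \rho_{n-r+1}(w)\,t_{n-r+1},\ \dots,\ \rho_n(w)\,t_n\bigr)
\]
sends $\widetilde{\mathcal{Z}}_{P'}\times(S^1)^r$ into $\widetilde{\mathcal{Z}}_P$: if $(\abs{w_1}^2,\dots,\abs{w_{n-r}}^2)=i_{A',b'}(x)$ with $x\in P'$, then $x\in P$ and the vector of squared moduli of $\Phi(w,t)$ equals $i_{A,b}(x)$. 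A two-sided inverse is $z\mapsto\bigl((z_1,\dots,z_{n-r}),\ (z_{n-r+1}/\abs{z_{n-r+1}},\dots,z_n/\abs{z_n})\bigr)$: the first block lies in $\widetilde{\mathcal{Z}}_{P'}$ because the first $n-r$ entries of $i_{A,b}(x)$ coincide with $i_{A',b'}(x)$, and the quotients are legitimate by the first step; one checks $\rho_j(w)=\abs{z_j}$ on the image, so the two maps are mutually inverse, and both are smooth. Hence $\widetilde{\mathcal{Z}}_P\cong\widetilde{\mathcal{Z}}_{P'}\times(S^1)^r$.

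The main obstacle is the first step — the interplay between redundancy of an inequality and genericity of the presentation that forces the corresponding supporting hyperplane to miss $P$, which is precisely what makes the coordinates $z_j$ nonvanishing and the added circle factors free; everything else is routine bookkeeping with the affine map $i_{A,b}$. The real analogue $\widetilde{\mathcal{R}}_P=\widetilde{\mathcal{R}}_{P'}\times\mathbb{Z}_2^{r}$ invoked in Section $\ref{mainconstructionpol}$ follows from the identical argument, with $S^1$ replaced by $\{\pm1\}$ and $z_j/\abs{z_j}$ replaced by the sign of the real coordinate $u_j$.
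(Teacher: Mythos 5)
Your proof is correct and is essentially the standard argument behind the cited result \cite[Proposition 6.2.3]{torictop}: the paper gives no proof of this lemma, only the citation, and your two steps (genericity plus redundancy force the added inequalities to be strictly positive on $P$, so the corresponding coordinates $z_j$ are nowhere zero and split off explicitly trivialized circle factors via the positive radius functions $\rho_j$) are exactly how the reference proceeds. Note that you have silently, and correctly, read the lemma's ``adding $r$ irredundant inequalities'' as ``adding $r$ \emph{redundant} inequalities''; this is a typo in the statement, since only redundant inequalities leave the polytope unchanged and produce the $(S^1)^r$ factor.
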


Let $P$ be a generic polytope with $n$ facets and $K_P = \partial P^{*}$. From Lemma $\ref{simplicialpolytope}$ we know that $P^{*}$ is simplicial and $K_P$ is simplicial complex. We denote the facets of $P$ by $v_1, \ldots, v_n$. Let $\mathbf{k}$ be either a field of zero characteristic, or $\mathbb{Z}$. Let $\mathbf{k}[v_1 \ldots v_n]$ be a graded polynomial algebra on $n$ variables, $deg(v_i) = 2$. Recall that the vertices of $K_P$ are identified with facets of $P$ and  a set of vertices of $K_P$ spans a simplex whenever the intersection of the corresponding facets of $P$ is nonempty.
\vspace{.12in}
\begin{definition}(see  \cite[Section ~3]{torictop}).
Let $\mathcal{I}_P$ be the ideal generated by those monomials $v_{i_1}\ldots v_{i_r}$ for which $v_{i_1}\cap...\cap v_{i_r} = \emptyset$. The face ring (or the Stanley-Reisner ring) of $P$ is the quotient graded ring $\mathbf{k}[P] = \mathbf{k}[v_1, \ldots, v_n]/\mathcal{I}_P$. Then $\mathbf{k}[P]$ has a natural structure of algebra and is a module over the polynomial algebra $\mathbf{k}[v_1,...,v_n]$ via the quotient projection $\mathbf{k}[v_1,...,v_n] \rightarrow \mathbf{k}[P]$.
\end{definition}

\vspace{.12in}

\textbf{Remark}. In references the face ring is defined in a different way. They consider an arbitrary simplicial complex $K$ and denote the vertices of $K$ by numbers $\{1,...,n\}$. The ideal $\mathcal{I}_K$ is generated by those monomials $v_{i_1}\ldots v_{i_r}$ for which vertices $i_1,...,i_r$ does not form a simplex in $K$. The face ring is defined by $\mathbf{k}[K] = \mathbf{k}[v_1, \ldots, v_n]/\mathcal{I}_K$. If $K=K_P$ for some generic polytope $P$, then the two given definitions coincide. By a result of Bruns and Gubeladze $\cite{bruns}$ two simplicial complexes $K_1$ and $K_2$ are combinatorially equivalent if and only if their Stanley-Reisner algebras are isomorphic. Thus, $\mathbf{k}[P]$ is a complete combinatorial invariant of $P$.
\\

Let us introduce a formula for the cohomology ring of $\widetilde{\mathcal{Z}}_P$. The face ring $\mathbf{k}[P]$ acquires a module structure over the polynomial algebra. Therefore, we can consider the corresponding $Tor$-modules (see \cite[Appendix Section A.2]{torictop}):

\begin{equation*}
{Tor}_{\mathbf{k}[v_1,...,v_n]}(\mathbf{k}[P], \mathbf{k}) = \bigoplus\limits_{i, j \geqslant 0} {Tor}^{-i, 2j}_{\mathbf{k}[v_1,...,v_n]}(\mathbf{k}[P], \mathbf{k}).
\end{equation*}
Also, we consider the differential bigraded algebra
\begin{equation*}
\begin{gathered}
R(P) = \Lambda[y_1,...,y_n] \otimes \mathbf{k}[P]/(v_i^2 = y_iv_i, \quad 1 \leqslant i \leqslant n), \\
deg(y_i) = 1, \;\;\; deg(v_i) = 2; \;\; \quad dv_i = 0, \;\;\; dy_i = v_i.
\end{gathered}
\end{equation*}

\begin{theorem}\label{torcohom}(\cite[Theorem 4.5.4]{torictop})
Assume that $P$ is irredundant. The cohomology ring of $\widetilde{\mathcal{Z}}_P$ is given by
\begin{equation*}
H^{k}(\widetilde{\mathcal{Z}}_P, \mathbf{k})= \bigoplus\limits_{2j-i=k}{Tor}^{-i, 2j}_{\mathbf{k}[v_1,...,v_n]}(\mathbf{k}[P], \mathbf{k}) = H^{k}[R(P), d], \\
\end{equation*}
where $\mathbf{k}$ is either $\mathbb{Z}$, or a field of zero characteristic. Moreover, $H^1(\widetilde{\mathcal{Z}}_P, \mathbb{Z}) = H^2(\widetilde{\mathcal{Z}}_P, \mathbb{Z}) = 0$.
\end{theorem}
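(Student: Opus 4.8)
The plan is to reduce the statement to the Buchstaber--Panov description of the cohomology of a polyhedral product (moment-angle complex), and then read off the low-degree vanishing directly from the small algebraic model.

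First I would replace the quadric manifold $\widetilde{\mathcal{Z}}_P$ by the moment-angle complex $\mathcal{Z}_{K_P}$ built on the simplicial complex $K_P=\partial P^{*}$. Since $P$ is generic and irredundant, there is a $T^n$-equivariant homeomorphism $\widetilde{\mathcal{Z}}_P\cong \mathcal{Z}_{K_P}$, where $\mathcal{Z}_{K_P}=\bigcup_{\sigma\in K_P}B_\sigma\subset(D^2)^n$ with $B_\sigma=\{z:\ \abs{z_i}=1\text{ for }i\notin\sigma\}$; this is the passage from the quadric model to the polytopal model, cf. \cite[Proposition 6.2.3]{torictop}. By Lemma~\ref{simplicialpolytope} together with the Remark following the definition of the face ring one has $\mathbf{k}[K_P]=\mathbf{k}[P]$, so from now on everything is phrased combinatorially in terms of $K=K_P$.

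Next I would compute $H^{*}(\mathcal{Z}_K)$ via the Borel construction. Set $DJ(K)=ET^n\times_{T^n}\mathcal{Z}_K$, which sits in a fibration $\mathcal{Z}_K\to DJ(K)\to BT^n$. Using the homotopy-colimit (polyhedral product) description of $DJ(K)$ and the K\"unneth formula, one identifies $H^{*}(DJ(K);\mathbf{k})\cong\mathbf{k}[K]$ as graded algebras, with the $H^{*}(BT^n)=\mathbf{k}[v_1,\dots,v_n]$-module structure given by the canonical surjection $\mathbf{k}[v_1,\dots,v_n]\twoheadrightarrow\mathbf{k}[K]$; over $\mathbb{Z}$ this is unproblematic because these cohomology groups are free. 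The Eilenberg--Moore spectral sequence of the fibration then reads
\[
E_2^{*,*}={Tor}_{\mathbf{k}[v_1,\dots,v_n]}(\mathbf{k}[K],\mathbf{k})\ \Longrightarrow\ H^{*}(\mathcal{Z}_K;\mathbf{k}).
\]
To pass from $E_2$ to the final answer I would use the small differential bigraded algebra $R(K)=R(P)=\Lambda[y_1,\dots,y_n]\otimes\mathbf{k}[K]/(v_i^2=y_iv_i)$, $dv_i=0$, $dy_i=v_i$. On the algebraic side, the Koszul complex $\Lambda[y_1,\dots,y_n]\otimes\mathbf{k}[v_1,\dots,v_n]$ is a free resolution of $\mathbf{k}$, whence ${Tor}_{\mathbf{k}[v]}(\mathbf{k}[K],\mathbf{k})=H[\mathbf{k}[K]\otimes\Lambda[y_1,\dots,y_n],d]$, and the quotient map onto $R(K)$ is a quasi-isomorphism, checked multidegree-by-multidegree in the $\mathbb{Z}^n$-grading where both complexes become explicit and small. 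On the geometric side, equipping $\mathcal{Z}_K$ with the CW structure inherited from the product cells of $(D^2)^n$, one exhibits a multiplicative quasi-isomorphism from its cellular cochain algebra to $R(K)$. Combining the two identifies $H^{*}(\mathcal{Z}_K;\mathbf{k})$ with ${Tor}_{\mathbf{k}[v]}(\mathbf{k}[K],\mathbf{k})=H^{*}[R(P),d]$ as graded rings, and in particular shows the Eilenberg--Moore spectral sequence collapses with no extension problems; this is the content of \cite[Theorem 4.5.4]{torictop}, which I would invoke. All the constructions involved (the Koszul resolution, the cellular model) are defined integrally, so the isomorphism holds for $\mathbf{k}=\mathbb{Z}$ as well as for fields of characteristic zero.

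Finally, the vanishing $H^1(\widetilde{\mathcal{Z}}_P,\mathbb{Z})=H^2(\widetilde{\mathcal{Z}}_P,\mathbb{Z})=0$ I would read off from $H^{*}[R(P),d]$ by hand. With the bigrading $\mathrm{bideg}\,y_i=(-1,2)$, $\mathrm{bideg}\,v_i=(0,2)$, the cochains of total degree $1$ are the $\mathbb{Z}$-span of the $y_i$, and $dy_i=v_i\neq 0$ in $\mathbb{Z}[K]$ because irredundancy of $P$ forces every singleton $\{i\}$ to be a vertex of $K_P$, so the $v_i$ belong to a $\mathbb{Z}$-basis of $\mathbb{Z}[K]$ in degree $2$; hence there are no nonzero cocycles and $H^1=0$. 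The cochains of total degree $2$ are spanned by the $v_i$ and the products $y_ay_b$; the $v_i=dy_i$ are coboundaries, while $d(y_ay_b)=v_ay_b-v_by_a\neq0$, so $H^2=0$ as well. (Equivalently, this is the statement that $\mathcal{Z}_K$ is $2$-connected.) The main obstacle is the multiplicative chain-level comparison in the previous step: the additive version of the isomorphism is comparatively soft --- it also follows from the Bahri--Bendersky--Cohen--Gitler stable splitting of $\Sigma\mathcal{Z}_K$ --- but identifying the ring structure, equivalently establishing the multiplicative collapse of the Eilenberg--Moore spectral sequence, requires the explicit quasi-isomorphism between the cellular cochains of $\mathcal{Z}_K$ and $R(P)$.
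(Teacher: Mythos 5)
Your proposal is correct: it reconstructs the standard Buchstaber--Panov argument (moment-angle complex model, Eilenberg--Moore spectral sequence for $\mathcal{Z}_K\to DJ(K)\to BT^n$, Koszul resolution, and the multiplicative quasi-isomorphism with the small model $R(P)$), which is precisely the content of the cited \cite[Theorem 4.5.4]{torictop}; the paper itself gives no proof and simply quotes that reference. Your hand computation of $H^1=H^2=0$ from $R(P)$, using irredundancy to ensure each $v_i\neq 0$ in $\mathbf{k}[P]$, is also correct and is the natural way to obtain the final clause.
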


\textbf{Example.} Let us consider a $5-$gon (see Figure $\ref{fig:5gonfig}$)
\begin{equation*}
\left\{
 \begin{array}{l}
x_1 + 1 \geqslant 0, \;\; x_2 + 1 \geqslant 0 \\
-x_1 + 1\geqslant 0 \;\; -x_2 + 1 \geqslant 0 \\
-x_1-x_2 + 1 \geqslant 0
 \end{array}
\right.
\end{equation*}
The complex moment-angle manifold  $\widetilde{\mathcal{Z}}_P$ is defined by
\begin{equation*}
\left\{
 \begin{array}{l}
|z_1|^2 + |z_3|^2 = 2\\
|z_2|^2 + |z_4|^2 = 2 \\
|z_1|^2 + |z_2|^2 + |z_5|^2 = 3
 \end{array}
\right.
\end{equation*}
The manifold $\widetilde{\mathcal{Z}}_P$ is smooth $7-$manifold. Since $v_1 \cap v_4 = \emptyset$, we have $d(y_1v_4) = v_1v_4 = 0$. Also, $d(y_1y_4) = y_1v_4 - y_4v_1$, i.e. elements $y_1v_4$ and $y_4v_1$ represent the same element in $H^{*}[R(P), d]$. We see that $deg(y_1v_4) = 3$ and $y_1v_4$ represents some element in $H^3(\widetilde{\mathcal{Z}}_P, \mathbb{Z})$. Arguing in the same way, we get that the group $H^3(\widetilde{\mathcal{Z}}_P, \mathbb{Z})$ is generated by $y_1v_3, \; y_1v_4, \; y_2v_4, \; y_2v_5, \; y_3v_5$.
\\
Since $v_1 \cap v_3 = v_1 \cap v_4 = \emptyset$, we have $d(y_3y_4v_1) = y_5v_3v_1 - y_3v_5v_1 = 0$. In the same way, we see that $H^4(\widetilde{\mathcal{Z}}_P, \mathbb{Z})$ is generated by elements $y_3y_4v_1, \; y_4y_5v_2, \; y_1y_5v_3, \; y_1y_2v_4, \; y_2y_3v_5$.
\\
Since $v_1 \cap v_3 = \emptyset$ we see that $y_1v_3 \smile  y_3y_4v_1 = 0$ and $y_1v_3 \smile  y_4y_5v_2 = y_1y_4y_5v_3v_2 \neq 0$. Arguing in the same way, we can show that the ring $H^{*}(\widetilde{\mathcal{Z}}_P, \mathbb{Z})$ is isomorphic to the ring $H^{*}(\#5(S^3 \times S^4), \mathbb{Z})$.
\\
In fact, $\widetilde{\mathcal{Z}}_P  = \#5(S^3 \times S^4)$ (see \cite[Exercise 4.2.10]{torictop}).

\begin{figure}[h]
\centering
\includegraphics[width=0.3\linewidth]{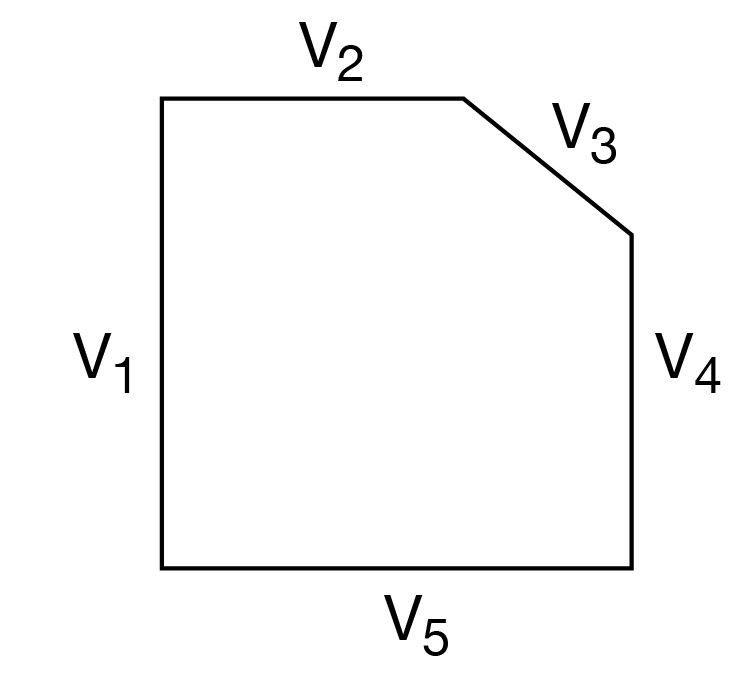}
\caption{5-gon}
  \label{fig:5gonfig}
\end{figure}

Assume that $X$ is a manifold of dimension $n$ without boundary and write $X_{-}$ for $X$ minus a small disk of dimension $n$. Then we define
\begin{equation*}
\mathcal{G}(X) = \partial(X_{-} \times D^2),
\end{equation*}
where $D^2$ is 2-disk. The operation $\mathcal{G}X$ was defined by Gonzalez Acuna in ~\cite{Gonzales}.

\begin{lemma}(\cite{Gonzales})
If $X$, $Y$ are connected manifolds, then
\begin{equation*}
\mathcal{G}(X \# Y) = \mathcal{G}(Y) \# \mathcal{G}(Y).
\end{equation*}
Moreover,
\begin{equation}\label{AcunaLeibniz}
\begin{gathered}
\mathcal{G}(S^p) = S^{p+1}, \\
\mathcal{G}(S^p \times S^q) = (S^{p+1} \times S^q) \times (S^p \times S^{q+1}). \\
\end{gathered}
\end{equation}
Let us note that the formula $(\ref{AcunaLeibniz})$ works only for $S^p \times S^q$. For arbitrary manifolds the formula $\ref{AcunaLeibniz}$ does not hold true.
\end{lemma}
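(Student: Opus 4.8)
The plan is to reduce everything to standard cut-and-paste with manifolds with boundary. Throughout, for a closed $n$-manifold $X$ the punctured manifold $X_-$ is compact with $\partial X_- \cong S^{n-1}$, so $X_-\times D^2$ is a compact $(n+2)$-manifold with corners, and after smoothing $\mathcal{G}(X)=\partial(X_-\times D^2)\cong (X_-\times S^1)\cup_{S^{n-1}\times S^1}(S^{n-1}\times D^2)$. I will use three elementary facts about the boundary connected sum $\natural$: (i) $(X\# Y)_-\cong X_-\,\natural\,Y_-$; (ii) $(A\,\natural\,B)\times D^2\cong (A\times D^2)\,\natural\,(B\times D^2)$, where on the right the boundary connected sum is taken along the $(n{+}1)$-disks $D^{n-1}\times D^2\subset \partial A\times D^2\subset\partial(A\times D^2)$; and (iii) $\partial(M\,\natural\,N)=\partial M\,\#\,\partial N$. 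Fact (iii) is standard; (ii) is immediate because gluing $A$ to $B$ along $D^{n-1}\subset\partial A$ and then multiplying by $D^2$ is the same as gluing $A\times D^2$ to $B\times D^2$ along $D^{n-1}\times D^2$ with the $D^2$-factors matched; and (i) follows by isotoping the deleted ball in $X\#Y$ so that it straddles the connecting sphere as a half-ball, since deleting a half-ball from a manifold with boundary does not change its diffeomorphism type, and the two resulting flat faces glue up to exactly the boundary connected sum.

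Granting (i)--(iii), the connected-sum formula (read with the evident correction of the typo $\mathcal{G}(Y)\#\mathcal{G}(Y)$) is immediate:
\[
\mathcal{G}(X\# Y)=\partial\big((X\#Y)_-\times D^2\big)\cong\partial\big((X_-\times D^2)\,\natural\,(Y_-\times D^2)\big)=\partial(X_-\times D^2)\,\#\,\partial(Y_-\times D^2)=\mathcal{G}(X)\,\#\,\mathcal{G}(Y).
\]
The sphere case is even easier: $S^p_-=D^p$, so $\mathcal{G}(S^p)=\partial(D^p\times D^2)=\partial D^{p+2}=S^{p+1}$.

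The real content is the product-of-spheres formula $\mathcal{G}(S^p\times S^q)=(S^{p+1}\times S^q)\,\#\,(S^p\times S^{q+1})$ (again correcting $\times$ to $\#$ on the right). First I would record a handle picture of the punctured product: writing $S^p=D^p\cup_{S^{p-1}}D^p$, $S^q=D^q\cup_{S^{q-1}}D^q$ and deleting one of the four product cells $\mathring D^p\times\mathring D^q$, one gets, after smoothing corners,
\[
(S^p\times S^q)_-\cong (D^p\times S^q)\cup_{S^{p-1}\times D^q}(D^p\times D^q),
\]
i.e.\ $(S^p\times S^q)_-$ is $D^p\times S^q$ with a single $p$-handle attached. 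Multiplying by $D^2$ and regrouping disk factors,
\[
(S^p\times S^q)_-\times D^2\cong (S^q\times D^{p+2})\cup_{S^{p-1}\times D^{q+2}}(D^p\times D^{q+2}),
\]
which exhibits $(S^p\times S^q)_-\times D^2$ as $S^q\times D^{p+2}$ with a single $p$-handle attached along an unknotted $S^{p-1}$ — one bounding a disk inside $\{\mathrm{pt}\}\times S^{p+1}\subset \partial(S^q\times D^{p+2})$ — carrying the trivial framing. Attaching a handle along a disk-bounding, trivially framed sphere splits off as a boundary connected summand with the "cancelling" handlebody $D^p\times D^{q+2}\cup(\text{trivial }p\text{-handle})=S^p\times D^{q+2}$, so
\[
(S^p\times S^q)_-\times D^2\cong (S^q\times D^{p+2})\,\natural\,(S^p\times D^{q+2}),
\]
and applying (iii) gives $\mathcal{G}(S^p\times S^q)=\partial\big((S^q\times D^{p+2})\,\natural\,(S^p\times D^{q+2})\big)=(S^q\times S^{p+1})\,\#\,(S^p\times S^{q+1})$.

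I expect the main obstacle to be making that last identification rigorous: one must verify carefully that the $p$-handle arising in $(S^p\times S^q)_-\times D^2$ is genuinely attached along an unknotted, trivially framed sphere, so that the attachment localizes into a boundary connected summand. Equivalently, one shows that $(S^p\times S^q)_-\times D^2$ is a regular neighborhood of a standardly embedded $S^p\vee S^q$ in dimension $p+q+2$ and invokes uniqueness of regular neighborhoods; the two spheres sit in codimension $\ge 2$ and each is unknotted (bounds a disk), so this goes through. This is precisely the step that uses that the factors are spheres — and it is why, as the closing sentence of the lemma observes, no such clean product formula can be expected for $X\times Y$ with general $X$, $Y$.
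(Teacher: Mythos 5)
The paper offers no proof of this lemma at all: it is imported verbatim from Gonzalez Acuna's notes (and versions of it appear in McGavran and Gitler--L\'opez de Medrano), so there is no in-paper argument to compare yours against. Judged on its own, your write-up is the standard route and is essentially sound. The reduction $\mathcal{G}(X\# Y)=\mathcal{G}(X)\#\mathcal{G}(Y)$ via the three facts about boundary connected sum is complete and correct (and you rightly read the statement's $\mathcal{G}(Y)\#\mathcal{G}(Y)$ and the stray $\times$ in $(S^{p+1}\times S^q)\times(S^p\times S^{q+1})$ as typos), as is $\mathcal{G}(S^p)=S^{p+1}$. One small point worth making explicit in fact (i): deleting the \emph{closed product cell} $D^p\times D^q$ gives the same manifold as deleting a standard ball only because that cell is a smoothly embedded disk and embedded disks in a connected manifold are ambient isotopic (the disk theorem); you use the same principle implicitly when you move the puncture onto the connecting neck of $X\#Y$.

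The only genuinely substantive step is the one you flag yourself: showing that the $p$-handle $D^p\times D^{q+2}$ in $(S^p\times S^q)_-\times D^2=(S^q\times D^{p+2})\cup_{S^{p-1}\times D^{q+2}}(D^p\times D^{q+2})$ is attached along an unknotted sphere with the \emph{standard} framing, so that it localizes into a $\natural$-summand diffeomorphic to $S^p\times D^{q+2}$. Unknottedness is easy (the attaching sphere lies in a fibre $\{y_0\}\times S^{p+1}$ and is isotopic there to $\partial(D^p\times\{\mathrm{pt}\})$), but the framing claim is exactly where a careless argument can fail: framings of the attaching sphere are a torsor over $\pi_{p-1}(SO(q+2))$, which is generally nonzero, so "the normal bundle is trivialized by the product structure" does not by itself identify the framing with the disk framing -- one must actually carry the product framing along the isotopy to $\partial(D^p\times\{\mathrm{pt}\})$ and check it matches. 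Your fallback via uniqueness of regular neighborhoods of $S^p\vee S^q$ has the same issue in different clothing (thickenings of a wedge split as boundary connected sums only under dimension/framing hypotheses that must be checked, not just codimension $\ge 2$). So the proposal is a correct and well-organized outline with the right key lemma isolated, but the framing verification is asserted rather than proved; since that is precisely the content that distinguishes $S^p\times S^q$ from a general product (as the last sentence of the lemma warns), it should be carried out or cited rather than left as "this goes through."
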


\vspace{.1in}

Let $P$ be a generic polytope and $x \in P$ be a vertex. Denote by $P_x$ the polytope obtained by cutting off the vertex $x$. Let us mention the following two theorems:

\begin{theorem}\label{vertexcutoff}(\cite[Theorem 2.2]{Lopez3})
Let $P$ be a generic polytope of dimension $m$ with $n$ facets. If $n<3m$, then
\begin{equation*}
\widetilde{\mathcal{Z}}_{P_x} = \mathcal{G}(\mathcal{Z}_P) \mathop\sharp^{n-m}_{j=1} \binom{n-m}{j} (S^{j+2} \times S^{n+m-j-1}).
\end{equation*}
\end{theorem}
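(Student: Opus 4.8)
The plan is to realize the vertex truncation as a controlled modification of the moment-angle manifold and then to read the answer off the link of the cut vertex.

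First I would record the combinatorics. Since $P$ is generic, the vertex $x$ lies on exactly $m$ facets, say $F_{i_1},\dots,F_{i_m}$; by Lemma~\ref{simplicialpolytope} these correspond to a maximal simplex $\sigma$ of the nerve $K_P = \partial P^{*}$. Cutting off $x$ deletes the interior of $\sigma$ and cones its boundary $\partial\sigma$ off with a new vertex $v_{n+1}$, so that $K_{P_x} = (K_P\setminus\mathrm{int}(\sigma))\cup(v_{n+1}\ast\partial\sigma)$; equivalently $K_{P_x}$ is the connected sum $K_P\,\#\,\partial\Delta^m$ of simplicial spheres and, dually, $P_x = P\,\#\,\Delta^m$ is a connected sum of polytopes along $x$ and a vertex of $\Delta^m$. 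In particular $P_x$ has $n+1$ facets, still has dimension $m$, and the new facet $F_{n+1}$ is an $(m-1)$-simplex meeting precisely $F_{i_1},\dots,F_{i_m}$.

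Next I would set up the surgery. Over a small neighborhood of $x$ the manifold $\widetilde{\mathcal{Z}}_P$ is a trivial tube $T^{n-m}\times D^{2m}$ around the torus orbit $T^{n-m}$ lying over $x$; removing this tube and gluing back the piece of $\widetilde{\mathcal{Z}}_{\Delta^m} = S^{2m+1}$ produced by the truncating simplex yields $\widetilde{\mathcal{Z}}_{P_x}$. Carrying out this gluing is where both features of the formula appear. The sphere $S^{2m+1}$, cut along the orbit over the identified vertex and re-attached to $\widetilde{\mathcal{Z}}_P$, performs precisely the operation $X\mapsto\partial(X_-\times D^2)$, which is the source of the factor $\mathcal{G}(\widetilde{\mathcal{Z}}_P)$; and since the collapsed orbit is a whole torus $T^{n-m}$, not a point, the K\"unneth decomposition of $H^{*}(T^{n-m})$, which has $\binom{n-m}{j}$ generators in degree $j$, replaces the single $D^2$-cap by $\binom{n-m}{j}$ handles of type $S^{j+2}\times S^{n+m-j-1}$ for $j = 1,\dots,n-m$. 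I would check this bookkeeping independently with Theorem~\ref{torcohom} and Hochster's formula: the full subcomplexes of $K_{P_x}$ split into those avoiding $v_{n+1}$, which are exactly the full subcomplexes of $K_P$ and reproduce $H^{*}(\mathcal{G}(\widetilde{\mathcal{Z}}_P))$, and those containing $v_{n+1}$, which are governed by $\partial\Delta^m$ and contribute the $\binom{n-m}{j}$ sphere-product summands, with cup products matching those of a connected sum.

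Finally, the hypothesis $n<3m$ keeps the construction in a stable range: it forces $\widetilde{\mathcal{Z}}_P$ to be highly connected, torsion-free, and in fact already a connected sum of products of spheres, so that the circle and torus directions along which one surgers are unknotted and carry the trivial framing. By the Leibniz-type rules $\mathcal{G}(X\,\#\,Y) = \mathcal{G}(X)\,\#\,\mathcal{G}(Y)$, $\mathcal{G}(S^p) = S^{p+1}$ and $\mathcal{G}(S^p\times S^q) = (S^{p+1}\times S^q)\,\#\,(S^p\times S^{q+1})$, the manifold $\mathcal{G}(\widetilde{\mathcal{Z}}_P)$ is again a connected sum of sphere products, and within this class a diffeomorphism type is determined by the cohomology ring together with Poincar\'e duality, which the previous step computes. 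I expect the real obstacle to be exactly this last point: upgrading the handle and surgery picture to an honest diffeomorphism rather than a bare cohomology isomorphism, that is, ruling out exotic framings and non-split handles. It is precisely here that the bound $n<3m$ is indispensable, and making that step rigorous is the technical heart of the argument.
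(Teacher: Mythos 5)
First, note that the paper does not prove this statement: it is imported verbatim from Gitler--L\'opez de Medrano \cite[Theorem 2.2]{Lopez3}, so there is no internal proof to compare against. Judged on its own terms, your outline has the right geometric skeleton, and it agrees with the strategy of the cited source: cutting the vertex $x$ is the stellar subdivision of the maximal simplex of $K_P$ dual to $x$ (equivalently, connected sum with $\partial\Delta^m$); the preimage of $x$ in $\widetilde{\mathcal{Z}}_P$ is a torus $T^{n-m}$ with a $D^{2m}$-neighborhood; and the binomial coefficients $\binom{n-m}{j}$ do come from the K\"unneth decomposition of $H^{*}(T^{n-m})$.

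There are, however, concrete gaps. The most serious is your reading of the hypothesis $n<3m$: it is simply false that this forces $\widetilde{\mathcal{Z}}_P$ to be ``already a connected sum of products of spheres.'' The cube $I^3$ has $m=3$, $n=6<9$ and $\widetilde{\mathcal{Z}}_{I^3}=S^3\times S^3\times S^3$, which has a nonzero triple product of degree-$3$ classes and hence is not a connected sum of sphere products; this very example is the starting point of the proof of Theorem \ref{wide-narrow2} in the paper. Consequently your proposed endgame --- classify the result by its cohomology ring within the class of connected sums of sphere products --- cannot be the argument: the conclusion of the theorem deliberately leaves $\mathcal{G}(\mathcal{Z}_P)$ as an unevaluated term precisely because $\mathcal{Z}_P$ need not belong to that class, and the identity $\mathcal{G}(S^p\times S^q)=(S^{p+1}\times S^q)\#(S^p\times S^{q+1})$ is explicitly stated in the paper to fail for general manifolds. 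The actual force of $n<3m$ is dimensional: it makes $\dim T^{n-m}=n-m$ strictly less than its codimension $2m$ in $\widetilde{\mathcal{Z}}_P$, which is what allows the torus to be isotoped into a disk and the modification to be performed in a standard, framing-independent way. Two further points are asserted rather than proved: the surgery step as you describe it does not account for the fact that $\dim\widetilde{\mathcal{Z}}_{P_x}=\dim\widetilde{\mathcal{Z}}_P+1$ (this jump is exactly where $\mathcal{G}(X)=\partial(X_-\times D^2)$ enters and must be exhibited, not just named), and the Hochster/Tor computation you invoke only certifies an isomorphism of cohomology rings, whereas the theorem claims a diffeomorphism. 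You flag this last upgrade as ``the technical heart,'' and that is accurate --- but it means the proposal stops short of the actual content of the result.
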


\begin{theorem}\label{simplexcutoff}(\cite{Gavran})
Let $P$ be a polytope obtained by cutting off $\ell$ vertices from the standard $m-$simplex. Then,
\begin{equation*}
\widetilde{\mathcal{Z}}_P= \mathop\sharp_{j=1}^{\ell}j\binom{\ell+1}{j+1}(S^{j+2} \times S^{2m + \ell - j - 1}).
\end{equation*}
\end{theorem}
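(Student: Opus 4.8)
The plan is to argue by induction on the number $\ell$ of vertices cut off, using Theorem \ref{vertexcutoff} for the inductive step together with the two properties of the operation $\mathcal{G}$ recorded above — multiplicativity under connected sum, $\mathcal{G}(X\#Y)=\mathcal{G}(X)\#\mathcal{G}(Y)$, and the Leibniz-type identities $\mathcal{G}(S^p\times S^q)=(S^{p+1}\times S^q)\#(S^p\times S^{q+1})$, $\mathcal{G}(S^p)=S^{p+1}$ — and then a short binomial identity to match coefficients. An alternative would be to compute $H^{*}(\widetilde{\mathcal{Z}}_P,\mathbf{k})$ directly from Theorem \ref{torcohom} and invoke a recognition theorem for connected sums of products of spheres, but the inductive route stays entirely inside the toolkit set up in Section \ref{torsiondefinitions}.

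We may assume $m\geqslant 2$ (for $m\leqslant 1$ only $\ell=0$ is meaningful, and then the statement is immediate). Let $P^{(\ell)}$ denote the polytope obtained from $\Delta^m$ by truncating $\ell$ of its $m+1$ vertices with hyperplanes chosen sufficiently close to the respective vertices. Since all vertices of a simplex are equivalent under its symmetry group and, for small enough cuts, the newly created facets are pairwise disjoint and disjoint from every uncut vertex, the combinatorial type of $P^{(\ell)}$ is well defined (independent of the choice and order of the vertices), every uncut original vertex survives as a simple vertex of $P^{(\ell)}$, and $P^{(\ell+1)}$ is obtained from $P^{(\ell)}$ by cutting off one of its vertices. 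As the homeomorphism type of $\widetilde{\mathcal{Z}}_P$ depends only on the combinatorial type of $P$, it suffices to prove the formula for these representatives. For $\ell=0$ the claim is $\widetilde{\mathcal{Z}}_{\Delta^m}=S^{2m+1}$ (the right-hand side being the empty connected sum), which is immediate from the quadric description of Section \ref{torsiondefinitions}: the normals of $\Delta^m$ satisfy the single relation $a_1+\ldots+a_{m+1}=0$, so $\widetilde{\mathcal{Z}}_{\Delta^m}=\{\,z\in\mathbb{C}^{m+1}:\ |z_1|^2+\ldots+|z_{m+1}|^2=\delta\,\}\cong S^{2m+1}$.

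Assume inductively that $\widetilde{\mathcal{Z}}_{P^{(\ell)}}=\mathop\sharp_{j=1}^{\ell}c_j\,(S^{j+2}\times S^{2m+\ell-j-1})$ with $c_j=j\binom{\ell+1}{j+1}$ and $c_0:=0$. The polytope $P^{(\ell)}$ has $n=m+1+\ell$ facets, so $n-m=\ell+1$, $n+m-j-1=2m+\ell-j$, and $n<3m$ holds throughout because at most $m+1$ vertices are ever cut and $m\geqslant 2$. Theorem \ref{vertexcutoff} then gives
\begin{equation*}
\widetilde{\mathcal{Z}}_{P^{(\ell+1)}}=\mathcal{G}\bigl(\widetilde{\mathcal{Z}}_{P^{(\ell)}}\bigr)\ \mathop\sharp_{j=1}^{\ell+1}\binom{\ell+1}{j}\bigl(S^{j+2}\times S^{2m+\ell-j}\bigr).
\end{equation*}
Applying $\mathcal{G}$ termwise to the inductive hypothesis, $\mathcal{G}(S^{j+2}\times S^{2m+\ell-j-1})=(S^{j+3}\times S^{2m+\ell-j-1})\#(S^{j+2}\times S^{2m+\ell-j})$; rewriting both pieces in the form $S^{i+2}\times S^{2m+\ell-i}$ (via $i=j+1$ and $i=j$ respectively) shows that the coefficient of $S^{i+2}\times S^{2m+\ell-i}$ in $\mathcal{G}(\widetilde{\mathcal{Z}}_{P^{(\ell)}})$ is $c_{i-1}+c_i$. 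Adding the $\binom{\ell+1}{i}$ contributed by the second factor, the coefficient of $S^{i+2}\times S^{2m+(\ell+1)-i-1}$ in $\widetilde{\mathcal{Z}}_{P^{(\ell+1)}}$ is
\begin{equation*}
c_{i-1}+c_i+\binom{\ell+1}{i}=(i-1)\binom{\ell+1}{i}+i\binom{\ell+1}{i+1}+\binom{\ell+1}{i}=i\binom{\ell+1}{i}+i\binom{\ell+1}{i+1}=i\binom{\ell+2}{i+1},
\end{equation*}
which is exactly the value prescribed for $P^{(\ell+1)}$. At $\ell=0$ one additionally uses $\mathcal{G}(S^{2m+1})=S^{2m+2}$ together with $S^{d}\#M\cong M$ to discard that summand. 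This completes the induction.

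The main obstacle is not conceptual but a matter of checking hypotheses: one must confirm that Theorem \ref{vertexcutoff} genuinely applies at every stage — that each $P^{(\ell)}$ is a generic polytope, that the truncated vertex is of the kind the theorem covers, and above all that the bound $n<3m$ is never exceeded, which is what forces the restriction $m\geqslant 2$ and the separate treatment of tiny cases. The secondary point requiring care is the combinatorial lemma underpinning the reduction to canonical representatives: that successive small truncations of a simplex produce mutually disjoint new facets, so that $P^{(\ell)}$ really carries the face poset of ``the simplex with $\ell$ vertices cut off'' irrespective of which vertices are chosen and in which order; here one should be explicit about taking each cutting hyperplane close enough to its vertex.
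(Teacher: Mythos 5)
The paper offers no proof of this statement at all: it is quoted verbatim from McGavran \cite{Gavran}, whose original argument proceeds by building explicit equivariant handle decompositions of manifolds with torus actions. Your induction is therefore a genuinely different (and legitimate) route: you derive the formula from Theorem \ref{vertexcutoff} together with the $\mathcal{G}$-calculus $\mathcal{G}(X\#Y)=\mathcal{G}(X)\#\mathcal{G}(Y)$, $\mathcal{G}(S^p\times S^q)=(S^{p+1}\times S^q)\#(S^p\times S^{q+1})$, $\mathcal{G}(S^p)=S^{p+1}$ — in fact this is essentially how Gitler and L\'opez de Medrano themselves recover McGavran's theorem as a corollary of their vertex-cutting result. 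I checked the details: with $n=m+1+\ell$ facets the hypothesis $n<3m$ reads $\ell<2m-1$, and since at most $\ell=m$ is ever in force when the cut is performed this holds exactly when $m\geqslant 2$, as you say; the reindexing of $\mathcal{G}(S^{j+2}\times S^{2m+\ell-j-1})$ gives the coefficient $c_{i-1}+c_i$ and the Pascal computation $c_{i-1}+c_i+\binom{\ell+1}{i}=i\binom{\ell+2}{i+1}$ is correct (including the boundary cases $i=1$ and $i=\ell+1$ via $c_0=c_{\ell+1}=0$); and the formula reproduces the paper's own examples (the pentagon giving $\#5(S^3\times S^4)$ and the hexagon giving $\#9(S^3\times S^5)\#8(S^4\times S^4)$). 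What your approach buys is a short, self-contained deduction inside the toolkit the paper already assembles, at the cost of resting on Theorem \ref{vertexcutoff} as a black box; McGavran's construction is independent of that theorem but far more laborious. The only points worth making explicit in a final write-up are the ones you flag yourself: the combinatorial well-definedness of the $\ell$-fold truncation (which, combined with the paper's remark that $\widetilde{\mathcal{Z}}_P$ depends only on the combinatorial type of $P$, justifies inducting along one convenient sequence of small cuts), and the absorption of the sphere summand $\mathcal{G}(S^{2m+1})=S^{2m+2}$ at the first step.
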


\subsection{Simplicial multiwedge operation}\label{wedge}

Let $P$ be a generic polytope and  $\widetilde{\mathcal{R}}_P \subset \mathbb{R}^{n}$ be the corresponding real moment-angle manifold
\begin{equation}\label{equationwedge}
\left\{
 \begin{array}{l}
\gamma_{1,r}u_{1}^2  + ... + \gamma_{n,r}u_n^2 = \delta_r \quad r=1,...,n-m.
 \end{array}
\right.
\end{equation}
Assume that $P$ is given by inequalities $(\ref{polytope})$. Denote by  $\widetilde{\mathcal{R}}_{P, u_1 = 0} \subset \widetilde{\mathcal{R}}_P$ a subset  $\widetilde{\mathcal{R}}_P \cap \{u_1 = 0\}$. Note that $\widetilde{\mathcal{R}}_{P, u_1 = 0}$  is a real moment-angle manifold (it s defined by quadrics) and is associated to some polytope $P_{u_1 = 0}$. As we can see from $(\ref{polytope2})$ and $(\ref{polsys})$ the subset $\widetilde{\mathcal{R}}_{P, u_1 = 0}$ is associated to a facet $P_{u_1 = 0} \subset P$ defined by $<a_1,x>+b_1 =  0 \cap P$.

Let us duplicate the variable $u_1$ in $(\ref{equationwedge})$ with the same coefficients $\gamma_{1,j}$, i.e. we consider a system
\begin{equation*}
\left\{
 \begin{array}{l}
\gamma_{1,i}(u_{1,1}^2 + u_{1,2}^2) + ... + \gamma_{n,r}u_n^2 = \delta_r, \quad r=1,...,n-m
 \end{array}
\right.
\end{equation*}
where $u_{1,1}$ and $u_{1,2}$ are new variables repeating $u_1$. The system above defines a new real moment-angle manifold $\widetilde{\mathcal{R}}_{P}^{(2,1,\ldots, 1)} \subset \mathbb{R}^{n+1}$ and is associated to a new polytope $P^{(2,1,\ldots, 1)}$. Denote by $I$ the segment $[-1,1]$. It turns out that $P^{(2,1,\ldots, 1)}$ can be obtained from $P \times I$ by collapsing $P_{u_1=0} \times I$ into $P_{u_1=0}$, where $P_{u_1=0}$ is the facet of $P$ defined by $<a_1,x>+b_1 =  0 \cap P$.

\begin{definition}
The polytope $P^{(2,1,\ldots,1)}$ is called the book of $P$ and  $(P^{(2,1,\ldots,1)})^{*}$ is called the simplicial wedge of $P^{*}$.
\end{definition}

\textbf{Example.} Assume $P$ is $1-$simplex given by $x_1 +1 \geqslant 0$ and $-x_1 + 1 \geqslant 0$. Then, $\widetilde{\mathcal{R}}_P = S^1$ is defined by the quadric $u_1^2 + u_2^2 = 2$. Duplicating $u_1$ we get $\widetilde{\mathcal{R}}_{P}^{(2,1)}$ defined by $u_{1,1}^2 + u_{1,2}^2 + u_2^2 = 2$. The book of $P$ is obtained by collapsing the edge of the rectangle $P \times I$. We see that $P^{(2,1)}$ is $2-$simplex.
\\

The construction of $\mathcal{R}_{P}^{(2,1,\ldots, 1)}$ from $\mathcal{R}_P$ can be generalized. Let $J=(j_1,...,j_n)$ be a vector of positive integers. Then we construct
a new system of quadrics
\begin{equation*}
\left\{
 \begin{array}{l}
\gamma_{1,r}\sum\limits_{s=1}^{j_1}u_{1,s}^2  + ... + \gamma_{n,r}\sum\limits_{s=1}^{j_n}u_{n,s}^2 = \delta_r, \quad r=1,...,n-m
 \end{array}
\right.
\end{equation*}
obtained from $\widetilde{\mathcal{R}}_P$ by repeating $j_i$ times the variable $u_i$. Denote the resulting real moment-angle manifold by $\widetilde{\mathcal{R}}^{J}_P$ or $\widetilde{\mathcal{R}}^{(j_1,...,j_n)}_P$.

Note that $\widetilde{\mathcal{R}}_P^{(2,...,2)}$ is equivalent to intersection of complex quadrics $\widetilde{\mathcal{Z}}_P$. Indeed,
\begin{equation*}
\left\{
 \begin{array}{l}
\gamma_{1,r}(u_{1,1}^2 + u_{1,2}^2)  + ... + \gamma_{n,r}(u_{n,1}^2 + u_{n,2}^2) = \delta_r, \quad r=1,...,n-m
 \end{array}
\right.
\end{equation*}
is equivalent to
\begin{equation*}
\left\{
 \begin{array}{l}
\gamma_{1,r}|z_1|^2  + ... + \gamma_{n,r}|z_n|^2 = \delta_r, \quad r=1,...,n-m,
 \end{array}
\right.
\end{equation*}

If $J = (2j_1,...,2j_n)$, then $\widetilde{\mathcal{R}}^J_P$ can be interpreted as an intersection of complex quadrics too. In other words,

\begin{equation*}
\begin{gathered}
\widetilde{\mathcal{R}}^{(2j_1,...,2j_n)}_P = \widetilde{\mathcal{Z}}_P^{(j_1,...,j_n)} = \left\{
 \begin{array}{l}
\gamma_{1,r}\sum\limits_{s=1}^{j_1}|z_{1,s}|^2  + ... + \gamma_{n,r}\sum\limits_{s=1}^{j_n}|z_{n,s}|^2 = \delta_r,
 \end{array}
\right.
\\
r=1,...,n-m
\end{gathered}
\end{equation*}
were $z_{i,s}$ are new variables repeating $z_i$.

\begin{theorem}\label{cohomisotopic}(\cite[Corollary 7.6]{Bahri}).
Let $P$ be a generic polytope. Then, $H^{*}(\widetilde{\mathcal{R}}_P^{(2,...,2)}, \mathbb{Z})$ and $H^{*}(\widetilde{\mathcal{R}}_P^{(2j_1,...,2j_n)}, \mathbb{Z})$ are isomorphic as ungraded rings.
\end{theorem}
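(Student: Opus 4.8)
This is \cite[Corollary 7.6]{Bahri}, and the plan is to reconstruct its proof by presenting both rings as the cohomology of a polyhedral product over the \emph{same} simplicial complex $K_P$ and then reading off the multiplicative structure from a model that does not see the repetition vector $J=(j_1,\ldots,j_n)$. The first step is pure bookkeeping in the notation of Section \ref{wedge}: $\widetilde{\mathcal{R}}_P^{(2j_1,\ldots,2j_n)}=\widetilde{\mathcal{Z}}_P^{(j_1,\ldots,j_n)}$ is the complex moment-angle manifold $\widetilde{\mathcal{Z}}_Q$ of the multiwedge polytope $Q=P^{(j_1,\ldots,j_n)}$, because repeating $u_i$ exactly $2j_i$ times is the composite of repeating $u_i$ $j_i$ times (which produces $\widetilde{\mathcal{R}}_Q$) and then doubling every variable (which turns a real moment-angle manifold into its associated complex one). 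Since $\widetilde{\mathcal{R}}_P^{(2,\ldots,2)}=\widetilde{\mathcal{Z}}_P$ as well, it suffices to compare $H^{*}(\widetilde{\mathcal{Z}}_P,\mathbb{Z})$ with $H^{*}(\widetilde{\mathcal{Z}}_Q,\mathbb{Z})$.

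Next I would invoke the simplicial wedge construction of Bahri--Bendersky--Cohen--Gitler \cite{Bahri}: $\widetilde{\mathcal{Z}}_Q$ is homeomorphic to the polyhedral product obtained from $\widetilde{\mathcal{Z}}_P=(\underline{D^2},\underline{S^1})^{K_P}$ by replacing, in the $i$-th coordinate block, the pair $(D^2,S^1)$ with $(D^{2j_i},S^{2j_i-1})$; write $(\underline{D^{2j_i}},\underline{S^{2j_i-1}})^{K_P}$ for the result. Concretely this also follows by writing $\widetilde{\mathcal{Z}}_Q=\widetilde{\mathcal{Z}}_P^{(j_1,\ldots,j_n)}$ as the level set $\{\gamma_{i,r}\sum_{s}|z_{i,s}|^2=\delta_r\}$ and running the standard moment-map deformation retraction onto the union, over the faces of $P$, of the corresponding products of the disks $D^{2j_i}$ and spheres $S^{2j_i-1}$. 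Thus both manifolds are polyhedral products over $K_P$, differing only in the dimensions of the disks and spheres.

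I would then compute, for an arbitrary $J$, the cohomology ring of $(\underline{D^{2j_i}},\underline{S^{2j_i-1}})^{K_P}$ from the Koszul differential graded algebra
\begin{equation*}
R_J(P)=\Lambda[y_1,\ldots,y_n]\otimes\mathbb{Z}[P]/(v_i^2=y_iv_i),\qquad \deg y_i=2j_i-1,\quad \deg v_i=2j_i,\quad dy_i=v_i,\quad dv_i=0,
\end{equation*}
where $\mathbb{Z}[P]=\mathbb{Z}[v_1,\ldots,v_n]/\mathcal{I}_P$ is the face ring; this is the $J$-graded analogue of the model $R(P)$ of Theorem \ref{torcohom} (equivalently, the appropriately regraded $Tor$-algebra ${Tor}_{\mathbb{Z}[v_1,\ldots,v_n]}(\mathbb{Z}[P],\mathbb{Z})$ with $\deg v_i=2j_i$), and it specialises to $R(P)$ and to $\widetilde{\mathcal{Z}}_P$ when $J=(1,\ldots,1)$. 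The key observation is then immediate: the identity assignment $y_i\mapsto y_i$, $v_i\mapsto v_i$ is a ring isomorphism $R(P)\to R_J(P)$ — the defining relations (those of $\mathcal{I}_P$, together with $v_i^2=y_iv_i$, $y_i^2=0$, $y_iy_j=-y_jy_i$ and $v_iy_j=y_jv_i$) are literally the same for every $J$ — and it commutes with the differentials, because the Koszul signs are governed solely by the parities of the generators and those parities ($y_i$ odd, $v_i$ even) do not depend on $J$. Passing to cohomology yields an isomorphism of ungraded rings $H^{*}[R(P),d]\cong H^{*}[R_J(P),d]$, i.e. $H^{*}(\widetilde{\mathcal{R}}_P^{(2,\ldots,2)},\mathbb{Z})\cong H^{*}(\widetilde{\mathcal{R}}_P^{(2j_1,\ldots,2j_n)},\mathbb{Z})$ as ungraded rings.

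The argument is short once the inputs are available, and the substantive point — which is precisely what \cite{Bahri} supplies — is the assertion that the ring, not merely the group, $H^{*}\bigl((\underline{D^{2j_i}},\underline{S^{2j_i-1}})^{K_P};\mathbb{Z}\bigr)$ is computed by the model $R_J(P)$ with exactly the stated signs. This rests on the generalized Hochster-type stable splitting of such polyhedral products together with the identification of the cup product with the canonical maps $|(K_P)_{I\sqcup I'}|\to|(K_P)_I|*|(K_P)_{I'}|$ of full subcomplexes, which are manifestly independent of $J$; so along that route one must track the signs in the splitting carefully, which is the only real obstacle. The remaining points — the homeomorphism identifications of the first two paragraphs — are standard moment-angle theory.
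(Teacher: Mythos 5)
The paper gives no proof of this statement; it is quoted verbatim from \cite[Corollary 7.6]{Bahri}, and your reconstruction follows exactly the route of that source: identify $\widetilde{\mathcal{R}}_P^{(2j_1,\ldots,2j_n)}$ with the polyhedral product $(\underline{D^{2j_i}},\underline{S^{2j_i-1}})^{K_P}$ over the \emph{original} complex $K_P$, compute its cohomology ring by the $J$-regraded Koszul/$Tor$ model, and observe that the model depends on $J$ only through the parities of the generators. Your argument is correct, and you correctly locate the one nontrivial input (that the regraded model computes the ring structure, not just the groups, for arbitrary $J$) in the cited reference.
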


Note that the theorem above doesn't hold true for $\widetilde{\mathcal{R}}_P$ and $\widetilde{\mathcal{R}}_P^{(2,...,2)}$ (see \cite[Section ~3]{Lopez3}).
\\

Let us study the face ring of the polytope $P^{(j_1,\ldots,j_n)}$ (the face ring is defined in the previous section). Let us denote facets of $P$ by $v_1,...,v_n$ and facets of $P^{(j_1,...,j_n)}$ by $v_{1,1},\ldots, v_{1, j_1},\ldots, v_{n,1},...,v_{n, j_n}$.

\begin{lemma}\label{interfacetswedge}(\cite[Section ~2]{Bahri})
The intersection $v_{i_1}\cap \ldots \cap v_{i_r} = \emptyset$ if and only if $v_{i_1,1}\cap \ldots v_{i_1, j_{i_1}} \ldots \cap v_{i_r, 1} \cap \ldots \cap v_{i_r, j_{i_r}} = \emptyset$.
\end{lemma}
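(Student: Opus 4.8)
The plan is to reduce the statement about facets of the book-type polytope $P^{(j_1,\ldots,j_n)}$ to a purely combinatorial statement about the simplicial complexes $K_P$ and $K_{P^{(j_1,\ldots,j_n)}}$, and then to invoke the known description of the simplicial (multi)wedge operation on the level of simplicial complexes. Recall from Lemma~\ref{simplicialpolytope} that the facets $v_1,\ldots,v_n$ of $P$ are precisely the vertices of $K_P = \partial P^{*}$, and a subset of these vertices spans a simplex of $K_P$ exactly when the corresponding facets of $P$ have nonempty intersection. The same applies to $P^{(j_1,\ldots,j_n)}$, whose facets $v_{i,s}$ ($1\le i\le n$, $1\le s\le j_i$) are the vertices of $K_{P^{(j_1,\ldots,j_n)}}$. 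Thus the claim is equivalent to: $\{v_{i_1},\ldots,v_{i_r}\}$ is \emph{not} a simplex of $K_P$ if and only if the full vertex set $\{v_{i_1,1},\ldots,v_{i_1,j_{i_1}},\ldots,v_{i_r,1},\ldots,v_{i_r,j_{i_r}}\}$ is not a simplex of $K_{P^{(j_1,\ldots,j_n)}}$.

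First I would record the defining property of the simplicial (multi)wedge $K_{P^{(j_1,\ldots,j_n)}}$ as constructed in the previous subsection (collapsing $P_{u_i=0}\times I$ inside $P\times I$, iterated): a subset $S$ of the vertices of $K_{P^{(j_1,\ldots,j_n)}}$ spans a simplex if and only if, after replacing each block $\{v_{i,1},\ldots,v_{i,j_i}\}\cap S$ that it meets by the single vertex $v_i$, the resulting subset of $\{v_1,\ldots,v_n\}$ spans a simplex of $K_P$, \emph{and} for every index $i$ with $v_i$ in that collapsed set, the block $S$ contains \emph{at most} $j_i - 1$ of the vertices $v_{i,1},\ldots,v_{i,j_i}$ is \emph{not} the condition — rather, the relevant fact (and the one I would cite from \cite[Section~2]{Bahri}) is that the missing faces of $K_{P^{(j_1,\ldots,j_n)}}$ are generated by the sets $\{v_{i_1,1},\ldots,v_{i_1,j_{i_1}},\ldots,v_{i_r,1},\ldots,v_{i_r,j_{i_r}}\}$ where $\{v_{i_1},\ldots,v_{i_r}\}$ is a minimal missing face of $K_P$. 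It is cleanest to prove the single-wedge case $j=(2,1,\ldots,1)$ directly from the collapsing description and then iterate, since an arbitrary $(j_1,\ldots,j_n)$ is obtained by a finite sequence of single duplications.

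Concretely, for the single duplication of $u_1$: by construction $P^{(2,1,\ldots,1)}$ is $P\times I$ with $P_{u_1=0}\times I$ collapsed to $P_{u_1=0}$. The facet $v_1$ of $P$ splits into the two facets $v_{1,1},v_{1,2}$ of $P^{(2,1,\ldots,1)}$ (the two copies of $P_{u_1=0}$), while each other facet $v_k$ becomes $v_k\times I$. One then checks the intersection pattern: $v_{1,1}\cap v_{1,2} = P_{u_1=0}$ is nonempty, so $\{v_{1,1},v_{1,2}\}$ is always a simplex; for $k_1,\ldots,k_s\neq 1$ one has $v_{k_1}\times I\cap\cdots\cap v_{k_s}\times I \neq\emptyset \iff v_{k_1}\cap\cdots\cap v_{k_s}\neq\emptyset$; and for a set involving the $v_1$-block, $v_{1,1}\cap v_{1,2}\cap v_{k_1}\times I\cap\cdots = (P_{u_1=0}\cap v_{k_1}\cap\cdots)\times\{pt\}$, which is nonempty iff $v_1\cap v_{k_1}\cap\cdots\neq\emptyset$. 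This gives the equivalence in the single-wedge case, and induction on $\sum(j_i-1)$ finishes the general case, using that duplicating a variable of $P^{(j_1,\ldots,j_n)}$ again produces a book.

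The main obstacle I anticipate is bookkeeping rather than conceptual: one must be careful that ``$v_{i_1,1}\cap\cdots\cap v_{i_1,j_{i_1}}\cap\cdots$'' in the statement means the intersection of \emph{all} copies within each involved block (not some copies), and verify that partial blocks never create new missing faces — i.e. that if $\{v_{i_1},\ldots,v_{i_r}\}$ is a face of $K_P$ then every subset of $\{v_{i_1,1},\ldots,v_{i_r,j_{i_r}}\}$ is a face of the wedge. This monotonicity is exactly what the collapsing description delivers, but it should be stated explicitly. Alternatively, one can bypass the geometry entirely and argue on Stanley--Reisner ideals: the ideal $\mathcal{I}_{P^{(j_1,\ldots,j_n)}}$ is obtained from $\mathcal{I}_P$ by the substitution $v_i\mapsto v_{i,1}\cdots v_{i,j_i}$ on a generating set (the standard algebraic description of the simplicial wedge), from which the claimed equivalence of minimal non-faces is immediate; I would mention this as the quick proof and cite \cite[Section~2]{Bahri} for the substitution formula.
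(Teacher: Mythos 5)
The paper does not actually prove this lemma --- it is quoted from \cite[Section~2]{Bahri}, and the remark closing Section 2.3 explicitly defers the proof to that reference, only translating between the quadric and simplicial-complex languages. So there is no ``paper's proof'' to match; what you have written is a genuine self-contained argument, and in substance it is correct. Your strategy --- translate via Lemma \ref{simplicialpolytope} to the statement that the minimal non-faces of $K_{P^{(j_1,\ldots,j_n)}}$ are exactly the full duplications $\{v_{i_1,1},\ldots,v_{i_r,j_{i_r}}\}$ of the minimal non-faces $\{v_{i_1},\ldots,v_{i_r}\}$ of $K_P$, verify this for a single duplication from the book description, and induct on $\sum (j_i-1)$ --- is exactly the standard proof, and the Stanley--Reisner substitution $v_i\mapsto v_{i,1}\cdots v_{i,j_i}$ that you mention at the end is the cleanest way to package it. What your route buys over the paper's bare citation is an actual verification adapted to the quadric/polytope language used here, which is arguably worth including since the paper leans on this lemma repeatedly (Theorems \ref{masseylagr}, \ref{wide-narrow}, \ref{wide-narrow2}).

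Two small repairs before this could be spliced in. First, your parenthetical identifying $v_{1,1},v_{1,2}$ as ``the two copies of $P_{u_1=0}$'' is not right: in the book $P^{(2,1,\ldots,1)}=(P\times I)/(P_{u_1=0}\times I\sim P_{u_1=0})$ the two new facets are the images of $P\times\{0\}$ and $P\times\{1\}$, and it is their \emph{intersection} that equals the collapsed spine $P_{u_1=0}$. Your subsequent intersection computations ($v_{1,1}\cap v_{1,2}=P_{u_1=0}$, and $v_{1,1}\cap v_{1,2}\cap (v_{k_1}\times I)\cap\cdots$ nonempty iff $v_1\cap v_{k_1}\cap\cdots$ nonempty) already use the correct identification, so only the description needs fixing. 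Second, the sentence in your first displayed paragraph that begins stating a face criterion and then reverses itself mid-clause (``\ldots is \emph{not} the condition --- rather\ldots'') should be rewritten to state only the fact you actually use, namely the description of the minimal non-faces of the wedge.
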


\textbf{Example.} Let $P$ be a rectangle and the corresponding complex moment-angle manifold is defined by
\begin{equation*}
\mathcal{Z}_P = \left\{
 \begin{array}{l}
|z_1|^2 + |z_3|^2 = 2 \\
|z_2|^2 + |z_4|^2 = 2
 \end{array}
\right.
\end{equation*}
We see that $\mathcal{Z}_P = S^3 \times S^3$ and cohomology ring of $\mathcal{Z}_P$ is generated by  $y_1v_3, \; y_2v_4 \in H^3(\mathcal{Z}_P, \mathbb{Z})$. The complex moment-angle manifold $\mathcal{Z}_P^{(2,3,1,2)}$ associated to the polytope $P^{(2,3,1,2)}$ and is defined by
\begin{equation*}
\mathcal{Z}_P^{(2,3,1,2)} = \left\{
 \begin{array}{l}
|z_{1,1}|^2 + |z_{1,2}|^2  + |z_3|^2 = 2 \\
|z_{2,1}|^2 + |z_{2,2}|^2 + |z_{2,3}|^2 + |z_{4,1}|^2 + |z_{4,2}|^2 = 2
 \end{array}
\right.
\end{equation*}
Easy to see that $\mathcal{Z}_P^{(2,3,1,2)} = S^5 \times S^9$. The cohomology ring of $\mathcal{Z}_P^{(2,3,1,2)}$ is generated by $y_{1,1}v_{1,2}v_3 \in H^5(\mathcal{Z}_P^{(2,3,1,2)}, \mathbb{Z}), \; y_{2,1}v_{2,2}v_{2,3}v_{4,1}v_{4,2} \in H^9(\mathcal{Z}_P^{(2,3,1,2)}, \mathbb{Z})$.
\\

\begin{theorem}\label{limonch}(see \cite{Limonchenko})
If $H^{*}(\mathcal{Z}_P, \mathbb{Q})$ has nonzero Massey product, then $H^{*}(\mathcal{Z}_P^{(j_1, \ldots, j_n)}, \mathbb{Q})$  has  nonzero Massey product for any $j_1, \ldots, j_n$.
\end{theorem}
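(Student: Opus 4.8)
The plan is to reduce to a single simplicial wedge, then build an explicit morphism of commutative differential graded algebras that realizes the (degree-shifting) cohomology isomorphism underlying Theorem~\ref{cohomisotopic}, and finally transport Massey products along it using the fine multigrading. First I would note that $K_{P^{(j_1,\ldots,j_n)}}=K_P(j_1,\ldots,j_n)$ is obtained from $K_P$ by a finite sequence of simplicial wedges, each doubling one vertex, and by Lemma~\ref{interfacetswedge} the order is immaterial. So it suffices to prove: if $H^*(\mathcal Z_K;\mathbb Q)$ carries a nonzero (triple, or higher) Massey product and $K'=\mathrm{wedge}_{v_1}(K)$ is the simplicial wedge of $K$ at a vertex $v_1$, then $H^*(\mathcal Z_{K'};\mathbb Q)$ carries a nonzero Massey product. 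I will work over an arbitrary field $\mathbf k$ with $\mathrm{char}\,\mathbf k=0$; write $v_1,\ldots,v_n$ for the facets of $K$ and $v_1'$ for the extra facet of $K'$ created by doubling $v_1$.

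Next I would use the Koszul CDGA model $\widetilde R(K)=\bigl(\mathbf k[K]\otimes\Lambda[u_1,\ldots,u_n],\ dv_i=0,\ du_i=v_i\bigr)$, with $\deg v_i=2$, $\deg u_i=1$, which is quasi-isomorphic as a CDGA to $C^*(\mathcal Z_K;\mathbf k)$ and computes $H^*(\mathcal Z_K;\mathbf k)$ as in Theorem~\ref{torcohom}; and likewise $\widetilde R(K')$. Define an algebra homomorphism
\[
\Phi\colon \widetilde R(K)\to \widetilde R(K'),\qquad \Phi(v_1)=v_1v_1',\ \ \Phi(u_1)=u_1v_1',\ \ \Phi(v_j)=v_j,\ \ \Phi(u_j)=u_j\ \ (j\geqslant 2).
\]
It is well defined: a squarefree monomial $v_{i_1}\cdots v_{i_r}$ lying in the Stanley--Reisner ideal of $K$ records a missing face of $K$, whose full doubling is a missing face of $K'$ by Lemma~\ref{interfacetswedge}, so its $\Phi$-image lies in the ideal of $K'$; and $\Phi(u_1)^2=(u_1v_1')^2=0$ while $\Phi(u_i)$ remains odd. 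One checks $\Phi d=d\Phi$ immediately, e.g. $d\Phi(u_1)=d(u_1v_1')=v_1v_1'=\Phi(du_1)$, so $\Phi$ is a CDGA morphism; it is moreover a morphism of finely graded algebras (giving $\widetilde R(K)$ the $\mathbb Z^n$-grading in which $v_i,u_i$ both have degree $e_i$), sending multidegree $\delta=(\delta_1,\ldots,\delta_n)$ to $\widehat\delta=(\delta_1,\delta_1,\delta_2,\ldots,\delta_n)$; in fact on each multigraded piece $\Phi=v_1'\cdot\iota$, where $\iota\colon\widetilde R(K)\hookrightarrow\widetilde R(K')$ is the evident inclusion.

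The central step is to show $\Phi$ induces an isomorphism on cohomology. By Hochster's decomposition $H^*(\mathcal Z_K;\mathbf k)=\bigoplus_{\omega\subseteq[n]}\widetilde H^*(K_\omega)$, and similarly for $K'$, with $\widetilde H^*(K'_\tau)=0$ unless $\tau$ is a doubled set $\widehat\omega$ (otherwise the undoubled copy of $v_1$ lies in no missing face of $K'_\tau$, so $K'_\tau$ is a cone). Fixing a multidegree $\widehat\delta$ with support $\widehat\omega\ni v_1,v_1'$, the map $\Phi$ on the multidegree-$\widehat\delta$ subcomplexes is the injection $\widetilde R(K)_\delta\hookrightarrow\widetilde R(K')_{\widehat\delta}$ whose cokernel is spanned by the monomials $v_\Sigma u_T$ with $v_1'\in T$; writing each as $u_1'\otimes(v_\Sigma u_{T'})$ with $\Sigma\sqcup T'=\omega$ and $\Sigma$ a face of $K'$ not containing $v_1'$, one recognizes the cokernel as $u_1'$ tensored with the complex computing $\widetilde H^*\bigl((\mathrm{del}_{K'}(v_1'))_\omega\bigr)$ — and $\mathrm{del}_{K'}(v_1')=\{v_1\}*\mathrm{del}_K(v_1)$ is a cone, hence this is acyclic. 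So $\Phi$ is an isomorphism on multidegree-$\widehat\delta$ cohomology; on the remaining multidegrees it is either the identity (when $v_1$ is not in the support) or an isomorphism onto a zero group. Hence $\Phi_*\colon H^*(\mathcal Z_K;\mathbf k)\xrightarrow{\ \cong\ }H^*(\mathcal Z_{K'};\mathbf k)$ is an isomorphism of ungraded algebras, bijective on each multigraded component (matching $\omega$ with $\widehat\omega$).

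Finally I would transport Massey products. Since $\widetilde R(K)$ is $\mathbb Z^n$-graded with grading-preserving differential, it suffices to treat multihomogeneous Massey products. Given a nonzero $\langle[a],[b],[c]\rangle$ with $[a],[b],[c]$ multihomogeneous and a (multihomogeneous) defining system $(f,g)$, $df=\bar a\bar b$, $dg=\bar b\bar c$, the pair $(\Phi f,\Phi g)$ is a defining system for $\langle\Phi_*[a],\Phi_*[b],\Phi_*[c]\rangle$ whose value is $\Phi_*$ of the original value; since $\Phi_*$ is a ring isomorphism that is bijective on the finitely many multigraded components involved, it carries the indeterminacy $[a]\cdot H^*(\mathcal Z_K)+H^*(\mathcal Z_K)\cdot[c]$ onto $\Phi_*[a]\cdot H^*(\mathcal Z_{K'})+H^*(\mathcal Z_{K'})\cdot\Phi_*[c]$, so applying the injective $\Phi_*$ shows $0$ lies in the image Massey product iff it lies in the original. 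Thus the image product is again nonzero, and higher-order Massey products are handled identically, since $\Phi$ sends defining systems to defining systems. The main obstacle is the central step above: proving $\Phi$ is an isomorphism, not merely a monomorphism, on cohomology — injectivity alone would not control the indeterminacy of the image Massey product — and the crux there is the identification of the cokernel with the acyclic cochain complex of the cone $\{v_1\}*\mathrm{del}_K(v_1)$. A lesser point is the reduction to multihomogeneous classes, which is legitimate because the $\mathbb Z^n$-grading on $\widetilde R(K)$ splits cohomology, cup products, and Massey products into multihomogeneous pieces.
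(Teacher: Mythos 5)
The paper does not actually prove Theorem~\ref{limonch}: it is imported from \cite{Limonchenko} as a black box, so there is no in-paper argument to compare yours against. Judged on its own, your proof is correct and self-contained. The reduction to a single simplicial wedge, the CDGA morphism $\Phi$ with $\Phi(v_1)=v_1v_1'$, $\Phi(u_1)=u_1v_1'$, the verification that $\Phi$ is a quasi-isomorphism via the $\mathbb{Z}^n$-grading and Hochster's decomposition (the target components supported on sets containing exactly one of $v_1,v_1'$ vanish because those full subcomplexes are cones, and the cokernel in each doubled multidegree is the Koszul complex of a full subcomplex of the cone $\{v_1\}*\mathrm{del}_K(v_1)$, hence acyclic), and the transport of defining systems all check out. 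It is worth noting that your $\Phi$ is precisely a systematic version of what the paper does by hand inside the proof of Theorem~\ref{masseylagr}: the classes $\alpha'=y_{3,1}v_{3,2}\cdots v_{3,2k}v_{6,1}\cdots v_{6,2k}$, etc., are the iterated $\Phi$-images of $\alpha=y_3v_6$, etc., and the explicit defining system $h'$ there is $\Phi(h)$. Your argument upgrades that computation to a structural statement, and in passing reproves Theorem~\ref{cohomisotopic} for the pairs in question.

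Two steps are stated more briskly than they deserve, though neither is a real gap. First, the assertion that the $\mathbb{Z}^n$-grading ``splits Massey products into multihomogeneous pieces'' is not literally true: Massey products are not additive in their arguments, and $ab=0$ for inhomogeneous $a,b$ does not force the componentwise products to vanish, so the reduction to multihomogeneous classes is not automatic. Fortunately it is also unnecessary: $\Phi$ shifts the ordinary degree only by even amounts, so it preserves parity, the signs in the definition of a Massey product depend only on parity, and you can push an arbitrary ($\mathbb{Z}$-homogeneous) defining system forward directly. Second, for $n$-fold products with $n\geqslant 4$ the set of values is not a coset of the indeterminacy, so the ``apply the bijective $\Phi_*$ to value plus indeterminacy'' argument does not generalize verbatim; there you should invoke the standard fact that a DGA quasi-isomorphism induces an equality $\langle\Phi_*a_1,\ldots,\Phi_*a_n\rangle=\Phi_*\langle a_1,\ldots,a_n\rangle$ of Massey product sets (via $A_\infty$ minimal models, say). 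You correctly identified that the load-bearing point is that $\Phi$ is a quasi-isomorphism and not merely injective on cohomology --- that is exactly what licenses this last step.
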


\textbf{Remark.} The reader can skip the rest of this section.  Theorem $\ref{cohomisotopic}$ and Lemma $\ref{interfacetswedge}$ are proved in paper \cite{Bahri}, but the authors use simplicial complexes instead of intersection of quadrics. Let us give more details.

Let $P$ be a generic polytope. From Theorem $\ref{simplicialpolytope}$ we know that $P^{*}$ is simplicial and $K_P = \partial P^{*}$ is a simplicial complex. Assume that $K_P$ has $n$ vertices and denote its vertices by $\{1,...,n\}$.  Let $\mathcal{K}$ be the set of simplices of maximal dimension and we denote by $I$ a simplex of maximal dimension. We say that $i \in I$ if the vertex denoted by number $i$ belongs to the simplex $I$. We define the following:
\begin{equation*}
\begin{gathered}
Z(K_P) = \bigcup_{I \in \mathcal{K}}\prod_{i=1}^m W_i,
\\
W_i = D^{2} \;\; if \;\; i \in I, \quad \; W_i = S^{1} \;\; if \;\; i \not\in I.
\end{gathered}
\end{equation*}
where the union is taken over all simplices of maximal dimension $I$. It is not obvious that $Z(K_P)$ has smooth structure.

\begin{theorem}(\cite{Bahri} Theorem $7.5$ and \cite{torictop} Theorem $2.6.4$).
If $P$ is generic, then the complex moment-angle manifold $\mathcal{Z}_P$ is homeomorphic to $Z(K_P)$. 
\end{theorem}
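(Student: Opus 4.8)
The plan is to introduce a single \emph{canonical model} $\mathcal{X}_P$ built directly from the combinatorics of $P$, and to exhibit $\mathcal{X}_P$ as homeomorphic to each of $\mathcal{Z}_P$ and $Z(K_P)$; the theorem then follows by composing the two homeomorphisms. For a face $F$ of $P$ I write $\omega(F)\subseteq\{1,\dots,n\}$ for the set of indices of facets of $P$ containing $F$. Since $P$ is generic, $\lvert\omega(F)\rvert = m-\dim F$, and by Lemma $\ref{simplicialpolytope}$ the subset $\omega(F)$ is a simplex of $K_P$ (the empty simplex when $F=P$, a maximal simplex when $F$ is a vertex); in fact $F\mapsto\omega(F)$ is an order-reversing bijection from the faces of $P$ onto the simplices of $K_P$, the empty simplex included. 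Let $T_\omega\subseteq T^n$ be the coordinate subtorus indexed by $\omega$. I define $\mathcal{X}_P:=(P\times T^n)/\!\sim$, where $(q,t)\sim(q',t')$ iff $q=q'$ and, with $F$ the face whose relative interior contains $q$, one has $t(t')^{-1}\in T_{\omega(F)}$. As $P$ is bounded, $\mathcal{X}_P$ is compact.

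First I would show $\mathcal{Z}_P\cong\mathcal{X}_P$. Using $(\ref{polytope2})$ and $(\ref{polsys})$, the affine map $i_{A,b}$ identifies $P$ with $\{u\in\mathbb{R}^n_{\geqslant 0}:\Gamma u=\delta\}$; writing $u(q)=i_{A,b}(q)$, the coordinate $u_j(q)$ vanishes exactly when $j\in\omega(F)$ for $q\in\operatorname{relint}F$. I would then set $\Phi(q,t)=(\sqrt{u_1(q)}\,t_1,\dots,\sqrt{u_n(q)}\,t_n)$ and check three routine points: $\Phi$ lands in $\mathcal{Z}_P$ because $\sum_j\gamma_{j,k}u_j(q)=\delta_k$; $\Phi$ is surjective because for $z\in\mathcal{Z}_P$ the vector $(\lvert z_1\rvert^2,\dots,\lvert z_n\rvert^2)$ is a nonnegative solution of $(\ref{nonnegsol})$, hence of the form $u(q)$, and the phases of $z$ supply $t$; and $\Phi(q,t)=\Phi(q',t')$ precisely when $(q,t)\sim(q',t')$. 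Thus $\Phi$ descends to a continuous bijection $\mathcal{X}_P\to\mathcal{Z}_P$, and since $\mathcal{X}_P$ is compact and $\mathcal{Z}_P\subseteq\mathbb{C}^n$ is Hausdorff, it is a homeomorphism.

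Next I would run the analogous argument for $Z(K_P)$. Because every simplex of $K_P$ lies in a maximal one and $S^1\subseteq D^2$, the space $Z(K_P)$ is the polyhedral product $(D^2,S^1)^{K_P}$; writing each coordinate $w_j=\rho_j t_j$ in polar form ($\rho_j=\lvert w_j\rvert\in[0,1]$, $t_j\in S^1$, with $t_j$ arbitrary when $\rho_j=0$), the radial part $\rho(w)$ ranges over the cubical complex $\operatorname{cc}(K_P)=\bigcup_{I\text{ maximal}}\{\rho:\rho_i\in[0,1]\ (i\in I),\ \rho_j=1\ (j\notin I)\}$. I would then invoke the standard identification $\operatorname{cc}(K_P)\cong P$, valid since $K_P=\partial P^*$ with $P^*$ simplicial by Lemma $\ref{simplicialpolytope}$ (see \cite[Theorem~2.6.4]{torictop}, \cite[Theorem~7.5]{Bahri}), which matches the face $\{\rho_i=0:i\in S\}$ of $\operatorname{cc}(K_P)$ with the face $F$ of $P$ having $\omega(F)=S$. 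Under this identification $w\mapsto[(\rho(w),t(w))]$ is a continuous map $Z(K_P)\to\mathcal{X}_P$, which the same three checks show is a bijection, hence again a homeomorphism since $Z(K_P)$ is compact and $\mathcal{X}_P$ Hausdorff. Composing yields $\mathcal{Z}_P\cong Z(K_P)$.

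\textbf{The hard part.} Once the canonical model is in place, Steps 1 and 2 are essentially bookkeeping together with the observation that a continuous bijection from a compact space to a Hausdorff space is a homeomorphism. The one genuinely geometric input is the identification $\operatorname{cc}(K_P)\cong P$ \emph{compatible with the correspondence} $F\leftrightarrow\omega(F)$ — equivalently, that the bounded generic polytope $P$, viewed as a manifold with corners, carries the cubical structure dual to $K_P$. This is exactly where genericity is indispensable: it forces $\lvert\omega(F)\rvert=m-\dim F$, so that $P$ looks like a cube $[0,\varepsilon]^m$ near each vertex and $F\mapsto\omega(F)$ is the required order-reversing isomorphism; without genericity $Z(K_P)$ need not even be a manifold. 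I would either cite \cite[Theorem~2.6.4]{torictop} for this identification, or prove it directly by normalizing the functions $u_i=\langle a_i,\cdot\rangle+b_i$ in a neighbourhood of each vertex to obtain explicit cube charts on $P$, then patching them with a partition of unity subordinate to the open vertex stars, the cocycle condition on overlaps being precisely the functoriality of $F\mapsto\omega(F)$.
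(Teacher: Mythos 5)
Your proposal is correct and is essentially the standard argument from the cited references (the paper itself gives no proof, only the citation to Bahri et al.\ and Buchstaber--Panov): both $\mathcal{Z}_P$ and $Z(K_P)=(D^2,S^1)^{K_P}$ are identified with the canonical quotient $(P\times T^n)/\!\sim$, using the parametrization $z_j=\sqrt{u_j(q)}\,t_j$ on one side and the face-preserving homeomorphism $P\cong\operatorname{cc}(K_P)$ on the other. You also correctly isolate the one genuinely geometric input (the cubical structure on a simple polytope dual to $K_P$, which is exactly where genericity is used), so nothing further is needed.
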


\textbf{Example.} Let $P$ be $2-$simplex. Then $K_P$ has three vertices and we denote them by $\{1,2,3\}$. There are $3$ simplices of maximal dimension $I_1$, $I_2$, $I_3$. We have
\begin{equation*}
\begin{gathered}
Z(K_P) = \bigcup(S^1 \times D^2 \times D^2) \bigcup (D^2 \times S^1 \times D^2) = \bigcup (D^2 \times D^2 \times S^1) =\\
 \partial (D^2 \times D^2 \times D^2) = \partial (D^6) = S^5.
\end{gathered}
\end{equation*}
The complex moment-angle manifold $\mathcal{Z}_P$ associated to the standard $2-$simplex is given by
\begin{equation*}
|z_1|^2 + |z_2|^2 + |z_3|^2 = 3.
\end{equation*}
We see that $\mathcal{Z}_P = S^5 = Z(K_P)$.
\\

\section{Lagrangian quantum cohomology}\label{quantumdefinitions}

Let us consider $\mathbb{C}P^{n-1}$ endowed with the standard Fubini-Study form $\omega$. Assume that $\omega$ is normalised so that $\omega(\mathbb{C}P^1) = \pi$.  Let $L \subset \mathbb{C}P^{n-1}$ be a Lagrangian submanifold. The \emph{Maslov class} is a homomorphism $\mu : \pi_2(\mathbb{C}P^{n-1}, L) \rightarrow \mathbb{Z}$ defined by the following formula (see \cite{Kail}):
\begin{equation}\label{generalmaslovformula}
\mu(\alpha) = \frac{2n \omega(\alpha)}{\pi} + \frac{\omega_H(\partial \alpha)}{\pi},
\end{equation}
where $\alpha \in \pi_2(\mathbb{C}P^{n-1}, L)$, $\partial \alpha \in \pi_1(L)$ is its boundary, $H$ is the mean curvature vector of $L$, $\omega_H = \omega(H, \cdot)$. Let us assume that $L$ is minimal, i.e. the mean curvature vector $H=0$. Then we have

\begin{equation}\label{maslovclass}
\mu(\alpha) = \frac{2n \omega(\alpha)}{\pi}.
\end{equation}
A Lagrangian $L \subset \mathbb{C}P^{n-1}$ is called \emph{monotone} if
\begin{equation*}
\mu(\alpha) = k\omega(\alpha)
\end{equation*}
for any $\alpha \in \pi_2(\mathbb{C}P^n,L)$ and for some $k>0$.
\\
\\
\textbf{Remark}. If $L\subset \mathbb{C}P^{n-1}$ is a minimal Lagrangian, then $L$ is monotone.
\\

Define the \emph{minimal Maslov number} of $L$ to be the integer
\begin{equation*}
N_L = \{\mu(\alpha) > 0 \; | \; \alpha \in \pi_2(\mathbb{C}P^{n-1}, L)   \}.
\end{equation*}
We can define the Maslov class on $H_2(\mathbb{C}P^{n-1}, L, \mathbb{Z})$ and get a homomorphism $\mu: H_2(\mathbb{C}P^n, L) \rightarrow \mathbb{Z}$. We define the following number:
\begin{equation*}
N^H_L = \{\mu(\alpha) > 0 \; | \; \alpha \in  H_2(\mathbb{C}P^{n-1}, L, \mathbb{Z})  \}.
\end{equation*}
It turns out that for monotone Lagrangians $L \subset \mathbb{C}P^{n-1}$ we have $N_L = N_L^H$.
\\
\\
\textbf{Remark.} We are not giving the general definition of the Lagrangian quantum cohomology of $L$ in this paper. Here we briefly recall basic facts and give all definitions in a suitable form.  The reader can find all details and proofs in papers  \cite{bircan1, bircan2, buhovsky}.
\\

Let $L \subset \mathbb{C}P^{n-1}$ be a monotone orientable  spin Lagrangian with $N_L \geqslant 2$. Let $R$ be a ring and $A = R[T, T^{-1}]$ be the algebra of Laurent polynomials over $R$. We grade $A$ so that deg$(T) = N_L$. Let $f$ be a generic Morse function $f: L \rightarrow \mathbb{R}$. Denote by $C^{*}_f = R\langle Crit(f)\rangle $ the Morse complex associated to $f$. Define a complex generated by the critical points of $f$:
\begin{equation*}
CF^{*}(L, f, A) = R\langle Crit(f)\rangle \otimes A = C_f^{*} \otimes R[T, T^{-1}].
\end{equation*}
We grade $CF^{*}(L, f,  A)$ using the Morse indices of $f$ and the grading of $T$ mentioned above. The complex $CF^{*}(L, f, A)$ is called the \emph{Floer complex} of $L$.  There exists a differential (see \cite[Section 2.3]{bircan1}) $d_F : CF^{*}(L, f, A) \rightarrow CF^{*+1}(L, A)$ of the following form:
\begin{equation*}
d^{f}_{F} = \partial_f^0 + \partial_f^1 \otimes T + ... + \partial_f^m \otimes T^m,
\end{equation*}
where $\partial_f^i : C_f^{*} \rightarrow C_f^{* - iN_L + 1}$, $m = [\frac{dimL+1}{N_L}]$. Note that $\partial_f^0$ is the usual Morse-cohomology boundary operator, i.e. the cohomology of the complex $(CF^{*}(L,f, A), \partial_f^0)$ is isomorphic to the singular cohomology $H^{*}(L, A)$. Similarly, the cohomology of $(C^{*}_f, \partial_f^0)$ is isomorphic to the singular cohomology $H^{*}(L, R)$. The cohomology of the complex $(CF^{*}(L, f, A), d_F)$ is independent of $f$ and is called the \emph{Lagrangian quantum cohomology} of $L$ (see \cite[Theorem 2.1]{bircan1}. We denote it by $QH^{*}(L, A)$.

Assume that $f_1, f_2, f_3$ are generic Morse functions. Then, we are able to define a quantum product
\begin{equation*}
\begin{gathered}
CF^{i}(L,f_1,A) \otimes CF^{j}(L,f_2,A) \rightarrow CF^{i+j}(L,f_3,A) \\
x \ast y = x \ast_0 y + x \ast_1 y T +... + x \ast_k y T^k, \\
x \in C^{i}_{f_1}, \;\; y \in C^{j}_{f_2}, \;\; x \ast_s y \in C^{i+j - sN_L}_{f_3}.
\end{gathered}
\end{equation*}
The map $\ast$ is a chain map and descends to an operation in cohomology (see \cite[Theorem 2.2]{bircan1})
\begin{equation*}
QH^i(L, A) \otimes QH^j(L, A)  \rightarrow QH^{i+j}(L, A).
\end{equation*}
Moreover, the product satisfies the Leibniz rule in the following sense (see \cite{buhovsky}):
\begin{equation*}
d_F^{f_3}(x \ast y) = d_F^{f_1}(x)\ast y \pm x \ast d_F^{f_2}(y).
\end{equation*}
Without loss of generality, assume that the Morse function $f_1$ has a single minimum $x^{min}$. It is proved that $x^{min}$ is the unit, i.e. even at the chain level $x^{min} \ast y = y$ for all critical points of $f_2$.
\\
\\
Let us note that $QH(L, A) = 0$ if and only if $d_F^f(x) = x^{min}$ for some $x \in CF(L, f, A)$.
\\
\\
There exists a $A-$bilinear map chain map $h$
\begin{equation}\label{hyperact}
\begin{gathered}
h : CF^{*}(L, f, A) \rightarrow CF^{*+2}(L, f, A). \\
\end{gathered}
\end{equation}
Moreover, $h$ defines an isomorphism (see \cite[Theorem 2.4]{bircan1})
\begin{equation*}
h : QH^{i}(L, A) \rightarrow QH^{i+2}(L,A).
\end{equation*}
Assume that $\mathbb{C}P^{n-2} \subset \mathbb{C}P^{n-1}$ intersects $L$ transversally. Then $L \cap \mathbb{C}P^{n-2}$ has dimension $n-2$. Denote by $h_{0} \in H_{n-2}(L, R)$ the homology class represented by $L \cap \mathbb{C}P^{n-2}$ and denote by $h^{0} \in H^2(L,R)$ the Poincare dual class to $h_0$. Suppose that the Morse function $f$ has a single minimum $x^{min}$. If the minimal Maslov number $N_L > 2$, then
\begin{equation*}
h(x^{min}) = h^{0}.
\end{equation*}
As a result, we get the following lemma:

\begin{lemma}\label{zeroquantum}
If $N_L > 2$ and $h^{0} = 0$ as a class in $H^2(L, R)$, then $QH(L, A) = 0$.
\end{lemma}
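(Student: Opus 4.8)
The plan is to combine the two structural facts recalled just above the statement: first, that $h \colon QH^i(L,A)\to QH^{i+2}(L,A)$ is an \emph{isomorphism} (Biran--Cornea, cited as \cite[Theorem 2.4]{bircan1}); and second, that when $N_L>2$ the chain-level operator $h$ sends the minimum $x^{min}$ (the unit of the quantum product) to the class $h^0\in H^2(L,R)$ represented by $L\cap \mathbb{C}P^{n-2}$. The key observation is that $h$ is $A$-linear and $x^{min}$ is the unit for $\ast$, so $h$ is essentially multiplication by $h(x^{min})=h^0$ on all of $QH^*(L,A)$.

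First I would spell out that $h(\alpha)=h(x^{min}\ast\alpha)=h(x^{min})\ast\alpha=h^0\ast\alpha$ for every $\alpha\in QH^*(L,A)$; this uses that $h$ is a morphism of $QH^*(L,A)$-modules (or at least that it intertwines the quantum product with $x^{min}$, which is what the Leibniz rule plus unitality of $x^{min}$ give). Then, under the hypothesis $h^0=0$ in $H^2(L,R)$, and since $N_L>2$ forces $h(x^{min})=h^0$ exactly (no higher $T$-corrections appear in this degree for dimension reasons), we get $h(x^{min})=0$ in $QH^2(L,A)$, hence $h(\alpha)=0$ for all $\alpha$, i.e. $h$ is the zero map. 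But $h$ is also an isomorphism $QH^i(L,A)\to QH^{i+2}(L,A)$. An isomorphism that is identically zero forces the groups to vanish: $QH^i(L,A)=0$ for all $i$, that is, $QH(L,A)=0$.

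One point to be careful about is the passage from ``$h^0=0$ in $H^2(L,R)$'' to ``$h(x^{min})=0$ in $QH^2(L,A)$'': the class $h(x^{min})$ lives in quantum cohomology, and a priori one only knows it equals $h^0$ when $N_L>2$ because then the quantum correction terms $\ast_s$ with $s\geq 1$ land in negative degrees relative to the relevant degree and vanish. I would invoke the $N_L>2$ hypothesis precisely here, citing the computation preceding the statement. I also want $h$ to be genuinely $A$-linear and compatible with the unit, which is exactly the content recalled in the paragraph on \eqref{hyperact}.

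The main obstacle is really just bookkeeping the degree shift and making sure the identity $h(x^{min})=h^0$ is legitimate at the level of $QH^2$, not merely in singular cohomology $H^2$; once that is granted, the argument is the short syllogism ``$h$ is zero and an isomorphism, so the target vanishes.'' No hard analysis is needed — all the Floer-theoretic input (the module structure of $h$, the value $h(x^{min})=h^0$, and the fact that $h$ is an isomorphism) is quoted from \cite{bircan1}.
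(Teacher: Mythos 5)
Your proposal is correct and rests on the same two inputs as the paper's proof: the isomorphism $h:QH^{i}(L,A)\to QH^{i+2}(L,A)$ and the identity $h(x^{min})=h^{0}$ valid when $N_L>2$. The difference is in how you finish. You upgrade $h$ to the operator ``quantum multiplication by $h^{0}$'' on all of $QH^{*}(L,A)$, via $h(\alpha)=h(x^{min}\ast\alpha)=h(x^{min})\ast\alpha$, conclude that $h\equiv 0$, and then note that a zero isomorphism forces the target to vanish. This is valid, but it requires $h$ to be a morphism of $QH^{*}(L,A)$-modules, a fact which is true in the Biran--Cornea framework but is \emph{not} among the properties recalled in Section~\ref{quantumdefinitions} (the paper only records that $h$ is an $A$-linear chain map, an isomorphism on cohomology, and that $h(x^{min})=h^{0}$); if you take this route you should cite the module property explicitly. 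The paper's own argument is more economical: since $h$ is injective and $h([x^{min}])=h^{0}=0$, the class $[x^{min}]$ itself vanishes, i.e.\ $d_F^{f}(x)=x^{min}$ for some $x$; as $x^{min}$ is the unit even at chain level, a unital ring whose unit is zero is the zero ring, which is exactly the criterion ``$QH(L,A)=0$ iff $x^{min}$ is a Floer coboundary'' stated just before the lemma. So the paper applies injectivity of $h$ to the single class $[x^{min}]$, whereas you globalize $h$ first; both close the argument, but the paper's version needs one fewer quoted property of $h$. Your caution about why $h(x^{min})$ equals $h^{0}$ on the nose only when $N_L>2$ is well placed and matches the paper's use of that hypothesis.
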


\begin{proof}
We have  $h(x^{min}) = h^{0} = 0$. Therefore, there exists $x$ such that $d_F^f(x) = x^{min}$ because $h$ is isomorphism. Since $x^{min}$ is the unit of the algebra, we get that  $QH(L, A) = 0$.
\end{proof}

Let us recall that $A = R[T, T^{-1}]$ is the algebra of Laurent polynomials over $R$. We have the decomposition $A = \oplus A^i$, where $A^i$ is the subspace of homogenous elements of degree $i$. There exists a spectral sequence $(E^{p,q}, d_r)$ with the following properties (see \cite{bircan1}, \cite{buhovsky}):
\\
\\
-$E_0^{p,q} = C_f^{p+q - pN_L} \otimes T^{p}$ and $d_0 = \partial_0 \otimes 1$.
\\
\\
- $E^{p,q}_1 = H^{p+q - pN_L}(L, R) \otimes A^{pN_L}$, $d_1 = \delta_1 \otimes T$, where $\delta_1 : H^{p+q-pN_L} \rightarrow H^{p+q-(p+1)N_L}$. The multiplication on $H^{*}(L,R)$, induced
from the multiplication on $E^{p,q}_1$, coincides with the standard cup product. In other words, if $a = x\otimes T^{p} \in E^{p,q}_1$, $b = y \otimes T^{p'} \in E^{p',q'}_1$, then $a \cdot b = x \smile y \otimes T^{p+p'}$, where $\cdot$ is the product on $E_1$.
\\
\\
- For every $r \geqslant 1$, $E_r^{p,q} = V_r^{p,q} \otimes A^{p}$ with $d_r = \delta_r \otimes T^r$, where $\delta_r$ are homomorphisms $\delta_r : V_r^{p,q} \rightarrow V^{p+r, q - r+1}$ for every $p, q$ and satisfy $\delta_r \circ \delta_r = 0$.
\\
\\
- The spectral sequence $(E_r^{p,q}, d_r)$ converges to $QH(L, A)$.

\section{Real Moment-angle manifolds and Lagrangian submanifolds}\label{mainconstructionmain}
\subsection{Intersection of quadrics and Lagrangian submanifolds of $\mathbb{C}^{n}$}\label{maincostruction0}
Mironov in \cite{MironovCn} found a very interesting method for constructing Hamiltonian-minimal Lagrangian submanifolds of $\mathbb{C}^n$. In this section we explain his method.

Let $\widetilde{\Gamma}$ be a matrix with columns $\widetilde{\gamma}_j \in \mathbb{Z}^{n-m}$, $j=1,...,n$. Assume that the integer vectors (in this section we omit the transposition sign $T$ and write columns as rows)
\begin{equation*}
\widetilde{\gamma}_j=(\widetilde{\gamma}_{j,1},...,\widetilde{\gamma}_{j, n-m}) \in \mathbb{Z}^{n-m}, \quad j=1,...,n
\end{equation*}
are linearly independent and form a lattice $\widetilde{\Lambda} \subset \mathbb{R}^{n-m}$ of maximum rank $n-m$. The dual lattice $\widetilde{\Lambda}^{*}$ is defined by
\begin{equation*}
\widetilde{\Lambda}^{*}=\{\lambda^{*} \in \mathbb{R}^{n-m}| <\widetilde{\lambda}^{*}, \widetilde{\lambda}> \in \mathbb{Z},  \;\; \forall \widetilde{\lambda} \in \widetilde{\Lambda}\},
\end{equation*}
where $<\cdot, \cdot>$ is the standard Euclidian product on $\mathbb{R}^{n-m}$. Define the following group:
\begin{equation*}
D_{\widetilde{\Gamma}} = \widetilde{\Lambda}^{*}/2\widetilde{\Lambda}^{*}\approx \mathbb{Z}_2^{n-m}.
\end{equation*}
Denote by $T_{\widetilde{\Gamma}}$ an $(n-m)$-dimensional torus
\begin{equation}\label{torus0}
\begin{gathered}
T_{\widetilde{\Gamma}} = (e^{i\pi<\widetilde{\gamma}_1, \widetilde{\varphi}>},...,e^{i\pi<\widetilde{\gamma}_{n}, \widetilde{\varphi}>}) \subset \mathbb{C}^n, \\
\widetilde{\varphi}=(\varphi_1,...,\varphi_{n-m})\in \mathbb{R}^{n-m}.
\end{gathered}
\end{equation}

Let $\widetilde{\mathcal{R}}$ be an $m$-dimensional submanifold of $\mathbb{R}^{n}$ defined by a system
\begin{equation}\label{eqmain0}
\begin{gathered}
\widetilde{\mathcal{R}} = \left\{
 \begin{array}{l}
 \widetilde{\gamma}_{1,r}u_1^2 + ... + \widetilde{\gamma}_{n,r}u_{n}^2 = \delta_r, \quad r=1,...,n-m
 \end{array}
\right.
\end{gathered}
\end{equation}
Consider a map
\begin{equation*}
\begin{gathered}
\widetilde{\psi}: \mathcal{\widetilde{R}} \times T_{\widetilde{\Gamma}} \rightarrow \mathbb{C}^{n},\\
\widetilde{\psi}(u_1,...,u_{n},\widetilde{\varphi}) = (u_1e^{i\pi<\widetilde{\gamma}_1,\tilde{\varphi}>}, ... , u_{n}e^{i\pi<\widetilde{\gamma}_{n},\tilde{\varphi}>}).
\end{gathered}
\end{equation*}
Let $\widetilde{\varepsilon} \in D_{\widetilde{\Gamma}}$ be a nontrivial element. By definition, we have $<\widetilde{\gamma}_j, \widetilde{\varepsilon}> \in \mathbb{Z}$ and $\cos(\pi<\widetilde{\gamma}_j, \varepsilon>) = \pm 1$ for all $j$. Therefore, we can define an action of $D_{\widetilde{\Gamma}}$ on $\mathcal{\widetilde{R}}$ and $T_{\widetilde{\Gamma}}$
\begin{equation*}
\begin{gathered}
\widetilde{\varepsilon} \cdot (u_1,...,u_{n}) =  (u_1\cos(\pi<\widetilde{\gamma}_1, \widetilde{\varepsilon}>),..,u_{n}\cos(\pi< \widetilde{\gamma}_{n}, \widetilde{\varepsilon}>), \\
\widetilde{\varepsilon} \cdot \widetilde{\varphi} = \widetilde{\varphi} + \widetilde{\varepsilon},
\end{gathered}
\end{equation*}
We define the action of $D_{\widetilde{\Gamma}}$ on $\mathcal{\widetilde{R}} \times T_{\widetilde{\Gamma}}$ diagonally, i.e.
\begin{equation*}
\varepsilon \cdot (u_1,...,u_n, \widetilde{\varphi}) = (\varepsilon \cdot (u_1,...,u_n), \widetilde{\varphi} + \varepsilon).
\end{equation*}
Then, we see that
\begin{equation*}
\widetilde{\psi}(u_1,...,u_{n},\widetilde{\varphi}) =  \widetilde{\psi}( \varepsilon \cdot (u_1...,u_n, \widetilde{\varphi})).
\end{equation*}
The formula above shows that $\widetilde{\psi}(\mathcal{R} \times T_{\widetilde{\Gamma}})$ is not embedded and all points of an orbit of $D_{\widetilde{\Gamma}}$ have the same image. So, it is natural to take quotient of $\mathcal{\widetilde{R}} \times {T_{\widetilde{\Gamma}}}$ by  $D_{\widetilde{\Gamma}}$ and consider
\begin{equation*}
\mathcal{\widetilde{N}} = (\mathcal{\widetilde{R}} \times T_{\widetilde{\Gamma}})/D_{\widetilde{\Gamma}}.
\end{equation*}
As a result we have a map
\begin{equation}\label{mainmapcn}
\begin{gathered}
\widetilde{\psi}: \mathcal{\widetilde{N}} \rightarrow \mathbb{C}^{n},\\
\widetilde{\psi}(u_1,...,u_{n},\widetilde{\varphi}) = (u_1e^{i\pi<\widetilde{\gamma}_1, \widetilde{\varphi}>},..., u_{n}e^{i\pi<\widetilde{\gamma}_{n}, \widetilde{\varphi}>}).
\end{gathered}
\end{equation}
Note that $D_{\widetilde{\Gamma}}$ acts freely on $T_{\widetilde{\Gamma}}$. Therefore, the action is free on $\mathcal{\widetilde{R}} \times T_{\widetilde{\Gamma}}$ and $\mathcal{\widetilde{N}}$ is smooth $n$-manifold. The projection
\begin{equation*}
\mathcal{\widetilde{N}} = (\mathcal{\widetilde{R}} \times T_{\widetilde{\Gamma}})/D_{\widetilde{\Gamma}} \rightarrow T_{\widetilde{\Gamma}}/D_{\widetilde{\Gamma}} = T^{n-m}
\end{equation*}
onto the second factor defines a fibration over $(n-m)$-dimensional torus $T^{n-m}$ with fiber $\mathcal{\widetilde{R}}$.
\\
\\
\textbf{Remark.} Let  $\varepsilon_1,...,\varepsilon_{n-m}$ be a basis for $\widetilde{\Lambda}^{*}$. Then these vectors form a basis for the tori, i.e.
\begin{equation*}
\begin{gathered}
T_{\widetilde{\Gamma}} = S_1^1 \times ... \times S_{n-m}^1, \quad S_k^{1} = \mathbb{R}\langle 2\widetilde{\varepsilon}_k\rangle/\mathbb{Z}\langle 2\widetilde{\varepsilon}_k\rangle, \\
T^{n-m} = S_1^1 \times ... \times S_{n-m}^1, \quad S_k^{1} = \mathbb{R}\langle \widetilde{\varepsilon}_k\rangle/\mathbb{Z}\langle \widetilde{\varepsilon}_k\rangle
\end{gathered}
\end{equation*}

\begin{theorem}\label{mironovtheorem}(\cite{MironovCn}).
The submanifold $\widetilde{L} = \widetilde{\psi}(\mathcal{\widetilde{N}}) \subset \mathbb{C}^{n}$ is immersed Hamiltonian-minimal Lagrangian.
\end{theorem}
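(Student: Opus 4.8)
The plan is to check three things in order: that $\widetilde{\psi}$ is an immersion of an $n$-dimensional manifold, that its image is isotropic (and hence Lagrangian, since $\dim\widetilde{\mathcal{N}}=m+(n-m)=n=\tfrac12\dim_{\mathbb{R}}\mathbb{C}^n$), and finally that the immersion is Hamiltonian-minimal; the last point carries the geometric content. For isotropy I would pull back the standard form $\omega_0=\sum_j dx_j\wedge dy_j$. Writing $\theta_j=\pi\langle\widetilde{\gamma}_j,\widetilde{\varphi}\rangle$, so that the $j$-th component of $\widetilde{\psi}$ is $u_je^{i\theta_j}$, one computes $\widetilde{\psi}^*(dx_j\wedge dy_j)=u_j\,du_j\wedge d\theta_j$, and therefore
\begin{equation*}
\widetilde{\psi}^*\omega_0=\frac{\pi}{2}\sum_{r=1}^{n-m} d\!\Big(\sum_{j=1}^{n}\widetilde{\gamma}_{j,r}u_j^2\Big)\wedge d\varphi_r=0
\end{equation*}
on $\widetilde{\mathcal{R}}\times T_{\widetilde{\Gamma}}$, since each $\sum_j\widetilde{\gamma}_{j,r}u_j^2=\delta_r$ is constant there by $(\ref{eqmain0})$. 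This form is $D_{\widetilde{\Gamma}}$-invariant, so it descends, and $\widetilde{L}=\widetilde{\psi}(\widetilde{\mathcal{N}})$ is Lagrangian wherever $\widetilde{\psi}$ is an immersion.

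\textbf{The immersion property.} A tangent vector to $\widetilde{\mathcal{R}}\times T_{\widetilde{\Gamma}}$ is a pair $(\dot u,\dot{\widetilde{\varphi}})$ with $\dot u\in T\widetilde{\mathcal{R}}$, and $d\widetilde{\psi}$ sends it to the vector whose $j$-th complex component is $e^{i\theta_j}\big(\dot u_j+i\pi u_j\langle\widetilde{\gamma}_j,\dot{\widetilde{\varphi}}\rangle\big)$. This vanishes only if $\dot u_j=0$ for all $j$ (so $\dot u=0$) and $u_j\langle\widetilde{\gamma}_j,\dot{\widetilde{\varphi}}\rangle=0$ for all $j$; the latter forces $\dot{\widetilde{\varphi}}=0$ as soon as $\{\widetilde{\gamma}_j:u_j\neq0\}$ spans $\mathbb{R}^{n-m}$. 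I would deduce this spanning from genericity of the presentation $P_{A,b}$ (equivalently, smoothness of $\widetilde{\mathcal{R}}$ and of all its coordinate-subspace sections): if $Z=\{j:u_j=0\}$, then $\delta=\sum_{j\notin Z}u_j^2\widetilde{\gamma}_j$ lies in the cone on $\{\widetilde{\gamma}_j:j\notin Z\}$, and the genericity criterion recalled in Section $\ref{mainconstructionpol}$ then forces these vectors to have rank $n-m$. Since $D_{\widetilde{\Gamma}}$ acts freely, $\widetilde{\psi}$ descends to an immersion of $\widetilde{\mathcal{N}}$; injectivity can fail, which is why one only gets an \emph{immersed} Lagrangian.

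\textbf{Hamiltonian-minimality.} This is the main obstacle. Since $\mathbb{C}^n$ is flat Kähler, the mean curvature $1$-form $\sigma_H=\iota_H\omega_0$ of a Lagrangian is closed, and by the first-variation characterization of Hamiltonian-minimality (Oh) the immersion is $H$-minimal precisely when $\sigma_H$ is co-closed, i.e. harmonic for the induced metric; so the task is to compute $\sigma_H$ and check harmonicity. I would use $\sigma_H=d\beta$, where $\beta$ is the Lagrangian angle of the holomorphic volume form $\Omega=dz_1\wedge\cdots\wedge dz_n$. Pulling back, $\Omega=e^{i\sum_j\theta_j}\bigwedge_j(du_j+iu_j\,d\theta_j)$; restricted to $\widetilde{\mathcal{R}}\times T_{\widetilde{\Gamma}}$, in each factor the $du_j$-term involves only $\widetilde{\mathcal{R}}$-coordinates and the $u_j\,d\theta_j=\pi u_j\langle\widetilde{\gamma}_j,d\widetilde{\varphi}\rangle$-term only torus coordinates, so expanding the product shows $\Omega|_{\widetilde{L}}$ equals $i^{\,n-m}e^{i\sum_j\theta_j}F$ times a coordinate volume form, with $F$ a \emph{real} function of the $\widetilde{\mathcal{R}}$-coordinates alone; $F$ is nowhere zero because $\widetilde{\psi}$ is an immersion, hence of locally constant sign. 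Consequently $\beta\equiv\sum_{j=1}^{n}\theta_j$ modulo locally constant terms, and
\begin{equation*}
\sigma_H=d\beta=\pi\Big\langle\sum_{j=1}^{n}\widetilde{\gamma}_j,\;d\widetilde{\varphi}\Big\rangle,
\end{equation*}
which has \emph{constant} coefficients in the frame $d\varphi_1,\dots,d\varphi_{n-m}$ and no $du$-component.

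\textbf{Conclusion and where the difficulty lies.} It then remains to see that this $\sigma_H$ is harmonic. From $|d(u_je^{i\theta_j})|^2=du_j^2+u_j^2\,d\theta_j^2$ the induced metric on $\widetilde{\mathcal{R}}\times T_{\widetilde{\Gamma}}$ is $g=g_{\widetilde{\mathcal{R}}}(u)+\sum_j u_j^2\big(\pi\langle\widetilde{\gamma}_j,d\widetilde{\varphi}\rangle\big)^2$: it has no mixed $du\,d\varphi$ terms, and all of its coefficients are independent of $\widetilde{\varphi}$. For such a metric the codifferential of any $1$-form $\sum_r c_r\,d\varphi_r$ with constant $c_r$ vanishes identically, because $\sqrt{|g|}\,g^{\mu\nu}(\sum_r c_r\,d\varphi_r)_\nu$ is supported on the $d\varphi$-directions and depends only on $u$, so its divergence is zero. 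Hence $\sigma_H$ is harmonic, $\widetilde{L}$ is Hamiltonian-minimal, and passing to the free quotient $\widetilde{\mathcal{N}}$ affects none of this. The genuinely delicate step is the identification $\beta\equiv\sum_j\theta_j$: one must verify that the $\widetilde{\mathcal{R}}$-factor, lying in the real locus of $\mathbb{C}^n$, contributes no variable phase, and then recognise from the twisted-product shape of $g$ that the $T_{\widetilde{\Gamma}}$-invariance of $\sigma_H$ already forces its co-closedness.
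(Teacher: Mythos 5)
The paper does not prove this statement; it is quoted verbatim from Mironov's paper \cite{MironovCn}, so there is no in-paper argument to compare against. Your proposal is, as far as I can check, a correct reconstruction of the standard proof: the polar-coordinate computation $\widetilde{\psi}^{*}(dx_j\wedge dy_j)=u_j\,du_j\wedge d\theta_j$ kills the symplectic form on the level set of the quadrics; the differential is injective once $\{\widetilde{\gamma}_j:u_j\neq 0\}$ spans $\mathbb{R}^{n-m}$; and the Lagrangian-angle identity $\sigma_H=d\beta$ with $\beta\equiv\sum_j\theta_j$ reduces Hamiltonian-minimality to the co-closedness of a constant-coefficient form on the torus factor of a block-diagonal, $\widetilde{\varphi}$-independent metric. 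Your formula $\sigma_H=\pi\langle\sum_j\widetilde{\gamma}_j,d\widetilde{\varphi}\rangle$ is exactly consistent with the Maslov-class formula the paper later records as Theorem \ref{fanocondition}, which is a good sanity check. One small tightening: to get the spanning of $\{\widetilde{\gamma}_j:u_j\neq 0\}$ it is cleaner to invoke smoothness of $\widetilde{\mathcal{R}}$ directly — the Jacobian of the defining quadrics at $u$ is the matrix with columns $2u_j\widetilde{\gamma}_j$, so regularity of the level set \emph{is} the required full rank — rather than the cone criterion, whose literal statement ($r\geqslant n-m$) only bounds the cardinality of the supporting set; the equivalence of the two is part 1 of the theorem quoted in Section \ref{mainconstructionpol}, so your appeal is legitimate, just one step less direct.
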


Let us denote by $\mathbb{Z}\langle \widetilde{\gamma}_1,....,\widetilde{\gamma}_n\rangle$ the set of integer linear combinations of vectors $\widetilde{\gamma}_1, ..., \widetilde{\gamma}_n$. For any $u=(u_1,...,u_n) \in \mathcal{\widetilde{R}}$ we have a sublattice
\begin{equation}\label{embcondition}
\widetilde{\Lambda}_u = \mathbb{Z}\langle \widetilde{\gamma}_j : u_j \neq 0 \rangle \subset \widetilde{\Lambda} = \mathbb{Z}\langle \widetilde{\gamma}_1,...,\widetilde{\gamma}_n \rangle.
\end{equation}

\begin{theorem}\label{mirpanemb0}(\cite[Lemma 3.1]{Mirpan}).
The Lagrangian $\widetilde{L} = \widetilde{\psi}(\mathcal{\widetilde{N}})$ is embedded if and only if $\widetilde{\Lambda}_u = \widetilde{\Lambda}$ for any $u \in \mathcal{\widetilde{R}}$.
\end{theorem}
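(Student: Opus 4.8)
The plan is to reduce the statement to injectivity of $\widetilde\psi$ and then describe the self-intersections of $\widetilde\psi$ explicitly in terms of the sublattices $\widetilde\Lambda_u$. Since $\mathcal{\widetilde R}$ is cut out by the bounded system $(\ref{eqmain0})$ it is compact (Lemma $\ref{boundedpolytope}$), hence so is $\mathcal{\widetilde N} = (\mathcal{\widetilde R} \times T_{\widetilde\Gamma})/D_{\widetilde\Gamma}$, and by Theorem $\ref{mironovtheorem}$ the map $\widetilde\psi$ is an immersion. An injective immersion from a compact manifold is an embedding, so $\widetilde L = \widetilde\psi(\mathcal{\widetilde N})$ is an embedded submanifold if and only if $\widetilde\psi$ is injective, and I work with injectivity from here on.

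First I would unwind the equation $\widetilde\psi(u,\widetilde\varphi) = \widetilde\psi(u',\widetilde\varphi')$. Comparing absolute values of the $j$-th coordinates gives $|u_j| = |u'_j|$, so $u'_j = s_j u_j$ with $s_j \in \{\pm 1\}$ when $u_j \neq 0$ (and $s_j$ arbitrary when $u_j = 0$); note $u' \in \mathcal{\widetilde R}$ automatically, as $(\ref{eqmain0})$ only involves the $u_j^2$. Comparing phases on the coordinates with $u_j \neq 0$ gives $e^{i\pi\langle \widetilde\gamma_j,\, \widetilde\varphi - \widetilde\varphi'\rangle} = s_j$, so $\langle \widetilde\gamma_j,\, \widetilde\varphi - \widetilde\varphi'\rangle \in \mathbb{Z}$ for every $j$ with $u_j \neq 0$, i.e. $\eta := \widetilde\varphi - \widetilde\varphi'$ lies in $\widetilde\Lambda_u^{*}$ modulo $2\widetilde\Lambda^{*}$, where $\widetilde\Lambda_u^{*} = \{\eta : \langle \widetilde\gamma_j,\eta\rangle \in \mathbb{Z}\ \text{for all}\ j\ \text{with}\ u_j \neq 0\}$ is dual to the lattice $\widetilde\Lambda_u$ of $(\ref{embcondition})$, and moreover $s_j = (-1)^{\langle \widetilde\gamma_j,\eta\rangle}$ on those coordinates. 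On the other hand, recalling that $T_{\widetilde\Gamma} = \mathbb{R}^{n-m}/2\widetilde\Lambda^{*}$, that $D_{\widetilde\Gamma} = \widetilde\Lambda^{*}/2\widetilde\Lambda^{*}$ translates the torus factor, and that a class $\widetilde\varepsilon \in D_{\widetilde\Gamma}$ acts on $\mathcal{\widetilde R}$ by $u_j \mapsto u_j\cos(\pi\langle \widetilde\gamma_j,\widetilde\varepsilon\rangle) = (-1)^{\langle \widetilde\gamma_j,\widetilde\varepsilon\rangle}u_j$, the pairs $(u,\widetilde\varphi)$ and $(u',\widetilde\varphi')$ represent the same point of $\mathcal{\widetilde N}$ precisely when $\eta$ can be chosen in $\widetilde\Lambda^{*}$ modulo $2\widetilde\Lambda^{*}$.

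The theorem then follows from this dictionary. If $\widetilde\Lambda_u = \widetilde\Lambda$ for every $u \in \mathcal{\widetilde R}$, then $\widetilde\Lambda_u^{*} = \widetilde\Lambda^{*}$, so in any coincidence $\eta \in \widetilde\Lambda^{*}$; taking $\widetilde\varepsilon$ to be the class of $-\eta$ in $D_{\widetilde\Gamma}$ (equal to the class of $\eta$, as $D_{\widetilde\Gamma}$ is $2$-torsion) one checks $u_j(-1)^{\langle \widetilde\gamma_j,\widetilde\varepsilon\rangle} = u'_j$ on both the $u_j \neq 0$ and the $u_j = 0$ coordinates, so $(u,\widetilde\varphi)$ and $(u',\widetilde\varphi')$ lie in one $D_{\widetilde\Gamma}$-orbit and $\widetilde\psi$ is injective. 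Conversely, if some $u^{0} \in \mathcal{\widetilde R}$ has $\widetilde\Lambda_{u^{0}} \subsetneq \widetilde\Lambda$, pick $\eta \in \widetilde\Lambda_{u^{0}}^{*} \setminus \widetilde\Lambda^{*}$ and set $\widetilde\varphi' = \widetilde\varphi - \eta$, $u'_j = (-1)^{\langle \widetilde\gamma_j,\eta\rangle}u^{0}_j$ for $u^{0}_j \neq 0$ and $u'_j = 0$ otherwise; a one-line computation using $(-1)^{-\langle \widetilde\gamma_j,\eta\rangle} = (-1)^{\langle \widetilde\gamma_j,\eta\rangle}$ gives $\widetilde\psi(u^{0},\widetilde\varphi) = \widetilde\psi(u',\widetilde\varphi')$, while these points of $\mathcal{\widetilde N}$ are distinct because $\widetilde\varphi$ and $\widetilde\varphi - \eta$ are not related by the $D_{\widetilde\Gamma}$-translation on $T_{\widetilde\Gamma}$ (as $\eta \notin \widetilde\Lambda^{*}$); hence $\widetilde\psi$ is not injective.

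I expect the main obstacle to be bookkeeping rather than a hard argument: one must keep straight the two lattices — $T_{\widetilde\Gamma}$ carries $2\widetilde\Lambda^{*}$, the deck group $D_{\widetilde\Gamma}$ is $\widetilde\Lambda^{*}/2\widetilde\Lambda^{*}$, and the equation only forces $\eta$ into the possibly larger $\widetilde\Lambda_u^{*}$ — and make the sign conventions of the $D_{\widetilde\Gamma}$-action ($\cos(\pi\langle\cdot,\cdot\rangle)$ versus $(-1)^{\langle\cdot,\cdot\rangle}$) line up exactly with the phase ambiguities $u'_j = \pm u_j$, treating separately the coordinates where $u_j = 0$. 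A secondary point worth addressing is that $\widetilde\Lambda_u$ is a full-rank sublattice for every $u$, so that $\widetilde\Lambda_u^{*}$ is again a lattice; this follows from the genericity and smoothness hypotheses on $\mathcal{\widetilde R}$ recorded before the statement.
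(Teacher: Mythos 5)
Your argument is correct. The paper does not actually prove Theorem \ref{mirpanemb0} (it is quoted from \cite[Lemma 3.1]{Mirpan}), but your sufficiency direction is precisely the unwinding the paper performs for the projective analogue in Lemma \ref{embc1}: compare moduli to get $|u_j|=|u'_j|$, compare phases on the nonzero coordinates to force $\eta=\widetilde\varphi-\widetilde\varphi'\in\widetilde\Lambda_u^{*}=\widetilde\Lambda^{*}$, and conclude the two preimages lie in a single $D_{\widetilde\Gamma}$-orbit; your converse, manufacturing a genuine double point from any $\eta\in\widetilde\Lambda_{u^{0}}^{*}\setminus\widetilde\Lambda^{*}$, is also sound and is the part the paper leaves entirely to the citation. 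The one prerequisite you rightly flag --- that $\widetilde\Lambda_u$ is full rank, so that $\widetilde\Lambda_{u}\subsetneq\widetilde\Lambda$ really gives $\widetilde\Lambda_{u}^{*}\supsetneq\widetilde\Lambda^{*}$ --- does hold, since smoothness of $\widetilde{\mathcal{R}}$ at $u$ means the Jacobian $(2\widetilde\gamma_{j,r}u_j)$ has rank $n-m$, i.e.\ $\{\widetilde\gamma_j: u_j\neq 0\}$ spans $\mathbb{R}^{n-m}$.
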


We know from Section $\ref{mainconstructionpol}$ that there exists a polytope $P$ associated to $\mathcal{\widetilde{R}}$.

\begin{theorem}\label{embdelzant0}(\cite[Lemma 4.5]{Mirpan}).
Let $P$ be the polytope associated to $\widetilde{\mathcal{R}}$. The Lagrangian $\widetilde{L} = \widetilde{\psi}(\mathcal{\widetilde{N}})$ is embedded if and only if the polytope $P$ is Delzant. So, the polytope $P$ is Delzant if and only if $\widetilde{\Lambda}_u = \widetilde{\Lambda}$ for any $u \in \mathcal{\widetilde{R}}$.
\end{theorem}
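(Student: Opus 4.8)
The plan is to reduce the statement to Theorem \ref{mirpanemb0}, which already identifies embeddedness of $\widetilde{L}$ with the condition $\widetilde{\Lambda}_u = \widetilde{\Lambda}$ for all $u \in \widetilde{\mathcal{R}}$; so it remains to prove the purely combinatorial equivalence: the polytope $P$ is Delzant if and only if $\widetilde{\Lambda}_u = \widetilde{\Lambda}$ for every $u \in \widetilde{\mathcal{R}}$. I would establish this by Gale duality, vertex by vertex.

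First I would reduce the condition to the vertices of $P$. The sublattice $\widetilde{\Lambda}_u = \mathbb{Z}\langle \widetilde{\gamma}_j : u_j \neq 0 \rangle$ depends only on the zero-set $Z(u) = \{\, j : u_j = 0\,\}$, and by the correspondence of Section \ref{mainconstructionpol} between $\widetilde{\mathcal{R}}$ and $P$, the set $Z(u)$ is precisely the set of facets of $P$ containing the face in whose relative interior the image of $u$ lies. If $v$ is a vertex of that face with facet-set $I(v)$, then $Z(u) \subseteq I(v)$, so $\widetilde{\Lambda}_u \supseteq \mathbb{Z}\langle \widetilde{\gamma}_j : j \notin I(v)\rangle$. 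Since $P$ is bounded, every face contains a vertex; hence "$\widetilde{\Lambda}_u = \widetilde{\Lambda}$ for all $u$" is equivalent to "$\widetilde{\Lambda}_u = \widetilde{\Lambda}$ for every $u$ lying over a vertex." At a vertex $v$, genericity gives $|I(v)| = m$, so the complement $\overline{I(v)}$ has $n-m$ elements, $\widetilde{\Lambda}_u = \mathbb{Z}\langle \widetilde{\gamma}_j : j \in \overline{I(v)}\rangle$, and these $\widetilde{\gamma}_j$ are linearly independent (they are Gale-dual to the linearly independent normals $a_i$, $i \in I(v)$). So the condition reads: for every vertex $v$, the $n-m$ vectors $\{\widetilde{\gamma}_j : j \in \overline{I(v)}\}$ form a $\mathbb{Z}$-basis of $\widetilde{\Lambda}$. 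By definition, $P$ is Delzant exactly when, for every vertex $v$, the $m$ vectors $\{a_i : i \in I(v)\}$ form a $\mathbb{Z}$-basis of $\mathbb{Z}^m$.

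Thus everything comes down to the Gale-duality statement: for any $I \subseteq \{1,\dots,n\}$ with $|I| = m$ on which $\{a_i\}_{i\in I}$ is linearly independent, $\{a_i\}_{i\in I}$ is a $\mathbb{Z}$-basis of $\mathbb{Z}^m$ if and only if $\{\widetilde{\gamma}_j\}_{j\in\overline{I}}$ is a $\mathbb{Z}$-basis of $\widetilde{\Lambda}$. I would prove this from the short exact sequence $0 \to \mathbb{Z}^m \xrightarrow{A^T} \mathbb{Z}^n \xrightarrow{\widetilde{\Gamma}} \widetilde{\Lambda} \to 0$, whose exactness encodes that the rows of $\widetilde{\Gamma}$ form a $\mathbb{Z}$-basis of the (saturated) lattice of integral linear relations among $a_1,\dots,a_n$ and that the $a_i$ span $\mathbb{Z}^m$ (both built into the reconstruction of $P$ from $\widetilde{\mathcal{R}}$ described in Section \ref{mainconstructionpol}). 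Splitting $\mathbb{Z}^n = \mathbb{Z}^{I}\oplus\mathbb{Z}^{\overline{I}}$ and running a snake-lemma argument on the induced maps $A_I^T \colon \mathbb{Z}^m \to \mathbb{Z}^{I}$ and $\widetilde{\Gamma}_{\overline{I}}\colon \mathbb{Z}^{\overline{I}} \to \widetilde{\Lambda}$ (each between free abelian groups of equal rank) identifies their cokernels, giving $|\det A_I| = |\det \widetilde{\Gamma}_{\overline{I}}|$, i.e. $[\mathbb{Z}^m : \mathbb{Z}\langle a_i\rangle_{i\in I}] = [\widetilde{\Lambda} : \mathbb{Z}\langle \widetilde{\gamma}_j\rangle_{j\in\overline{I}}]$; one index equals $1$ if and only if the other does. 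Combining this with the two reductions above and Theorem \ref{mirpanemb0} finishes the proof.

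The main obstacle I anticipate is the lattice bookkeeping in this last step: making sure the presentation of $P$ is the one for which $A$ and $\widetilde{\Gamma}$ genuinely fit into the short exact sequence above (so that the cokernel identification holds on the nose, rather than only up to a global finite index), and then carrying out the snake-lemma/determinant computation cleanly. The face-poset reduction of the second paragraph is elementary, and the input from symplectic topology is entirely contained in the already-cited Theorem \ref{mirpanemb0}.
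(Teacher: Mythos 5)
The paper does not actually prove this statement; it is imported wholesale from Mironov--Panov (\cite[Lemma 4.5]{Mirpan}), so there is no in-paper argument to compare against. Your proposal is correct and is, as far as I can tell, essentially the standard Gale-duality proof one would find in that reference: the first half (embedded $\iff$ $\widetilde{\Lambda}_u = \widetilde{\Lambda}$) is exactly Theorem \ref{mirpanemb0}, the reduction to vertices via $Z(u) \subseteq I(v)$ is sound, and the index identity $[\mathbb{Z}^m : \mathbb{Z}\langle a_i\rangle_{i\in I}] = [\widetilde{\Lambda} : \mathbb{Z}\langle \widetilde{\gamma}_j\rangle_{j\in\overline{I}}]$ does follow from the short exact sequence by the cokernel computation you describe (quotient $\mathbb{Z}^n$ by $A^T\mathbb{Z}^m + \mathbb{Z}^{\overline{I}}$ and read it off two ways). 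The caveat you flag is the genuine pressure point, and you should make it explicit rather than leave it as an anticipated obstacle: exactness of $0 \to \mathbb{Z}^m \to \mathbb{Z}^n \to \widetilde{\Lambda} \to 0$ at the middle term requires that the columns of $A^T$ form a $\mathbb{Z}$-basis of the \emph{saturated} kernel lattice $\ker_{\mathbb{Z}}\widetilde{\Gamma}$; this holds here because the theorem's hypothesis is that $P$ is \emph{the polytope associated to} $\widetilde{\mathcal{R}}$, i.e.\ $A$ is obtained by solving the homogeneous integer system in Section \ref{mainconstructionpol}, which pins $A$ down up to $GL_m(\mathbb{Z})$ and forces saturation. (If one instead started from an arbitrary integral presentation of a polytope and took rational relations, the two indices could differ by the order of the middle homology, and the equivalence would fail --- e.g.\ an interval presented with non-primitive normals.) With that one sentence added, the argument is complete.
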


Let us prove the following lemma, which will be used later.

\begin{lemma}\label{equivmon}
The polytope $P$ is Fano if and only if
\begin{equation}\label{condforlemma}
\widetilde{\gamma}_1 + ... + \widetilde{\gamma}_n = C\delta,
\end{equation}
where $\delta = (\delta_1,...,\delta_{n-m})$ and $C$ is some number.
\end{lemma}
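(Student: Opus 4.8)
The plan is to unwind the definition of ``Fano'' --- that the support numbers $b_1,\dots,b_n$ can be made equal to a common positive constant --- into a single linear relation between $\widetilde\Gamma$ and $\delta$. Recall from Section~\ref{mainconstructionpol} that the polytope attached to $\widetilde{\mathcal R}$ is $P=P_{A,b}$, where $\widetilde\Gamma A^{T}=0$, where $\widetilde\Gamma b=\delta$ by $(\ref{rightsidepol})$, and where $\widetilde\gamma_1,\dots,\widetilde\gamma_n$ are the columns of $\widetilde\Gamma$; in particular $\widetilde\gamma_1+\dots+\widetilde\gamma_n=\widetilde\Gamma\mathbf 1$ with $\mathbf 1=(1,\dots,1)^{T}\in\mathbb R^{n}$. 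Two elementary facts will be used. First, replacing $P$ by a translate $P-x_0$ replaces $b$ by $b+A^{T}x_0$ and changes nothing else, so $P$ is Fano precisely when some translate has $b=c\mathbf 1$ for a scalar $c>0$ (Delzantness is translation invariant and the facet normals of a Delzant polytope are automatically primitive integral). Second, the inclusion $\operatorname{im}A^{T}\subseteq\ker\widetilde\Gamma$ forced by $\widetilde\Gamma A^{T}=0$ is an equality: both are subspaces of $\mathbb R^{n}$ of dimension $m$, since $\operatorname{rank}\widetilde\Gamma=n-m$ and $\operatorname{rank}A=m$.

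For the ``only if'' direction, if $P$ is Fano then after a translation $b=c\mathbf 1$ with $c>0$, hence $\delta=\widetilde\Gamma b=c\,\widetilde\Gamma\mathbf 1=c(\widetilde\gamma_1+\dots+\widetilde\gamma_n)$, which is $(\ref{condforlemma})$ with $C=1/c$. For the ``if'' direction, assume $\widetilde\gamma_1+\dots+\widetilde\gamma_n=C\delta$. I first claim $C>0$: since $P$ is bounded, Lemma~\ref{boundedpolytope} provides $\mu\in\mathbb R^{n-m}$ with all entries of $\widetilde\Gamma^{T}\mu$ positive and $\langle\mu,\delta\rangle>0$, and pairing the assumed identity with $\mu$ gives $C\langle\mu,\delta\rangle=\langle\mu,\widetilde\Gamma\mathbf 1\rangle=\langle\widetilde\Gamma^{T}\mu,\mathbf 1\rangle>0$, so $C>0$. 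Put $c=1/C>0$; then $\widetilde\Gamma(c\mathbf 1-b)=cC\delta-\delta=0$, so $c\mathbf 1-b\in\ker\widetilde\Gamma=\operatorname{im}A^{T}$, and writing $c\mathbf 1-b=A^{T}x_0$ shows that the translate $P-x_0$ is a Delzant polytope with all support numbers equal to $c>0$, i.e.\ $P$ is Fano in the sense of Definition~\ref{Fanopolytope}.

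I expect the only genuinely delicate point to be the positivity of $C$ in the ``if'' direction: without the boundedness hypothesis, $C$ could a priori be zero or negative, and then $(\ref{condforlemma})$ would not exhibit $P$ as Fano; this is precisely where boundedness, via Lemma~\ref{boundedpolytope}, is needed. Everything else --- the identity $\sum\widetilde\gamma_j=\widetilde\Gamma\mathbf 1$, the translation formula $b\mapsto b+A^{T}x_0$, and the dimension count giving $\ker\widetilde\Gamma=\operatorname{im}A^{T}$ --- is routine linear-algebra bookkeeping, and one should only double-check that a translate of a Delzant polytope is again Delzant with the same primitive integral interior normals, which is immediate.
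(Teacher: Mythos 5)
Your proof is correct and follows essentially the same route as the paper: both directions come down to observing that $\widetilde\Gamma b=\delta$ admits the constant solution $b=(1/C)\mathbf 1$ exactly when $\sum_j\widetilde\gamma_j=C\delta$, the paper leaving the translation freedom implicit (its $b$ is only ever ``an arbitrary solution'' of the linear system, so your $\ker\widetilde\Gamma=\operatorname{im}A^{T}$ bookkeeping is the same fact made explicit). Your positivity argument for $C$ via Lemma~\ref{boundedpolytope} reproduces exactly the remark the paper places immediately after the lemma, so nothing is missing or different in substance.
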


\begin{proof}
From formula $(\ref{rightsidepol})$ we know that $\widetilde{\Gamma} b = \delta$, where $\widetilde{\Gamma}$ is the matrix with columns $\widetilde{\gamma}_1,...,\widetilde{\gamma}_n$ and $b=(b_1,...,b_{n})^T$ defined in formula $(\ref{polytope})$.

If condition $\ref{condforlemma}$ is satisfied, then  $b = (\frac{1}{C},...,\frac{1}{C})$ solves   $\widetilde{\Gamma} b = \delta$. This yields that the polytope $P$ is written as in definition $\ref{Fanopolytope}$. Therefore, $P$ is Fano.

Assume that $P$ is Fano. This means that $b = (C,...,C)$. Then the formula  $\widetilde{\Gamma} b = \delta$ implies $\gamma_1+...+\gamma_n = C\delta$.
\end{proof}

\textbf{Remark.} In fact, the number $C$ is always positive (we assume that $P$ is bounded). Taking some linear combinations we can assume that $(\ref{eqmain0})$ is written in the form
\begin{equation*}
\begin{gathered}
\left\{
 \begin{array}{l}
\widetilde{\gamma}_{1,1}u_1^2 + ... + \widetilde{\gamma}_{1, n}u_n^2 =  a   \\
\widetilde{\gamma}_{1,j}u_1^2 + ... + \widetilde{\gamma}_{n,j}u_{n}^2 = 0 , \quad \; j=2,...,n-m
 \end{array}
\right.
\end{gathered}
\end{equation*}
where $a > 0$ and $\widetilde{\gamma}_{1,1},...,\widetilde{\gamma}_{1,n} > 0$ (see Lemma $\ref{boundedpolytope}$). Then the condition $\widetilde{\gamma}_1 + ... + \widetilde{\gamma}_n = aC$ implies $C>0$.
\\

Let us define an $(n-m)$-dimensional vector
\begin{equation*}
\widetilde{\gamma}_1+...+\widetilde{\gamma}_n = (t_1,...,t_{n-m}).
\end{equation*}

In fact, Mironov in his paper \cite{MironovCn} almost found the Maslov class of the constructed Lagrangians. It was noticed by the author of this paper that the monotone Lagrangians are related to Fano polytopes. Let $\omega_{st}$ be the standard symplectic form of $\mathbb{C}^n$.

\begin{theorem}\label{fanocondition}(\cite{Og} Theorem $1.1$)
Assume that $\widetilde{\mathcal{R}}$ is connected (equivalently P is irredundant). The Lagrangian $\widetilde{L} = \widetilde{\psi}(\mathcal{\widetilde{N}}) \subset \mathbb{C}^n$ is embedded and monotone if and only if $P$ is Delzant and Fano. Moreover, the Maslov class of $\widetilde{L}$ is given by
\begin{equation*}
\mu_{\widetilde{L}}(\beta) = \int\limits_{\partial\beta} t_1d\varphi_1 + ... + t_{n-m}d\varphi_{n-m}, \;\;\; \beta \in \pi_2(\mathbb{C}^n, \widetilde{L}),
\end{equation*}
where $\partial(\beta)$ is the boundary of $\beta$.

More explicitly, if $\widetilde{\gamma}_1+...+\widetilde{\gamma}_n = C\delta$ (equivalently $P$ is Fano, see Lemma $\ref{equivmon}$), then
\begin{equation*}
\mu_{\widetilde{L}} = \frac{2C}{\pi}\omega_{st}.
\end{equation*}
The Maslov class is given by
\begin{equation*}
\mu_{\widetilde{L}}(\beta) = \int\limits_{\partial\beta} C\delta_1d\varphi_1 + ... + C\delta_{n-m}d\varphi_{n-m}, \;\;\; \beta \in \pi_2(\mathbb{C}^n, \widetilde{L}).
\end{equation*}
\end{theorem}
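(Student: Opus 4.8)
The plan is to separate the claim into the embedding part and the Maslov/monotonicity part. The equivalence ``$\widetilde L$ embedded $\iff$ $P$ Delzant'' is precisely Theorem $\ref{embdelzant0}$, so the substantive work is to compute the Maslov class of $\widetilde L$ and to recognize exactly when it is a positive multiple of $\omega_{st}$. I will use the standard description of the Maslov index in $\mathbb{C}^n$: for a disc $\beta\in\pi_2(\mathbb{C}^n,\widetilde L)$ (equivalently, since $\mathbb{C}^n$ is contractible, for the boundary loop $\partial\beta\in\pi_1(\widetilde L)\cong\pi_2(\mathbb{C}^n,\widetilde L)$) one has $\mu_{\widetilde L}(\beta)=\deg\big(\det_{\mathbb{C}}^2\circ G|_{\partial\beta}\big)$, where $G:\widetilde{\mathcal{N}}\to\Lambda(n)=U(n)/O(n)$ is the Lagrangian Gauss map and $\det_{\mathbb{C}}^2:\Lambda(n)\to S^1$ is the (well-defined) square of the complex determinant; equivalently $\det_{\mathbb C}^2(T_x\widetilde L)$ is the square of the value of $dz_1\wedge\cdots\wedge dz_n$ on a real orthonormal frame of $T_x\widetilde L$.

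The first step is to compute $G$ explicitly. Differentiating $(\ref{mainmapcn})$, the $\widetilde\varphi_k$-directions produce the vectors $i\pi\,(\widetilde\gamma_{1,k}u_1,\dots,\widetilde\gamma_{n,k}u_n)$ left-multiplied by the diagonal unitary $D_{\widetilde\varphi}=\mathrm{diag}\big(e^{i\pi\langle\widetilde\gamma_1,\widetilde\varphi\rangle},\dots,e^{i\pi\langle\widetilde\gamma_n,\widetilde\varphi\rangle}\big)$, while the directions tangent to $\widetilde{\mathcal{R}}$ produce $D_{\widetilde\varphi}\xi$ with $\xi\in T_u\widetilde{\mathcal{R}}\subset\mathbb{R}^n$. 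Since $T_u\widetilde{\mathcal{R}}$ is the real orthogonal complement of the vectors $w_r(u)=(\widetilde\gamma_{1,r}u_1,\dots,\widetilde\gamma_{n,r}u_n)$, the tangent plane equals $D_{\widetilde\varphi}\cdot W(u)$, where $W(u)$ is the real span of $T_u\widetilde{\mathcal{R}}$ together with $i\,w_1(u),\dots,i\,w_{n-m}(u)$. Picking a real orthonormal frame adapted to $\mathbb{R}^n=T_u\widetilde{\mathcal{R}}\oplus\langle w_1,\dots,w_{n-m}\rangle$ exhibits $W(u)=O_uD_0\,\mathbb{R}^n$ with $O_u\in O(n)$ and $D_0=\mathrm{diag}(1,\dots,1,i,\dots,i)$ ($n-m$ entries equal to $i$). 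Hence $\det_{\mathbb{C}}^2(T_{\widetilde\psi(u,\widetilde\varphi)}\widetilde L)=\det(D_{\widetilde\varphi})^2\det(O_u)^2\det(D_0)^2=(-1)^{n-m}e^{2\pi i\langle\widetilde\gamma_1+\cdots+\widetilde\gamma_n,\,\widetilde\varphi\rangle}$, using $\det(O_u)^2=1$. Thus $\det_{\mathbb{C}}^2\circ G$ depends on $\widetilde\varphi$ only through $\Theta(\widetilde\varphi)=t_1\varphi_1+\cdots+t_{n-m}\varphi_{n-m}$, and it is $D_{\widetilde\Gamma}$-invariant because $\langle\widetilde\gamma_j,\widetilde\varepsilon\rangle\in\mathbb{Z}$ and $(t_1,\dots,t_{n-m})=\sum_j\widetilde\gamma_j\in\widetilde\Lambda$, so it descends to $\widetilde{\mathcal{N}}$. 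Taking the winding number along $\partial\beta$ yields $\mu_{\widetilde L}(\beta)=\int_{\partial\beta}t_1\,d\varphi_1+\cdots+t_{n-m}\,d\varphi_{n-m}$, the first Maslov formula.

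The second step is to compare with $\omega_{st}$. Using the standard primitive $\lambda_{st}=\tfrac12\sum_j(x_j\,dy_j-y_j\,dx_j)=\tfrac12\sum_j\rho_j^2\,d\theta_j$, Stokes gives $\omega_{st}(\beta)=\int_{\partial\beta}\lambda_{st}$. On $\widetilde L$ one has $\rho_j^2=u_j^2$ and $\theta_j=\pi\langle\widetilde\gamma_j,\widetilde\varphi\rangle$, so $\lambda_{st}|_{\widetilde L}=\tfrac{\pi}{2}\sum_k\big(\sum_j\widetilde\gamma_{j,k}u_j^2\big)d\varphi_k=\tfrac{\pi}{2}\sum_k\delta_k\,d\varphi_k$, where the last equality is exactly the defining system $(\ref{eqmain0})$ of $\widetilde{\mathcal{R}}$. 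Hence $\omega_{st}(\beta)=\tfrac{\pi}{2}\int_{\partial\beta}\delta_1\,d\varphi_1+\cdots+\delta_{n-m}\,d\varphi_{n-m}$. If $\widetilde\gamma_1+\cdots+\widetilde\gamma_n=C\delta$, comparing the two displayed formulas gives $\mu_{\widetilde L}=\tfrac{2C}{\pi}\omega_{st}$, and $C>0$ by the Remark following Lemma $\ref{equivmon}$ (via Lemma $\ref{boundedpolytope}$), so $\widetilde L$ is monotone. Conversely, if $\widetilde L$ is embedded and monotone, then $P$ is Delzant by Theorem $\ref{embdelzant0}$, and both $\mu_{\widetilde L}$ and $\omega_{st}$ factor through the surjection $\pi_2(\mathbb{C}^n,\widetilde L)\cong\pi_1(\widetilde{\mathcal{N}})\twoheadrightarrow\pi_1(T^{n-m})$ (surjective since the fiber $\widetilde{\mathcal{R}}$ is connected, i.e. $P$ is irredundant), on which they are the linear functionals $\widetilde\lambda^*\mapsto\langle(t_1,\dots,t_{n-m}),\widetilde\lambda^*\rangle$ and $\widetilde\lambda^*\mapsto\tfrac{\pi}{2}\langle\delta,\widetilde\lambda^*\rangle$ on the lattice $\widetilde\Lambda^*$; positive proportionality of these functionals on the full-rank lattice $\widetilde\Lambda^*$ forces $(t_1,\dots,t_{n-m})=C\delta$ with $C>0$, i.e. $P$ is Fano by Lemma $\ref{equivmon}$. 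Together with the embedding equivalence this proves the theorem and all the Maslov formulas.

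The main obstacle I expect is the clean identification of the Gauss map: verifying that the tangent plane really has the form $D_{\widetilde\varphi}O_uD_0\,\mathbb{R}^n$, that $\det_{\mathbb{C}}^2$ is insensitive both to the $O(n)$-ambiguity of a Lagrangian frame and to the free $D_{\widetilde\Gamma}$-action, and keeping the $\pi$ versus $2\pi$ normalizations consistent between the torus $T_{\widetilde\Gamma}$ of $(\ref{torus0})$ (period lattice $2\widetilde\Lambda^*$) and the quotient torus $T^{n-m}$ (period lattice $\widetilde\Lambda^*$). Once the Gauss map is pinned down, the remaining passages through Stokes' theorem and the duality between $\widetilde\Lambda$ and $\widetilde\Lambda^*$ are routine bookkeeping.
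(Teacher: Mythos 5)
Your argument is correct, but note that the paper itself does not prove Theorem \ref{fanocondition}: it is imported verbatim from \cite{Og} (Theorem 1.1), so there is no internal proof to compare against. Your derivation is a legitimate self-contained one and is essentially the computation underlying the cited result, organized slightly differently: \cite{Og} and Mironov's work reach the Maslov class through the mean curvature form and the Cieliebak--Goldstein identity $\mu(\beta)=\tfrac{1}{\pi}\omega_H(\partial\beta)$ (the $\mathbb{C}^n$ analogue of formula $(\ref{generalmaslovformula})$), whereas you read off the winding of $\det_{\mathbb{C}}^2\circ G$ directly from the factorization $T\widetilde{L}=D_{\widetilde\varphi}O_uD_0\,\mathbb{R}^n$; since the mean curvature form of a Lagrangian in $\mathbb{C}^n$ is exactly the differential of the Lagrangian angle $\arg\det_{\mathbb{C}}^2$, the two routes are two presentations of the same computation. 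All the individual steps check out: the $\varphi_k$-derivatives of $(\ref{mainmapcn})$ do give $i\pi D_{\widetilde\varphi}w_k(u)$ with $w_k(u)=(\widetilde\gamma_{1,k}u_1,\ldots,\widetilde\gamma_{n,k}u_n)$, and $T_u\widetilde{\mathcal{R}}=\langle w_1,\ldots,w_{n-m}\rangle^{\perp}$, so $\det(O_u)^2=1$ kills the frame ambiguity and the phase is $2\pi\langle\sum_j\widetilde\gamma_j,\widetilde\varphi\rangle$ up to a constant; the primitive $\lambda_{st}=\tfrac12\sum u_j^2\,d\theta_j$ restricted to $\widetilde L$ has no $du_j$ component and collapses to $\tfrac{\pi}{2}\sum_k\delta_k\,d\varphi_k$ by the defining quadrics; and in the converse direction the two functionals on the full-rank lattice $\widetilde\Lambda^*=\pi_1(T^{n-m})$ (onto which $\pi_2(\mathbb{C}^n,\widetilde L)\cong\pi_1(\widetilde{\mathcal N})$ surjects because $\widetilde{\mathcal R}$ is connected) can only be positively proportional if $\sum_j\widetilde\gamma_j=C\delta$ with $C>0$, which is the Fano condition by Lemma $\ref{equivmon}$. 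Your approach buys a cleaner separation of the two normalizations ($\pi$ in the parametrization versus $2\pi$ in the winding number) and avoids invoking minimality or the mean curvature vector at all; the price is that you must justify the Maslov-index-as-winding-of-$\det_{\mathbb{C}}^2$ convention, which the paper instead absorbs into the citation of \cite{Kail}.
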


\subsection{Intersection of quadrics and Lagrangian submanifolds of $\mathbb{C}P^{n-1}$}\label{maincostruction}

The ideas of this section belong to Mironov, Panov, and Kotelskiy \cite{MironovCn, Mirpan, kotel}. We give a different explanation of their results and prove some new lemmas.
\\

We keep notations from Section $\ref{maincostruction0}$. Let $\Gamma$ be a matrix with columns $\gamma_j \in \mathbb{Z}^{n-m-1}$, $j=1,...,n$, $m \geqslant 1$. Assume that the integer vectors
\begin{equation*}
\gamma_j=(\gamma_{j,1},...,\gamma_{j, n-m-1}) \in \mathbb{Z}^{n-m-1}, \quad j=1,...,n
\end{equation*}
are linearly independent. Suppose  $(\ref{eqmain0})$ is written in the form

\begin{equation}\label{eqmain}
\begin{gathered}
\left\{
 \begin{array}{l}
u_1^2 + ... + u_n^2 = \delta \\
\gamma_{1,r}u_1^2 + ... + \gamma_{n,r}u_{n}^2 = 0
 \end{array}
\right.
\\
r=1,...,n-m-1, \;\;\;\; \gamma_{j, r} \in \mathbb{Z}, \;\;\;\; \delta \in \mathbb{R}
\end{gathered}
\end{equation}
and define a smooth submanifold $\widetilde{\mathcal{R}} \subset \mathbb{R}^{n}$. In other words, $\widetilde{\gamma}_j = (1, \gamma_j)$.

\begin{lemma}\label{sumzero}
Let $P$ be the polytope associated to system $(\ref{eqmain})$ and denote by $a_1,..,a_n$ the vectors normal to the facets of $P$. Then, $a_1+....+a_n = 0$.

If $a_1+...+a_n = 0$, then the system of quadrics associated to $P$ can be written in the form $(\ref{eqmain})$.
\end{lemma}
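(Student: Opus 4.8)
The plan is to use the dictionary of Section \ref{mainconstructionpol} between systems of quadrics and polytopes: the coefficient matrix $\widetilde{\Gamma}$ of a system of the type (\ref{quadrics}) is precisely a matrix whose rows form a basis of the space of linear relations
\[
R=\{w=(w_1,\dots,w_n)\in\mathbb{R}^n:\ w_1a_1+\dots+w_na_n=0\}
\]
among the facet normals $a_1,\dots,a_n$ of the associated polytope $P$ --- this is the content of $\widetilde{\Gamma}A^T=0$ together with the fact that, conversely, solving the homogeneous version of (\ref{nonnegsol}) recovers $A$ --- and two coefficient matrices with the same row space define the same polytope and the same real moment-angle manifold.

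For the first assertion I would simply observe that in (\ref{eqmain}) the coefficient matrix $\widetilde{\Gamma}$ has the all-ones vector $\mathbf{1}=(1,\dots,1)$ as its first row and the rows of $\Gamma$ as its remaining rows. Since these $n-m$ rows span $R$, in particular $\mathbf{1}\in R$, which by the definition of $R$ says exactly that $a_1+\dots+a_n=0$.

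For the converse, suppose $P=\{x:\langle a_j,x\rangle+b_j\ge 0,\ j=1,\dots,n\}$ with $a_1+\dots+a_n=0$. Since $P$ is bounded the normals $a_j$ span $\mathbb{R}^m$, hence $\dim R=n-m$, and $\mathbf{1}\in R$ by hypothesis. First I would check that $\sum_j b_j>0$: as $P$ is $m$-dimensional it has an interior point $x_0$, and then $\langle a_j,x_0\rangle+b_j>0$ for every $j$, so summing over $j$ and using $\sum_j a_j=0$ gives $\sum_j b_j=\sum_j\big(\langle a_j,x_0\rangle+b_j\big)>0$. Consequently the linear functional $w\mapsto\langle w,b\rangle$ on $R$ is nonzero on $\mathbf{1}$, so its kernel $R_0\subset R$ has dimension $n-m-1$ and $\mathbf{1}\notin R_0$. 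After replacing the presentation of $P$ by a combinatorially equivalent rational one if necessary --- which changes neither the diffeomorphism type of $\widetilde{\mathcal{R}}_P$ nor the relation $\sum_j a_j=0$ --- the subspace $R_0$ is rational and thus has an integral basis $w_2,\dots,w_{n-m}$. Then $\{\mathbf{1},w_2,\dots,w_{n-m}\}$ is a basis of $R$, and by (\ref{rightsidepol}) the system of quadrics it determines is
\[
u_1^2+\dots+u_n^2=\sum_j b_j=:\delta,\qquad w_{r,1}u_1^2+\dots+w_{r,n}u_n^2=\langle w_r,b\rangle=0\quad(r=2,\dots,n-m),
\]
i.e.\ exactly (\ref{eqmain}) with $\gamma_j:=(w_{2,j},\dots,w_{n-m,j})$ and $\widetilde{\gamma}_j=(1,\gamma_j)$; moreover $\gamma_1,\dots,\gamma_n$ span $\mathbb{R}^{n-m-1}$ because $\widetilde{\Gamma}$ has rank $n-m$.

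The only slightly delicate point is the step that splits off the all-ones equation while keeping every other equation homogeneous, i.e.\ the inequality $\sum_j b_j\neq 0$; this is exactly what the interior-point computation supplies, and it is consistent with Lemma \ref{boundedpolytope} (which forces $\delta>0$ for a bounded polytope). The remainder is elementary linear algebra over $\mathbb{Q}$, including the bookkeeping that keeps the vectors $\gamma_j$ integral.
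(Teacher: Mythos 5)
Your proof is correct and follows the same route as the paper: both rest on the dictionary of Section \ref{mainconstructionpol} identifying the rows of the quadric coefficient matrix with a basis of linear relations among the facet normals, so that the all-ones row is exactly the relation $a_1+\dots+a_n=0$. The paper's own proof is a two-line appeal to this correspondence; you additionally spell out the converse direction (splitting off the all-ones equation and homogenizing the rest via $\sum_j b_j>0$, plus the integrality bookkeeping), which is a welcome elaboration of details the paper leaves implicit.
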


\begin{proof}
From Section $\ref{mainconstructionpol}$ we know that the coefficients of the quadrics are coefficients of  linear relations between the vectors $a_1,...,a_n$. The first equation is equivalent to $a_1+...+a_n = 0$.
\end{proof}

Let $\Lambda \subset \mathbb{R}^{n-m-1}$ be a lattice generated by vectors
\begin{equation*}
\gamma_i - \gamma_{j}, \;\; \;\; i,j=1,...,n,
\end{equation*}
Assume that the rank of $\Lambda$ equals $n-m-1$. Since  $\gamma_i - \gamma_j = (\gamma_i - \gamma_n) - (\gamma_j - \gamma_n)  \in \Lambda$, we see that $\Lambda$ is generated by vectors $\gamma_1 - \gamma_n,...,\gamma_{n-1} - \gamma_n$. We define the dual lattice $\Lambda^{*}$, $D_{\Gamma}$, and $(n-m-1)-$torus $T_{\Gamma}$ as in the previous section
\begin{equation}\label{torus}
\begin{gathered}
\Lambda^{*}=\{\lambda^{*} \in \mathbb{R}^{n-m-1}| <\lambda^{*},\lambda> \in \mathbb{Z},  \;\; \forall \lambda \in \Lambda\}, \\
D_{\Gamma} = \Lambda^{*}/2\Lambda^{*}\approx \mathbb{Z}_2^{n-m-1},  \\
T_{\Gamma} = (e^{i\pi<\gamma_1 - \gamma_{n},\varphi>},...,e^{i\pi<\gamma_{n-1} - \gamma_{n},\varphi>}, 1) \subset \mathbb{C}^n.
\end{gathered}
\end{equation}
where $\varphi=(\varphi_1,...,\varphi_{n-m-1})\in \mathbb{R}^{n-m-1}$ and $<\cdot, \cdot>$ is the standard Euclidian product.

Let $\widetilde{\mathcal{R}}$ be the submanifold of $\mathbb{R}^n$ defined by system $(\ref{eqmain})$ and
\begin{equation}\label{mainquotient}
\mathcal{R} = \widetilde{\mathcal{R}}/\mathbb{Z}_2, \;\; (u_1,...,u_n) \sim (-u_1,...,-u_n).
\end{equation}

\vspace{.11in}

\textbf{Remark.}(see Section $\ref{sympalg}$ for more details). In fact, $\mathcal{R}$ is $m$-dimensional submanifold of $\mathbb{R}P^{n-1}$ defined by the system
\begin{equation}\label{projectiveq}
\begin{gathered}
\left\{
 \begin{array}{l}
\gamma_{1,r}u_1^2 + ... + \gamma_{n,r}u_{n}^2 = 0, \quad r=1,...,n-m-1.
 \end{array}
\right.
\end{gathered}
\end{equation}

\vspace{.18in}

Consider a map
\begin{equation*}
\begin{gathered}
\psi: \mathcal{R} \times T_{\Gamma} \rightarrow \mathbb{C}P^{n-1},\\
\psi(u_1,...,u_{n},\varphi) = [u_1e^{i\pi<\gamma_1 -\gamma_{n},\varphi>} : ... : u_{n-1}e^{i\pi<\gamma_{n-1}-\gamma_{n},\varphi>}, u_{n}].
\end{gathered}
\end{equation*}

Let $\varepsilon \in D_{\Gamma}$ be a nontrivial element. We define an action of $D_{\Gamma}$ on $\mathcal{R} \times T_{\Gamma}$ by
\begin{equation*}
\begin{gathered}
\varepsilon \cdot (u_1,...,u_{n}) = (u_1\cos(\pi<\gamma_1-\gamma_{n}, \varepsilon>),..,u_{n-1}\cos(\pi<\gamma_{n-1} - \gamma_{n}, \varepsilon>), u_{n}), \\
\varepsilon \cdot \varphi = \varphi + \varepsilon, \\
\varepsilon \cdot (u_1,...,u_{n},\varphi) = (\varepsilon \cdot (u_1,...,u_n), \varepsilon \cdot \varphi).
\end{gathered}
\end{equation*}
Then, we see that
\begin{equation*}
\psi(u_1,...,u_{n},\varphi) = \psi(\varepsilon \cdot (u_1,.., u_{n} , \varphi)).
\end{equation*}
As in the previous section, all points of an orbit of $D_{\Gamma}$ have the same image. So, we define
\begin{equation}\label{mainmap}
\begin{gathered}
\mathcal{N} = (\mathcal{R} \times T_{\Gamma})/{D_{\Gamma}}, \\
\psi: \mathcal{N} \rightarrow \mathbb{C}P^{n-1},\\
L =  \psi(u_1,...,u_{n},\varphi) = [u_1e^{i\pi(\gamma_1 - \gamma_{n},\varphi)}:...:u_{n-1}e^{i\pi(\gamma_{n-1} - \gamma_{n},\varphi)}: u_{n}] = \\
[u_1e^{i\pi(\gamma_1,\varphi)}:...:u_{n-1}e^{i\pi(\gamma_{n-1} ,\varphi)}, u_{n}e^{i\pi(\gamma_{n}, \varphi)}].
\end{gathered}
\end{equation}

Let us note that the action of $D_{\Gamma}$ is free on the second factor $T_{\Gamma}$. This implies that $L$ (or equivalently $\mathcal{N}$) fibers over $(n-m-1)-$dimensional torus
\begin{equation}\label{mainprojection}
L \xrightarrow{\mathcal{R}} T^{n-m-1} = T_{\Gamma}/D_{\Gamma}.
\end{equation}

\vspace{.08in}

\begin{theorem}\label{mironovtheorem2}(\cite{MironovCn})
The Lagrangian $L = \psi(\mathcal{N}) \subset \mathbb{C}P^{n-1}$ is immersed. If $\gamma_1 + ... +\gamma_{n} = 0$, then $L$ is minimal with respect to the Fubini-Study metric, i.e. the mean curvature vector of $L$ is equal to zero.
\end{theorem}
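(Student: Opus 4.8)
The plan is to realise $L$ as the image under the Hopf map $\pi\colon S^{2n-1}\to\mathbb{C}P^{n-1}$ of a Legendrian submanifold of the sphere — which makes the isotropic (hence Lagrangian) property automatic — and then, for the second assertion, to compute the mean curvature $1$-form of $L$ and recognise it as a constant multiple of $\sum_{r}\big(\sum_{j}\gamma_{j,r}\big)\,d\varphi_r$ pulled back to $\mathcal N$, so that it vanishes identically exactly when $\gamma_1+\dots+\gamma_n=0$. After rescaling we may take $\delta=1$ in $(\ref{eqmain})$, so that its first equation forces $\widetilde{\mathcal R}$ into the unit sphere of $\mathbb R^n$. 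Put
\begin{equation*}
\widehat\psi\colon\widetilde{\mathcal R}\times T_\Gamma\longrightarrow S^{2n-1}\subset\mathbb C^n,\qquad \widehat\psi(u,\varphi)=\big(u_1e^{i\pi\langle\gamma_1,\varphi\rangle},\dots,u_ne^{i\pi\langle\gamma_n,\varphi\rangle}\big).
\end{equation*}
It lands in $S^{2n-1}$ since $\sum_j|u_je^{i\pi\langle\gamma_j,\varphi\rangle}|^2=\sum_ju_j^2=1$, and after the harmless unit rescaling by $e^{-i\pi\langle\gamma_n,\varphi\rangle}$ one has $\psi=\pi\circ\widehat\psi$. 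The antipodal involution $u\mapsto-u$ and the $D_\Gamma$--action move $\widehat\psi(u,\varphi)$ only within its Hopf fibre, so $\psi$ indeed descends to $\mathcal N$ as in $(\ref{mainmap})$, and it suffices to study the immersed submanifold $\widehat N:=\widehat\psi(\widetilde{\mathcal R}\times T_\Gamma)\subset S^{2n-1}$.

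For the standard contact form $\alpha=\sum_j(x_j\,dy_j-y_j\,dx_j)$ on $S^{2n-1}$, a one--line computation with $z_j=u_je^{i\pi\langle\gamma_j,\varphi\rangle}$ gives
\begin{equation*}
\widehat\psi^*\alpha=\pi\sum_{j}u_j^2\,d\langle\gamma_j,\varphi\rangle=\pi\sum_{r=1}^{n-m-1}\Big(\sum_{j}\gamma_{j,r}\,u_j^2\Big)\,d\varphi_r,
\end{equation*}
and every coefficient $\sum_j\gamma_{j,r}u_j^2$ is identically zero on $\widetilde{\mathcal R}$ by the defining equations $(\ref{eqmain})$. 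Hence $\widehat\psi^*\alpha=0$; since $\dim(\widetilde{\mathcal R}\times T_\Gamma)=m+(n-m-1)=n-1$, the map $\widehat\psi$ parametrises a Legendrian, $d\widehat\psi$ being injective by genericity of the presentation $(\ref{eqmain})$, as in Mironov's argument. As $\pi^*\omega_{FS}$ is a constant multiple of $d\alpha|_{S^{2n-1}}$, we get $\psi^*\omega_{FS}=\text{const}\cdot d(\widehat\psi^*\alpha)=0$; and because a Legendrian is transverse to the Hopf fibres, $d\psi=d\pi\circ d\widehat\psi$ remains injective. Passing to the finite free quotient $\mathcal N$, $L=\psi(\mathcal N)$ is an immersed Lagrangian of $\mathbb C P^{n-1}$.

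Now assume $\gamma_1+\dots+\gamma_n=0$. The Hopf fibration has totally geodesic fibres and $\widehat N$ is horizontal (being Legendrian), so the O'Neill integrability tensor is skew on horizontal vectors and therefore drops out of the mean curvature trace; hence the mean curvature vector of $L=\pi(\widehat N)$ is the $d\pi$--image of that of $\widehat N$, and $L$ is minimal in $\mathbb C P^{n-1}$ if and only if $\widehat N$ is minimal in $S^{2n-1}$, if and only if the metric cone $C(\widehat N)=\{t\,\widehat\psi(u,\varphi):t>0\}$ is minimal in $\mathbb C^n$. But $C(\widehat N)$ is parametrised by $(u',\varphi)\mapsto\big(u'_je^{i\pi\langle\gamma_j,\varphi\rangle}\big)_j$ with $u'$ ranging over the cone $\{u'\in\mathbb R^n:\sum_j\gamma_{j,r}(u'_j)^2=0,\ r=1,\dots,n-m-1\}$ and $\varphi\in T_\Gamma$; this is exactly a Mironov--type Lagrangian of $\mathbb C^n$ whose Gale vectors are the $\gamma_j\in\mathbb Z^{n-m-1}$ themselves (the ``radial'' quadric $\sum_ju_j^2$ having been dropped). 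By the same computation that underlies Theorems $\ref{mironovtheorem}$ and $\ref{fanocondition}$ in the $\mathbb C^n$ case, the mean curvature $1$-form of $C(\widehat N)$ equals a positive constant times $\sum_{r}\big(\sum_j\gamma_{j,r}\big)\,d\varphi_r$; equivalently, running this computation directly on $\mathcal N$ from $(\ref{mainmap})$, using the Fubini--Study connection and $(\ref{generalmaslovformula})$, yields $\psi^*(\iota_H\omega_{FS})=c\sum_{r}\big(\sum_j\gamma_{j,r}\big)\,d\varphi_r$ with $c>0$. Under the hypothesis this $1$-form vanishes identically, so $H\equiv0$ and $L$ is minimal.

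\textbf{Main obstacle.} The only substantial point is this last mean curvature computation: one has to write down the second fundamental form of the complete intersection of quadrics $\widetilde{\mathcal R}$ (equivalently, of its cone) in $\mathbb R^n$, add the contribution of the torus directions $\partial_{\varphi_r}$, and isolate the part normal to $S^{2n-1}$ (the radial direction) from the tangential remainder, which collapses to the vector dual to $\sum_j\gamma_j$. Everything else — the sphere lift, the vanishing $\widehat\psi^*\alpha=0$, and the descent to $\mathcal N$ — is bookkeeping.
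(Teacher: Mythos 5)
Your argument is correct and is essentially the one the paper relies on: Theorem \ref{mironovtheorem2} is stated here only as a citation to \cite{MironovCn}, and Mironov's own proof proceeds exactly by the Hopf lift to a Legendrian of $S^{2n-1}$ (your computation $\widehat\psi^{*}\alpha=\pi\sum_r\bigl(\sum_j\gamma_{j,r}u_j^2\bigr)d\varphi_r=0$ on $\widetilde{\mathcal R}$) followed by the cone reduction to the $\mathbb{C}^n$ mean-curvature formula of Theorem \ref{fanocondition}. The one point worth stating explicitly is that the cone $C(\widehat N)$ corresponds to the homogeneous system with $\delta=0$, so you cannot quote Theorem \ref{fanocondition} verbatim but must observe that the mean-curvature computation is local and valid on the smooth part of that (singular, non-compact) Mironov Lagrangian; with that remark your reduction is complete.
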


\begin{corollary}\label{monmin}
If $\gamma_1+...+\gamma_n = 0$, then $L$ is monotone Lagrangian (see formula $(\ref{generalmaslovformula})$).
\end{corollary}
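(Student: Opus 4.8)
The plan is to obtain monotonicity as an immediate consequence of minimality, using the Maslov-class formula of Section \ref{quantumdefinitions}. First I would invoke Theorem \ref{mironovtheorem2}: the hypothesis $\gamma_1 + \ldots + \gamma_n = 0$ forces the mean curvature vector $H$ of the immersed Lagrangian $L = \psi(\mathcal{N}) \subset \mathbb{C}P^{n-1}$ to vanish identically with respect to the Fubini--Study metric. So the opening step of the proof is simply to cite that theorem and record $H = 0$.

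Next I would substitute $H = 0$ into the general formula (\ref{generalmaslovformula}), namely $\mu(\alpha) = \frac{2n\,\omega(\alpha)}{\pi} + \frac{\omega_H(\partial\alpha)}{\pi}$. Since $\omega_H = \omega(H, \cdot) = 0$, the boundary term drops out and one is left with (\ref{maslovclass}): $\mu(\alpha) = \frac{2n}{\pi}\,\omega(\alpha)$ for every $\alpha \in \pi_2(\mathbb{C}P^{n-1}, L)$. Reading off the constant $k = \frac{2n}{\pi} > 0$, this is precisely the defining relation $\mu = k\omega$ of a monotone Lagrangian, so $L$ is monotone.

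I do not expect any genuine obstacle here: the corollary is essentially a one-line deduction once Theorem \ref{mironovtheorem2} is in hand, and it is exactly the content of the remark after the definition of monotone that a minimal Lagrangian of $\mathbb{C}P^{n-1}$ is automatically monotone. The only point worth a moment's care is that formula (\ref{generalmaslovformula}) is valid for immersed (not necessarily embedded) Lagrangians, so the argument is legitimate even though at this stage $L$ is only known to be immersed; embeddedness, which is arranged later in Section \ref{mainconstructionmain} via Theorems \ref{mirpanemb0} and \ref{embdelzant0}, then carries monotonicity along for free.
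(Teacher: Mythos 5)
Your proposal is correct and is precisely the paper's intended argument: the corollary follows immediately from Theorem \ref{mironovtheorem2} (minimality, $H=0$) together with formula (\ref{generalmaslovformula}), which then collapses to $\mu(\alpha) = \tfrac{2n}{\pi}\omega(\alpha)$, i.e.\ monotonicity with constant $\tfrac{2n}{\pi}>0$. Your closing remark that the formula applies to immersed Lagrangians is a sensible precaution consistent with how the paper orders its results.
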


Let $\widetilde{\Lambda}$ be the lattice generated by vectors $\widetilde{\gamma_j} = (1, \gamma_j)$. For any $u=(u_1,...,u_n) \in \mathcal{R}$ we have a sublattice
\begin{equation*}
\Lambda_u = \mathbb{Z}\langle \gamma_i -\gamma_j : u_i, u_j \neq 0 \rangle \subset \Lambda
\end{equation*}
and define $\widetilde{\Lambda}_u$ as in $(\ref{embcondition})$.  There is analogue of Theorem $\ref{mirpanemb0}$.

\begin{lemma}\label{embc1}
If $\Lambda_u = \Lambda$ for any $u  \in \mathcal{R}$, then the Lagrangian $L = \psi(\mathcal{N})$ is embedded.
\end{lemma}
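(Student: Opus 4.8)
The plan is to prove that $\psi$ is injective; since $\mathcal{N}$ is compact and, by Theorem $\ref{mironovtheorem2}$, $\psi$ is an immersion, an injective $\psi$ is automatically a smooth embedding (an injective immersion of a compact manifold into a Hausdorff space is an embedding). So suppose $\psi([u,\varphi]) = \psi([u',\varphi'])$. I would lift this to chosen representatives $u,u' \in \widetilde{\mathcal{R}} \subset \mathbb{R}^n$ (so $u_1^2+\cdots+u_n^2 = (u_1')^2+\cdots+(u_n')^2 = \delta$, with $\delta>0$ because $P$ is bounded, cf.\ Lemma $\ref{boundedpolytope}$) and $\varphi,\varphi' \in \mathbb{R}^{n-m-1}$, and use the symmetric form $\psi(u,\varphi) = [u_1 e^{i\pi\langle\gamma_1,\varphi\rangle} : \cdots : u_n e^{i\pi\langle\gamma_n,\varphi\rangle}]$ from $(\ref{mainmap})$. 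The projective identity yields $\lambda\in\mathbb{C}^{*}$ with $u_j' e^{i\pi\langle\gamma_j,\varphi'\rangle} = \lambda\, u_j e^{i\pi\langle\gamma_j,\varphi\rangle}$ for all $j$. Taking absolute values gives $|u_j'| = |\lambda|\,|u_j|$; squaring and summing gives $\delta = |\lambda|^2\delta$, hence $|\lambda|=1$ and $|u_j'| = |u_j|$ for every $j$. Since the $u_j,u_j'$ are real, $u_j' = \pm u_j$, and in particular $u$ and $u'$ have the same support $S = \{\,j : u_j\neq 0\,\}$.

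Next, writing $\lambda = e^{i\pi\tau}$ and $\eta = \varphi'-\varphi$, for $j\in S$ the relation becomes $u_j'/u_j = e^{i\pi(\tau-\langle\gamma_j,\eta\rangle)}\in\{\pm 1\}$, so $\tau-\langle\gamma_j,\eta\rangle\in\mathbb{Z}$; subtracting two such identities over $j,k\in S$ gives $\langle\gamma_j-\gamma_k,\eta\rangle\in\mathbb{Z}$. The vectors $\gamma_j-\gamma_k$ with $j,k\in S$ generate $\Lambda_u$, and $\Lambda_u=\Lambda$ by hypothesis, so $\langle\lambda,\eta\rangle\in\mathbb{Z}$ for all $\lambda\in\Lambda$, i.e.\ $\eta\in\Lambda^{*}$. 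Set $\varepsilon = [\eta]\in D_\Gamma = \Lambda^{*}/2\Lambda^{*}$. Then the $T_\Gamma$-component of $\varepsilon\cdot[u,\varphi]$ is $\varphi+\eta$, which equals $\varphi'$ in $T_\Gamma = \mathbb{R}^{n-m-1}/2\Lambda^{*}$.

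It remains to match the $\mathcal{R}$-component, and this is the delicate step. Since $\eta\in\Lambda^{*}$ and $\gamma_j-\gamma_n\in\Lambda$, the action $(\ref{mainmap})$ gives $(\varepsilon\cdot u)_j = (-1)^{\langle\gamma_j-\gamma_n,\eta\rangle}u_j$ for $j<n$ and $(\varepsilon\cdot u)_n = u_n$. For $j\in S$ with $j<n$ we have $u_j' = (-1)^{\tau-\langle\gamma_j,\eta\rangle}u_j$ (the exponent being an integer by the previous paragraph), hence $u_j'/(\varepsilon\cdot u)_j = (-1)^{\tau-2\langle\gamma_j,\eta\rangle+\langle\gamma_n,\eta\rangle}$; since $2\langle\gamma_j-\gamma_n,\eta\rangle$ is an even integer, this exponent is congruent modulo $2$ to the $j$-independent value $\tau-\langle\gamma_n,\eta\rangle\in\mathbb{Z}$. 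Writing $\rho = (-1)^{\tau-\langle\gamma_n,\eta\rangle}\in\{\pm 1\}$, one gets $u_j' = \rho\,(\varepsilon\cdot u)_j$ for all $j<n$ in $S$; the coordinates outside $S$ vanish on both sides, and for $j=n$ one has $u_n' = (-1)^{\tau-\langle\gamma_n,\eta\rangle}u_n = \rho\,u_n = \rho\,(\varepsilon\cdot u)_n$. Thus $u' = \rho\,(\varepsilon\cdot u)$, so $u$ and $\varepsilon\cdot u$ represent the same point of $\mathcal{R} = \widetilde{\mathcal{R}}/\mathbb{Z}_2$, and therefore $[u',\varphi'] = \varepsilon\cdot[u,\varphi]$ in $\mathcal{N}$, which proves injectivity. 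The main obstacle is exactly this last bookkeeping: one has to manufacture the correct element of the finite group $D_\Gamma$ out of the ``angular'' datum $\eta$ and verify that all the sign ambiguities collapse into a single global sign $\rho$, which is then absorbed by the $\mathbb{Z}_2$-quotient defining $\mathcal{R}$ in $(\ref{mainquotient})$; the hypothesis $\Lambda_u=\Lambda$ is what makes this possible, since it forces $\eta\in\Lambda^{*}$.
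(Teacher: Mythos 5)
Your proof is correct and follows essentially the same route as the paper: reduce the projective equality to the statement $\varphi'-\varphi\in\Lambda_u^{*}=\Lambda^{*}$, realize this difference as an element $\varepsilon\in D_{\Gamma}$, and check that the two preimages are identified in $\mathcal{N}$. The only difference is that you track the residual global sign $\rho$ explicitly and note it is absorbed by the $\mathbb{Z}_2$-quotient defining $\mathcal{R}$, a point the paper's proof passes over when it normalizes $\hat{u}_r=u_r$.
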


\begin{proof}

Assume that $L$ is not embedded. Then
\begin{equation*}
[\hat{u}_1e^{i\pi<\gamma_1, \hat{\varphi}>}:...: \hat{u}_{n}e^{i\pi<\gamma_{n}, \hat{\varphi}>}] = [u_1e^{i\pi<\gamma_1, \varphi>}:...: u_ne^{i\pi<\gamma_n, \varphi>}]
\end{equation*}
for some  $\varphi, \hat{\varphi}$, $u=(u_1,...,u_n)$, $\hat{u}=(\hat{u}_1,...,\hat{u}_n)$. Suppose that $u_r \neq 0$. Then, we have
\begin{equation*}
\hat{u}_r = u_r, \;\; \hat{u}_je^{i\pi<\gamma_j - \gamma_r, \hat{\varphi}>} = u_je^{i\pi<\gamma_j - \gamma_r, \varphi>}, \;\; \forall u_j \neq 0, \; j\neq r.
\end{equation*}
This implies
\begin{equation*}
\begin{gathered}
e^{i\pi<\gamma_j - \gamma_r, \hat{\varphi} - \varphi>} \in \{1, -1 \} \; \Rightarrow \; <\gamma_j - \gamma_r, \hat{\varphi} - \varphi> \in \mathbb{Z} \; \Rightarrow \; \hat{\varphi} - \varphi \in \Lambda^{*}_u = \Lambda^{*}, \\
e^{i\pi<\gamma_j - \gamma_r, \hat{\varphi} - \varphi>} = \cos(\pi<\gamma_j - \gamma_r, \varepsilon>)
\end{gathered}
\end{equation*}
and $\hat{\varphi} - \varphi$ represents some element $\varepsilon \in D_{\Gamma} = \Lambda^{*}/2\Lambda^{*}$ (here we use that $\Lambda_u^{*} = \Lambda^{*}$). This means that $\hat{u}_j = u_j\cos(\pi<\gamma_j - \gamma_r, \varepsilon>)$. In other words, $\psi(\hat{u}, \hat{\varphi}) = \psi(u, \varphi)$ yields that $\hat{u} = \varepsilon \cdot u$ and $\hat{\varphi} = \varphi + \varepsilon$, but  we already identified these points. So, $L$ is embedded.

\end{proof}

\begin{lemma}\label{embc2}
If $\widetilde{\Lambda}_u = \widetilde{\Lambda}$, then $\Lambda_u = \Lambda$.
\end{lemma}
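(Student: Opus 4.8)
The plan is to compare the two pairs of lattices through the obvious extra coordinate: since $\widetilde{\gamma}_j = (1,\gamma_j)$, the lattices $\widetilde{\Lambda}$ and $\widetilde{\Lambda}_u$ live one dimension above $\Lambda$ and $\Lambda_u$, and that extra coordinate simply records the sum of the coefficients used to express a vector as an integer combination of the generators. Write $s\colon \mathbb{R}^{n-m}\to\mathbb{R}$ for the first coordinate and $p\colon\mathbb{R}^{n-m}\to\mathbb{R}^{n-m-1}$ for the projection forgetting it (so $p$ restricts to an isomorphism on $\ker s$). I would establish the two identities
\[
\Lambda = p\bigl(\widetilde{\Lambda}\cap\ker s\bigr),\qquad \Lambda_u = p\bigl(\widetilde{\Lambda}_u\cap\ker s\bigr),
\]
after which the lemma is immediate: from $\widetilde{\Lambda}_u=\widetilde{\Lambda}$ we get $\widetilde{\Lambda}_u\cap\ker s=\widetilde{\Lambda}\cap\ker s$, and applying $p$ yields $\Lambda_u=\Lambda$ (the reverse inclusion $\Lambda_u\subseteq\Lambda$ being already recorded in the text just before the statement).

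For the identities, the inclusion $\subseteq$ is automatic: each generator $\gamma_i-\gamma_j$ (with $u_i,u_j\neq 0$ in the second case) lifts to $\widetilde{\gamma}_i-\widetilde{\gamma}_j=(0,\gamma_i-\gamma_j)$, which lies in $\widetilde{\Lambda}\cap\ker s$ (resp. $\widetilde{\Lambda}_u\cap\ker s$) and projects back to $\gamma_i-\gamma_j$. The inclusion $\supseteq$ is the point to check: an element of $\widetilde{\Lambda}\cap\ker s$ is $\sum_j c_j\widetilde{\gamma}_j$ with $\sum_j c_j=0$, so its image under $p$ equals $\sum_j c_j\gamma_j=\sum_j c_j(\gamma_j-\gamma_n)\in\Lambda$; the same computation handles $\Lambda_u$ once we fix an index $r$ with $u_r\neq 0$, since then an element of $\widetilde{\Lambda}_u\cap\ker s$ is $\sum_{j:u_j\neq 0}c_j\widetilde{\gamma}_j$ with $\sum c_j=0$ and projects to $\sum_{j:u_j\neq 0}c_j(\gamma_j-\gamma_r)\in\Lambda_u$.

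I do not expect a genuine obstacle here; this is essentially linear bookkeeping, parallel to the remark in the text that $\Lambda$ is already generated by $\gamma_1-\gamma_n,\dots,\gamma_{n-1}-\gamma_n$. The only step deserving a word of care is the existence of an index $r$ with $u_r\neq 0$: this holds because every point of $\mathcal{R}$ has a nonzero coordinate, as $0\in\widetilde{\mathcal{R}}$ would force $\delta=0$, contradicting boundedness of $P$ via Lemma \ref{boundedpolytope} (and in any case the hypothesis $\widetilde{\Lambda}_u=\widetilde{\Lambda}$ is vacuous when $u=0$). Everything else reduces to the isomorphism between the codimension-one sublattice $\widetilde{\Lambda}\cap\ker s$ and $\Lambda$ induced by $p$.
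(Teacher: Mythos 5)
Your proof is correct and is essentially the same argument as the paper's: the paper shows elementwise that $\gamma\in\Lambda$ forces $(0,\gamma)\in\widetilde{\Lambda}=\widetilde{\Lambda}_u$, and then uses the vanishing of the sum of coefficients (the first coordinate) to rewrite $(0,\gamma)$ in terms of the differences $\gamma_j-\gamma_r$ with $u_j,u_r\neq 0$. Your packaging of this as the two identities $\Lambda=p\bigl(\widetilde{\Lambda}\cap\ker s\bigr)$ and $\Lambda_u=p\bigl(\widetilde{\Lambda}_u\cap\ker s\bigr)$ is just a cleaner statement of the same computation, so no substantive difference.
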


\begin{proof}
If $\gamma \in \Lambda$, then there are integers $\lambda_1,...,\lambda_{n-1}$  such that
\begin{equation*}
\lambda_1(\gamma_1 - \gamma_n) + ... + \lambda_{n-1}(\gamma_{n-1} - \gamma_n) = \gamma.
\end{equation*}
Let us recall that $\widetilde{\gamma_j} = (1, \gamma_j)$. Note that $\lambda_1(\widetilde{\gamma}_1 - \widetilde{\gamma}_n) + ... + \lambda_{n-1}(\widetilde{\gamma}_{n-1} - \widetilde{\gamma}_n) = (0, \gamma)$. So, if $\gamma \in \Lambda$, then $(0, \gamma) \in \widetilde{\Lambda}$.

Let $u=(u_1,...,u_n)$ be an arbitrary point of $\mathcal{R}$.  Then, $(0, \gamma) \in \widetilde{\Lambda} = \widetilde{\Lambda}_u$. Without loss of generality, assume $u_1,...,u_r \neq 0$. Then there are integers $\mu_1,...,\mu_r$ such that $\mu_1(1, \gamma_1) + ... + \mu_r(1, \gamma_r) = (0, \gamma)$. We see that
\begin{equation*}
\begin{gathered}
\mu_1 + ... +\mu_r = 0 \Rightarrow \\
(0,\gamma) = \mu_1\widetilde{\gamma}_1 + ... + \mu_{r-1}\widetilde{\gamma}_{r-1} + (-\mu_1 - ... -\mu_{r-1})\widetilde{\gamma_r} = \\
\mu_1(\widetilde{\gamma}_1 - \widetilde{\gamma}_r) + ... + \mu_r(\widetilde{\gamma}_{r-1} - \widetilde{\gamma}_r) = \mu_1(0, \gamma_1 - \gamma_r) + ... + \mu_{r-1}(0, \gamma_{r-1} - \gamma_r).
\end{gathered}
\end{equation*}
So, we get $\gamma = \mu_1(\gamma_1 - \gamma_r) + ... + \mu_{r-1}(\gamma_{r-1} - \gamma_r)$. This yields that $\gamma  \in \Lambda_u$. Since $\gamma$ is arbitrary element of $\Lambda$, we have $\Lambda \subseteq \Lambda_u$. Hence, $\Lambda = \Lambda_u$.

\end{proof}

Note that the following two conditions are equivalent
\begin{equation}\label{monconstant}
\begin{gathered}
\gamma_1 +... \gamma_n = 0  \Leftrightarrow \\
(1, \gamma_1) + ... + (1, \gamma_n) = \widetilde{\gamma}_1 + ... + \widetilde{\gamma}_n = \frac{n}{\delta}(\delta,0,...,0).
\end{gathered}
\end{equation}
\\
\\
Let $P$ be the polytope associated to system $(\ref{eqmain})$. Then from Lemma $\ref{equivmon}$ and formula $(\ref{monconstant})$  we get the following:
\begin{lemma}
If  $\gamma_1+...+\gamma_n = 0$, then the polytope associated to $P$ is Fano.
\end{lemma}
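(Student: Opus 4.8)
The plan is to combine Lemma~\ref{equivmon} with the equivalence \eqref{monconstant}; essentially all the work has already been done and what remains is bookkeeping. First I would recall that by construction $\widetilde{\gamma}_j = (1,\gamma_j)$ for every $j = 1,\ldots,n$, so that summing coordinate by coordinate gives
\[
\widetilde{\gamma}_1 + \cdots + \widetilde{\gamma}_n = \Bigl(n,\ \gamma_1 + \cdots + \gamma_n\Bigr).
\]
Under the hypothesis $\gamma_1 + \cdots + \gamma_n = 0$ this collapses to $(n,0,\ldots,0)$, which is exactly the content of \eqref{monconstant}.

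Next I would identify the right-hand side vector of the defining system \eqref{eqmain} with the constant vector appearing in Lemma~\ref{equivmon}. Since \eqref{eqmain} has first equation $u_1^2 + \cdots + u_n^2 = \delta$ and all remaining equations homogeneous, the vector $\delta = (\delta_1,\ldots,\delta_{n-m})$ of Lemma~\ref{equivmon} is here $(\delta,0,\ldots,0)$. Because $P$ is bounded, Lemma~\ref{boundedpolytope} applied to the normalized form \eqref{eqmain} (where $h_1 = \cdots = h_n = 1$ and $a = \delta$) gives $\delta > 0$, so we may set $C = n/\delta > 0$ and write
\[
\widetilde{\gamma}_1 + \cdots + \widetilde{\gamma}_n = (n,0,\ldots,0) = \frac{n}{\delta}\,(\delta,0,\ldots,0) = C\,\delta .
\]
This is precisely condition \eqref{condforlemma}, so Lemma~\ref{equivmon} immediately yields that $P$ is Fano.

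I do not expect any real obstacle here: the substantive equivalence is already packaged in Lemma~\ref{equivmon}, and the only points requiring care are notational — namely that the symbol $\delta$ plays a double role (a scalar, the first component of the right-hand side in \eqref{eqmain}, versus the full right-hand side vector in Lemma~\ref{equivmon}), and that $\delta \neq 0$ so that the scalar $C = n/\delta$ is well defined. For completeness one can note that $C > 0$, which is consistent with the Remark following Lemma~\ref{equivmon}, but this does not require any additional argument.
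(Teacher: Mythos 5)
Your proof is correct and is essentially the paper's own argument: the paper likewise derives the lemma directly from the identity $\widetilde{\gamma}_1+\cdots+\widetilde{\gamma}_n=\frac{n}{\delta}(\delta,0,\ldots,0)$ in $(\ref{monconstant})$ combined with Lemma $\ref{equivmon}$. Your added remark that $\delta>0$ (so $C=n/\delta$ is well defined and positive) is a harmless clarification already covered by the remark following Lemma $\ref{equivmon}$.
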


As a result, from Lemma $\ref{sumzero}$, Corollary $\ref{monmin}$, Lemmas $\ref{embdelzant0}$, $\ref{embc1}$, $\ref{embc2}$  we have

\begin{theorem}\label{monotonecondition}
Let $a_1,\ldots,a_n$ be the normal vectors to the facets of $P$.  The system of quadrics associated to $P$ can be written in the form $(\ref{eqmain})$ if and only if $a_1+\ldots+a_n = 0$. If $\widetilde{\Lambda}_u = \widetilde{\Lambda}$ for any $u \in \widetilde{\mathcal{R}}$ and $\gamma_1+...+\gamma_n = 0$, then the Lagrangian $L = \psi(\mathcal{N}) \subset \mathbb{C}P^{n-1}$ is embedded and  monotone.

In other words, if $P$ is Delzant, Fano, and $a_1+\ldots + a_n = 0$, then there exists monotone embedded Lagrangian $L  \subset \mathbb{C}P^{n-1}$ associated to $P$.
\end{theorem}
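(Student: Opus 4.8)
The plan is to read this statement as a synthesis of the lemmas just proved in this section, so the work is bookkeeping rather than a new argument. First I would dispose of the opening equivalence: the assertion that the system of quadrics attached to $P$ can be put in the normalized shape $(\ref{eqmain})$ exactly when $a_1 + \cdots + a_n = 0$ is precisely Lemma $\ref{sumzero}$, since the rows of the quadric matrix encode the linear relations among the normals $a_i$ and the first row $u_1^2 + \cdots + u_n^2 = \delta$ of $(\ref{eqmain})$ is the relation $a_1 + \cdots + a_n = 0$. From this point on I may assume the system is written as in $(\ref{eqmain})$, so that $\widetilde{\gamma}_j = (1,\gamma_j)$, and it remains to prove the second and third assertions.

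For embeddedness I would run the chain: $P$ Delzant $\Longrightarrow$ $\widetilde{\Lambda}_u = \widetilde{\Lambda}$ for all $u \in \widetilde{\mathcal{R}}$ $\Longrightarrow$ $\Lambda_u = \Lambda$ for all $u$ $\Longrightarrow$ $L = \psi(\mathcal{N}) \subset \mathbb{C}P^{n-1}$ embedded. The first implication is Lemma $\ref{embdelzant0}$, the second is Lemma $\ref{embc2}$, and the third is Lemma $\ref{embc1}$. The one point that needs a word of care is that Lemma $\ref{embc1}$ wants $\Lambda_u = \Lambda$ for every $u \in \mathcal{R}$ whereas Lemma $\ref{embc2}$ is phrased on $\widetilde{\mathcal{R}}$; but both $\Lambda_u$ and $\widetilde{\Lambda}_u$ depend only on the support $\{\,j : u_j \neq 0\,\}$, which is unchanged under $u \mapsto -u$, so the condition passes unobstructed to $\mathcal{R} = \widetilde{\mathcal{R}}/\mathbb{Z}_2$.

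For monotonicity the task is to translate the Fano hypothesis into the condition $\gamma_1 + \cdots + \gamma_n = 0$ appearing in Corollary $\ref{monmin}$. By Lemma $\ref{equivmon}$, $P$ is Fano if and only if $\widetilde{\gamma}_1 + \cdots + \widetilde{\gamma}_n = C\delta$ for some scalar $C$; in the normalized system $(\ref{eqmain})$ the vector $\delta$ is $(\delta,0,\ldots,0)$ and each $\widetilde{\gamma}_j = (1,\gamma_j)$, so comparing first coordinates forces $C = n/\delta$ and comparing the remaining coordinates yields exactly $\gamma_1 + \cdots + \gamma_n = 0$ --- this is the equivalence recorded in $(\ref{monconstant})$. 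Hence Corollary $\ref{monmin}$, which itself rests on Theorem $\ref{mironovtheorem2}$ (that $L$ is minimal for the Fubini-Study metric when $\gamma_1 + \cdots + \gamma_n = 0$), shows $L$ is monotone. Putting the two conclusions together proves the middle assertion, and the final ``in other words'' sentence is the special case in which $P$ is simultaneously Delzant, Fano and satisfies $a_1 + \cdots + a_n = 0$, so that all the hypotheses of the middle assertion are met at once.

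The only genuinely delicate point, and the step I would watch most closely, is the dictionary between the two ambient settings: the ``$\mathbb{C}^n$ picture'' with tilded objects $\widetilde{\mathcal{R}}, \widetilde{\Lambda}, \widetilde{\gamma}_j, T_{\widetilde{\Gamma}}$, and the ``$\mathbb{C}P^{n-1}$ picture'' with the untilded objects, the extra antipodal $\mathbb{Z}_2$-quotient, and the shift $\gamma_j \mapsto \gamma_j - \gamma_n$. One must verify that passing from $\widetilde{\gamma}_j$ to $\gamma_j$ (dropping the first coordinate) and then taking the projective quotient does not enlarge the relevant sublattices in a way that breaks the embeddedness criterion --- this is exactly what Lemmas $\ref{embc1}$ and $\ref{embc2}$ are designed to handle --- and that the particular normalization making the first quadric $\sum u_j^2 = \delta$ is the one for which ``$P$ Fano'' is literally ``$\sum \gamma_j = 0$''. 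Once those two checks are in hand, the theorem follows by concatenating the cited results.
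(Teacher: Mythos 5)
Your proposal is correct and follows essentially the same route as the paper, which derives Theorem \ref{monotonecondition} by exactly the concatenation you describe: Lemma \ref{sumzero} for the normalization equivalence, Theorem \ref{embdelzant0} together with Lemmas \ref{embc2} and \ref{embc1} for embeddedness, and Lemma \ref{equivmon} via formula (\ref{monconstant}) plus Corollary \ref{monmin} for monotonicity. Your added observation that $\Lambda_u$ and $\widetilde{\Lambda}_u$ depend only on the support of $u$, and hence descend to $\mathcal{R}=\widetilde{\mathcal{R}}/\mathbb{Z}_2$, is a correct detail the paper leaves implicit.
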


\textbf{Remark.} Actually, any equation $(\ref{eqmain0})$, after taking some linear combinations and changing of variables, can be written in the form $(\ref{eqmain})$. The problem is that the condition $\widetilde{\Lambda}_{u} = \widetilde{\Lambda}$ may not be preserved. Let us consider an example. Suppose $\widetilde{\mathcal{R}}$ is given by a quadric
\begin{equation*}
\begin{gathered}
\left\{
 \begin{array}{l}
2u_1^2 +u_2^2+u_3^2 + u_4^2 + u_5^2 = 5 \\
u_1^2 + u_4^2+u_5^2 = 3
 \end{array}
\right.
\end{gathered}
\end{equation*}
We see that $\widetilde{\Lambda}_{u}  = \widetilde{\Lambda}$ for any $u \in \widetilde{\mathcal{R}}$. After change of variables we get
\begin{equation*}
\begin{gathered}
\left\{
 \begin{array}{l}
u_1^2 +u_2^2+u_3^2 + u_4^2 + u_5^2 = 5 \\
u_1^2 + 2u_4^2+2u_5^2 = 6
 \end{array}
\right.
\end{gathered}
\end{equation*}
We see that for the new system $\widetilde{\Lambda} = \mathbb{Z}^2$, but $\widetilde{\Lambda}_{u_1=0} \neq \mathbb{Z}^2$.
\\
\\
\textbf{Example.} If $P$ is the standard $n-$simplex, then
\begin{equation*}
\begin{gathered}
\widetilde{\mathcal{R}} = \{ (u_1,\ldots, u_n) \in \mathbb{R}^n \; | \; u_1^2 + ... + u_n^2 = \delta \}
\\
\mathcal{R} = \widetilde{\mathcal{R}}/\mathbb{Z}_2 = \mathbb{R}P^{n-1}.
\end{gathered}
\end{equation*}
We get that $L = \mathbb{R}P^{n-1}$.

\section{Proof of main theorems}

\subsection{Minimal Maslov number}

In this section we find a formula for the minimal Maslov number of some Lagrangians.

From Sections $\ref{maincostruction0}$ and $\ref{maincostruction}$ we know that there are Lagrangians $L \subset \mathbb{C}P^{n-1}$ and $\widetilde{L} \subset \mathbb{C}^n$ associated to system $(\ref{eqmain})$. So, there are two Lagrangians associated to the same system. Assume that the Lagrangians are embedded and monotone (see Theorem $\ref{monotonecondition}$ and Theorem $\ref{fanocondition}$)

Recall that the Lagrangians $L $ and $\widetilde{L}$  fiber over $T^{n-m-1} = T_{\Gamma}/D_{\Gamma}$ and $T^{n-m}=T_{\widetilde{\Gamma}}/D_{\widetilde{\Gamma}}$, respectively. We have
\begin{equation*}
\begin{gathered}
L \xrightarrow{\mathcal{R}} T^{n-m-1}, \;\;\;\; \widetilde{L} \xrightarrow{\widetilde{\mathcal{R}}} T^{n-m},
\end{gathered}
\end{equation*}
where $\mathcal{R} = \widetilde{\mathcal{R}}/\mathbb{Z}_2$.

\begin{theorem}\label{minmaslovtheorem}
Suppose that $\widetilde{\mathcal{R}}$ is connected and
\begin{equation*}
\pi_1(\widetilde{\mathcal{R}}) = 0, \;\;\; \pi_1(L) = \pi_1(\mathcal{R}) \bigoplus \pi_1(T^{n-m-1}) = \mathbb{Z}_2 \bigoplus \mathbb{Z}^{n-m-1}.
\end{equation*}
Then
\begin{equation}
\begin{gathered}
H_2(\mathbb{C}P^{n-1}, L, \mathbb{Z}) = \mathbb{Z}^{n-m}, \\
N_L = N_{\widetilde{L}},
\end{gathered}
\end{equation}
where $L$, $\widetilde{L}$, are the Lagrangians associated to system $(\ref{eqmain})$, and $N_L$, $N_{\widetilde{L}}$ are the minimal Maslov numbers.

\end{theorem}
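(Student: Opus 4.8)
The key geometric fact is that $\widetilde{L}$ is the Hopf preimage of $L$. Writing the system $(\ref{eqmain})$ with first equation $u_1^2+\dots+u_n^2=\delta$ places $\widetilde{\mathcal R}$ on the sphere $S^{n-1}_{\sqrt\delta}\subset\mathbb R^n$, hence $\widetilde L=\widetilde\psi(\widetilde{\mathcal N})\subset S^{2n-1}_{\sqrt\delta}\subset\mathbb C^n$. Since $\widetilde\gamma_j=(1,\gamma_j)$, the first angle enters $\widetilde\psi$ as an overall unit scalar,
\[
\widetilde\psi(u,(\varphi_1,\varphi'))=e^{i\pi\varphi_1}\,\psi_0(u,\varphi'),\qquad \psi_0(u,\varphi')=\big(u_j\,e^{i\pi\langle\gamma_j,\varphi'\rangle}\big)_{j},
\]
and $\psi_0(u,\varphi')$ is a lift to $S^{2n-1}_{\sqrt\delta}$ of the point $\psi(u,\varphi')\in\mathbb C P^{n-1}$ of $(\ref{mainmap})$. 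Comparing the two constructions (and using that the element $e_1\in\widetilde\Lambda^*$ acts on $\widetilde{\mathcal R}$ by $u\mapsto -u$, so that the $\mathbb Z_2$ in $\mathcal R=\widetilde{\mathcal R}/\mathbb Z_2$ is precisely ``half a turn'' of the Hopf circle), one checks directly that $\widetilde L=p^{-1}(L)$, where $p\colon S^{2n-1}_{\sqrt\delta}\to\mathbb C P^{n-1}$ is the Hopf fibration. As the Hopf $S^1$-action is free, $p$ restricts to a principal $S^1$-bundle $\pi\colon\widetilde L\to L$; radially retracting $\mathbb C^n\setminus\{0\}$ onto $S^{2n-1}_{\sqrt\delta}$ then exhibits $(\mathbb C^n\setminus\{0\},\widetilde L)\simeq(S^{2n-1}_{\sqrt\delta},\widetilde L)$ as the total space of an $S^1$-bundle of pairs over $(\mathbb C P^{n-1},L)$.

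\emph{Computation of $H_2(\mathbb C P^{n-1},L,\mathbb Z)$.} Because $\widetilde{\mathcal R}$ is connected with $\pi_1(\widetilde{\mathcal R})=0$, the fibration $\widetilde{\mathcal R}\to\widetilde L\to T^{n-m}$ gives $\pi_1(\widetilde L)\cong\mathbb Z^{n-m}$, so $H_1(\widetilde L)=\mathbb Z^{n-m}$ and, as $\mathbb C^n$ is contractible, $H_2(\mathbb C^n,\widetilde L)\cong H_1(\widetilde L)=\mathbb Z^{n-m}$; moreover $H_2(\mathbb C^n\setminus\{0\},\widetilde L)\cong H_2(\mathbb C^n,\widetilde L)$ since $\widetilde H_i(S^{2n-1})=0$ for $i\le 2$. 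The homology Gysin sequence of the $S^1$-bundle of pairs,
\[
H_2(S^{2n-1}_{\sqrt\delta},\widetilde L)\xrightarrow{\ \pi_*\ }H_2(\mathbb C P^{n-1},L)\xrightarrow{\ \cap e\ }H_0(\mathbb C P^{n-1},L),
\]
flanked by $H_1(\mathbb C P^{n-1},L)$, shows $\pi_*$ is an isomorphism once one notes $H_0(\mathbb C P^{n-1},L)=H_1(\mathbb C P^{n-1},L)=0$ ($L$ connected, $H_1(\mathbb C P^{n-1})=0$). Hence $H_2(\mathbb C P^{n-1},L)\cong H_2(\mathbb C^n,\widetilde L)=\mathbb Z^{n-m}$.

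\emph{Equality of Maslov numbers.} I claim $\mu_{\widetilde L}=\mu_L\circ p_*$ on $H_2(\mathbb C^n,\widetilde L)$; since $p_*$ is onto, this yields $\operatorname{im}\mu_L=\operatorname{im}\mu_{\widetilde L}$, and therefore (using $N_L=N_L^H$, and that the formula of Theorem $\ref{fanocondition}$ depends only on $H_1$-classes) $N_L=N_{\widetilde L}$. Both $L$ and $\widetilde L$ are minimal (Theorems $\ref{mironovtheorem2}$ and $\ref{mironovtheorem}$, as $\gamma_1+\dots+\gamma_n=0$), so $(\ref{maslovclass})$ and Theorem $\ref{fanocondition}$ give $\mu_L=\tfrac{2n}{\pi}\omega_{FS}$ and $\mu_{\widetilde L}=\tfrac{2C}{\pi}\omega_{st}$ with $\widetilde\gamma_1+\dots+\widetilde\gamma_n=C(\delta,0,\dots,0)$, i.e.\ $C=n/\delta$. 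Given $\beta\in\pi_2(\mathbb C^n,\widetilde L)$, represent it by $f\colon(D^2,S^1)\to(\mathbb C^n\setminus\{0\},\widetilde L)$ (possible since $\{0\}$ has codimension $2n\ge 4$), so $p\circ f$ represents $p_*\beta$ and
\[
\mu_{\widetilde L}(\beta)-\mu_L(p_*\beta)=\tfrac{2n}{\pi}\int_{D^2}f^*\eta,\qquad \eta:=\tfrac{1}{\delta}\omega_{st}-p^*\omega_{FS}.
\]
Now $\eta$ vanishes on $S^{2n-1}_{\sqrt\delta}$ (the normalization $\omega_{FS}(\mathbb C P^1)=\pi$ gives $\omega_{st}|_{S^{2n-1}_{\sqrt\delta}}=\delta\,p^*\omega_{FS}$), and $\eta$ is exact on $\mathbb C^n\setminus\{0\}\simeq S^{2n-1}$ because $H^2(S^{2n-1},\mathbb R)=0$; writing $\eta=d\theta$ we get $\int_{D^2}f^*\eta=\int_{\partial\beta}\theta$, and $\theta|_{S^{2n-1}_{\sqrt\delta}}$ is closed with $H^1(S^{2n-1},\mathbb R)=0$, hence exact, so its period over the loop $\partial\beta\subset\widetilde L$ vanishes. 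This proves the claim, and with it the theorem.

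\textbf{Main obstacle.} The delicate part is the bookkeeping in the first step: tracking the two quotients $D_{\widetilde\Gamma}$, $D_\Gamma$ and the dual lattices $\widetilde\Lambda^*$, $\Lambda^*$ carefully enough to confirm $\widetilde L=p^{-1}(L)$ and that $\pi\colon\widetilde L\to L$ really is the Hopf $S^1$-bundle restricted to $L$. A convenient consistency check: the Hopf fibre over $[z_0]\in L$ (with $|z_0|=\sqrt\delta$) bounds the disc $\{r z_0 : r\in D^2\subset\mathbb C\}$ of $\omega_{st}$-area $\pi\delta$, so $\mu_{\widetilde L}=\tfrac{2C}{\pi}\cdot\pi\delta=2n$, while the same class maps under $p_*$ to $\pm[\mathbb C P^1]$ with $\mu_L=2n$; this also fixes the constant in $\omega_{st}|_{S^{2n-1}_{\sqrt\delta}}=\delta\,p^*\omega_{FS}$, which is exactly where the value $C=n/\delta$ of Theorem $\ref{fanocondition}$ lines up with the factor $2n$ in $(\ref{maslovclass})$.
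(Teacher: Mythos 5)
Your proposal is correct and reaches both conclusions, but by a genuinely different route from the paper, so a comparison is worthwhile. The shared geometric core is the same: both arguments rest on the Hopf projection $\sigma(\widetilde L)=L$, on the relation $p^*\omega_{FS}=\tfrac1\delta\,\omega_{st}$ on the sphere of radius $\sqrt\delta$, and on minimality via formula $(\ref{maslovclass})$ together with Theorem $\ref{fanocondition}$. The bookkeeping, however, differs in both halves. For $H_2(\mathbb{C}P^{n-1},L,\mathbb{Z})$ the paper builds explicit loops $\widetilde\alpha_r$ from a basis of $\widetilde\Lambda^*$, caps them by discs $\widetilde\beta_r$, projects, and then analyses the long exact sequence of the pair by hand (showing $i_*=0$ and $p_*[\mathbb{C}P^1]=2[\beta_{n-m}]$), which uses the hypothesis $\pi_1(L)=\mathbb{Z}_2\oplus\mathbb{Z}^{n-m-1}$; you instead upgrade $\sigma(\widetilde L)=L$ to the stronger (and correct) statement $\widetilde L=p^{-1}(L)$ and run the relative Gysin sequence of the $S^1$-bundle of pairs, getting $H_2(\mathbb{C}P^{n-1},L)\cong H_2(\mathbb{C}^n,\widetilde L)=\mathbb{Z}^{n-m}$ in one step and needing only connectedness of $L$ and $\pi_1(\widetilde{\mathcal R})=0$. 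For the Maslov numbers the paper checks $\mu_L(\beta_r)=\mu_{\widetilde L}(\widetilde\beta_r)$ generator by generator, whereas you prove the identity $\mu_{\widetilde L}=\mu_L\circ p_*$ of homomorphisms by the exactness/Stokes argument for $\eta=\tfrac1\delta\omega_{st}-p^*\omega_{FS}$ on $\mathbb{C}^n\setminus\{0\}$ and deduce $\operatorname{im}\mu_L=\operatorname{im}\mu_{\widetilde L}$ from surjectivity of $p_*$; this is cleaner and more conceptual. What the paper's explicit version buys in exchange is concrete output used later: the Maslov-$n$ disc $\beta_{n-m}$ with $2[\beta_{n-m}]=p_*[\mathbb{C}P^1]$, and the explicit basis $\beta_1,\dots,\beta_{n-m}$ with $\mu_L(\beta_r)=C\delta_r$ that feeds into Lemma $\ref{simplemimasnumber}$ to compute $N_L$ as a gcd. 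The one step you flag as delicate, $\widetilde L=p^{-1}(L)$, does go through exactly as you sketch: since $\widetilde\gamma_j=(1,\gamma_j)$, the coordinate $\varphi_1$ sweeps the full Hopf circle, and $e_1=\widetilde\varepsilon_{n-m}\in\widetilde\Lambda^*$ acts by $u\mapsto -u$, which is precisely the $\mathbb{Z}_2$ defining $\mathcal R=\widetilde{\mathcal R}/\mathbb{Z}_2$.
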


\begin{proof}
We use notations from Section $\ref{maincostruction}$. Let us recall that the lattice $\widetilde{\Lambda}$ is generated by vectors $\widetilde{\gamma}_j = (1, \gamma_j)$ and the lattice $\Lambda$ is generated by vectors $\gamma_j - \gamma_n$, where $j=1,\ldots,n$. Note that $\widetilde{\Lambda}$ is generated by vectors $\widetilde{\gamma}_1 - \widetilde{\gamma}_n, \ldots, \widetilde{\gamma}_{n-1} - \widetilde{\gamma}_n, \widetilde{\gamma}_n$. Therefore, we can choose the following basis for the dual lattice $\widetilde{\Lambda}^{*}$:
\begin{equation*}
\widetilde{\varepsilon}_{n-m}=(1,0...,0), \;\; \widetilde{\varepsilon}_{r}=(\varepsilon_{r,1},...,\varepsilon_{r, n-m}), \;\; r=1,...,n-m-1. \\
\end{equation*}
Denote by $\varepsilon_r$, an $(n-m-1)-$vector
\begin{equation*}
\varepsilon_r = (\varepsilon_{r, 2},..., \varepsilon_{r, n-m}).
\end{equation*}

\begin{lemma}\label{basisgen}
If vectors $\widetilde{\varepsilon}_{1},...,\widetilde{\varepsilon}_{n-m}$ form a basis for $\widetilde{\Lambda}^{*}$, then $\varepsilon_1,...,\varepsilon_{n-m-1}$ form a  basis for $\Lambda^{*}$.
\end{lemma}

\begin{proof}

Since $\Lambda$ is generated by vectors $\gamma_1 - \gamma_n, \ldots, \gamma_{n-1} - \gamma_n$, we have
\begin{equation*}
<\gamma_j - \gamma_n, \varepsilon_r> = <(1,\gamma_j) - (1, \gamma_n), \widetilde{\varepsilon}_r>  =<\widetilde{\gamma}_j - \widetilde{\gamma}_n, \widetilde{\varepsilon}_r> \in \mathbb{Z}.
\end{equation*}
Therefore, $\varepsilon_r \in \Lambda^{*}$. Let $\varepsilon$ be an arbitrary element of $\Lambda^{*}$. We see that
\begin{equation*}
<(1, \gamma_j), (-<\gamma_n, \varepsilon>, \varepsilon)> = -<\gamma_n, \varepsilon> + <\gamma_j, \varepsilon> = <\gamma_j - \gamma_n, \varepsilon> \in \mathbb{Z}.
\end{equation*}
Hence $(-<\gamma_n, \varepsilon>, \varepsilon) \in \widetilde{\Lambda}^{*}$. This means that there exist integers $\lambda_1, \ldots,\lambda_{n-m}$ such that
\begin{equation*}
\begin{gathered}
\lambda_1\widetilde{\varepsilon}_1 +... + \lambda_{n-m}\widetilde{\varepsilon}_{n-m} = (-<\gamma_n, \varepsilon>, \varepsilon) \Rightarrow \\
\lambda_1\varepsilon_1 + \ldots + \lambda_{n-m-1}\varepsilon_{n-m-1} = \varepsilon.
\end{gathered}
\end{equation*}

\end{proof}

If $\widetilde{\mathcal{R}}$ is connected, then for each $r=1,...,n-m$ there exists a curve $(v_{r,1}(s),...,v_{r,n}(s))$ such that
\begin{equation*}
\begin{gathered}
(v_{r,1}(0),...,v_{r,n}(0)) = (u_1,..,u_n), \;\; u_n \neq 0 \\
(v_{r,1}(1),...,v_{r,n}(1)) = (u_1\cos(\pi<\widetilde{\gamma}_1, \widetilde{\varepsilon}_r>), \ldots ,u_n\cos(\pi<\widetilde{\gamma}_n, \widetilde{\varepsilon}_r>))
\end{gathered}
\end{equation*}
for some $(u_1,\ldots, u_n) \in \widetilde{\mathcal{R}}$. Then we see that the curve
\begin{equation*}
\widetilde{\alpha}_r(s) = \widetilde{\psi}(v_{r,1}(s),...,v_{r,n}(s), s\widetilde{\varepsilon}_r) \subset \widetilde{L}, \;\;\; s\in [0,1]
\end{equation*}
is closed and there are elements $\widetilde{\beta}_1,...,\widetilde{\beta}_{n-m} \in \pi_2(\mathbb{C}^{n}, \widetilde{L})$ such that $\partial \widetilde{\beta}_r = \widetilde{\alpha}_r$.

Let us recall that the embedding of $\widetilde{L} \subset \mathbb{C}^n$ corresponding to system $(\ref{eqmain})$ is given by
\begin{equation*}
\begin{gathered}
\widetilde{L} = (u_1e^{i\pi\varphi_1 + i\pi<\gamma_1, \varphi>},..., u_ne^{i\pi\varphi_1 + i\pi<\gamma_n, \varphi>}) \subset \mathbb{C}^{n}, \quad \varphi \in \mathbb{R}^{n-m-1}.
\end{gathered}
\end{equation*}
Consider the Hopf projection $\sigma: \mathbb{C}^{n} \rightarrow \mathbb{C}P^{n - 1}$
\begin{equation*}
\begin{gathered}
\sigma(\widetilde{L}) = [u_1e^{i\pi\varphi_1 + i\pi<\gamma_1, \varphi>}:...: u_ne^{i\pi\varphi_1 + i\pi<\gamma_n, \varphi>}] = \\
 [u_1e^{i\pi<\gamma_1, \varphi>}:...: u_ne^{i\pi<\gamma_n, \varphi>}].
 \end{gathered}
\end{equation*}
We see that $\sigma(\widetilde{L})$ coincides with the Lagrangian $L \subset \mathbb{C}P^{n-1}$. Let $\omega_{FS}$, $\omega_{st}$ be the Fubini-Study form and the standard symplectic form of  $\mathbb{C}P^{n-1}$ and $\mathbb{C}^{n}$, respectively. Denote
\begin{equation*}
\beta_r = \sigma(\widetilde{\beta}_r), \quad  \alpha_r = \sigma(\widetilde{\alpha}_r), \quad r=1,...,n-m.
\end{equation*}
From Theorem $\ref{fanocondition}$ we have

\begin{equation*}
\omega_{st}(\widetilde{\beta}_r) = \frac{\pi}{2C}\mu_{\widetilde{L}}( \widetilde{\beta}_r),
\end{equation*}
where $C = \frac{n}{\delta}$ (see formula $(\ref{monconstant})$). Therefore
\begin{equation*}
\omega_{FS}(\beta_r) = \int\limits_{\sigma(\widetilde{\beta}_r)}\omega_{FS} = \frac{1}{\delta}\int\limits_{\widetilde{\beta}_r}\omega_{st} = \frac{1}{\delta}\omega_{st}(\widetilde{\beta}_r) = \frac{\pi}{2n}\mu_{\widetilde{L}}(\widetilde{\beta}_r).
\end{equation*}
We know that $L \subset \mathbb{C}P^{n-1}$ is monotone and from formula $(\ref{maslovclass})$ we get
\begin{equation}\label{mainmaslovformula2}
\mu_L(\beta_r) = \frac{2n}{\pi}\omega_{FS}(\beta_r) = \frac{2n}{\pi}\frac{\pi}{2n}\mu_{\widetilde{L}}(\widetilde{\beta}_r) = \mu_{\widetilde{L}}(\widetilde{\beta}_r).
\end{equation}

\begin{lemma}\label{linearind}
The cycles $\alpha_1 = \sigma(\widetilde{\alpha}_1), \ldots ,\sigma(\widetilde{\alpha}_{n-m-1}) = \alpha_{n-m-1}$ form a basis for $H_1(L, \mathbb{Z})/(torsion) = H_1(T^{n-m-1}, \mathbb{Z}) = \mathbb{Z}^{n-m-1}$.
\end{lemma}

\begin{proof}
We have
\begin{equation*}
\begin{gathered}
\widetilde{L} \xrightarrow{\widetilde{R}}  T_{\widetilde{\Gamma}}/D_{\widetilde{\Gamma}} = T^{n-m} = \widetilde{S}_1^1 \times \ldots \times \widetilde{S}_{n-m}^1, \;\;\; \widetilde{S}_k^1 = \mathbb{R}\langle \widetilde{\varepsilon}_k \rangle/\mathbb{Z}\langle \widetilde{\varepsilon}_k \rangle, \\
L \xrightarrow{\mathcal{R}} T_{\Gamma}/D_{\Gamma} = T^{n-m-1} = S_1^1 \times \ldots \times S_{n-m-1}^1, \;\; \; S_k^1 = \mathbb{R}\langle \varepsilon_k \rangle/\mathbb{Z}\langle \varepsilon_k \rangle
\end{gathered}
\end{equation*}
Since $\widetilde{\varepsilon}_1, \ldots,\widetilde{\varepsilon}_{n-m}$ form a basis for $\widetilde{\Lambda}$, we see that $\widetilde{\alpha}_1, \ldots,\widetilde{\alpha}_{n-m}$ form a basis for $H_1(\widetilde{L}, \mathbb{Z}) = H_1(T^{n-m}, \mathbb{Z})$. From Lemma $\ref{basisgen}$ we get that cycles $\alpha_1, \ldots,\alpha_{n-m-1}$ define a basis for $H_1(L, \mathbb{Z})/(torsion) = H_1(T^{n-m-1}, \mathbb{Z})$.
\end{proof}

Let us pay more attention to $\widetilde{\alpha}_{n-m}$. We assumed in the beginning of the proof that $\widetilde{\varepsilon}_{n-m} = (1,0, \ldots,0)$. We have
\begin{equation}\label{notintprojection}
\begin{gathered}
\partial (\widetilde{\beta}_{n-m}) = \widetilde{\alpha}_{n-m}(s) = (v_1(s)e^{i\pi s \varphi_1}, \ldots, v_n(s)e^{i\pi s \varphi_1}), \\
\alpha_{n-m} = \sigma(\widetilde{\alpha}_{n-m}) = [v_1(s): \ldots :v_n(s)]. \\
\end{gathered}
\end{equation}
From Formula $(\ref{mainmaslovformula2})$ and Theorem $\ref{fanocondition}$ we obtain
\begin{equation}\label{cpmaslovclass}
\mu_{L}(\beta_{n-m}) = \mu_{\widetilde{L}}(\widetilde{\beta}_{n-m}) = \int\limits_{\widetilde{\alpha}_{n-m}}nd\varphi_1 = \int\limits_{0}^{1}nds = n.
\end{equation}

From the long exact sequence of the pair $(\mathbb{C}P^{n-1}, L)$ we have
\begin{equation*}
H_2(L, \mathbb{Z}) \xrightarrow{i_{*}} \mathbb{Z} \xrightarrow{p_{*}} H_2(\mathbb{C}P^{n-1}, L, \mathbb{Z}) \xrightarrow{\partial_{*}} \mathbb{Z}_2 \bigoplus H_1(T^{n-m-1}, \mathbb{Z})  \rightarrow 0.
\end{equation*}
Then, from Lemma $\ref{linearind}$ we see that the elements $\partial_{*}(\beta_r) = \alpha_r$ define the basis for  $H_1(T^{n-m-1}, \mathbb{Z})$, where $r=1,...,n-m-1$. From formula $(\ref{notintprojection})$ we get $\partial_{*}(\beta_{n-m}) \in \mathbb{Z}_2$ (belongs to $\pi_1(\mathcal{R}))$.

\begin{lemma}
We have $H_2(\mathbb{C}P^{n-1}, L, \mathbb{Z}) = \mathbb{Z}^{n-m}$ and $p_{*}([\mathbb{C}P^1]) = 2[\beta_{n-m}]$.
\end{lemma}

\begin{proof}
Assume that the homomorphism $i_{*}$ is nonzero. Then there exists $m$ such that $p_{*}[m\mathbb{C}P^1] = 0$ and $0=\mu_L(p_{*}[m\mathbb{C}P^1]) = m\mu_L(p_{*}[\mathbb{C}P^1])$.  On the other hand, we have the disc $\beta_{n-m}$ such that $\partial_{*}\beta_{n-m} \in \mathbb{Z}_2 \subset H_1(L, \mathbb{Z})$. Hence, $[\partial_{*}(2\beta_{n-m})] = 0$  and for some number $s$ we have $p_{*}(s[\mathbb{C}P^1]) = 2[\beta_{n-m}]$. So, we get $\mu_{L}(p_{*}[\mathbb{C}P^1]) = 0 \neq \mu_{L}(\beta_{n-m})$ = n. We obtain that the homomorphism $i_{*}$ is trivial and the long exact sequence has the form
\begin{equation*}
0 \rightarrow \mathbb{Z} \xrightarrow{p_{*}} H_2(\mathbb{C}P^{n-1}, L, \mathbb{Z}) \xrightarrow{\partial_{*}} \mathbb{Z}^{n-m-1} \oplus \mathbb{Z}_2 \rightarrow 0.
\end{equation*}
As we already noticed, $[\partial_{*} (2\beta_{n-m})] = 0$. Hence, there exists a number $s$ such that $sp_{*}[\mathbb{C}P^1] = 2[\beta_{n-m}]$. From Formula $(\ref{maslovclass})$ and Fromula $(\ref{cpmaslovclass})$ we obtain
\begin{equation*}
s\pi = \omega_{FS}(p_{*}[s\mathbb{C}P^1]) = \frac{\pi}{2n} \mu_{L}(p_{*}(s[\mathbb{C}P^1]) = \frac{\pi}{2n} \mu_{L}(2[\beta_{n-m}]) = 2n\frac{\pi}{2n} = \pi.
\end{equation*}
So, $s=1$ and $2[\beta_{n-m}] = p_{*}[\mathbb{C}P^1]$. Therefore,
\begin{equation*}
H_2(\mathbb{C}P^{n-1}, L, \mathbb{Z}) = \mathbb{Z}^{n-m}.
\end{equation*}

\end{proof}

So, we have the basis $\widetilde{\beta}_1,...,\widetilde{\beta}_{n-m}$ for  $H_2(\mathbb{C}^n, L, \mathbb{Z})$ and the basis $\beta_1,...,\beta_{n-m}$ for $H_2(\mathbb{C}P^{n-1}, L, \mathbb{Z})$ such that $\mu_{\widetilde{L}}(\widetilde{\beta_i}) = \mu_L(\beta_i)$ for any $i$. This implies that $N_{\widetilde{L}} = N_{L}$.

\end{proof}

In general, there is no canonical basis for $H_1(T^{n-m}, \mathbb{Z})$. So, to compute the minimal Maslov number we need to choose some basis. We find some convenient basis below and give an explicit formula for the Maslov class.

Let us assume that $\widetilde{\Lambda} = \mathbb{Z}^{n-m}$ and $\widetilde{\Lambda}_u = \widetilde{\Lambda}$. Without loss of generality,  we can assume that system $(\ref{eqmain})$ can be written in the form (we take linear combinations of equations and use that $\widetilde{\Lambda}_{u_1=\ldots u_{m-1}=0} = \widetilde{\Lambda} = \mathbb{Z}^{n-m}$)
\begin{equation}\label{simplemaslov}
\begin{gathered}
\widetilde{\mathcal{R}} = \left\{
 \begin{array}{l}
\widetilde{\gamma}_{1,1}u_1^2 + ... + \widetilde{\gamma}_{m-1,1}u_{m-1}^2 + u_{m}^2 = \delta_1, \\
\widetilde{\gamma}_{1,2}u_1^2 + ... + \widetilde{\gamma}_{m-1,2}u_{m-1}^2 + u_{m+1}^2 = \delta_2 \\
...\\
\widetilde{\gamma}_{1,n-m}u_1^2 + ... + \widetilde{\gamma}_{m-1,n-m}u_{m-1}^2 + u_{n}^2 = \delta_{n-m}
 \end{array}
\right.
\end{gathered}
\end{equation}
Then the embedding of $\widetilde{L} \subset \mathbb{C}^n$ is given by
\begin{equation}\label{simplemaslovemb}
\widetilde{L} = (u_1e^{i\pi<\widetilde{\gamma}_1, \widetilde{\varphi}>},...,u_{m-1}e^{i\pi<\widetilde{\gamma}_{m-1}, \widetilde{\varphi}>}, u_me^{i\pi\varphi_{m}},...,u_me^{i\pi\varphi_{n}}).
\end{equation}

\vspace{.1in}

\begin{lemma}\label{simplemimasnumber}
Let $\widetilde{L} \subset \mathbb{C}^{n}$ be the Lagrangian corresponding to system $(\ref{simplemaslov})$ (equivalently, the embedding of $\widetilde{L}$ is given by formula $(\ref{simplemaslovemb})$). Assume that $\widetilde{\mathcal{R}}$ is connected, $\pi_1(\widetilde{\mathcal{R}}) = 0$, and $ \pi_1(L) = \pi_1(\mathcal{R}) \bigoplus \pi_1(T^{n-m-1})$. If $\widetilde{\gamma}_1+...+\widetilde{\gamma}_n = (C\delta_1,...,C\delta_{n-m})$ for some constant $C$, then the minimal Maslov number is given by
\begin{equation*}
N_{\widetilde{L}} = gcd(C\delta_1,...,C\delta_{n-m}),
\end{equation*}
where $gcd$ stands for the greatest common divisor.
\end{lemma}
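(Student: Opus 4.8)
The idea is to read off the image of the Maslov homomorphism $\mu_{\widetilde{L}}$ on an explicit generating set of $\pi_2(\mathbb{C}^n,\widetilde{L})$, using the description of $\mu_{\widetilde{L}}$ from Theorem \ref{fanocondition}. Since by hypothesis $\widetilde{\Lambda}=\mathbb{Z}^{n-m}$, we may take as a basis of $\widetilde{\Lambda}^{*}=\mathbb{Z}^{n-m}$ the standard basis vectors $\widetilde{\varepsilon}_1,\ldots,\widetilde{\varepsilon}_{n-m}$, which is exactly the situation treated in the proof of Theorem \ref{minmaslovtheorem}. Because $\widetilde{\mathcal{R}}$ is connected, that proof produces, for each $k$, a closed loop $\widetilde{\alpha}_k(s)=\widetilde{\psi}\big(v_k(s),\,s\widetilde{\varepsilon}_k\big)\subset\widetilde{L}$, $s\in[0,1]$, obtained by joining a point $u\in\widetilde{\mathcal{R}}$ to $\widetilde{\varepsilon}_k\cdot u$ inside $\widetilde{\mathcal{R}}$ while running simultaneously along $s\mapsto s\widetilde{\varepsilon}_k$ in $T_{\widetilde{\Gamma}}$, together with a disc $\widetilde{\beta}_k\in\pi_2(\mathbb{C}^n,\widetilde{L})$ with $\partial\widetilde{\beta}_k=\widetilde{\alpha}_k$.

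The first step is to check that $\widetilde{\beta}_1,\ldots,\widetilde{\beta}_{n-m}$ generate $\pi_2(\mathbb{C}^n,\widetilde{L})$. As $\mathbb{C}^n$ is contractible, the boundary map $\partial\colon\pi_2(\mathbb{C}^n,\widetilde{L})\to\pi_1(\widetilde{L})$ is an isomorphism. The fibration $\widetilde{\mathcal{R}}\to\widetilde{L}\to T^{n-m}$ together with $\pi_1(\widetilde{\mathcal{R}})=0$ and $\pi_2(T^{n-m})=0$ gives $\pi_1(\widetilde{L})\cong\pi_1(T^{n-m})=\mathbb{Z}^{n-m}$, and under the projection $\widetilde{L}\to T^{n-m}=T_{\widetilde{\Gamma}}/D_{\widetilde{\Gamma}}$ the loop $\widetilde{\alpha}_k$ is sent to the positive generator of the $k$-th circle factor $S_k^{1}=\mathbb{R}\langle\widetilde{\varepsilon}_k\rangle/\mathbb{Z}\langle\widetilde{\varepsilon}_k\rangle$. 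Hence $\widetilde{\alpha}_1,\ldots,\widetilde{\alpha}_{n-m}$ generate $\pi_1(\widetilde{L})$ (this is shown in the proof of Theorem \ref{minmaslovtheorem}), so $\widetilde{\beta}_1,\ldots,\widetilde{\beta}_{n-m}$ generate $\pi_2(\mathbb{C}^n,\widetilde{L})$; since $\mu_{\widetilde{L}}$ is a homomorphism to $\mathbb{Z}$, its image is the subgroup generated by the integers $\mu_{\widetilde{L}}(\widetilde{\beta}_1),\ldots,\mu_{\widetilde{L}}(\widetilde{\beta}_{n-m})$, and $N_{\widetilde{L}}$ is the positive generator of that image.

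It remains to compute $\mu_{\widetilde{L}}(\widetilde{\beta}_k)$. Because $\widetilde{\gamma}_1+\cdots+\widetilde{\gamma}_n=(C\delta_1,\ldots,C\delta_{n-m})$, Theorem \ref{fanocondition} gives $\mu_{\widetilde{L}}(\beta)=\int_{\partial\beta}C\delta_1\,d\varphi_1+\cdots+C\delta_{n-m}\,d\varphi_{n-m}$. Along $\widetilde{\alpha}_k$ the torus coordinate is $\varphi(s)=s\widetilde{\varepsilon}_k$, so on this loop $d\varphi_j=\widetilde{\varepsilon}_{k,j}\,ds=\delta_{jk}\,ds$ and
\begin{equation*}
\mu_{\widetilde{L}}(\widetilde{\beta}_k)=\int_0^1 C\delta_k\,ds=C\delta_k ,
\end{equation*}
which is an integer, being the $k$-th coordinate of $\widetilde{\gamma}_1+\cdots+\widetilde{\gamma}_n$. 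Therefore the image of $\mu_{\widetilde{L}}$ equals $\gcd(C\delta_1,\ldots,C\delta_{n-m})\,\mathbb{Z}$, and $N_{\widetilde{L}}=\gcd(C\delta_1,\ldots,C\delta_{n-m})$, as claimed.

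The only genuine obstacle is the first step: producing the loops $\widetilde{\alpha}_k$ and verifying that they generate $\pi_1(\widetilde{L})$. This needs the connectedness of $\widetilde{\mathcal{R}}$ (to get the paths $v_k(s)$) and the fibration computation with $\pi_1(\widetilde{\mathcal{R}})=0$, but all of this has already been carried out in the proof of Theorem \ref{minmaslovtheorem}, so here it reduces to quoting that construction; everything else is the one-line integral above. A minor bookkeeping point to mention is that $N_{\widetilde{L}}$ is defined via $\pi_2(\mathbb{C}^n,\widetilde{L})$ whereas $\mu_{\widetilde{L}}$ is most naturally read off $H_1(\widetilde{L},\mathbb{Z})$; the two agree because $\mu_{\widetilde{L}}$ factors through $H_1(\widetilde{L},\mathbb{Z})$ and $\partial$ followed by the Hurewicz map is surjective.
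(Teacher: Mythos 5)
Your proposal is correct and follows essentially the same route as the paper: choose the standard basis $\widetilde{\varepsilon}_1,\ldots,\widetilde{\varepsilon}_{n-m}$ of $\widetilde{\Lambda}^{*}$, take the loops $\widetilde{\alpha}_r$ and discs $\widetilde{\beta}_r$ already built in the proof of Theorem \ref{minmaslovtheorem}, evaluate $\mu_{\widetilde{L}}(\widetilde{\beta}_r)=C\delta_r$ via the integral formula of Theorem \ref{fanocondition}, and conclude by taking the gcd over this generating set. Your extra remarks (why the $\widetilde{\beta}_r$ generate $\pi_2(\mathbb{C}^n,\widetilde{L})$ via contractibility of $\mathbb{C}^n$, and the $\pi_2$ versus $H_1$ bookkeeping) only make explicit what the paper leaves implicit.
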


\begin{proof}
We can choose vectors
\begin{equation*}
\widetilde{\varepsilon}_r = (\underbrace{0,...,0}_{r},1,0,...,0), \quad r=0,...,n-m-1\\
\end{equation*}
as basis for $\widetilde{\Lambda}^{*}$. As in the proof of the previous theorem, let $(v_{r,1}(s),...,v_{r,n}(s))$ be the curve such that
\begin{equation*}
\begin{gathered}
(v_{r,1}(0),...,v_{r,n}(0)) = (u_1,..,u_n),  \\
(v_{r,1}(1),...,v_{r,n}(1)) = (u_1\cos(\pi<\widetilde{\gamma}_1, \widetilde{\varepsilon}_r>),...,u_n\cos(\pi<\widetilde{\gamma}_n, \widetilde{\varepsilon}_r>)).
\end{gathered}
\end{equation*}
We get loops
\begin{equation*}
\widetilde{\alpha}_r(s) = \widetilde{\psi}(v_{r,1}(s),...,v_{r,n}(s), s\widetilde{\varepsilon}_r) \subset \widetilde{L}, \;\;\; s\in [0,1].
\end{equation*}
There is $\widetilde{\beta}_r \in \pi_2(\mathbb{C}^n, \widetilde{L})$ such that $\partial(\widetilde{\beta}_r) = \widetilde{\alpha}_r$. From Theorem $\ref{fanocondition}$ we have
\begin{equation*}
\mu_{\widetilde{L}}(\widetilde{\beta}_r) = \int\limits_{\widetilde{\alpha}_r}C\delta_1d\varphi_1+...+C\delta_{n-m}d\varphi_{n-m} = C\delta_r.
\end{equation*}
We proved in the previous theorem that $\widetilde{\alpha}_1,...,\widetilde{\alpha}_{n-m}$ form basis for $H_1(\widetilde{L}, \mathbb{Z}) = H_1(T^{n-m}, \mathbb{Z}) = \pi_2(\mathbb{C}^n, L)$. 
\end{proof}

\subsection{Some lemmas}\label{somelemmas}

Suppose $j_1,...,j_{\ell}$ are positive integers  and $j_1+...+j_{\ell} = n$. Let $\overrightarrow{u}_i$ be a vector $(u_{i,1},...,u_{i, j_i})$. Define $|u_i|^2$ by
\begin{equation*}
|u_i|^2 = \sum\limits_{s=1}^{j_i}u_{i,s}^2.
\end{equation*}
Then consider a system
\begin{equation}\label{minmaslovquadric2}
\begin{gathered}
\left\{
 \begin{array}{l}
|\overrightarrow{u}_{1}|^2 + \ldots + |\overrightarrow{u}_{\ell}|^2 = \delta \\
\gamma_{1,r}|\overrightarrow{u}_{1}|^2 + \ldots + \gamma_{n,r}|\overrightarrow{u}_{{\ell}}|^2 = 0
 \end{array}
\right.
\\
r=1,\ldots, n-m-1
\end{gathered}
\end{equation}
In fact, we applied the multiwedge operation, defined in Section $\ref{wedge}$, to  system
\begin{equation*}
\begin{gathered}
\left\{
 \begin{array}{l}
u_{1}^2 + \ldots + u_{\ell}^2 = \delta \\
\gamma_{1,r}u_{1}^2 + \ldots + \gamma_{n,r}u_{\ell}^2 = 0
 \end{array}
\right.
\\
r=1, \ldots,n-m-1
\end{gathered}
\end{equation*}
System $(\ref{minmaslovquadric2})$ defines Lagrangians $\widetilde{L} \subset \mathbb{C}^n$ and $L \subset \mathbb{C}P^{n-1}$.

\begin{lemma}\label{trivialfibration}
If $j_1,...,j_{\ell}$ are even numbers, then the Lagrangian $L$ is diffeomorphic to $\mathcal{R} \times T^{n-m-1}$, where $T^{n-m-1} = T_{\Gamma}/D_{\Gamma}$. In other words, the fibration $(\ref{mainprojection})$ is trivial.

Analogously, if $j_1,...,j_{\ell}$ are even, then $\widetilde{L}$ is diffeomorphic to $\widetilde{\mathcal{R}} \times T^{n-m}$, where $T^{n-m} = T_{\widetilde{\Gamma}}/D_{\widetilde{\Gamma}}$.
\end{lemma}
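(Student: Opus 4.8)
The plan is to trivialise the fibration $(\ref{mainprojection})$ by untwisting its clutching function with the torus action on the fibre that becomes available once all block sizes are even. Recall that $L=(\mathcal R\times T_\Gamma)/D_\Gamma$; equivalently $L=(\mathcal R\times\mathbb R^{n-m-1})/\Lambda^{*}$, where $\lambda\in\Lambda^{*}$ acts by $\lambda\cdot(x,v)=(\phi(\lambda)x,\,v+\lambda)$ for the $D_\Gamma$-action $\phi\colon\Lambda^{*}\to\Lambda^{*}/2\Lambda^{*}=D_\Gamma\to\mathrm{Diff}(\mathcal R)$ of Section $\ref{maincostruction}$, and the projection onto $T^{n-m-1}=\mathbb R^{n-m-1}/\Lambda^{*}$ is $(x,v)\mapsto v$. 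After the multiwedge the coordinates of $(\ref{minmaslovquadric2})$ are grouped into blocks $\overrightarrow u_{1},\dots,\overrightarrow u_{\ell}$, and inside the $i$-th block every coordinate carries the same vector $\gamma_{i}$; consequently $\phi(\lambda)$ rescales the whole block $\overrightarrow u_{i}$ by a single sign $\sigma_{i}(\lambda)=\cos(\pi\langle\gamma_{i}-\gamma_{\ell},\lambda\rangle)\in\{\pm1\}$ (each cosine is $\pm1$ since $\gamma_{i}-\gamma_{\ell}\in\Lambda$ and $\lambda\in\Lambda^{*}$), and $\lambda\mapsto(\sigma_{1}(\lambda),\dots,\sigma_{\ell}(\lambda))$ is a homomorphism $\Lambda^{*}\to\{\pm1\}^{\ell}$.

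First I would build the torus action. Since every $j_{i}$ is even, fix an orthogonal complex structure $J_{i}$ on $\mathbb R^{j_{i}}$; the circle $\{e^{tJ_{i}}:t\in\mathbb R\}\subset SO(j_{i})$ preserves $|\overrightarrow u_{i}|^{2}$ and satisfies $e^{\pi J_{i}}=-\mathrm{Id}$. Letting $T^{\ell}=\prod_{i}S^{1}$ act on $\mathbb R^{n}$ block-wise, we obtain an action on $\widetilde{\mathcal R}$ (the quadrics of $(\ref{minmaslovquadric2})$ involve only the Euclidean norms $|\overrightarrow u_{i}|^{2}$, which block rotations preserve) that commutes with the global involution $u\mapsto-u$ and so descends to $\mathcal R$. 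The key point is that $\phi$ now factors through this torus: the sign $\sigma_{i}(\lambda)=-1$ on block $i$ is realised by $e^{\pi J_{i}}$, so $\phi(\lambda)$ equals the value at $\lambda$ of the homomorphism $\Lambda^{*}\to\{\pm1\}^{\ell}\hookrightarrow T^{\ell}$ composed with the block-wise torus action.

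Next I would untwist. Because $\Lambda^{*}\cong\mathbb Z^{n-m-1}$ is free abelian, the homomorphism $\Lambda^{*}\to T^{\ell}$ lifts to a homomorphism $\Theta\colon\Lambda^{*}\to\mathbb R^{\ell}$, which I extend $\mathbb R$-linearly to $\Theta\colon\mathbb R^{n-m-1}\to\mathbb R^{\ell}$, $\Theta(v)=(\theta_{1}(v),\dots,\theta_{\ell}(v))$. Let $R_{v}\in\mathrm{Diff}(\mathcal R)$ be the block-wise rotation acting on block $i$ by $e^{\theta_{i}(v)J_{i}}$ (defined first on $\widetilde{\mathcal R}$, then descending to $\mathcal R$). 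Then $v\mapsto R_{v}$ is additive, $R_{\lambda}=\phi(\lambda)$ for $\lambda\in\Lambda^{*}$, and the map $(x,v)\mapsto(R_{-v}x,\,v)$ on $\mathcal R\times\mathbb R^{n-m-1}$ intertwines the twisted $\Lambda^{*}$-action above with the product action $\lambda\cdot(y,v)=(y,\,v+\lambda)$; passing to quotients gives a diffeomorphism $L\xrightarrow{\ \cong\ }\mathcal R\times T^{n-m-1}$, which is precisely the assertion that $(\ref{mainprojection})$ is trivial. The identical argument — with $\widetilde\gamma_{i}=(1,\gamma_{i})\in\mathbb Z^{n-m}$ (so that $D_{\widetilde\Gamma}$ rescales block $i$ by $\cos(\pi\langle\widetilde\gamma_{i},\widetilde\varepsilon\rangle)$) and with $\widetilde{\mathcal R}$, $\widetilde\Lambda^{*}$, $T_{\widetilde\Gamma}$ replacing $\mathcal R$, $\Lambda^{*}$, $T_{\Gamma}$ — yields $\widetilde L\xrightarrow{\ \cong\ }\widetilde{\mathcal R}\times T^{n-m}$.

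The main point to get right, and the only place the evenness hypothesis is actually used, is the construction of the block-wise circles: if some $j_{i}$ were odd, then $-\mathrm{Id}_{\mathbb R^{j_{i}}}$ would be orientation-reversing and could not lie in a connected group of norm-preserving symmetries of that block, so $\phi$ would genuinely fail to factor through a torus and the trivialisation would collapse — as it must, since the fibration is non-trivial in general. Everything else is formal: the projectivity of the free abelian group $\Lambda^{*}$ and a mapping-torus style untwisting. It is worth noting that this refines the equality $\widetilde{\mathcal R}^{(2,\dots,2)}=\widetilde{\mathcal Z}$ of Section $\ref{wedge}$, the torus $T^{\ell}$ used above being exactly the coordinate torus of the associated complex moment-angle manifold.
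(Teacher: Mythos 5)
Your proof is correct, and it rests on exactly the same key observation as the paper's: when $j_i$ is even, the block rotations preserving $|\overrightarrow{u}_i|^2$ form a connected group containing $-\mathrm{Id}$, so the $D_\Gamma$-action on the fibre is "undone" by rotations. Where you differ is in how this observation is converted into a trivialization. The paper only records that the transition maps of the fibration are isotopic to the identity, then trivializes one circle factor of $T^{n-m-1}$ at a time and iterates ($L = L_1 \times S^1$, $L_1 = L_2 \times S^1$, etc.). You instead make the rotation group into a genuine $T^\ell$-action on $\mathcal{R}$, check that $\phi\colon \Lambda^{*} \to \mathrm{Diff}(\mathcal{R})$ factors through $\{\pm1\}^{\ell}\hookrightarrow T^{\ell}$, lift to $\Theta\colon\Lambda^{*}\to\mathbb{R}^{\ell}$ using freeness of $\Lambda^{*}$, extend linearly, and write down the explicit untwisting $(x,v)\mapsto(R_{-v}x,v)$. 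This buys you a single closed-form diffeomorphism $L\cong\mathcal{R}\times T^{n-m-1}$ rather than an inductive isotopy argument, and it makes transparent both where evenness enters (existence of $J_i$ with $e^{\pi J_i}=-\mathrm{Id}$) and the connection to the coordinate torus of the complex moment-angle manifold; the paper's route is shorter to state but leaves the successive trivializations implicit. Both arguments are sound.
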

\begin{proof}
Note that
\begin{equation*}
(\cos(\phi)u_{s,i} - \sin(\phi)u_{s+1,i})^2 + (\sin(\phi)u_{s,i} + \cos(\phi)u_{s+1,i})^2 = u_{s,i}^2 + u_{s+1,i}^2
\end{equation*}
and there is an isotopy connecting $\overrightarrow{u}_i$ with $-\overrightarrow{u}_i$
\begin{equation*}
\begin{gathered}
(\cos(\phi)u_{1,i} - \sin(\phi)u_{2,i}, \sin(\phi)u_{1,i} + \cos(\phi)u_{2,i}, \ldots, \cos(\phi)u_{j_{i-1},i} - \sin(\phi)u_{j_i,i}, \\
 \sin(\phi)u_{j_{i-1},i} + \cos(\phi)u_{j_i,i}), \quad \;\; \phi \in [0,\pi].
\end{gathered}
\end{equation*}
Therefore, all transformation maps of the fibration are isotopic to identity.

We know that
\begin{equation*}
\begin{gathered}
T_{\Gamma} = \widetilde{S}_1^1 \times \ldots \times \widetilde{S}^1_{n-m-1}, \;\; \; \widetilde{S}^1_i = \mathbb{R}\langle\varepsilon_i\rangle/\mathbb{Z}\langle 2\varepsilon_i \rangle, \\
T^{n-m-1} = T_{\Gamma}/D_{\Gamma} = S_{1}^{1} \times \ldots\times S^1_{n-m-1}, \;\; \; S^1_i = \mathbb{R}\langle\varepsilon_i\rangle/\mathbb{Z}\langle \varepsilon_i \rangle.
\end{gathered}
\end{equation*}
Let us consider a fibration over $S^1$ defined by composition of projections $\pi_1: L \rightarrow T^{n-m-1} \rightarrow S_1^1$. Since all transformation maps of the fibration are isotopic to identity, we get that  $\pi_1: L \rightarrow S_1^1$ is trivial. This means that $L = L_1 \times S^1_1$, where $L_1$ fibers over $T^{n-m-2} = S_2^2 \times ... \times S_{n-m-1}^1$ with fiber $\mathcal{R}$.

In the same way, we can prove that $L_1 = L_2 \times S^1$, where $L_2$ fibers over $T^{n-m-3} = S_3^1 \times ... \times S_{n-m-1}^1$. Repeating this $n-m-1$ times we get $L = \mathcal{R} \times T^{n-m-1}$.
\\
\\
The same proof works for $\widetilde{L} \subset \mathbb{C}^n$.
\end{proof}

Let us study the fundamental group of Lagrangians.

\begin{lemma}\label{fundgrouplagr}
If $j_1,...,j_n \geqslant 2$, $\pi_1(\widetilde{\mathcal{R}}) = 0$, and $\gamma_1+...+\gamma_n = 0$, then $\pi_1(L) = \mathbb{Z}_2 \bigoplus \mathbb{Z}^{n-m-1}$.
\end{lemma}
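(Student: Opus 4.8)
The plan is to realize $L$ as the quotient of a simply connected manifold by a free, properly discontinuous action of an explicitly described \emph{abelian} group, and then read off $\pi_1(L)$ as the deck group. Since $\gamma_1+\dots+\gamma_n=0$, formula $(\ref{monconstant})$ together with Lemma \ref{equivmon} shows that the polytope attached to $(\ref{minmaslovquadric2})$ is Fano, so by Corollary \ref{monmin} and Theorem \ref{monotonecondition} the map $\psi$ of $(\ref{mainmap})$ is an embedding and I identify $L$ with $\mathcal N=(\mathcal R\times T_\Gamma)/D_\Gamma$, where $\mathcal R=\widetilde{\mathcal R}/\mathbb Z_2$ for the antipodal involution $u\mapsto-u$. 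This involution is fixed-point free: the first equation of $(\ref{minmaslovquadric2})$ reads $|\overrightarrow u_1|^2+\dots+|\overrightarrow u_\ell|^2=\delta>0$ (as $P$ is bounded), so $0\notin\widetilde{\mathcal R}$; combined with $\pi_1(\widetilde{\mathcal R})=0$ this shows $\widetilde{\mathcal R}$ is the universal cover of $\mathcal R$ and $\pi_1(\mathcal R)=\mathbb Z_2$. The hypothesis $j_i\ge2$ enters only through the construction (it guarantees $\widetilde{\mathcal R}$ connected and $L$ embedded); in contrast with Lemma \ref{trivialfibration} it does \emph{not} trivialize the fibration $(\ref{mainprojection})$ when some $j_i$ is odd, so a product splitting of $L$ is unavailable and a genuine argument is needed.

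The key step is to pass to universal covers of the torus factors as well. By $(\ref{torus})$ and the discussion of the tori preceding Theorem \ref{mironovtheorem}, $T_\Gamma=\mathbb R^{n-m-1}/2\Lambda^*$ and $T^{n-m-1}=T_\Gamma/D_\Gamma=\mathbb R^{n-m-1}/\Lambda^*$, whence
\[
L=\bigl(\widetilde{\mathcal R}\times\mathbb R^{n-m-1}\bigr)\big/ G ,
\]
where $G$ is generated by the antipodal involution $\iota\colon(u,\varphi)\mapsto(-u,\varphi)$ together with, for each $\varepsilon\in\Lambda^*$, the transformation $g_\varepsilon\colon(u,\varphi)\mapsto(s(\varepsilon)u,\,\varphi+\varepsilon)$, $s(\varepsilon)$ being the diagonal $\pm1$ matrix that equals $\cos(\pi\langle\gamma_i-\gamma_n,\varepsilon\rangle)$ on the $i$-th block of coordinates (the $D_\Gamma$-action preceding $(\ref{mainmap})$, lifted from $\mathcal R$ to $\widetilde{\mathcal R}$); since $g_{2\lambda}$ is plain translation by $2\lambda$, the group $G$ contains the deck group $2\Lambda^*$ of $\widetilde{\mathcal R}\times\mathbb R^{n-m-1}\to\widetilde{\mathcal R}\times T_\Gamma$ and surjects onto $\mathbb Z_2\times D_\Gamma$. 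I would then check that $G$ acts freely (an element with nonzero $\mathbb R^{n-m-1}$-translation part is fixed-point free, and the remaining elements are powers of the free map $\iota$) and properly discontinuously (the action is cocompact, as $\widetilde{\mathcal R}$ is compact and the translations form the lattice $\Lambda^*$). As $\widetilde{\mathcal R}\times\mathbb R^{n-m-1}$ is simply connected (product of simply connected spaces, using $\pi_1(\widetilde{\mathcal R})=0$), it is the universal cover of $L$, so $\pi_1(L)\cong G$.

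Finally I would identify $G$ as a group. From $\cos(\pi a)\cos(\pi b)=\cos(\pi(a+b))$ for $a,b\in\mathbb Z$ one gets $s(\varepsilon)s(\varepsilon')=s(\varepsilon+\varepsilon')$, hence $g_\varepsilon g_{\varepsilon'}=g_{\varepsilon+\varepsilon'}$; thus $\varepsilon\mapsto g_\varepsilon$ is an injective homomorphism $\Lambda^*\cong\mathbb Z^{n-m-1}\hookrightarrow G$ and the $g_\varepsilon$ pairwise commute. Since a diagonal $\pm1$ matrix commutes with $-\mathrm{Id}$ we have $\iota g_\varepsilon=g_\varepsilon\iota$, while $\iota^2=\mathrm{id}$, and $\langle\iota\rangle\cap\langle g_\varepsilon:\varepsilon\in\Lambda^*\rangle=\{\mathrm{id}\}$ because $g_\varepsilon=\iota^a$ forces $\varepsilon=0$ (compare translation parts) and then $a$ even. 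Choosing a basis $\varepsilon_1,\dots,\varepsilon_{n-m-1}$ of $\Lambda^*$ therefore gives $\pi_1(L)\cong G=\langle\iota\rangle\times\langle g_{\varepsilon_1},\dots,g_{\varepsilon_{n-m-1}}\rangle\cong\mathbb Z_2\oplus\mathbb Z^{n-m-1}$.

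The main obstacle is conceptual, not computational. The homotopy exact sequence of $(\ref{mainprojection})$ alone yields only a central extension $0\to\mathbb Z_2\to\pi_1(L)\to\mathbb Z^{n-m-1}\to0$ (central because $\mathrm{Aut}(\mathbb Z_2)$ is trivial), and such an extension need not split as a direct sum — its class lies in $H^2(\mathbb Z^{n-m-1};\mathbb Z_2)$, which is typically nonzero. The substance of the lemma is that this class vanishes, and the role of the explicit commuting lifts $\iota,g_{\varepsilon_1},\dots,g_{\varepsilon_{n-m-1}}$ is to make that vanishing transparent; the one point needing care is the additivity and the commutation with $-\mathrm{Id}$ of the sign matrices $s(\varepsilon)$, i.e. that the antipodal $\mathbb Z_2$ and the lifted $D_\Gamma$-action together generate an abelian group with no hidden relations.
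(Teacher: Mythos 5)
Your proof is correct, and it takes a genuinely different route from the paper's. The paper argues via the homotopy exact sequence of the fibration $L \xrightarrow{\mathcal{R}} T^{n-m-1}$ and splits the resulting extension by exhibiting an explicit section $T^{n-m-1} \to L$ built from rotations $(\cos(\pi\langle\gamma_i-\gamma_\ell,\varphi\rangle), \sin(\pi\langle\gamma_i-\gamma_\ell,\varphi\rangle),0,\dots,0)$ inside each block --- this is precisely where $j_i \geqslant 2$ is used --- and then notes that $\mathbb{Z}_2 \rtimes \mathbb{Z}^{n-m-1} \cong \mathbb{Z}_2 \oplus \mathbb{Z}^{n-m-1}$ because $\mathrm{Aut}(\mathbb{Z}_2)$ is trivial. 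You instead exhibit $\widetilde{\mathcal{R}}\times\mathbb{R}^{n-m-1}$ as the universal cover and compute the deck group $G$ directly; the additivity $s(\varepsilon)s(\varepsilon')=s(\varepsilon+\varepsilon')$ and the commutation of $\iota$ with the diagonal sign matrices make $G$ visibly isomorphic to $\mathbb{Z}_2\oplus\Lambda^{*}$, and the freeness and proper discontinuity checks you sketch all go through. Your diagnosis of the real content (killing the extension class in $H^2(\mathbb{Z}^{n-m-1};\mathbb{Z}_2)$ by producing commuting lifts, whereas the exact sequence alone only gives a central extension) is exactly right. What each approach buys: the paper's is shorter once the section formula is written down, but your argument never actually uses $j_1,\dots,j_n\geqslant 2$ and therefore proves the statement under the weaker hypotheses that $\widetilde{\mathcal{R}}$ is connected and simply connected --- the multiwedge condition is only needed for the section trick. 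One small inaccuracy to fix: your parenthetical claim that $j_i\geqslant 2$ "guarantees $\widetilde{\mathcal{R}}$ connected and $L$ embedded" is not right --- connectedness comes from irredundancy of the presentation and embeddedness from the Delzant condition --- but since your argument makes no use of $j_i\geqslant 2$ anywhere, this does not affect its validity.
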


\begin{proof}
If $\pi_1(\widetilde{\mathcal{R}}) = 0$, then $\pi_1(\mathcal{R}) = \mathbb{Z}_2$. From the exact sequence of the fibration $L \xrightarrow{\mathcal{R}} T^{n-m-1}$ we have
\begin{equation*}
0 \rightarrow \mathbb{Z}_2 \rightarrow \pi_1(L) \rightarrow \mathbb{Z}^{n-m-1} \rightarrow 0.
\end{equation*}
Note that under our conditions the fibartion $L \xrightarrow{\mathcal{R}} T^{n-m-1}$ has a section
\begin{equation*}
\begin{gathered}
s(\varphi) = \\
 (\underbrace{\cos(\pi<\gamma_1 - \gamma_{\ell}, \varphi>), \sin(\pi<\gamma_1 - \gamma_{\ell}, \varphi>,0,...0}_{j_1},...,\\
\underbrace{\cos(\pi<\gamma_{\ell-1} - \gamma_{\ell}, \varphi>), \sin(\pi<\gamma_{\ell-1} - \gamma_{\ell}, \varphi>, 0,...,0}_{j_{\ell}}, \overrightarrow{u}_{\ell}, \varphi), \\
\overrightarrow{u}_{\ell} = const.
\end{gathered}
\end{equation*}
Hence, the exact sequence above splits and $\pi_1(L) = \mathbb{Z}_2 \rtimes \mathbb{Z}^{n-m-1}$, where $\rtimes$ stands for the semidirect product. Easy to see that  $\mathbb{Z}_2 \rtimes \mathbb{Z}^{n-m-1}$ is isomorphic to $\mathbb{Z}_2 \oplus \mathbb{Z}^{n-m-1}$.
\end{proof}

Let us mention the following almost obvious lemma. Denote by $\widetilde{\mathcal{R}}_P^{(j_1,\ldots, j_n)}$ the real moment-angle manifold obtained from $\widetilde{\mathcal{R}}_P$ by applying the multiwedge operation $(j_1,\ldots, j_n)$ (see Section $\ref{wedge}$). Let $P^{(j_1,\ldots,j_n)}$ be the polytope associated to $\widetilde{\mathcal{R}}_P^{(j_1,\ldots, j_n)}$.

\begin{lemma}\label{multiwedgedelzant}
If $P$ is Delzant, then $P^{(j_1, \ldots, j_n)}$ is Delzant.
\end{lemma}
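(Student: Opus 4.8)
The plan is to reduce the statement to the lattice criterion for the Delzant property recorded in Theorem~\ref{embdelzant0}: a generic bounded polytope $Q$ (with at least one vertex) is Delzant if and only if $\widetilde{\Lambda}_u = \widetilde{\Lambda}$ for every point $u$ of its real moment-angle manifold $\widetilde{\mathcal{R}}_Q$, where $\widetilde{\Lambda}$ is the lattice spanned by the columns of the Gale-dual matrix and $\widetilde{\Lambda}_u$ is the sublattice spanned by the columns indexed by the nonzero coordinates of $u$. First I would write the two configurations explicitly. If $\widetilde{\mathcal{R}}_P$ is cut out by $\sum_i \widetilde{\gamma}_{i,r}u_i^2 = \delta_r$, $r=1,\dots,n-m$, then by the definition of the multiwedge operation in Section~\ref{wedge} the manifold $\widetilde{\mathcal{R}}_P^{(j_1,\dots,j_n)}$ is cut out by $\sum_i \widetilde{\gamma}_{i,r}\bigl(\sum_{s=1}^{j_i}u_{i,s}^2\bigr) = \delta_r$. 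Hence its Gale-dual configuration consists of the vectors $\widetilde{\gamma}_i$, each repeated $j_i$ times, the number of quadrics is unchanged ($n'-m' = \sum_i j_i - (m+\sum_i j_i - n) = n-m$), and so the new Gale lattice $\widetilde{\Lambda}'$ equals $\widetilde{\Lambda}$, because repeating generators does not change the lattice they span. I would also note at this point that $P^{(j_1,\dots,j_n)}$ is generic (the simplicial wedge of a simplicial complex is again a simplicial complex, cf. Lemma~\ref{interfacetswedge}) and bounded (Lemma~\ref{boundedpolytope} applied to the form $\sum_i h_i|\overrightarrow{u}_i|^2 = a$ with the same positive $h_i$ and the same $a$), so that Theorem~\ref{embdelzant0} applies to it.

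The key step is then an elementary observation about supports. Given any $u' = (u_{i,s}) \in \widetilde{\mathcal{R}}_P^{(j_1,\dots,j_n)}$, set $w_i := \bigl(\sum_{s=1}^{j_i}u_{i,s}^2\bigr)^{1/2}$. Substituting into the equations shows immediately that $w = (w_1,\dots,w_n)$ lies in $\widetilde{\mathcal{R}}_P$. Moreover $w_i \neq 0$ if and only if $u_{i,s}\neq 0$ for some $s$, and since every duplicated Gale vector $\widetilde{\gamma}_{i,s}$ equals $\widetilde{\gamma}_i$, we get
\begin{equation*}
\widetilde{\Lambda}'_{u'} = \mathbb{Z}\langle \widetilde{\gamma}_{i,s} : u_{i,s}\neq 0\rangle = \mathbb{Z}\langle \widetilde{\gamma}_i : w_i \neq 0\rangle = \widetilde{\Lambda}_w .
\end{equation*}
Because $P$ is Delzant, Theorem~\ref{embdelzant0} gives $\widetilde{\Lambda}_w = \widetilde{\Lambda} = \widetilde{\Lambda}'$. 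As $u'$ was arbitrary, $\widetilde{\Lambda}'_{u'} = \widetilde{\Lambda}'$ for every $u' \in \widetilde{\mathcal{R}}_P^{(j_1,\dots,j_n)}$, and applying Theorem~\ref{embdelzant0} in the other direction concludes that $P^{(j_1,\dots,j_n)}$ is Delzant.

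There is essentially no obstacle here: the whole content is the bookkeeping fact that duplicating a variable in the quadric system duplicates its Gale vector and preserves both the ambient Gale lattice and the vanishing pattern of the coordinates. The only two points that deserve a line of care are that $w$ really lies in $\widetilde{\mathcal{R}}_P$ (immediate from the shape of the equations) and that the hypotheses of Theorem~\ref{embdelzant0} — genericity and boundedness — survive the multiwedge, which I would dispatch as indicated above. A more combinatorial alternative would be to describe the vertices of $P^{(j_1,\dots,j_n)}$ directly through Lemma~\ref{interfacetswedge} and invoke unimodular Gale duality facet by facet, but the argument through Theorem~\ref{embdelzant0} is shorter and stays in the quadric language used throughout the paper.
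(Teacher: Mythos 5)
Your proposal is correct and follows essentially the same route as the paper: reduce to the lattice criterion of Theorem~\ref{embdelzant0}, observe that the multiwedge merely repeats Gale columns so the ambient lattice is unchanged, and deduce $\widetilde{\Lambda}'_{u'}=\widetilde{\Lambda}'$ for every point of the new quadric. The only difference is that you make explicit (via the point $w_i=(\sum_s u_{i,s}^2)^{1/2}\in\widetilde{\mathcal{R}}_P$ and the matching of supports) the step the paper dismisses as ``easy to see,'' and you additionally verify that genericity and boundedness survive the multiwedge so that Theorem~\ref{embdelzant0} applies.
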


\begin{proof}
Let $\widetilde{\Lambda}$ and $\widetilde{\Lambda}^{(j_1,\ldots, j_n)}$ be the lattices generated by columns of $\widetilde{\mathcal{R}}_P$ and $\widetilde{\mathcal{R}}_P^{(j_1,\ldots, j_n)}$, respectively (see Section $\ref{maincostruction0}$). We see that $\widetilde{\Lambda} = \widetilde{\Lambda}^{(j_1,\ldots, j_n)}$ (because we repeat columns). Since $P$ is Delzant, we have $\widetilde{\Lambda} = \widetilde{\Lambda}_u$ for any $u \in \widetilde{\mathcal{R}}_P$ (see Theorem $\ref{embdelzant0})$. Easy to see that $\widetilde{\Lambda}^{(j_1,\ldots, j_n)}_u = \widetilde{\Lambda}^{(j_1,\ldots, j_n)}$ for any $u \in \widetilde{\mathcal{R}}_P^{(j_1,\ldots, j_n)}$ (because we have the same repeated columns). Then, Theorem $\ref{embdelzant0}$ says that $P^{(j_1, \ldots, j_n)}$ is Delzant.
\end{proof}

\vspace{.1in}

\emph{Proof of Lemma $\ref{helplemma}$}. Assume that $\widetilde{\mathcal{R}}_P = \widetilde{\mathcal{Z}}_Q$. This means that $P = Q^{(2,\ldots,2)}$. By the construction of the multiwedge operation, the polytope $Q$ has dimension $\frac{m}{2}$ and has $\frac{n}{2}$ facets. It follows from Lemma $\ref{trivialfibration}$ that $L = \widetilde{\mathcal{R}}_P/\mathbb{Z}_2 \times T^{n-m-1}$. Arguing as in Lemma $\ref{trivialfibration}$, we see that the action of $\mathbb{Z}_2$ on $\widetilde{\mathcal{R}}_P$ is isotopic to identity; therefore the action preserves the orientation. So, $L$ is orientable.

We mentioned that $\mathcal{R}_P = \mathcal{R}_P/\mathbb{Z}_2$ can be defined by the system of quadrics $(\ref{projectiveq})$ in $\mathbb{R}P^{n-1}$. Therefore, the normal bundle of $\mathcal{R}_P \subset \mathbb{R}P^{n-1}$ has $n-m-1$ nonzero sections. We get that the normal bundle of $\mathcal{R}_P \subset \mathbb{R}P^{n-1}$ is trivial and $T\mathcal{R} \oplus \mathbb{R}^{n-m-1} = T\mathbb{R}P^{n-1}$. We have $w_2(\mathcal{R}) = \frac{n(n-1)}{2} \; mod \; 2$. This proves the Lemma.

\subsection{Proof of Theorem $\ref{vanishquantum}$}

Let $P$ be  Delzant and Fano polytope. Let $a_1,\ldots, a_n$ be the vectors normal to the facets of $P$. We assume that $a_1+\ldots + a_n = 0$. From Section $\ref{maincostruction}$ we know that there is embedded monotone Lagrangian $L \subset \mathbb{C}P^{n-1}$ associated to $P$.

Let us use notations from Section $\ref{wedge}$. We suppose that $P = Q^{(2,\ldots, 2)}$. Therefore, $\widetilde{\mathcal{R}}_P = \widetilde{\mathcal{Z}}_Q$.  By Lemma $\ref{helplemma}$, the Lagrangian $L$ is diffeomorphic to $\widetilde{\mathcal{R}}_P/\mathbb{Z}_2 \times T^{n-m-1}$. Moreover, $L$ is orientable and the action of $\mathbb{Z}_2$ on $\widetilde{\mathcal{R}}_P$ is isotopic to identity.

Let us study the face ring of $Q$ and the complex $[R(Q), d]$ defined in Section $\ref{torsiondefinitions}$. Denote the facets of $Q$ by $v_1,\ldots,v_{\frac{n}{2}}$ and denote the exterior variables of  $[R(Q), d]$ by $y_1,\ldots,y_{\frac{n}{2}}$. Let us recall that
\begin{equation*}
m(Q) = min\{k \in \mathbb{N} \; | \; v_{i_1}\cap \ldots \cap v_{i_k} = \emptyset  \}.
\end{equation*}

Suppose that there exists a nontrivial element $a \in H^{r}(\widetilde{\mathcal{Z}}_Q, \mathbb{Z})$, where $0 < r < 2m(Q) -1$. Then, there are facets $v_1,\ldots, v_{\ell}$ such that $v_1\cap \ldots \cap v_{\ell} = \emptyset$. From our definition it follows that $\ell \geqslant m(Q)$. This contradiction proves that $H^{r}(\widetilde{\mathcal{Z}}_Q, \mathbb{Z}) = 0$ for $0 < r<2m(Q)-1$.

The manifold $\widetilde{\mathcal{R}}_P$ is given by system $(\ref{eqmain})$. Since $\widetilde{\mathcal{R}}_P = \widetilde{\mathcal{Z}}_Q=\widetilde{\mathcal{R}}_Q^{(2,\ldots,2)}$, we see that system $(\ref{eqmain})$ can be written in the following form:

\begin{equation}\label{quantumvanisheq1}
\begin{gathered}
\widetilde{\mathcal{R}}_P = \left\{
 \begin{array}{l}
u_1^2 + ... + u_n^2 = \delta \\
\gamma_{1,r}(u_1^2 + u_2^2) + \gamma_{2,r}(u_3^2 + u_4^2) + ... + \gamma_{\frac{n}{2},r}(u_{n-1}^2 + u_n^2) = 0
 \end{array}
\right.
\\
r=1,...,n-m-1.
\end{gathered}
\end{equation}

Denote $\widetilde{\mathcal{R}}_P/\mathbb{Z}_2$ by $\mathcal{R}_P$.  The embedding of $\mathcal{R}_P \times T^{n-m-1}$ into $\mathbb{C}P^{n-1}$ is given by the formula
\begin{equation*}\label{quantumvanisheq}
\begin{gathered}
\mathcal{N} = (\mathcal{R}_P \times T_{\Gamma})/{D_{\Gamma}},  \;\;\; \psi: \mathcal{N} \rightarrow \mathbb{C}P^{n-1},\\
L =  \psi(u_1, u_2,...,u_{n-1}, u_{n}, \varphi) = \\
[u_{1}e^{i\pi(\gamma_1,\varphi)}: u_{2}e^{i\pi(\gamma_1,\varphi)}:...:u_{n-1}e^{i\pi(\gamma_{\frac{n}{2}}, \varphi)}: u_{n}e^{i\pi(\gamma_{\frac{n}{2}}, \varphi)}].
\end{gathered}
\end{equation*}
Denote homogeneous coordinates of $\mathbb{C}P^{n-1}$ by $w_1,\ldots,w_n$. Let $H_i$ be a hyperplane defined by $w_i + iw_{i+1} = 0$. If $L \cap H_1 \neq \emptyset$, then
\begin{equation*}
\begin{gathered}
L \cap H_1 = [0:0:u_{3}e^{i\pi(\gamma_2,\varphi)}: u_{4}e^{i\pi(\gamma_2,\varphi)}:...:u_{n-1}e^{i\pi(\gamma_{\frac{n}{2}} :\varphi)}: u_{n}e^{i\pi(\gamma_{\frac{n}{2}}, \varphi)}]
\end{gathered}
\end{equation*}
Since $P$ is Delzant, we have that $\Lambda_u = \Lambda$ for any $u \in \widetilde{\mathcal{R}}_P$ (see Lemma $\ref{embdelzant0}$). Therefore, we can argue as in Lemma $\ref{trivialfibration}$ and show that
\begin{equation}\label{diffintersect}
L \cap H_1 = (\mathcal{R}_P\cap \{u_1=u_2 =0\})\times T^{n-m-1} \;\;\; or \;\;\; \emptyset.
\end{equation}
Denote by $h^0 \in H^{2}(L, \mathbb{Z}_2)$ the Poincare dual to $[L \cap H_1] \in H_{n-3}(L, \mathbb{Z}_2)$. It follows from formula $(\ref{diffintersect})$ that
\begin{equation*}
\begin{gathered}
[L \cap H_1] \in H_{m-2}(\mathcal{R}_P, \mathbb{Z}_2) \times [T^{n-m-1}] \subset H_{n-3}(L, \mathbb{Z}_2) = H_{n-3}(\mathcal{R}_P \times T^{n-m-1}, \mathbb{Z}_2) \; \Rightarrow \\
h^{0} \in H^2(\mathcal{R}_P, \mathbb{Z}_2) \times 1 \subset H^2(L, \mathbb{Z}_2).
\end{gathered}
\end{equation*}

\begin{lemma}\label{generatorzero}
We have $(h^0)^{m(Q)} = 0$.
\end{lemma}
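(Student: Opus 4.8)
The plan is to recognise $h^{0}$ as the square of a one-dimensional mod-$2$ class pulled back from $\mathbb{R}P^{n-1}$, and then to kill the $m(Q)$-th power of that square by a transversality argument against a coordinate subspace. Write $j:\mathcal{R}_P\hookrightarrow\mathbb{R}P^{n-1}$ for the inclusion coming from the quadrics $(\ref{projectiveq})$, let $\theta\in H^{1}(\mathbb{R}P^{n-1},\mathbb{Z}_2)$ be the generator, and set $c=j^{*}\theta\in H^{1}(\mathcal{R}_P,\mathbb{Z}_2)$; equivalently $c$ is the class classifying the double cover $\widetilde{\mathcal{R}}_P\to\mathcal{R}_P$. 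I would first check that $h^{0}=c^{2}$ inside $H^{2}(\mathcal{R}_P,\mathbb{Z}_2)\subset H^{2}(L,\mathbb{Z}_2)$. We may assume $L\cap H_{1}\neq\emptyset$ (otherwise $h^{0}=0$ and there is nothing to prove); by $(\ref{diffintersect})$ the relevant intersection equals $(\mathcal{R}_P\cap\{u_{1}=u_{2}=0\})\times T^{n-m-1}$, the locus $\{u_{1}=u_{2}=0\}$ is a linear $\mathbb{R}P^{n-3}\subset\mathbb{R}P^{n-1}$ with mod-$2$ Poincar\'e dual $\theta^{2}$, and $\mathcal{R}_P$ is transverse to it along that locus because the quadrics $(\ref{projectiveq})$ contain $u_{1},u_{2}$ only in the block $u_{1}^{2}+u_{2}^{2}$, whose differential vanishes when $u_{1}=u_{2}=0$, so $\partial_{u_{1}},\partial_{u_{2}}$ are tangent to $\mathcal{R}_P$ there. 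Hence $h^{0}=j^{*}(\theta^{2})=c^{2}$, and therefore $(h^{0})^{m(Q)}=c^{2m(Q)}$.

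It remains to show $c^{2m(Q)}=0$, and here I would use the hypothesis $\widetilde{\mathcal{R}}_P=\widetilde{\mathcal{Z}}_Q$. By the definition of $m(Q)$ there are facets $v_{j_{1}},\dots,v_{j_{m(Q)}}$ of $Q$ with $v_{j_{1}}\cap\dots\cap v_{j_{m(Q)}}=\emptyset$. The moment map $\widetilde{\mathcal{Z}}_Q\to Q$ sends the coordinate locus $\{z_{j_{1}}=\dots=z_{j_{m(Q)}}=0\}$ into that empty face, so $\widetilde{\mathcal{Z}}_Q\cap\{z_{j_{1}}=\dots=z_{j_{m(Q)}}=0\}=\emptyset$. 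Writing $z_{j}=u_{2j-1}+iu_{2j}$, this says that the $2m(Q)$ real coordinates $u_{2j_{1}-1},u_{2j_{1}},\dots,u_{2j_{m(Q)}-1},u_{2j_{m(Q)}}$ have no common zero on $\widetilde{\mathcal{R}}_P$, hence none on $\mathcal{R}_P$; in other words $\mathcal{R}_P$ is disjoint from the linear subspace $\Pi=\{u_{2j_{1}-1}=u_{2j_{1}}=\dots=u_{2j_{m(Q)}}=0\}\cong\mathbb{R}P^{\,n-1-2m(Q)}$ of $\mathbb{R}P^{n-1}$. Since $\mathrm{PD}_{\mathbb{R}P^{n-1}}[\Pi]=\theta^{2m(Q)}$ mod $2$ and the empty intersection $\mathcal{R}_P\cap\Pi$ is vacuously transverse, we get $c^{2m(Q)}=j^{*}(\theta^{2m(Q)})=\mathrm{PD}_{\mathcal{R}_P}[\mathcal{R}_P\cap\Pi]=0$. (If $2m(Q)\geq n$ one may instead simply note that $\theta^{2m(Q)}=0$ already in $\mathbb{R}P^{n-1}$.) Combining the two steps, $(h^{0})^{m(Q)}=c^{2m(Q)}=0$.

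The computation is short; the one point that deserves care is the second step — the passage from the combinatorial fact that $\{v_{j_{1}},\dots,v_{j_{m(Q)}}\}$ is a missing face of $Q$ to the geometric fact that $\widetilde{\mathcal{Z}}_Q$ avoids the corresponding coordinate plane. This is exactly the standard dictionary between faces of $Q$ and coordinate subspaces meeting $\widetilde{\mathcal{Z}}_Q$, so I expect no real obstacle. If one wishes to sidestep the transversality check in the first step, one can instead invoke the Cartan--Leray spectral sequence together with $H^{1}(\widetilde{\mathcal{R}}_P,\mathbb{Z}_2)=H^{2}(\widetilde{\mathcal{R}}_P,\mathbb{Z}_2)=0$ to see that $H^{2}(\mathcal{R}_P,\mathbb{Z}_2)=\mathbb{Z}_2$ is generated by $c^{2}$; then $h^{0}$ is either $0$ or $c^{2}$, and in either case its $m(Q)$-th power vanishes by the argument above.
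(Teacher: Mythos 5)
Your proof is correct, and it rests on the same geometric input as the paper's: the missing face $v_{j_1}\cap\dots\cap v_{j_{m(Q)}}=\emptyset$ of $Q$ forces $\widetilde{\mathcal{Z}}_Q$ (hence $\mathcal{R}_P$) to avoid the corresponding real coordinate subspace of codimension $2m(Q)$. The bookkeeping differs. The paper never leaves $\mathbb{C}P^{n-1}$: it notes that the sections $L\cap H_1, L\cap H_3,\dots,L\cap H_{2m(Q)-1}$ are all homologous, represents $(h^0)^{m(Q)}$ by their mutual (transverse) intersection, and observes that this intersection is exactly $(\mathcal{Z}_Q\cap\{z_1=\dots=z_{m(Q)}=0\})\times T^{n-m-1}=\emptyset$. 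You instead push everything into $\mathbb{R}P^{n-1}$, identify $h^0=c^2$ with $c=j^{*}\theta$, and kill $c^{2m(Q)}=j^{*}(\theta^{2m(Q)})$ by the disjointness from $\Pi$ (or, as you note, simply because $j$ factors through the complement of $\Pi$). Your route buys a slightly finer statement than the lemma asks for: the explicit identification of $h^0$ as the square of the degree-one class classifying the double cover $\widetilde{\mathcal{R}}_P\to\mathcal{R}_P$, which is precisely the comparison with the Cartan--Leray generator $\alpha$ that the paper needs in the very next lemma (where it must separately argue that $h^0$ is either $0$ or $\alpha^2$ using $H^2(\mathcal{R}_P,\mathbb{Z}_2)=\mathbb{Z}_2$). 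The only points to be slightly more careful about are the ones you already flagged: the transversality of $\mathcal{R}_P$ with the coordinate $\mathbb{R}P^{n-3}$ (your tangency argument for $\partial_{u_1},\partial_{u_2}$ is fine), and the compatibility of ``Poincar\'e dual in $L$'' with ``Poincar\'e dual in the fiber $\mathcal{R}_P$ tensored with $1$ on the torus factor,'' which holds because by $(\ref{diffintersect})$ the intersection is a product with the full torus. Neither is a gap.
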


\begin{proof}
Assume that the polytope $Q$ is given by a system
\begin{equation}\label{vanishquantumpol}
Q=\{x\in \mathbb{R}^{\frac{m}{2}}: <\hat{a}_i,x> + \hat{b}_i \geqslant  0 \quad \;\;\; i=1,...,\frac{n}{2}  \}.
\end{equation}
Denote $\widetilde{\mathcal{Z}}_Q/\mathbb{Z}_2$ by $\mathcal{Z}_Q$.  System $(\ref{quantumvanisheq1})$ can be written in the form
\begin{equation*}
\begin{gathered}
\widetilde{\mathcal{R}}_P = \widetilde{\mathcal{Z}}_Q = \left\{
 \begin{array}{l}
|z|_1^2 + ... + |z|_{\frac{n}{2}}^2 = \delta \\
\gamma_{1,r}|z|^2_1 + \gamma_{2,r}|z|_2^2 + ... + \gamma_{\frac{n}{2},r}|z|_{\frac{n}{2}}^2 = 0
 \end{array}
\right.
\end{gathered}
\end{equation*}
Then $\widetilde{\mathcal{R}}_P \cap H_1 = \widetilde{\mathcal{Z}}_Q \cap H_1 = \widetilde{\mathcal{Z}}_Q \cap \{z_1 = 0\}$. By our construction (see Section $\ref{torsiondefinitions}$), the variable $z_1$ corresponds to the first inequality of system $(\ref{vanishquantumpol})$, i.e. the complex moment angle manifold associated to the polytope $Q \cap \{<\hat{a}_1,x> + \hat{b}_1 =  0\}$ is $\widetilde{\mathcal{Z}}_Q \cap \{z_1 = 0\}$. Since $Q$ is irredundant, we have $Q \cap \{<\hat{a}_1,x> + \hat{b}_1 =  0\} \neq \emptyset$ and $\widetilde{\mathcal{Z}}_Q \cap \{z_1 = 0\} \neq \emptyset$.   Let us recall that we denote facets of $Q$ by $v_1, \ldots, v_{\frac{n}{2}}$. Arguing in the same way, we see that the complex moment angle manifold associated to the intersection of facets $v_1\cap\ldots\cap v_r$ is $\widetilde{\mathcal{Z}}_Q \cap \{z_1 = 0\}\cap \ldots \cap \{z_r = 0\}$.

Without loss of generality, assume that $v_{1} \cap \ldots \cap v_{m(Q)} = \emptyset$. This implies that
\begin{equation*}
\widetilde{\mathcal{Z}}_Q \cap \{z_1=\ldots z_{m(Q)} = 0\} = \emptyset \Rightarrow \mathcal{Z}_Q \cap \{z_1=\ldots z_{m(Q)} = 0\} = \emptyset.
\end{equation*}

Let us consider $H_{*}(L, \mathbb{Z}_2)$ equipped with the intersection product $\cap$. Since $L\cap H_1, L \cap H_3, \ldots, L\cap H_{n-1}$ represent the same homology class in $H_{n-3}(L, \mathbb{Z}_2)$, we get
\begin{equation*}
\begin{gathered}
\underbrace{[L\cap H_1] \cap \ldots \cap [L\cap H_1]}_{m(Q)} = [L\cap H_1] \cap [L\cap H_3]\cap \ldots \cap [L\cap H_{2m(Q)-1}]] = \\
[(\mathcal{Z}_Q \cap \{z_1 = \ldots = z_{m(Q)} = 0\}) \times T^{n-m-1}] = \emptyset.
\end{gathered}
\end{equation*}
Since, $h^{0}$ is the Poincare dual to $L\cap H_1$ and we work over $\mathbb{Z}_2$, we obtain that $(h^0)^{m(Q)} = 0$

\end{proof}

Let us use Lemma $\ref{zeroquantum}$ to prove that $QH^{*}(L, \mathbb{Z}_2[T, T^{-1}]) = 0$. Since the action of $\mathbb{Z}_2$ is free on $\widetilde{\mathcal{R}}_P$ and the Eilenberg-MacLane space $K(\mathbb{Z}_2, 1) = \mathbb{R}P^{\infty}$ we get the Cartan-Leray spectral sequence (see \cite[p. 206]{Weibel})
\begin{equation*}
\begin{gathered}
E_2^{p,q}= H^{p}(\mathbb{R}P^{\infty}, H^{q}(\widetilde{\mathcal{R}}_P, \mathbb{Z}_2)) \Rightarrow H^{*}(\mathcal{R}_P, \mathbb{Z}_2),\\
\mathcal{R}_P = \widetilde{\mathcal{R}}_P/\mathbb{Z}_2
\end{gathered}
\end{equation*}

The part of $E_{2m(Q)}$  is shown in Figure $\ref{fig:s1}$.
\begin{figure}[h]
\centering
\includegraphics[width=0.6\linewidth]{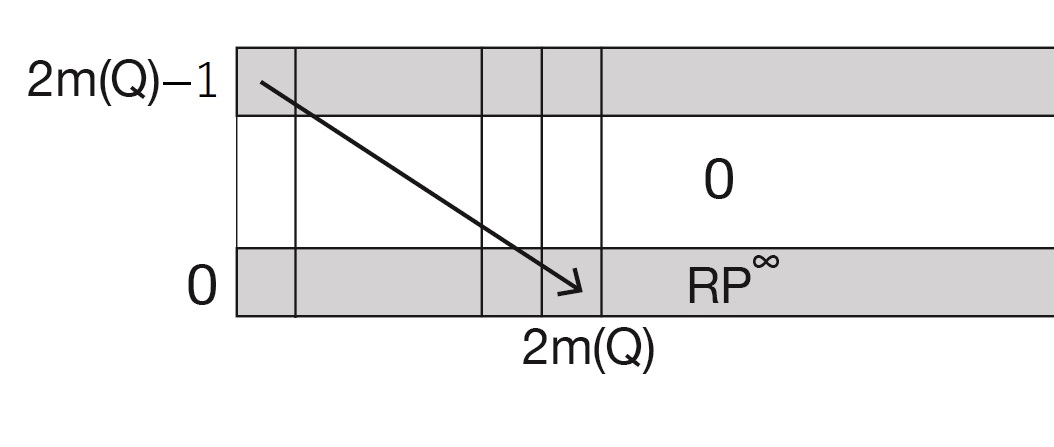}
\caption{$E_{2m(Q) }$ page}
  \label{fig:s1}
\end{figure}

Let us denote by $\alpha$ the generator of $H^{*}(\mathbb{R}P^{\infty}, \mathbb{Z}_2)$, i.e. $H^{*}(\mathbb{R}P^{\infty}, \mathbb{Z}_2) = \mathbb{Z}_2[\alpha]$, where $deg(\alpha) = 1$. Since $\partial_{2m(Q) }$ is the only differential that can kill $E_{2m(Q)}^{2m(Q),0}$, we get that

\begin{equation*}
E_{\infty}^{2m(Q),0} = \mathbb{Z}_2 \;\; \Leftrightarrow \;\; \partial_{2m(Q) }(E_{2m(Q)}^{0,2m(Q)-1}) = 0
\end{equation*}

\begin{lemma}
If $\partial_{2m(Q) }(E_{2m(Q)}^{0,2m(Q)-1}) = 0$, then $QH^{*}(L, \mathbb{Z}_2[T, T^{-1}]) = 0$.
\end{lemma}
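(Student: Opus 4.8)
The goal is to deduce $QH^{*}(L, \mathbb{Z}_2[T,T^{-1}]) = 0$ from the hypothesis. The plan is to use Lemma $\ref{zeroquantum}$ in the following indirect way: since $N_L > 2$ we have $h(x^{min}) = h^{0}$ and $h$ is an isomorphism, so it suffices to show that $h^{0}$ (equivalently, as $h$ is the quantum action of the hyperplane class, some quantum power $(h^{0})^{*k}$) dies in the Floer cohomology; then $x^{min} = 0$ in $QH^{0}$, and because $x^{min}$ is a unit we get $QH^{*}(L, \mathbb{Z}_2[T,T^{-1}]) = 0$. First I would record what is already known: $h^{0}$ lies in $H^{2}(\mathcal{R}_P, \mathbb{Z}_2)\otimes 1 \subset H^{2}(L, \mathbb{Z}_2)$, and by Lemma $\ref{generatorzero}$ we have $(h^{0})^{m(Q)} = 0$ already in the singular ring $H^{*}(L, \mathbb{Z}_2)$. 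So the only obstruction to narrowness comes from the $T$-corrections in the pearl (Floer) differential, and it is enough to prove $(h^{0})^{*m(Q)} = 0$ in $QH^{2m(Q)}(L, \mathbb{Z}_2[T,T^{-1}])$.

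Next I would run the spectral sequence of Oh for $L$, whose first page is $E_1^{p,q} = H^{p+q-pN_L}(L, \mathbb{Z}_2)\otimes T^{p}$ with $d_1 = \delta_1 \otimes T$, $\delta_1 : H^{k}(L) \to H^{k-N_L+1}(L)$. Writing $(h^{0})^{*m(Q)} = \sum_{s\geqslant 0} c_s T^{s}$ with $c_s \in H^{2m(Q)-sN_L}(L, \mathbb{Z}_2)$, Lemma $\ref{generatorzero}$ gives $c_0 = (h^{0})^{m(Q)} = 0$, so only the terms with $1 \leqslant s \leqslant 2m(Q)/N_L$ have to be controlled. This is where the hypothesis enters, through the Cartan–Leray spectral sequence of the double cover $\widetilde{\mathcal{R}}_P \to \mathcal{R}_P$: the vanishing $H^{q}(\widetilde{\mathcal{R}}_P, \mathbb{Z}_2) = 0$ for $0 < q < 2m(Q)-1$ established above forces $H^{<2m(Q)}(\mathcal{R}_P, \mathbb{Z}_2)$ to be generated by the powers of the class pulled back from $\mathbb{R}P^{\infty}$ (equivalently, by the powers of $h^{0}$), while the assumption $\partial_{2m(Q)}(E_{2m(Q)}^{0, 2m(Q)-1}) = 0$ says exactly which of these powers survive, i.e. it pins down the behavior of $h^{0}$ in $H^{*}(\mathcal{R}_P, \mathbb{Z}_2)$ and hence in $H^{*}(L, \mathbb{Z}_2)$ in the range that feeds the Oh differentials. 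Using the product decomposition $L \cong \mathcal{R}_P \times T^{n-m-1}$ of Lemma $\ref{helplemma}$ and formula $(\ref{diffintersect})$ to identify these classes geometrically, I would then show that each remaining $c_s T^{s}$ is a $d_F$-boundary, so that $(h^{0})^{*m(Q)} = 0$, and conclude by Lemma $\ref{zeroquantum}$.

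The main obstacle is precisely this last translation: passing from the differential $\partial_{2m(Q)}$ of the purely topological Cartan–Leray spectral sequence of $\mathcal{R}_P$ to the Floer differential $d_F$ of $L$, i.e. showing that the vanishing of the former closes up the Floer complex so that $x^{min}$ becomes a boundary. The most delicate term is the degree-zero correction (the case $s = 2m(Q)/N_L$ when $N_L \mid 2m(Q)$), which is a genuine closed Gromov–Witten-type count landing in $H^{0}(L, \mathbb{Z}_2) = \mathbb{Z}_2\langle x^{min}\rangle$: its vanishing is literally equivalent to the conclusion $QH = 0$, so it cannot be checked by degree bookkeeping and must be extracted from the hypothesis — tracing it back, through the fibration $L \xrightarrow{\mathcal{R}_P} T^{n-m-1}$ and the intersections $L\cap H_i$, to the survival of the top power of the $\mathbb{R}P^{\infty}$-class in the Cartan–Leray spectral sequence. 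The remaining positive-degree corrections $c_sT^s$ should be comparatively routine, being controlled by the same vanishing range together with the sparseness of the Oh $E_1$-page in the relevant bidegrees.
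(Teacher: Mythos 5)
There is a genuine gap: your plan never actually extracts the conclusion from the hypothesis, and you say so yourself — you flag the degree-zero quantum correction as ``literally equivalent to the conclusion $QH=0$'' and then leave it unproven. The route through quantum powers $(h^{0})^{*m(Q)}$ and the Oh spectral sequence is not how the hypothesis gets used, and as set up it cannot close, because you have no mechanism for killing that last $H^{0}$-valued term. Moreover your parenthetical identification of the powers of the $\mathbb{R}P^{\infty}$-class $\alpha$ with the powers of $h^{0}$ is exactly what fails here.

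The missing idea is much simpler and entirely topological. Since $H^{q}(\widetilde{\mathcal{R}}_P,\mathbb{Z}_2)=0$ for $0<q<2m(Q)-1$, the Cartan--Leray spectral sequence gives $H^{2}(\mathcal{R}_P,\mathbb{Z}_2)=\mathbb{Z}_2$, generated by $\alpha^{2}$; so $h^{0}$ is either $0$ or $\alpha^{2}$. The only differential that can kill $E_{2m(Q)}^{2m(Q),0}=\mathbb{Z}_2\langle\alpha^{2m(Q)}\rangle$ is $\partial_{2m(Q)}$ out of $E_{2m(Q)}^{0,2m(Q)-1}$, so the hypothesis forces $\alpha^{2m(Q)}\neq 0$ in $H^{2m(Q)}(\mathcal{R}_P,\mathbb{Z}_2)$. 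But Lemma $\ref{generatorzero}$ gives $(h^{0})^{m(Q)}=0$, so $h^{0}\neq\alpha^{2}$, hence $h^{0}=0$ already as a class in ordinary cohomology. Then Lemma $\ref{zeroquantum}$ (using $N_L>2$) gives $QH^{*}(L,\mathbb{Z}_2[T,T^{-1}])=0$ in one line — no control of quantum correction terms or of the Oh spectral sequence is needed. In short: the hypothesis does not ``pin down which powers of $h^{0}$ survive''; it shows a power of $\alpha$ survives, which together with the nilpotence of $h^{0}$ forces $h^{0}$ itself to vanish.
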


\begin{proof}
Assume that $h^0 = \alpha$. If $\partial_{2m(Q) }(E_{2m(Q)}^{0,2m(Q)-1}) = 0$, then $\alpha^{2m(Q)}$ is well defined nonzero element of $H^{2m(Q)}(\mathcal{R}_P, \mathbb{Z}_2)$. On the other hand, from Lemma $\ref{generatorzero}$ we get that $(h^0)^{m(Q)} = 0$. Since $H^{2}(\mathcal{R}_P, \mathbb{Z}_2) = \mathbb{Z}_2$, we see that $h^{0} = 0$. Note that $m(Q)>1$ because $Q$ is irredundant.

It follows from Lemma $\ref{zeroquantum}$ that $QH^{*}(L, \mathbb{Z}_2[T, T^{-1}]) = 0$ ( we assumed that $N_L > 2$).
\end{proof}

Let us consider cohomology ring with $\mathbb{Z}$ coefficients. Let us denote by $h^{0} \in H^2(\mathcal{R}_P, \mathbb{Z})$ the Poincate dual to $[L \cap H_1] \in H_{n-3}(L, \mathbb{Z})$. Since $Q$ is irredundant, we see from Theorem $\ref{torcohom}$ that $H^1(\widetilde{\mathcal{R}}_P, \mathbb{Z}) = H^2(\widetilde{\mathcal{R}}_P, \mathbb{Z}) = 0$. From the Cartan-Leray spectral sequence
\begin{equation*}
\begin{gathered}
E_2^{p,q}= H^{p}(\mathbb{R}P^{\infty}, H^{q}(\widetilde{\mathcal{R}}_P, \mathbb{Z})) \Rightarrow H^{*}(\mathcal{R}_P, \mathbb{Z})
\end{gathered}
\end{equation*}
we get that $H^{2}(\mathcal{R}_P, \mathbb{Z}) = \mathbb{Z}_2$.  Therefore, $h^0 \in \mathbb{Z}_2$. From Section $\ref{zeroquantum}$ we know that there is the isomorphism $h: QH^{*}(L, \mathbb{Z}[T, T^{-1}]) \rightarrow QH^{*+2}(L, \mathbb{Z}[T, T^{-1}]) = 0$. Moreover, $h(x^{min}) = h^0$ whenever $N_L > 2$, where $x^{min}$ is a unique minimum of a generic Morse function.  Since $H^{0}(L, \mathbb{Z}) = \mathbb{Z}$ and $2h^{0} = 0$, we get that there exists $x$ such that $d_F^{f}(x) = \ell x^{min}$, where $\ell \in \{1, 2\}$ and $d_F^{f}$ is the Floer differential. So, $QH^{*}(L, \mathbb{Z}[T, T^{-1}])$ is never isomorphic to $H^{*}(L, \mathbb{Z}[T, T^{-1}])$.
\\

Let $G$ be a ring in which $2$ is invertible. Then $H^{r}(\mathbb{R}P^{\infty}, G) = 0$ for all $r>0$. Therefore, from the Cartan-Leray spectral sequnce
\begin{equation*}
\begin{gathered}
E_2^{p,q}= H^{p}(\mathbb{R}P^{\infty}, H^{q}(\widetilde{\mathcal{R}}_P, G)) \Rightarrow H^{*}(\mathcal{R}_P, G)
\end{gathered}
\end{equation*}
we see that $H^{2}(\mathcal{R}_P, G) = 0$. Hence, the element $h^{0} = 0$. It follows from Lemma $\ref{zeroquantum}$  that $QH^{*}(L, G[T, T^{-1}]) = 0$.

\subsection{Proof of Theorem $\ref{twospheretheorem}$}

Let $P$ be product of two standard simplices of dimensions $p-1$ and $n-p-1$, i.e. $P$ is defined by inequalities
\begin{equation}\label{twoprodpolytope}
\begin{gathered}
\left\{
 \begin{array}{l}
x_1 + 1 \geqslant 0 \\
\ldots \\
x_{p-1} +1 \geqslant 0 \\
-x_1 - \ldots - x_{p-1} + 1 \geqslant 0 \\
x_p +1 \geqslant 0 \\
\ldots \\
x_{n-2} + 1 \geqslant 0 \\
-x_p - \ldots - x_{n-2} + 1 \geqslant 0
 \end{array}
\right.
\end{gathered}
\end{equation}

Then, the real moment-angle manifold  $\widetilde{\mathcal{R}}_P \subset \mathbb{R}^{n}$ associated to $P$ is given by
\begin{equation}\label{sphereprod}
\begin{gathered}
\left\{
 \begin{array}{l}
u_1^2 + \ldots + u_{n}^2 = n \\
(n-p)u_1^2 + \ldots +(n-p)u_p^2 - pu_{p+1}^2 - ... - pu_{n}^2 = 0
 \end{array}
\right.
\\
\Leftrightarrow
\\
\left\{
 \begin{array}{l}
u_1^2 + \ldots + u_p^2 = p \\
u_{p+1} + \ldots u_{n}^2 = n-p
 \end{array}
\right.
\end{gathered}
\end{equation}
We see that $\widetilde{\mathcal{R}}_P = S^{p-1} \times S^{n-p-1}$.  We define
\begin{equation*}
\begin{gathered}
\mathcal{R}_P = (S^{p-1} \times S^{n-p-1})/\mathbb{Z}_2, \\
(x,y) \sim (-x,-y), \quad x \in S^{p-1}, \;\; y \in S^{n-p-1}.
\end{gathered}
\end{equation*}

Easy to see that $\mathcal{R}_P$ is orientable if and only if $p-1$, $n-p-1$ are both either odd, or even. In other words, $\mathcal{R}_P$ is orientable if and only if $n$ is even.

Since the action of $\mathbb{Z}_2$ is free on the first factor of $S^{p-1} \times S^{n-p-1}$, we see that $\mathcal{R}_P$ fibers over $\mathbb{R}P^{p-1}$, where the fiber is $S^{n-p-1}$, i.e.
\begin{equation*}
\begin{gathered}
\pi: \mathcal{R}_P \xrightarrow{S^{n-p-1}} \mathbb{R}P^{p-1}, \\
\pi(u_1,,,.u_n) = [u_1: \ldots :u_p],
\end{gathered}
\end{equation*}

\begin{lemma}\label{cohomtwosphere}
Without loss of generality, assume that $p \leqslant n-p$. If $p=n-p = 2$, then $\mathcal{R} = T^2$. If $p = 2$ and $n - p\geqslant 3$, then $\pi_1(\mathcal{R}_P) = \mathbb{Z}$. If $p \geqslant 3$, then $\pi_1(\mathcal{R}_P) = \mathbb{Z}_2$. Moreover,
\begin{equation*}
\begin{gathered}
H^{*}(\mathcal{R}_P, \mathbb{Z}_2) = \mathbb{Z}_2[x, y]/(x^p, x^{p-1}y + y^2, y^3)  \;\; if \;\; p=n-p \;\; and \;\; p = 1 \;\; mod \;\; 2 \\
H^{*}(\mathcal{R}_P, \mathbb{Z}_2) = H^{*}(\mathbb{R}P^{p-1}, \mathbb{Z}_2) \otimes H^{*}(S^{n-p-1}, \mathbb{Z}_2) = \mathbb{Z}_2[x, y]/(x^p, y^2) \;\;\; otherwise,
\end{gathered}
\end{equation*}
where $deg(x) = 1$, $deg(y) = n-p-1$.
\end{lemma}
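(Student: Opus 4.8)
The plan is to realize $\mathcal{R}_P$ as the unit sphere bundle of a vector bundle over $\mathbb{R}P^{p-1}$ and to compute with the Gysin sequence, the only delicate point being a single multiplicative extension, which I will settle by Poincar\'e duality. I use the normalization $p\le n-p$ throughout. By $(\ref{sphereprod})$ we have $\widetilde{\mathcal{R}}_P=S^{p-1}\times S^{n-p-1}$ with $\mathbb{Z}_2$ acting antipodally on each factor; since this action is free on the $S^{p-1}$--factor, the projection $\pi(u)=[u_1:\dots:u_p]$ exhibits $\mathcal{R}_P$ as the unit sphere bundle $S(\xi)$ of the rank $n-p$ bundle $\xi\cong(n-p)\gamma^1$ over $\mathbb{R}P^{p-1}$ (here $\gamma^1$ is the tautological line bundle and $\xi=\gamma^1\otimes_{\mathbb R}\underline{\mathbb R^{\,n-p}}$), with double cover $\widetilde{\mathcal{R}}_P\to\mathcal{R}_P$ pulled back from $S^{p-1}\to\mathbb{R}P^{p-1}$.

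For the fundamental group: if $p\ge 3$ then $n-p-1\ge p-1\ge 2$, both spheres are simply connected, $\widetilde{\mathcal{R}}_P$ is the universal cover, so $\pi_1(\mathcal{R}_P)=\mathbb{Z}_2$. If $p=2$ and $n-p\ge 3$, the universal cover of $\mathcal{R}_P$ is $\mathbb{R}\times S^{n-p-1}$, whose deck group is generated by the translation $t\mapsto t+1$ together with the lift $\tau\colon(t,y)\mapsto(t+\tfrac12,-y)$ of the $\mathbb{Z}_2$--action; since $\tau^2$ is that translation, $\pi_1(\mathcal{R}_P)=\langle\tau\rangle\cong\mathbb{Z}$. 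If $p=n-p=2$, the action on $T^2=\mathbb{R}^2/\mathbb{Z}^2$ is translation by $(\tfrac12,\tfrac12)$, which is free with torus quotient, so $\mathcal{R}_P=T^2$.

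For the cohomology ring I run the mod $2$ Gysin sequence of $S^{n-p-1}\hookrightarrow\mathcal{R}_P\xrightarrow{\pi}\mathbb{R}P^{p-1}$. Its Euler class is $w_{n-p}(\xi)$, the degree $n-p$ part of $(1+x)^{n-p}$ in $\mathbb{Z}_2[x]/(x^p)$, i.e.\ $x^{n-p}=0$ because $n-p\ge p$; hence the sequence splits and $H^{*}(\mathcal{R}_P;\mathbb{Z}_2)=\mathbb{Z}_2[x]/(x^p)\cdot\{1,e\}$ as an $H^{*}(\mathbb{R}P^{p-1})$--module, where $\deg e=n-p-1$, $e$ restricts to a generator on the fiber, $x=\pi^{*}(\text{generator})$ with $x^p=0$, and $\dim_{\mathbb{Z}_2}H^{*}(\mathcal{R}_P)=2p$. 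The ring is then determined by $e^2\in H^{2(n-p-1)}(\mathcal{R}_P)=\mathbb{Z}_2x^{2(n-p-1)}\oplus\mathbb{Z}_2x^{n-p-1}e$ (terms with exponent $\ge p$ being zero). If $p<n-p$ then $n-p-1\ge p$, so $e^2=0$ and $H^{*}(\mathcal{R}_P;\mathbb{Z}_2)=\mathbb{Z}_2[x,y]/(x^p,y^2)$ with $y=e$. If $p=n-p$ then $2(n-p-1)=2p-2\ge p$ kills the first term while $n-p-1=p-1<p$, so $e^2=c\,x^{p-1}e$ for some $c\in\mathbb{Z}_2$ that the (collapsing) spectral sequence does not detect.

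To determine $c$ when $n=2p$: now $\xi=p\gamma^1\cong T\mathbb{R}P^{p-1}\oplus\underline{\mathbb R}$ carries the nowhere-vanishing section $(0,1)$, giving a section $\sigma_0$ of $\mathcal{R}_P\to\mathbb{R}P^{p-1}$; write $S_0,F\in H_{p-1}(\mathcal{R}_P;\mathbb{Z}_2)$ for the classes of $\sigma_0(\mathbb{R}P^{p-1})$ and of a fiber, and normalize $e$ so that $\sigma_0^{*}e=0$. As $\mathcal{R}_P$ is a closed $(2p-2)$--manifold with $H^{2p-2}(\mathcal{R}_P;\mathbb{Z}_2)=\mathbb{Z}_2\langle x^{p-1}e\rangle$, one has $c=\langle e^2,[\mathcal{R}_P]\rangle$; the intersection pairings $\langle e,F\rangle=1$, $\langle e,S_0\rangle=\langle\sigma_0^{*}e,[\mathbb{R}P^{p-1}]\rangle=0$, $\langle x^{p-1},F\rangle=0$, $\langle x^{p-1}e,[\mathcal{R}_P]\rangle=1$ give $\mathrm{PD}(e)=S_0+cF$, so $c=\mathrm{PD}(e)^2=S_0^2$ (using $F^2=0$, $F\cdot S_0=1$). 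Finally the normal bundle of the section is its vertical tangent bundle, namely $T\mathbb{R}P^{p-1}$, so $S_0^2=\langle w_{p-1}(T\mathbb{R}P^{p-1}),[\mathbb{R}P^{p-1}]\rangle=\chi(\mathbb{R}P^{p-1})\bmod 2$, which equals $1$ exactly when $p-1$ is even, i.e.\ when $p$ is odd. Hence $e^2=x^{p-1}e$ for $p$ odd and $e^2=0$ for $p$ even, yielding $\mathbb{Z}_2[x,y]/(x^p,x^{p-1}y+y^2,y^3)$ (with $y^3=x^{2p-2}y=0$ automatic, since $2p-2\ge p$) and $\mathbb{Z}_2[x,y]/(x^p,y^2)$ respectively. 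In every case the displayed quotient ring has $\mathbb{Z}_2$--dimension $\le 2p=\dim H^{*}(\mathcal{R}_P)$, so the tautological surjection onto $H^{*}(\mathcal{R}_P;\mathbb{Z}_2)$ is an isomorphism. I expect the extension problem in the $n=2p$ case to be the only genuine obstacle: it is invisible to the Serre/Gysin spectral sequence and is forced instead by the Euler characteristic (mod $2$) of $\mathbb{R}P^{p-1}$.
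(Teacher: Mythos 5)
Your argument is correct, and it arrives at the lemma by a genuinely different implementation of the same underlying geometry. The paper stays entirely on the homology side: it collapses the Serre spectral sequence of $\pi:\mathcal{R}_P\to\mathbb{R}P^{p-1}$ for the additive structure, exhibits explicit cycles $A_r=\pi^{-1}(\mathbb{R}P^{p-r-1})$ and sections $B_r$, and computes every intersection product transversally; the single nontrivial product $[B_{p-1}]\cap[B_{p-1}]$ is evaluated by noting that the normal bundle of the diagonal section is isomorphic to its tangent bundle, whence $\chi(\mathbb{R}P^{p-1})\bmod 2$. You instead recognize $\mathcal{R}_P$ as the sphere bundle of $(n-p)\gamma^1$, so that the mod $2$ Euler class is the degree-$(n-p)$ part of $(1+x)^{n-p}$, which vanishes because $n-p\geqslant p$; the Gysin sequence then hands you the free module structure at once, and the one multiplicative extension is forced by Poincar\'e duality from the self-intersection of the section supplied by the trivial summand of $p\gamma^1\cong T\mathbb{R}P^{p-1}\oplus\mathbb{R}$, which is again $\chi(\mathbb{R}P^{p-1})\bmod 2$. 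The punchline (a section whose normal bundle is $T\mathbb{R}P^{p-1}$) is thus shared, but your characteristic-class bookkeeping replaces the paper's hands-on cycle computations: your route makes the freeness of the module and the trichotomy in $e^2$ conceptually transparent, while the paper's route produces concrete generators that it reuses for the other products. The fundamental group computations coincide. One point worth making explicit in your write-up: the normalization $\sigma_0^{*}e=0$ is legitimate and harmless because replacing $e$ by $e+x^{p-1}$ changes neither $e^2$ nor $x^{p-1}e$ (as $x^{2p-2}=0$), so the coefficient $c$ in $e^2=c\,x^{p-1}e$ is independent of the choice of $e$.
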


\textbf{Example.} If $p=n-p=3$, then $\mathcal{R}_P = (S^2 \times S^2)/\mathbb{Z}_2 $ is diffeomorphic to Grassmanian $Gr(2,4)$. It is known (see ~\cite[Chapter $7$, Problem $7-$B]{Milnor}) that $H^{*}(Gr(2,4), \mathbb{Z}_2)$ is generated by Stifel-Whitney classes $w_1, w_2$ and their dual classes $\bar{w}_1, \bar{w}_2$. They satisfy the following condition:
\begin{equation*}
\begin{gathered}
(1+w_1+w_2)(1+ \bar{w}_1 +\bar{w}_2 ) = 1 \Leftrightarrow \\
w_1 = \bar{w}_1, \; \; w_1^2 = w_2 + \bar{w}_2, \; \; w_1(w_2 + \bar{w}_2) =0, \; \; w_2\bar{w}_2 = 0.
\end{gathered}
\end{equation*}
If we denote $x=w_1$, $y=w_2$, then the conditions above are equivalent to
\begin{equation*}
\overline{w}_2 = x^2 + y, \;\; 0 =  w_1(w_2 + \bar{w}_2) = x^3, \;\; 0 = w_2\bar{w}_2= y(x^2 + y) = x^2y + y^2
\end{equation*}

If $p=n-p=4$, Then $(S^3 \times S^3)/\mathbb{Z}_2$ is diffeomorphic to $SO(4) = SO(3) \times S^3 = \mathbb{R}P^3 \times S^3$.

\begin{proof}
Let us recall that we have the firbration
\begin{equation*}
\begin{gathered}
\pi: \mathcal{R}_P \xrightarrow{S^{n-p-1}} \mathbb{R}P^{p-1}, \\
\pi(u_1,,,.u_n) = [u_1: \ldots :u_p].
\end{gathered}
\end{equation*}

If $p=n-p=2$, then $\mathcal{R}_P$ is orientable with zero Euler characteristic. Therefore, $\mathcal{R}_P$ is diffeomorphic to $2-$torus.

If $p=2$ and $n-p \geqslant 3$, then from the exact sequence of the fibration we get $\pi_1(\mathcal{R}_P) = \pi_1(S^1) = \mathbb{Z}$.

If $p \geqslant 3$, $n-p \geqslant 3$, then from the exact sequence of the fibration we obtain $\pi_1(\mathcal{R}_P) = \pi_1(\mathbb{R}P^{p-1}) = \mathbb{Z}_2$.

Instead of cohomology ring let us study homology groups equipped with the intersection product $\cap$. The homology Serre spectral sequence of the fibration $\pi: \mathcal{R}_P \rightarrow \mathbb{R}P^{p-1}$ collapses at $E_2$. Here we consider homology groups over $\mathbb{Z}_2$. In our case, all homology groups  are isomorphic to $\mathbb{Z}_2$ and there is no nontrivial action of the fundamental group. Then, we have

\begin{equation*}
H_{*}(\mathcal{R}_P, \mathbb{Z}_2) =  H_{*}(\mathbb{R}P^{p-1}, \mathbb{Z}_2) \otimes H_{*}(S^{n-p-1}, \mathbb{Z}_2)
\end{equation*}
as groups, not rings (with respect to the intersection product). Let $i_{r}: \mathbb{R}P^{r} \hookrightarrow \mathbb{R}P^{p-1}$ be an embedding given by
\begin{equation*}
i_r([u_1:...:u_{r+1}]) = [u_1:...:u_{r+1}:0...0].
\end{equation*}
Note that the fibration $i_{r}^{*}\mathcal{R}_P$ has a section defined by the following formula:
\begin{equation*}
\begin{gathered}
s_{r} : \mathbb{R}P^{r} \rightarrow i_{r}^{*}\mathcal{R}_P \\
s_{r}([u_1:...:u_{r+1}]) = (\underbrace{u_1\sqrt{p},...,u_{r+1}\sqrt{p}, 0,...,0}_p, u_1\sqrt{n-p},...,u_{r+1}\sqrt{n-p},0...,0).
\end{gathered}
\end{equation*}
Let us define the following closed submanifolds of $\mathcal{R}_P$:
\begin{equation*}
\begin{gathered}
A_{r} = \pi^{-1}([0:...:0,u_{r+1}:...:u_p]), \;\;\; B_{r} = s_r([u_1:...:u_{r+1}:0...:0]), \\
r=0,...,p-1.
\end{gathered}
\end{equation*}
Note that $A_0 = \mathcal{R}_P$, $A_{p-1}$ is the fiber $S^{n-p-1}$, $B_0$ is a point. We have $dim(A_{r}) = p-r - 1 + n-p-1 = n-r-2$ and $dim(B_{r}) = r$. Then, $dim(A_{r}) = codim(B_{r})$ (recall that $dim(\mathcal{R}_P) = n-2)$ and $\pi(A_r)$ intersects $\pi(B_{r})$ at a single point $[0:,...,0,u_{r+1},0,...,0]$. Since $B_r$ is section, we get that $A_r$ intersects $B_r$ at a single point. Moreover, their intersection is transversal. As a result, we obtain that $A_r$ and $B_r$ define nontrivial elements in homology groups. Comparing dimensions we see that elements $A_0,...,A_{p-1}, B_0,...,B_{p-1}$ form an additive basis for $H_{*}(\mathcal{R}_P, \mathbb{Z}_2)$.

Let us study the ring structure. Let $M_r, M_{s} \hookrightarrow \mathbb{R}P^{p-1}$ be transversally embedded $\mathbb{R}P^{p-r-1}$ and  $\mathbb{R}P^{p-s-1}$, respectively. Therefore, the intersection of $\pi^{-1}(M_r)$ and $\pi^{-1}(M_s)$ is transversal too. Arguing as before, we can prove that $\pi^{-1}(M_r)$ and $\pi^{-1}(M_s)$ represent nontrivial elements in homology group (we need to intersect it with appropriately modified section $B_r$). Since $H_r(\mathcal{R}_P, \mathbb{Z}_2)$ has dimension $1$ for $r<p$, we see that  $\pi^{-1}(M_r)$ and $A_r$ represent the same homology class. So,
\begin{equation*}
[A_r] \cap [A_{s}] = [\pi^{-1}M_r] \cap [\pi^{-1}M_s]  = [A_{r+s}].
\end{equation*}
Here we assume that $[A_{r+s}] = 0$ whenever $r+s >p$. By definition we see that $\pi(A_r) \cap \pi(B_s) = \mathbb{R}P^{s-r}$ and
\begin{equation*}
[A_r] \cap [B_s] = [B_{s-r }].
\end{equation*}
In the formula above we assume that $[B_{s-r}] = 0$ whenever $s-r <0$. Moreover, $dim([B_r] \cap [B_s]) = r + s - (n-2) \leqslant (p-1) + (p-1) - (n-2)  \leqslant 2p - n  \leqslant 0$.

Assume that $p< n-p$. Then $dim([B_r] \cap [B_s]) < 0$. Hence, $[B_r] \cap [B_s] = 0$.

Suppose $p=n-p$. Then $[B_r] \cap [B_s] = 0$ whenever $r+s < 2p - 2$. Let us find $[B_{p-1}] \cap [B_{p-1}]$.  Consider the composition of diagonal embedding and the projection
\begin{equation*}
S^{p-1} \xrightarrow{\Delta} S^{p-1} \times S^{p-1} \xrightarrow{pr} (S^{p-1} \times S^{p-1})/\mathbb{Z}_2 = \mathcal{R}_P.
\end{equation*}
We see that $pr(\Delta(S^{p-1})) = B_{p-1}$ and the tangent bundle of $B_{p-1}$ is isomorphic to the normal bundle. Therefore,
\begin{equation*}
\# ([B_{p-1}] \cap [B_{p-1}]) \; \; mod \; \; 2 \; = \; \chi(B_{p-1}) \;\; mod \;\; 2,
\end{equation*}
where $\chi$ is the Euler characteristic. Since $B_{p-1}$ is section of the fibration, we have $\chi(B_{p-1}) = \chi(\mathbb{R}P^{p-1}) $. So,  $[B_{p-1}] \cap [B_{p-1}]= 1$ if $p$ is odd and $[B_{p-1}] \cap [B_{p-1}]=0$ if $p$ is even. In other words,
\begin{equation*}
[B_{p-1}] \cap [B_{p-1}] = (p \;\; mod  \;\; 2)[B_0].
\end{equation*}

We get that elements $[\mathcal{R}_P] = [A_0]$, $[A_1]$, $[B_{p-1}]$, $[B_0]$ generate $(H_{*}(\mathcal{R}_P, \mathbb{Z}_2), \cap)$ as a ring (with respect to the intersection product). Let $x$, be the Poincare dual to $[A_1]$ and $y$ be the Poincare dual to $B_{p}$.  The lemma is proved.

\end{proof}
Let us use notations from Section $\ref{maincostruction}$. We have
\begin{equation*}
\begin{gathered}
\gamma_1=\ldots =\gamma_p = n-p, \quad \gamma_{p+1} = \ldots =\gamma_n = - p, \\
\gamma_1 - \gamma_n = \ldots = \gamma_p - \gamma_n = n, \;\; \gamma_{p+1} - \gamma_n = \ldots = \gamma_{n-1} - \gamma_n = 0,
\end{gathered}
\end{equation*}
The lattice $\Lambda \subset \mathbb{R}_P$ is generated by  number $n$ and the dual lattice $\Lambda^{*}$ is generated by $\frac{1}{n}$. The $1-$dimensional torus $T_{\Gamma}$ is given by
\begin{equation*}
\begin{gathered}
T_{\Gamma} = (e^{i\pi n\varphi}, \ldots, e^{i\pi n\varphi},1,...,1), \\
\end{gathered}
\end{equation*}
We see that $D_{\Gamma} = \Lambda^{*}/2\Lambda^{*}$ has only one nontrivial element $\frac{1}{n}$. Then from formulas $(\ref{mainmap})$ we have
\begin{equation*}
\begin{gathered}
\mathcal{N}(n,p) = ((S^{p-1} \times S^{n-p-1})/\mathbb{Z}_2 \times T_{\Gamma})/D_{\Gamma} = (\mathcal{R}_P \times T_{\Gamma})/D_{\Gamma}, \\
(u_1,...,u_n, \varphi) \sim (-u_1, \ldots, -u_p, u_{p+1},...,u_n, \varphi + \frac{1}{n}),
\end{gathered}
\end{equation*}
We see that the action of $D_{\Gamma}$ is free on $T_{\Gamma}$. Therefore $\mathcal{N}$ fibers over $T_{\Gamma}/D_{\Gamma} = S^1$
\begin{equation*}
\begin{gathered}
\mathcal{N}(n,p) \xrightarrow{\mathcal{R}_P } S^1, \;\; S^1 = \mathbb{R}_P/\mathbb{Z}\langle\frac{1}{n}\rangle.
\end{gathered}
\end{equation*}
From formula $(\ref{mainmap})$ we get the Lagrangian
\begin{equation*}
\psi(\mathcal{N}(n,p)) = [u_1e^{i\pi n\varphi}:...:u_pe^{i\pi n\varphi}:u_{p+1}:...:u_n].
\end{equation*}
Let us reparametrize $\varphi$ in the formula above (to get rid of $n$) and obtain
\begin{equation}\label{embtwosphere}
\begin{gathered}
\mathcal{N}(n,p) \xrightarrow{\mathcal{R}_P} S^1, \;\; S^1 = \mathbb{R}/\mathbb{Z}, \\
L(n,p) = \psi(\mathcal{N}(n,p)) = [u_1e^{i\pi\varphi}:...:u_pe^{i\pi\varphi}:u_{p+1}:...:u_n].
\end{gathered}
\end{equation}

Denote by $L(n,p)$ the image $\psi(\mathcal{N}(n,p)) \subset \mathbb{C}P^{n-1}$. From theorem $(\ref{mironovtheorem2})$ we get that $L(n,p)$ is immersed Lagrangian.

\begin{lemma}\label{maslovclasstwo}
The Lagrangian $L(n,p)$ is embedded monotone Lagrangian. The minimal Maslov number of $L(n,p)$ is equal to $gcd(p,n)$, where $gcd$ stands for the greatest common divisor.
\end{lemma}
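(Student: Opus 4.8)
The plan is to pin down the polytope, invoke the embedding-and-monotonicity package already assembled, and then transport the Maslov computation to $\mathbb{C}^n$, where Theorem~\ref{fanocondition} gives a closed formula.

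First I would record what $P$ is: from $(\ref{twoprodpolytope})$, $P$ is the product of standard simplices $\Delta^{p-1}\times\Delta^{n-p-1}$, its normals satisfy $a_1+\dots+a_p=0$ and $a_{p+1}+\dots+a_n=0$, hence $a_1+\dots+a_n=0$, and all $b_i=1$, so $P$ is Fano; a product of standard simplices is Delzant. Thus Theorem~\ref{monotonecondition} already yields that $L(n,p)$ is embedded and monotone. (One can also see this directly: for $u\in\widetilde{\mathcal{R}}_P=S^{p-1}\times S^{n-p-1}$ there are indices $i\le p$, $j>p$ with $u_i,u_j\ne0$, so $\gamma_i-\gamma_j=(n-p)-(-p)=n$ lies in $\Lambda_u$, giving $\Lambda_u=\Lambda=n\mathbb{Z}$ and embeddedness by Lemma~\ref{embc1}; and $\gamma_1+\dots+\gamma_n=p(n-p)-(n-p)p=0$, so monotonicity follows from Corollary~\ref{monmin} and Theorem~\ref{mironovtheorem2}.)

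For the minimal Maslov number I would use $N_{L(n,p)}=N_{\widetilde L}$, where $\widetilde L\subset\mathbb{C}^n$ is the Mironov Lagrangian attached to the same system $(\ref{sphereprod})$; this is Theorem~\ref{minmaslovtheorem}, whose hypotheses ($\widetilde{\mathcal{R}}$ connected, $\pi_1(\widetilde{\mathcal{R}})=0$, $\pi_1(L)=\mathbb{Z}_2\oplus\mathbb{Z}^{n-m-1}$) hold as soon as $p\ge 3$ and $n-p\ge 3$, the small cases $p=2$ or $n-p=2$ being disposed of by a direct look at the fibration $L\to S^1$. To compute $N_{\widetilde L}$ I would apply Theorem~\ref{fanocondition}: here $\widetilde\gamma_j=(1,n-p)$ for $j\le p$ and $\widetilde\gamma_j=(1,-p)$ for $j>p$, so $\widetilde\gamma_1+\dots+\widetilde\gamma_n=(n,0)=1\cdot\delta$ with $\delta=(n,0)$, and therefore $\mu_{\widetilde L}(\beta)=\int_{\partial\beta}n\,d\varphi_1$. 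Identifying $\pi_2(\mathbb{C}^n,\widetilde L)$ with $\pi_1(\widetilde L)$, which surjects onto $\pi_1(T^{n-m})=\widetilde\Lambda^{*}$ with kernel carried by loops in the (simply connected) fiber along which $\varphi_1$ is constant, the image of $\mu_{\widetilde L}$ equals $n$ times the subgroup of $\mathbb{Z}$ generated by the first coordinates of $\widetilde\Lambda^{*}$.

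Finally I would compute that subgroup. The lattice $\widetilde\Lambda$ is spanned by the two column types $(1,n-p)$ and $(1,-p)$, hence by $(1,-p)$ and $(0,n)$; dualizing, $\widetilde\Lambda^{*}=\mathbb{Z}\langle(1,0),(\tfrac pn,\tfrac1n)\rangle$, whose first coordinates generate $\mathbb{Z}\langle1,\tfrac pn\rangle=\tfrac1n\mathbb{Z}\langle n,p\rangle=\tfrac{\gcd(n,p)}{n}\mathbb{Z}$. Hence $N_{\widetilde L}=n\cdot\tfrac{\gcd(n,p)}{n}=\gcd(n,p)$, so $N_{L(n,p)}=\gcd(p,n)$. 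The main obstacle I anticipate is bookkeeping with Mironov's lattice conventions: here $\widetilde\Lambda$ has index $n$ in $\mathbb{Z}^2$, so Lemma~\ref{simplemimasnumber} does not apply verbatim, and one must be careful that the overall factor $n$ coming from $d\varphi_1$ and the $\tfrac1n$ coming from dualizing combine to exactly $\gcd(n,p)$; in addition the non-simply-connected cases $p=2$ and $n-p=2$ require a short separate verification that $N_L=\gcd(p,n)$ still holds.
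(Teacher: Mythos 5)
Your proposal is correct and follows the same overall architecture as the paper: Theorem~\ref{monotonecondition} for embeddedness and monotonicity, Theorem~\ref{minmaslovtheorem} to reduce $N_{L}$ to $N_{\widetilde L}$, and the Maslov formula of Theorem~\ref{fanocondition} for the final count. The one place you diverge is the last step. The paper first rewrites $(\ref{sphereprod})$ in its diagonal form $u_1^2+\dots+u_p^2=p$, $u_{p+1}^2+\dots+u_n^2=n-p$, so that every column is a standard basis vector, $\widetilde\Lambda=\mathbb{Z}^2$, and Lemma~\ref{simplemimasnumber} applies verbatim with $C=1$ and $\delta=(p,n-p)$, giving $N_{\widetilde L}=\gcd(p,n-p)=\gcd(p,n)$ in one line. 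You instead keep the presentation with first equation $u_1^2+\dots+u_n^2=n$, where $\widetilde\Lambda$ has index $n$ in $\mathbb{Z}^2$, and compute $\widetilde\Lambda^{*}=\mathbb{Z}\langle(1,0),(\tfrac pn,\tfrac1n)\rangle$ by hand; your arithmetic is right and the factor $n$ from $\int n\,d\varphi_1$ cancels against the $\tfrac1n$ from dualizing, so you recover the same answer. Your route is equivalent but does manually what Lemma~\ref{simplemimasnumber} packages; the obstacle you anticipated disappears once the system is diagonalized. One genuine merit of your write-up: you notice that the hypothesis $\pi_1(\widetilde{\mathcal{R}})=0$ of Theorem~\ref{minmaslovtheorem} fails when $p=2$ or $n-p=2$ (the fiber is then $S^1\times S^{n-3}$), a point the paper's proof passes over silently; if you rely on Theorem~\ref{minmaslovtheorem} you should actually supply the short separate argument for those cases rather than only promising it.
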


\begin{proof}
Since $P$ is Delzant and monotone, we see that $L(n,p)$ is embedded and monotone ( see Theorem $\ref{monotonecondition}$).

Lemma $\ref{simplemaslov}$ can be applied to system $(\ref{sphereprod})$. So, Lemma $\ref{simplemaslov}$ and Theorem $\ref{minmaslovtheorem}$ say that the minimal Maslov number $N_L = gcd(p, n-p) = gcd(p,n)$.

\end{proof}

\textbf{Remark.} Note that $|z+1|^2 + |z-1|^2 = 2|z|^2 + 2$ and $\frac{i(e^{i\varphi}+1)}{e^{i\varphi}-1} \in \mathbb{R}$. This implies that $arg(i(e^{i\varphi}+1)) = arg(e^{i\varphi}-1)$. Therefore $H_2(\mathbb{C}P^{n-1}, L) = \mathbb{Z}^2$ is generated by the following holomorphic discs:
\begin{equation*}
\begin{gathered}
\beta_1, \beta_2: (D^2, S^1) \rightarrow (CP^{n-1}, L),\\
\beta_1(z) = [\underbrace{\sqrt{p}\frac{i(z+1)}{2} : \sqrt{p}\frac{z-1}{2}: 0: \ldots :0}_p : 1 : ... : 1], \\
\beta_2(z) = [\underbrace{i\sqrt{p}(z+1): \sqrt{p}(z-1): 0: \ldots :0}_p : \\
 i\sqrt{n-p}(z+1): \sqrt{n-p}(z-1):0: \ldots :0].
\end{gathered}
\end{equation*}

\vspace{.08in}

\begin{lemma}
Suppose that $p$ is odd and $p>1$. Then $QH(L(2p,p), \; \mathbb{Z}_2[T, T^{-1}])$ is isomorphic to $H^{*}(L(2p,p), \; \mathbb{Z}_2[T,T^{-1}])$. This yields that the Lagrangians $L(2p,p)$ are not displaceable.
\end{lemma}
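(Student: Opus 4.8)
The plan is to run Oh's spectral sequence (Section~\ref{quantumdefinitions}) for $L=L(2p,p)$ and show that it degenerates at its first page, so that $QH^{*}(L,\mathbb{Z}_2[T,T^{-1}])\cong H^{*}(L,\mathbb{Z}_2[T,T^{-1}])$ as graded $\mathbb{Z}_2[T,T^{-1}]$-modules, i.e. $L$ is wide. Recall from Lemma~\ref{maslovclasstwo} that $N_L=\gcd(p,2p)=p\geqslant3$, and from $(\ref{embtwosphere})$ that $L$ fibers over $S^1$ with fiber $\mathcal{R}_P=(S^{p-1}\times S^{p-1})/\mathbb{Z}_2$, so $\dim L=n-1=2p-1$. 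The hypothesis that $p$ is odd will enter only through the ring $H^{*}(L,\mathbb{Z}_2)$: when the two spheres have the same dimension and $p$ is odd, $H^{*}(\mathcal{R}_P,\mathbb{Z}_2)$ carries the extra relation $x^{p-1}y=y^{2}$ of Lemma~\ref{cohomtwosphere}, and it is precisely this relation that will kill the first differential.

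\textbf{Step 1: the ring $H^{*}(L,\mathbb{Z}_2)$.} First I would identify the monodromy $\tau$ of $L\to S^1$. Under the identification $(u_1,\dots,u_n)\sim(-u_1,\dots,-u_n)$, the map $[u_1:\dots:u_n]\mapsto[u_1:\dots:u_p]$ exhibits $\mathcal{R}_P$ as the sphere bundle $S(\gamma^{\oplus p})$ over $\mathbb{R}P^{p-1}$, where $\gamma$ is the tautological line bundle, and $\tau$ is the fibrewise antipodal map, i.e. $-\operatorname{id}$ on $\gamma^{\oplus p}$. Since $p$ is odd, $-\operatorname{id}$ reverses the orientation of the rank-$p$ fibre and hence acts trivially on $H^{*}(\mathcal{R}_P,\mathbb{Z}_2)$ (over $\mathbb{Z}_2$ the Thom class is insensitive to orientation; equivalently the mod-$2$ Euler class of $\gamma^{\oplus p}$ is the degree-$p$ part of $(1+x)^{p}$, which vanishes in $\mathbb{Z}_2[x]/(x^{p})$, so the Gysin sequence splits $\tau$-equivariantly). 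The Wang sequence of $L\to S^1$ then yields a ring isomorphism
\begin{equation*}
H^{*}(L,\mathbb{Z}_2)\;\cong\;\mathbb{Z}_2[x,y,t]/(x^{p},\,x^{p-1}y+y^{2},\,y^{3},\,t^{2}),\qquad \deg x=\deg t=1,\ \deg y=p-1,
\end{equation*}
where $t$ is pulled back from $S^1$ and $x,y$ restrict to the generators of Lemma~\ref{cohomtwosphere}. In particular $H^{*}(L,\mathbb{Z}_2)$ is generated in degrees $1$ and $p-1$, and $x^{p-1}\neq0$ because its restriction to the fibre $\mathcal{R}_P$ is nonzero.

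\textbf{Step 2: collapse of the spectral sequence.} On the page $E_r$ the differential $\delta_r$ lowers the cohomological degree of $H^{*}(L)$ by $rN_L-1=rp-1$ (compatibly with $\partial_f^i:C_f^{*}\to C_f^{*-iN_L+1}$) and is a derivation for the product, which on $E_1$ is the cup product (Leibniz rule). If $r\geqslant2$ then $rp-1\geqslant2p-1=\dim L$, so $\delta_r$ annihilates every class of degree $<2p-1$; since $H^{*}(L,\mathbb{Z}_2)$ is generated in degrees $\leqslant p-1<2p-1$, being a derivation forces $\delta_r=0$ for all $r\geqslant2$. For $\delta_1$ (which lowers degree by $p-1$): on the ring generators, $\delta_1(x),\delta_1(t)\in H^{2-p}(L,\mathbb{Z}_2)=0$, while $\delta_1(y)\in H^{0}(L,\mathbb{Z}_2)\otimes T\cong\mathbb{Z}_2$. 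Applying $\delta_1$ to the relation $x^{p-1}y+y^{2}=0$ and using $\delta_1(x^{p-1})=0$ together with $\delta_1(y^{2})=0$ (characteristic $2$), I obtain $x^{p-1}\,\delta_1(y)=0$; if $\delta_1(y)$ were the generator $1\otimes T$ this would give $x^{p-1}=0$ in $H^{p-1}(L,\mathbb{Z}_2)$, contradicting Step 1. Hence $\delta_1(y)=0$, so $\delta_1=0$, the spectral sequence collapses at $E_1$, and $QH^{*}(L,\mathbb{Z}_2[T,T^{-1}])\cong H^{*}(L,\mathbb{Z}_2[T,T^{-1}])$.

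\textbf{Step 3: non-displaceability, and the main difficulty.} The last assertion is the standard consequence of wideness: if $L$ were displaceable by a Hamiltonian diffeomorphism then its self-Floer cohomology $HF(L,L)$ would vanish, whereas $HF(L,L)\cong QH^{*}(L,\mathbb{Z}_2[T,T^{-1}])\neq0$. I expect the main obstacle to be Step 1 — in particular checking that the monodromy of $L\to S^1$ acts trivially on $H^{*}(\mathcal{R}_P,\mathbb{Z}_2)$, so that the relation $x^{p-1}y=y^{2}$ genuinely survives in $H^{*}(L,\mathbb{Z}_2)$ together with $x^{p-1}\neq0$; once that is established, Step 2 is pure degree bookkeeping. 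As an alternative to Step 2 one could instead verify $\delta_1=0$ by counting the Maslov-$p$ discs directly, using the explicit holomorphic discs $\beta_1,\beta_2$ displayed in the Remark preceding the statement.
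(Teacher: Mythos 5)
Your proposal is correct and follows essentially the same route as the paper: extract the relation $x^{p-1}y=y^{2}$ (coming from Lemma \ref{cohomtwosphere} in the case $n=2p$, $p$ odd) into $H^{*}(L,\mathbb{Z}_2)$ via the Serre/Wang sequence of $L\to S^1$, then use that $\delta_1$ is a degree-$(p-1)$ derivation vanishing on the degree-one generators and apply it to that relation to force $\delta_1(y)=0$, so Oh's spectral sequence collapses at $E_1$. The only cosmetic difference is that the paper sidesteps the filtration ambiguity you flag in Step 1 by multiplying the relation by the circle class ($x^{p-1}yz=y^{2}z$ holds on the nose), which makes your "main difficulty" evaporate.
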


\begin{proof}
We know that $L(2p,p)$ fibers over $S^1$ with fiber $\mathcal{R}_P$. Let us consider the Serre spectral sequence.  Denote the generator of $E_{2}^{1,0}= \mathbb{Z}_2$ by $\hat{z}$ and denote by $\hat{x}$ the generator of $E_2^{0,1} = \mathbb{Z}_2$. Let $\hat{y}$ be an element of $H^{p-1}(\mathcal{R}_P, \mathbb{Z}_2)$ such that $E_2^{0,p-1} = \mathbb{Z}_2^2\langle \hat{x}^{p-1}, \hat{y} \rangle$. We see that $\hat{x}^{p-1}\hat{y}\hat{z} = \hat{y}^2\hat{z}$ (see Lemma $\ref{cohomtwosphere}$). Let us denote by $x,y,z$ the elements of $H^{*}(L(2p,p), \mathbb{Z}_2)$ corresponding to $\hat{x}, \hat{y}, \hat{z}$, respectively. We get that $x^{p-1}yz = y^2z$.

It follows from Lemma $\ref{maslovclasstwo}$ that the minimal Maslov number of $L(2p,p)$ is equal to $p$. Let us consider the spectral sequence of Oh (see Section $\ref{quantumdefinitions}$). The operator $\delta_1$ has degree $p-1$. Since $deg(x)=deg(z) = 1$, we see that $\delta_1(x) =\delta(z) = 0$. Then $\delta_1(y)\in H^{0}(L(2p,p), \mathbb{Z}_2)$ equals either 1, or 0.

We have
\begin{equation*}
\delta_1(y^2z) = \delta_1(y^2)z = 2\delta_1(y)z= 0 = \delta_1(x^{p-1}yz) = x^{p-1}z\delta_1(y)
\end{equation*}
So, we obtain that $\delta_1(y) = 0$. Since the cohomology ring of $L(2p,p)$ is generated by $x,y,z$, we get that the spectral sequence of Oh collapses at $E_1$.
\end{proof}

If $p, n$ are even, then from Lemma $\ref{trivialfibration}$ we get that $L(n,p)$ is diffeomorphic to $\mathcal{R}_P \times S^1$. It follows from Lemma $\ref{helplemma}$ that $L(n,p)$ is spin if and only if $\frac{n}{2}$ is even (and $p$ is even). Let $G$ be a ring where $2$ is invertible. From the Serre spectral sequence we get $H^{2}(L(n,p), G) = 0$. Assume that the minimal Maslov number of $L(n,p)$ is strictly greater than $2$. Let $f: L(n,p) \rightarrow \mathbb{R}$ be a Morse function with a single minimum $x^{min}$. We know from Section $\ref{quantumdefinitions}$ that there exists the isomorphism $h: QH^{*} \rightarrow QH^{*+2}$ such that $h(x^{min}) = h^0 \in H^{2}(L(n,p), G)$, where $h^0$ is Poicare dual to $(L(n,p) \cap H) \in H_{n-3}(L(n,p), G)$. Since $H^{2}(L(n,p), G) = 0$, we see that $h^0 = 0$. Then Lemma $\ref{zeroquantum}$ implies that $QH^{*}(L(n,p), G[T, T^{-1}]) = 0$.

\subsection{Proof of Theorem $\ref{masseylagr}$}

Let us consider $3-$dimensional cube and cut off three edges, i.e. we consider a polytope $P$ defined by inequalities (see Figure $\ref{fig:truncube}$)
\begin{equation*}
P = \left\{
 \begin{array}{l}
x_1 + 2 \geqslant 2 \;\;\; x_2 + 2 \geqslant 0 \;\;\; x_3 + 2 \geqslant 0 \\
-x_1 + 2 \geqslant 2 \;\;\; -x_2 + 2 \geqslant 0 \;\;\; -x_3 + 2 \geqslant 0 \\
x_1 + x_2 + 3 \geqslant 0 \\
-x_1 - x_3 + 3 \geqslant 0 \\
-x_2 + x_3 + 3 \geqslant 0
 \end{array}
\right.
\end{equation*}

\textbf{Remark.} The polytope $P$ and its generalizations were considered by Limonchenko in \cite{Limon2}. Limonchenko constructed a rich set of complex moment-angle manifolds with nontrivial Massey products.
\\

Easy to see that $P$ is Delzant. From Section $\ref{maincostruction}$ and Lemma $\ref{sumzero}$ we know that there is embedded Lagrangian in $\mathbb{C}P^9$ associated to $P$ (but not monotone because $P$ is not Fano). We denote by $\widetilde{\mathcal{R}}_P$ and $\widetilde{\mathcal{Z}}_P$ the real and complex moment-angle manifolds associated to $P$, respectively.  It is discussed in Section $\ref{wedge}$ that $\widetilde{\mathcal{R}}_P^{(2,\ldots,2)} = \widetilde{\mathcal{Z}}_P$.

\begin{lemma}
The cohomology ring $H^{*}(\widetilde{\mathcal{Z}}_P, \mathbb{Q})$ contains nonzero triple Massey product.
\end{lemma}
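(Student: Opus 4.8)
The plan is to work inside the differential graded algebra $(R(P),d)$ of Theorem~\ref{torcohom} --- which over $\mathbb Q$ is a cochain model for $\widetilde{\mathcal Z}_P$, so that rational Massey products can be read off from it --- and to exhibit three classes in $H^{3}$ whose triple Massey product is a nonzero element of $H^{8}(\widetilde{\mathcal Z}_P,\mathbb Q)$ lying outside its indeterminacy. Throughout I would use the standard multigrading refining Theorem~\ref{torcohom}: $H^{*}(\widetilde{\mathcal Z}_P,\mathbb Q)$ is $\mathbb Z^{n}$-graded, supported in squarefree degrees, and the squarefree component indexed by $J\subseteq\{v_1,\dots,v_9\}$ is $\widetilde H^{\,*-|J|-1}(K_J)$, where $K_J$ is the full subcomplex of $K_P$ on the vertex set $J$.

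First I would record the combinatorics of $K_P$. The nine facets are the six (slightly shrunk) cube facets $v_1,\dots,v_6$, with opposite pairs $\{v_1,v_4\},\{v_2,v_5\},\{v_3,v_6\}$, together with the three facets $v_7,v_8,v_9$ cut off along the edges $v_1\cap v_2$, $v_4\cap v_6$, $v_3\cap v_5$; consequently these three edges are no longer faces, so in $\mathbb Q[P]$ one has $v_1v_2=v_4v_6=v_3v_5=0$ and $v_1v_4=v_2v_5=v_3v_6=0$, while every other pair among $v_1,\dots,v_6$ still meets. The essential feature is that the three deleted edges form a perfect matching of $\{v_1,\dots,v_6\}$ cyclically linked through the opposite-face pairing --- this is exactly the configuration of Baskakov and Limonchenko \cite{Limon2}. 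Since $dy_i=v_i$ and $v_iv_j=0$ on deleted edges, the degree-$3$ elements $\xi_1=y_1v_2$, $\xi_2=y_4v_6$, $\xi_3=y_3v_5$ are $d$-cocycles. One checks $[\xi_1][\xi_2]=0=[\xi_2][\xi_3]$: for instance $\xi_1\xi_2=y_1y_4\,v_2v_6$ is a cocycle whose class lies in multidegree $J=\{v_1,v_2,v_4,v_6\}$, and $K_J$ is the contractible path $v_1-v_6-v_2-v_4$, so $\widetilde H^{1}(K_J)=0$; equivalently one writes down explicit primitives $\eta_{12},\eta_{23}$ of degree $5$ with $d\eta_{12}=\xi_1\xi_2$ and $d\eta_{23}=\xi_2\xi_3$, whence $\Omega=\eta_{12}\,\xi_3\pm\xi_1\,\eta_{23}$ is a degree-$8$ cocycle representing $\langle[\xi_1],[\xi_2],[\xi_3]\rangle$.

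The main step is nontriviality: showing $[\Omega]\neq 0$ and, more precisely, that $[\Omega]$ avoids the indeterminacy $[\xi_1]\cdot H^{5}(\widetilde{\mathcal Z}_P,\mathbb Q)+H^{5}(\widetilde{\mathcal Z}_P,\mathbb Q)\cdot[\xi_3]$. Here $\Omega$ is concentrated in the single multidegree $J_0=\{v_1,\dots,v_6\}$, and $K_{J_0}$ is the boundary of the octahedron with the three matched edges and the six triangles containing them removed --- a $2$-complex with $\widetilde H^{1}(K_{J_0})\neq 0$. A Mayer--Vietoris argument then identifies the triple Massey operation $\widetilde H^{0}(K_{\{v_1,v_2\}})\otimes\widetilde H^{0}(K_{\{v_4,v_6\}})\otimes\widetilde H^{0}(K_{\{v_3,v_5\}})\to\widetilde H^{1}(K_{J_0})$ with a nonzero map, so $[\Omega]\neq 0$ in that component. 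For the indeterminacy, any $z\in H^{5}$ whose product with $\xi_1$ (resp. with $\xi_3$) can land in the squarefree degree $J_0$ must have support disjoint from $\{v_1,v_2\}$ (resp. $\{v_3,v_5\}$), hence must come from $\widetilde H^{0}(K_{\{v_3,v_4,v_5,v_6\}})$ (resp. $\widetilde H^{0}(K_{\{v_1,v_2,v_4,v_6\}})$); but both of these full subcomplexes are connected paths, so these groups vanish and the indeterminacy misses multidegree $J_0$. Therefore $[\Omega]$ is a genuine nonzero triple Massey product in $H^{8}(\widetilde{\mathcal Z}_P,\mathbb Q)$.

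The Mayer--Vietoris computation in Step~3 --- equivalently, recognizing $K_{J_0}$ and checking that the matched-edge deletions make $\widetilde H^{1}(K_{J_0})\neq 0$ and that the relevant boundary/linking maps do not degenerate --- is the only genuinely laborious part; everything else is bookkeeping in $R(P)$. Since this is precisely the calculation carried out for this polytope (and its generalizations) in Limonchenko \cite{Limon2}, the lemma can alternatively be obtained by quoting that reference, with the multigraded argument above serving to pin down which classes to use and why the Massey product is nontrivial rather than merely defined.
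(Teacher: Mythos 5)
Your proposal is correct and lives in the same model the paper uses, namely the Koszul-type DGA $[R(P),d]$ of Theorem~\ref{torcohom}, but the two arguments differ in which classes are chosen and in what is checked carefully. The paper takes the three degree-$3$ classes attached to the \emph{opposite} facet pairs, $\alpha=y_3v_6$, $\beta=y_1v_4$, $\delta=y_5v_2$, notes $\alpha\beta=0$ on the nose (because $v_4v_6=0$) and $\beta\delta=d(y_1y_5y_4v_2)$, and then writes down the explicit representative $y_3y_1y_5y_4v_6v_2$ of $\langle\alpha,\beta,\delta\rangle$, asserting without further justification that it is nonzero in $H^{8}$ and saying nothing about indeterminacy. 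You instead take the classes attached to the three \emph{cut} edges $\{v_1,v_2\},\{v_4,v_6\},\{v_3,v_5\}$; both triples are legitimate, since in each case the supporting pairs partition $\{v_1,\dots,v_6\}$ and the consecutive products land in squarefree multidegrees whose full subcomplexes are contractible paths (your identifications of $K_{\{1,2,4,6\}}$ and $K_{\{3,4,5,6\}}$ check out against the inequalities defining $P$). Your multigraded bookkeeping buys exactly what the paper omits: the verification that the indeterminacy cannot meet the multidegree $J_0=\{v_1,\dots,v_6\}$, so nonvanishing of one representative genuinely shows the Massey product is nontrivial. Conversely, the one step you defer --- the Mayer--Vietoris/Baskakov computation that the resulting class in $\widetilde H^{1}(K_{J_0})$ is nonzero, or the appeal to \cite{Limon2} --- is the exact counterpart of the paper's unproved assertion that $y_3y_1y_5y_4v_6v_2\neq 0$, so on that last point the two write-ups are at the same level of rigor and neither is complete without the final subcomplex computation.
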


\begin{proof}
Let us study the face ring $\mathbb{Q}[P]$ and the complex $[R(P),d]$, defined in Section $\ref{torsiondefinitions}$.  We denote the facets of $P$ as in Figure $\ref{fig:truncube}$. We denote the exterior variables of  $[R(P),d]$  by $y_1,\ldots, y_6$ such that $dy_i = v_i$.

\begin{figure}[h]
\centering
\includegraphics[width=0.4\linewidth]{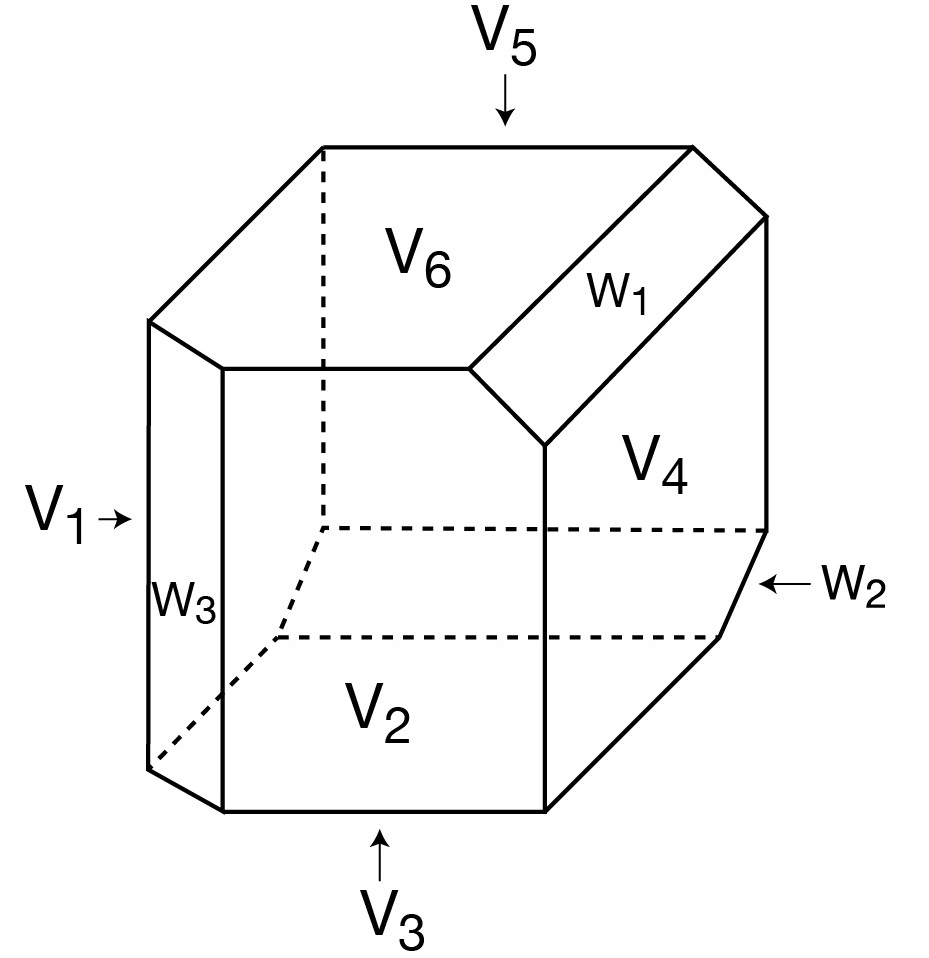}
\caption{truncted cube}
  \label{fig:truncube}
\end{figure}

Consider three classes $\alpha, \beta, \gamma \in H^{3}(\widetilde{\mathcal{Z}}_P, \mathbb{Q})$ represented by the following elements of $[R(P),d]$:
\begin{equation*}
\alpha = y_3v_6, \quad \beta = y_1v_4, \quad \delta = y_5v_2.
\end{equation*}
Since $v_4 \cap v_6 = v_1 \cap v_2 = v_5 \cap v_2 = \emptyset$, we get that
\begin{equation*}
\begin{gathered}
\alpha \beta = y_3y_1v_6v_4 = 0, \quad \beta\delta = y_1y_5v_4v_2 = d(y_1y_5y_4v_2) = d(h), \\
h = y_1y_5y_4v_2.
\end{gathered}
\end{equation*}
Hence, the triple Massey product $<\alpha, \beta, \gamma> \in H^{8}(\mathcal{Z}_P, \mathbb{Q})$ is defined. We have
\begin{equation*}
<\alpha, \beta, \delta> = \alpha h + 0\delta = y_3y_1y_5y_4v_6v_2 \neq 0.
\end{equation*}
So, the Massey product $<\alpha, \beta, \delta>$ is nontrivial.

\end{proof}

The real moment-angle manifold $\widetilde{\mathcal{R}}_P$ is given by a system

\begin{equation}\label{systemmassey}
\widetilde{\mathcal{R}}_P = \left\{
 \begin{array}{l}
u_1^2 + u_4^2 = 4 \\
u_2^2 + u_5^2 = 4 \\
u_3^2 + u_6^2 = 4 \\
u_1^2 + u_2^2 - u_7^2 = 1 \\
u_1^2 + u_3^2 + u_8^2 = 7 \\
u_2^2 - u_3^2 + u_9^2 = 3
 \end{array}
\right.
\end{equation}

We need to apply the simplicial multiwedge operation defined in Section $\ref{wedge}$, i.e. we repeat $j_i$ times the variable $u_i$. By $\overrightarrow{u}_i$ denote the vector $(u_{i,1},\ldots, u_{i, j_i})$ and by $|\overrightarrow{u}_i|^2 = u_{i,1}^2 + \ldots + u_{i,j_i}^2$ the squared length of the vector. Let us consider

\begin{equation}\label{systemformassey2}
\begin{gathered}
\widetilde{\mathcal{R}}_P^{(j_1,\ldots, j_9)} = \left\{
 \begin{array}{l}
|\overrightarrow{u}_1|^2 + |\overrightarrow{u}_4|^2 = 4 \\
|\overrightarrow{u}_2|^2 + |\overrightarrow{u}_5|^2 = 4 \\
|\overrightarrow{u}_3|^2 + |\overrightarrow{u}_6|^2 = 4 \\
|\overrightarrow{u}_1|^2 + |\overrightarrow{u}_2|^2 - |\overrightarrow{u}_7|^2 = 1 \\
|\overrightarrow{u}_1|^2 + |\overrightarrow{u}_3|^2 + |\overrightarrow{u}_8|^2 = 7 \\
|\overrightarrow{u}_2|^2 - |\overrightarrow{u}_3|^2 + |\overrightarrow{u}_9|^2 = 3
 \end{array}
\right.
\\
j_1=\ldots = j_6=4k, \;\; j_7 = 6k, \;\; j_8 = 6k, \;\; j_9 = 6k
\end{gathered}
\end{equation}
where $k$ is an arbitrary positive integer. Let $P^{(j_1, \ldots, j_9)}$ be the polytope associated to system $(\ref{systemformassey2})$. Since $P$ is Delzant, we obtain that $P^{(j_1,\ldots, j_9)}$ is Delzant (see Lemma $\ref{multiwedgedelzant}$). We see that system $(\ref{systemformassey2})$ satisfies the conditions of Lemma $\ref{equivmon}$ for $C = 2k$, i.e. the sum of columns is equal to the right-hand side multiplied by $2k$. This implies that $P^{(j_1, \ldots, j_9)}$ is Fano.

\begin{lemma}
The cohomology ring $H^{*}(\widetilde{\mathcal{R}}_P^{(j_1,\ldots, j_9)}, \mathbb{Q})$ contains nonzero triple Massey product.
\end{lemma}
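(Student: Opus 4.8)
The plan is to reduce the statement to the nonzero triple Massey product in $H^{*}(\widetilde{\mathcal{Z}}_P,\mathbb{Q})$ established in the lemma above, by invoking the stability of Massey products under the simplicial multiwedge operation. The first observation is that every exponent appearing in system $(\ref{systemformassey2})$ is even: $j_1=\ldots=j_6=4k=2(2k)$ and $j_7=j_8=j_9=6k=2(3k)$. Hence, by the identity recorded at the end of Section $\ref{wedge}$, namely $\widetilde{\mathcal{R}}^{(2j_1,\ldots,2j_n)}_P=\widetilde{\mathcal{Z}}_P^{(j_1,\ldots,j_n)}$ applied with the entries $2k,\ldots,2k,3k,3k,3k$, we get the identification
\[
\widetilde{\mathcal{R}}_P^{(4k,4k,4k,4k,4k,4k,6k,6k,6k)}=\widetilde{\mathcal{Z}}_P^{(2k,2k,2k,2k,2k,2k,3k,3k,3k)},
\]
so the manifold in question is honestly a complex moment-angle manifold, namely the one associated to the polytope obtained from $P$ by the multiwedge $(2k,\ldots,2k,3k,3k,3k)$.

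Next I apply Theorem $\ref{limonch}$. By the lemma above, $H^{*}(\widetilde{\mathcal{Z}}_P,\mathbb{Q})$ carries the nonzero triple Massey product $\langle y_3v_6,\,y_1v_4,\,y_5v_2\rangle$. Theorem $\ref{limonch}$, applied with $(j_1,\ldots,j_n)=(2k,2k,2k,2k,2k,2k,3k,3k,3k)$, then guarantees that $H^{*}(\widetilde{\mathcal{Z}}_P^{(2k,\ldots,2k,3k,3k,3k)},\mathbb{Q})$ also has a nonzero Massey product, and combining this with the identification above finishes the proof.

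Alternatively, one can exhibit the Massey triple directly in the Koszul-type model $[R(P^{(j_1,\ldots,j_9)}),d]$ of Section $\ref{torsiondefinitions}$: by Lemma $\ref{interfacetswedge}$, the emptiness of $v_4\cap v_6$, $v_1\cap v_2$, $v_5\cap v_2$ in $P$ is equivalent to the emptiness of the intersections of the corresponding blocks of duplicated facets in $P^{(j_1,\ldots,j_9)}$, and one then repeats verbatim the computation from the lemma above with $y_3v_6$, $y_1v_4$, $y_5v_2$ replaced by their images under the natural wedge map on the face-ring models. The only point requiring attention along either route is the bookkeeping that all the chosen exponents $j_i$ are even, so that the real multiwedge coincides with a complex moment-angle manifold and Theorem $\ref{limonch}$ is applicable; apart from that there is no real obstacle, since nonzero Massey products are preserved both by the passage to the complex moment-angle manifold and by arbitrary simplicial multiwedges.
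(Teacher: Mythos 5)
Your proposal is correct and matches the paper: the paper's own remark notes that the lemma follows immediately from Theorem \ref{limonch} via the identification $\widetilde{\mathcal{R}}_P^{(j_1,\ldots,j_9)}=\widetilde{\mathcal{Z}}_P^{(j_1/2,\ldots,j_9/2)}$ (valid since all $j_i$ are even), and the explicit proof it then writes out is exactly your alternative route, transporting the triple $\langle y_3v_6,\,y_1v_4,\,y_5v_2\rangle$ through the wedge using Lemma \ref{interfacetswedge}. Both of your arguments are sound and no gap remains.
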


\vspace*{.05in}

\textbf{Remark.} The lemma above follows immediately from Theorem $\ref{limonch}$,  but we also give an explicit proof. The reader can skip the proof of the lemma.

\begin{proof}

Note that $\widetilde{\mathcal{R}}_P^{(j_1,\ldots, j_9)} = \widetilde{\mathcal{Z}}_P^{(\frac{j_1}{2},\ldots, \frac{j_9}{2})}$. So, we study the topology of complex moment-angle manifold $\widetilde{\mathcal{Z}}_P^{(\frac{j_1}{2},\ldots, \frac{j_9}{2})}$. We denote facets of $P^{(\frac{j_1}{2},\ldots,\frac{j_9}{2})}$ and the corresponding variables in $[R(P^{(j_1,\ldots, j_9)}),d]$ by $v_{1,1},\ldots, v_{1,\frac{j_1}{2}}, \ldots, v_{9, 1}, \ldots, v_{9, \frac{j_9}{2}}$. Lemma $\ref{interfacetswedge}$ says that
\begin{equation*}
\begin{gathered}
d(y_{3,1}v_{3,2}\ldots v_{3,2k}v_{6,1}\ldots v_{6,2k}) = v_{3,1}v_{3,2}\ldots v_{3,2k}v_{6,1}\ldots v_{6,2k} = 0,\\
d(y_{1,1}v_{1,2}\ldots v_{1,2k}v_{4,1}\ldots v_{4,2k}) = v_{1,1}v_{1,2}\ldots v_{1,2k}v_{4,1}\ldots v_{4,2k} = 0, \\
d(y_{5,1}v_{5,2}\ldots v_{5,2k}v_{2,1}\ldots v_{2,2k}) = v_{5,1}v_{5,2}\ldots v_{5,2k}v_{2,1}\ldots v_{2,2k} = 0.
\end{gathered}
\end{equation*}
This means that we have three classes $\alpha', \beta', \delta' \in H^{16k-1}(\widetilde{\mathcal{R}}_P^{(j_1,\ldots, j_9)}, \mathbb{Q})$ represented in $[R(P^{(j_1,\ldots, j_9)}),d]$ by elements
\begin{equation*}
\begin{gathered}
\alpha' = y_{3,1}v_{3,2}\ldots v_{3,2k}v_{6,1}\ldots v_{6,2k}, \quad \beta' = y_{1,1}v_{1,2}\ldots v_{1,2k}v_{4,1}\ldots v_{4,2k}, \\
\delta' =  y_{5,1}v_{5,2}\ldots v_{5,2k}v_{2,1}\ldots v_{2,2k},
\end{gathered}
\end{equation*}
respectively. Since $v_4 \cap v_6 = v_1 \cap v_2 = v_5 \cap v_2 = \emptyset$, from Lemma $\ref{interfacetswedge}$ we have
\begin{equation*}
\begin{gathered}
\alpha'\beta' = 0, \\
\beta'\delta' = d(y_{1,1}y_{5,1}y_{4,1}v_{1,2}\ldots v_{1, 2k}v_{5,2}\ldots v_{5, 2k}v_{4,2}\ldots v_{4, 2k}v_{2,1}\ldots v_{2,2k}) = d(h'), \\
h' = y_{1,1}y_{5,1}y_{4,1}v_{1,2}\ldots v_{1, 2k}v_{5,2}\ldots v_{5, 2k}v_{4,2}\ldots v_{4, 2k}v_{2,1}\ldots v_{2,2k}.
\end{gathered}
\end{equation*}

Therefore, the triple Massey product $<\alpha', \beta', \delta'> \in H^{48k-4}(\mathcal{Z}_P, \mathbb{Q})$ is defined. We have
\begin{equation*}
\begin{gathered}
<\alpha', \beta', \delta'> = \alpha' h' + 0\delta' \neq 0 \\
\end{gathered}
\end{equation*}
We get that $<\alpha', \beta', \delta'>$ is nontrivial.

\end{proof}

Denote by $I$ the segment $[0,1]$. Let us consider a polytope $Q = P \times I$ given by inequalities
\begin{equation*}
Q = \left\{
 \begin{array}{l}
x_1 + 2 \geqslant 2 \;\;\; x_2 + 2 \geqslant 0 \;\;\; x_3 + 2 \geqslant 0 \\
-x_1 + 2 \geqslant 2 \;\;\; -x_2 + 2 \geqslant 0 \;\;\; -x_3 + 2 \geqslant 0 \\
x_1 + x_2 + 3 \geqslant 0 \\
-x_1 - x_3 + 3 \geqslant 0 \\
-x_2 + x_3 + 3 \geqslant 0 \\
x_4 + \frac{1}{2k} \geqslant , \;\;\; -x_4 + \frac{1}{2k} \leqslant 0
 \end{array}
\right.
\end{equation*}
We apply multiwedge operation to $Q$ and consider $Q^{(j_1,\ldots,j_9,1,1)}$. The real moment angle manifold associated to $Q^{(j_1,\ldots,j_9,1,1)}$ is given by
\begin{equation}\label{systemformassey3}
\begin{gathered}
\widetilde{\mathcal{R}_Q}^{(j_1,\ldots,j_9,1,1)} = \left\{
 \begin{array}{l}
|\overrightarrow{u}_1|^2 + |\overrightarrow{u}_4|^2 = 4 \\
|\overrightarrow{u}_2|^2 + |\overrightarrow{u}_5|^2 = 4 \\
|\overrightarrow{u}_3|^2 + |\overrightarrow{u}_6|^2 = 4 \\
|\overrightarrow{u}_1|^2 + |\overrightarrow{u}_2|^2 - |\overrightarrow{u}_7|^2 = 1 \\
|\overrightarrow{u}_1|^2 + |\overrightarrow{u}_3|^2 + |\overrightarrow{u}_8|^2 = 7 \\
|\overrightarrow{u}_2|^2 - |\overrightarrow{u}_3|^2 - |\overrightarrow{u}_9|^2 = -3 \\
|\overrightarrow{u}_{10}|^2 + |\overrightarrow{u}_{11}|^2 = \frac{1}{k}
 \end{array}
\right.
\\
j_1=\ldots j_6=4k, \;\; j_7 = 6k, \;\; j_8 = 6k, \;\; j_9 = 6k, \;\; j_{10}=j_{11} = 1
\end{gathered}
\end{equation}
It follows from Lemma $\ref{multiwedgedelzant}$ that $P^{(j_1,\ldots,j_9)}$ is Delzant. This yields that $Q$ is Delzant. Since conditions of Lemma $\ref{equivmon}$ are satisfied, we get that $Q$ is Fano. Note that the last equation of system $(\ref{systemformassey3})$ is independent of the other equations. This means that $ \widetilde{\mathcal{R}}_Q^{(j_1,\ldots,j_9,1,1)} = \widetilde{\mathcal{R}}_P^{(j_1,\ldots,j_9)} \times S^1$. Taking linear combinations of the equations of system $(\ref{systemformassey3})$ we can write the system in the form $(\ref{eqmain})$. Therefore, there is monotone Lagrangian embedding of
\begin{equation*}
L = \widetilde{\mathcal{R}}_Q^{(j_1,\ldots,j_9,1,1)}/\mathbb{Z}_2 \times T^6 = (\widetilde{\mathcal{R}}_P^{(j_1,\ldots,j_9)} \times S^1)/\mathbb{Z}_2 \times T^6.
\end{equation*}
into $\mathbb{C}P^{42k +1}$. Since the action of $\mathbb{Z}_2$ is free on $S^1$, we get that $\widetilde{\mathcal{R}}_Q^{(j_1,\ldots,j_9,1,1)}$ fibers over $S^1/\mathbb{Z}_2$. By Lemma $\ref{trivialfibration}$, the action of $\mathbb{Z}_2$ on $\widetilde{\mathcal{R}}_P^{(j_1,\ldots,j_9)} $ is isotopic to identity. Hence, the fibration over $S^1$ is trivial and $(\widetilde{\mathcal{R}}_P^{(j_1,\ldots,j_9)} \times S^1)/\mathbb{Z}_2$ is diffeomorphic to $\widetilde{\mathcal{R}}_P^{(j_1,\ldots,j_9)} \times S^1$. We already know that $\widetilde{\mathcal{R}}_P^{(j_1,\ldots,j_9)}$ has nontrivial Massey product. Since $L$ is diffeomorphic to
\begin{equation*}
\widetilde{\mathcal{R}}_P^{(j_1,\ldots,j_9)} \times  T^7,
\end{equation*}
we see that $L$ has nontrivial Massey product.

\subsection{Proof of Theorem $\ref{wide-narrow}$}

In this section we consider the standard $2-$simplex and cut off all its vertices. Then, we use Theorem $\ref{vertexcutoff}$ to study the corresponding complex moment-angle manifold.
\\

Let us consider a $6-$gon defined by inequalities
\begin{equation*}
\begin{gathered}
P = \left\{
 \begin{array}{l}
x_1 + 1 \geqslant 0 \quad x_2 + 1 \geqslant 0\\
-x_2 + 1 \geqslant 0 \quad -x_2+1 \geqslant 0 \\
x_1+x_2 + 1 \geqslant 0 \quad -x_1-x_2 + 1 \geqslant 0
 \end{array}
\right. \\
\end{gathered}
\end{equation*}
Denote the polytope above by $P$. The real moment-angle manifold $\widetilde{\mathcal{R}}_P$ associated to $P$ is defined by a system
\begin{equation*}
\widetilde{\mathcal{R}}_P = \left\{
 \begin{array}{l}
u_1^2  +  u_3^2 = 2\\
u_2^2 + u_4^2  =   2 \\
u_1^2 + u_2^2 + u_5^2 = 3 \\
u_1^2 + u_2^2 - u_6^2  =   1
 \end{array}
 \right.
\end{equation*}
Let us apply the simplicial multiwedge operation (see Section $\ref{wedge}$) to $P$. We consider $P^{(2,\ldots,2)}$ and the corresponding moment-angle manifolds
\begin{equation*}
\widetilde{\mathcal{R}}_P^{(2,\ldots,2)} = \widetilde{\mathcal{Z}}_P = \left\{
 \begin{array}{l}
|z_1|^2 + |z_3|^2 = 2 \\
|z_2|^2 + |z_4|^2 = 2 \\
|z_1|^2 + |z_2|^2 + |z_5|^2 = 3\\
|z_1|^2 + |z_2|^2 - |z_6|^2 = 1
 \end{array}
 \right.
\end{equation*}
Note that $\widetilde{\mathcal{Z}}_P$ is associated to the polytope $P$. Let us consider a polytope
\begin{equation*}
Q = P^{(k, k, k, k, k, k)},
\end{equation*}
where $k$ is an arbitrary positive integer. Denote by $\widetilde{\mathcal{Z}}_Q$ the complex moment-angle manifold associated to $Q$. We have $\widetilde{\mathcal{R}}^{(2,\ldots,2)}_Q = \widetilde{\mathcal{Z}}_Q$, i.e. $\widetilde{\mathcal{R}}^{(2,\ldots,2)}_Q$ is associated to the polytope
\begin{equation*}
Q^{(2,\ldots,2)} = P^{(2k, 2k, 2k, 2k, 2k, 2k)}.
\end{equation*}
We repeat $j_i$ times the variable $z_i$. By $\overrightarrow{z}_i$ denote the vector $(z_{i,1},\ldots, z_{i, j_i})$ and by $|\overrightarrow{z}_i|^2 = z_{i,1}^2 + \ldots + z_{i,j_i}^2$ the squared length of the vector. We have
\begin{equation}\label{widenarrow1}
\begin{gathered}
 \widetilde{\mathcal{Z}}_Q = \left\{
 \begin{array}{l}
|\overrightarrow{z}_1|^2 + |\overrightarrow{z}_3|^2 = 2 \\
|\overrightarrow{z}_2|^2 + |\overrightarrow{z}_4|^2 = 2 \\
|\overrightarrow{z}_1|^2 + |\overrightarrow{z}_2|^2 + |\overrightarrow{z}_5|^2 = 3\\
|\overrightarrow{z}_1|^2 + |\overrightarrow{z}_2|^2 - |\overrightarrow{z}_6|^2 = 1
 \end{array}
 \right.
 \\
 j_1 = \ldots = j_6 = k.
\end{gathered}
\end{equation}

\begin{lemma}\label{consumquantum}
The rings
\begin{equation*}
\begin{gathered}
H^{*}(\widetilde{\mathcal{Z}}_Q^, \mathbb{Z}_2), \\
H^{*}(\#9(S^{4k-1} \times S^{8k-3}) \#8(S^{6k-2} \times S^{6k-2}), \; \mathbb{Z}_2)
\end{gathered}
\end{equation*}
are isomorphic.
\end{lemma}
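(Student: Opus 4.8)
The plan is to recognize $\widetilde{\mathcal Z}_Q$ as a multiwedge of a moment‑angle manifold whose cohomology ring is already known, and then to transport that ring across the multiwedge operation.

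\emph{Step 1: identify $\widetilde{\mathcal Z}_P$.} The hexagon $P$ is combinatorially the $2$‑simplex with all three of its vertices cut off, and the homeomorphism type of a moment‑angle manifold depends only on the combinatorial type of the polytope (Section~\ref{mainconstructionpol}). Hence Theorem~\ref{simplexcutoff} applies with $m=2$, $\ell=3$ and gives
\begin{equation*}
\widetilde{\mathcal Z}_P \;=\; 6\bigl(S^{3}\times S^{5}\bigr)\ \#\ 8\bigl(S^{4}\times S^{4}\bigr)\ \#\ 3\bigl(S^{5}\times S^{3}\bigr)\;=\;\#9\bigl(S^{3}\times S^{5}\bigr)\,\#\,8\bigl(S^{4}\times S^{4}\bigr),
\end{equation*}
which is exactly the target manifold for $k=1$. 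In particular $H^{*}(\widetilde{\mathcal Z}_P,\mathbb Z_2)$ is the graded ring on generators $a_1,\dots,a_9$ in degree $3$, $b_1,\dots,b_9$ in degree $5$ and $c_1,\dots,c_8,c_1',\dots,c_8'$ in degree $4$, in which every product of two generators vanishes except $a_ib_i=c_jc_j'=\mu$ (the generator of $H^{8}$), and $\mu$ annihilates everything of positive degree.

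\emph{Step 2: recognize the multiwedge and conclude.} The system~(\ref{widenarrow1}) defining $\widetilde{\mathcal Z}_Q$ is obtained from the system defining $\widetilde{\mathcal Z}_P$ by replacing each $|z_i|^{2}$ with $|\overrightarrow{z}_i|^{2}=\sum_{s=1}^{k}|z_{i,s}|^{2}$; that is, $\widetilde{\mathcal Z}_Q=\widetilde{\mathcal Z}_P^{(k,k,k,k,k,k)}=\widetilde{\mathcal R}_P^{(2k,\dots,2k)}$. Since $\widetilde{\mathcal R}_P^{(2,\dots,2)}=\widetilde{\mathcal Z}_P$, Theorem~\ref{cohomisotopic} (with $j_1=\dots=j_6=k$) yields an isomorphism of ungraded rings $H^{*}(\widetilde{\mathcal Z}_Q,\mathbb Z)\cong H^{*}(\widetilde{\mathcal Z}_P,\mathbb Z)$; all groups in sight are free, being the cohomology of connected sums of products of spheres, so reducing modulo $2$ gives $H^{*}(\widetilde{\mathcal Z}_Q,\mathbb Z_2)\cong H^{*}(\widetilde{\mathcal Z}_P,\mathbb Z_2)$ as ungraded rings. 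Finally, the ring described in Step~1 has the same presentation as $H^{*}\bigl(\#9(S^{4k-1}\times S^{8k-3})\,\#\,8(S^{6k-2}\times S^{6k-2}),\mathbb Z_2\bigr)$ --- nine $a$–$b$ pairs and eight $c$–$c'$ pairs, all products of generators zero except the pairings, which all coincide with the socle class --- so the two rings are isomorphic, which is the assertion of the lemma.

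\emph{Strengthening to a graded isomorphism, and where the work lies.} If a grading‑preserving isomorphism is wanted, one tracks cohomological degrees through the $R(Q)$‑model of Theorem~\ref{torcohom} (here $Q$, hence $P$, is irredundant): a class of $\widetilde{\mathcal Z}_P$ represented by a monomial $y_{i_1}\cdots y_{i_s}v_{i_{s+1}}\cdots v_{i_{s+t}}$, supported on $p=s+t$ facets and of cohomological degree $q=s+2t$, lifts under the $(k,\dots,k)$‑multiwedge, via Lemma~\ref{interfacetswedge}, to a class of degree $q+2(k-1)p$. Applied to the three families of generators of $\widetilde{\mathcal Z}_P$, namely the pairs $(q,p)=(3,2),(4,3),(5,4)$, this produces degrees $4k-1$, $6k-2$, $8k-3$, which match the sphere dimensions of the target, while Lemma~\ref{interfacetswedge} reads off exactly the same multiplicative relations as in the case $k=1$. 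The only real obstacle is this degree bookkeeping together with the verification that the listed families indeed exhaust the cohomology of the hexagonal $\widetilde{\mathcal Z}_P$; everything else is a direct application of Theorems~\ref{simplexcutoff} and~\ref{cohomisotopic}.
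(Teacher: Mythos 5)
Your proposal is correct and follows essentially the same route as the paper: identify $\widetilde{\mathcal Z}_P$ as $\#9(S^{3}\times S^{5})\,\#\,8(S^{4}\times S^{4})$ via the vertex-cut theorems, invoke Theorem~\ref{cohomisotopic} for the ungraded ring isomorphism $H^{*}(\widetilde{\mathcal Z}_Q)\cong H^{*}(\widetilde{\mathcal Z}_P)$, and then track degrees of the generating monomials through the $R(\cdot)$-model using Lemma~\ref{interfacetswedge} (your uniform formula $q+2(k-1)p$ reproduces exactly the degrees $4k-1$, $6k-2$, $8k-3$ that the paper obtains by listing the lifted monomials explicitly, the paper handling the degree-$5$ family by Poincar\'e duality instead). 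The only cosmetic difference is that you separate the ungraded statement, which already suffices for the lemma as literally phrased, from the graded refinement that the subsequent spectral-sequence argument actually uses.
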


\begin{proof}
Theorem $\ref{cohomisotopic}$ says that $H^{*}(\widetilde{\mathcal{Z}}_Q, \mathbb{Z})$ and $H^{*}(\widetilde{\mathcal{Z}}_P, \mathbb{Z})$ are isomorphic as ungraded rings. Denote by $h$ the isomorphism. It follows from Theorem $\ref{vertexcutoff}$) that
\begin{equation*}
\widetilde{\mathcal{Z}}_P = \#9(S^{3} \times S^{5}) \#8(S^{4} \times S^{4})
\end{equation*}
and denote generators of $H^{*}(\widetilde{\mathcal{Z}}_P, \; \mathbb{Z})$ by
\begin{equation*}
s^3_i, \;\; s_i^5, \;\; s^4_j, \;\; \hat{s}_j^4     \quad i=1,\ldots, 9, \quad j=1,\ldots, 8.
\end{equation*}
To study the ring $H^{*}(\widetilde{\mathcal{Z}}_Q, \mathbb{Z})$ it is enough to find  $h(s^3_i), h(s^4_j)$.

Let us study the face ring of $P$ and use Theorem $\ref{torcohom}$. We denote by $v_1,...,v_6$ the variables of $[R(P), d]$ coming from the edges of $6-$gon. Denote the corresponding exterior variables of $[R(P), d]$ by $y_1, \ldots, y_6$ (see Figure $\ref{fig:6gon}$).

\begin{figure}[h]
\centering
\includegraphics[width=0.5\linewidth]{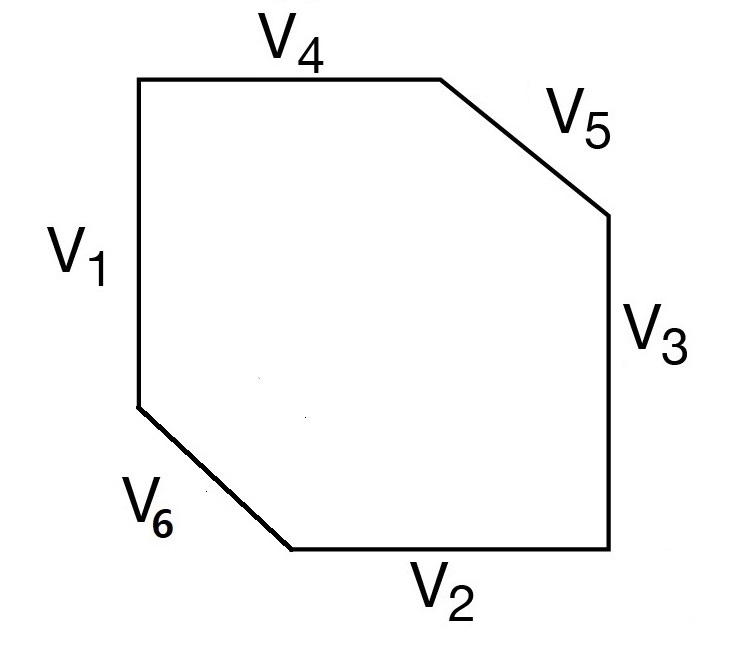}
\caption{6-gon}
  \label{fig:6gon}
\end{figure}

Since $v_1 \cap v_5 = \emptyset$, we see that $d(y_1v_5) = v_1v_5 = 0$. Therefore, $y_1v_5$ represent nontrivial element. Analogously,
\begin{equation*}
\begin{gathered}
y_1v_{5}, \;\; y_1v_{3}, \;\; y_1v_{2},  \;\; y_4v_{3}, \;\; y_4v_{2}, \;\; y_4v_{6}, \\
y_5v_{2}, \;\; y_5v_{6}, \;\; y_3v_{6} \quad  \in H^{3}(\widetilde{\mathcal{Z}}_P, \mathbb{Z})
\end{gathered}
\end{equation*}
represent nontrivial elements in $H^{3}(\widetilde{\mathcal{Z}}_P, \mathbb{Z})$. In the same way
\begin{equation*}
\begin{gathered}
y_1y_4v_3, \;\; y_1y_4v_2, \;\; y_1y_5v_2, \;\; y_4y_5v_2, \\
y_4y_5v_6, \;\; y_4y_3v_6, \;\; y_5y_3v_6, \;\; y_3y_2v_1 \in H^{4}(\widetilde{\mathcal{Z}}_P, \mathbb{Z})
\end{gathered}
\end{equation*}
represent nontrivial elements in $H^{4}(\widetilde{\mathcal{Z}}_P, \mathbb{Z})$. For example, since $v_1 \cap v_3 = v_3 \cap v_4 = \emptyset$, we have $d(y_1y_4v_3) = y_4v_1v_3 - y_1v_4v_3 = 0$.

Now, let us study the face ring of $Q = P^{(k, k, k, k, k, k)}$. We denote the new variables corresponding to the new facets by $v_{i,1}, \ldots, v_{i,k}$. New exterior variables are denoted by $y_{i,1}, \ldots, y_{i,k}$. Theorem $\ref{interfacetswedge}$ says that
\begin{equation*}
d(y_{1,1}v_{1,2} \ldots v_{1,k}v_{5,1}\ldots v_{5,k}) = v_{1,1} \ldots v_{1,k}v_{5,1}\ldots v_{5,k} =0
\end{equation*}
because $v_{1,1} \cap \ldots \cap v_{1,2k} \cap v_{3,1}\ldots v_{3,2k} = \emptyset$. Analogously, we see that

\begin{equation*}
\begin{gathered}
y_{1,1}v_{1,2}\ldots v_{1,k}v_{5,1}\ldots v_{5,k}, \;\; y_{1,1}v_{1,2}\ldots v_{1,k}v_{3,1}\ldots v_{3,k}, \;\; y_{1,1}v_{1,2}\ldots v_{1,k}v_{2,1}\ldots v_{2,k}, \\
y_{4,1}v_{4,2}\ldots v_{4,k}v_{3,1}\ldots v_{3,k}, \;\; y_{4,1}v_{4,2}\ldots v_{4,k}v_{2,1}\ldots v_{2,k}, \;\; y_{4,1}v_{4,2}\ldots v_{4,k}v_{6,1}\ldots v_{6,k}, \\
y_{5,1}v_{5,2}\ldots v_{5,k}v_{2,1}\ldots v_{2,k}, \;\; y_{5,1}v_{5,2}\ldots v_{5,k}v_{6,1}\ldots v_{6,k}, \;\; y_{3,1}v_{3,2}\ldots v_{3,2k}v_{6,1}\ldots v_{6,k} \\
  \in \;\;  H^{4k-1}(\widetilde{\mathcal{Z}}_Q, \mathbb{Z})
\end{gathered}
\end{equation*}
So, we have
\begin{equation*}
\begin{gathered}
h(s^3_i) \in H^{4k-1}(\widetilde{\mathcal{Z}}_Q, \mathbb{Z}).
\end{gathered}
\end{equation*}
Arguing as before, we show that
\begin{equation*}
\begin{gathered}
h(s^4_j) \in H^{6k-2}(\widetilde{\mathcal{Z}}_Q, \mathbb{Z}).
\end{gathered}
\end{equation*}
As a result, we obtain that rings
\begin{equation*}
H^{*}(\widetilde{\mathcal{Z}}_Q, \; \mathbb{Z}), \quad H^{*}(\#9(S^{4k-1} \times S^{8k-3}) \#8(S^{6k-2} \times S^{6k-2}), \; \mathbb{Z})
\end{equation*}
are isomorphic as ungraded rings. Since there is no torsion, we see that
\begin{equation*}
H^{*}(\widetilde{\mathcal{Z}}_Q, \; \mathbb{Z}_2), \quad H^{*}(\#9(S^{4k-1} \times S^{8k-3}) \#8(S^{6k-2} \times S^{6k-2}), \; \mathbb{Z}_2)
\end{equation*}
are isomorphic as ungraded rings.
\end{proof}

As we already noticed
\begin{equation*}
 \widetilde{\mathcal{Z}}_Q = \widetilde{\mathcal{R}}^{(2,\ldots,2)}_Q,
\end{equation*}
i.e. $\widetilde{\mathcal{R}}_P^{(2k,\ldots,2k)} $ is associated to the polytope $P^{(2k,\ldots,2k)}= Q^{(2,\ldots,2)}$. We see that $\widetilde{\mathcal{R}}_Q^{(2,\ldots,2)}$ is given by the system
\begin{equation}\label{widenarrow2}
\begin{gathered}
\widetilde{\mathcal{R}}_Q^{(2,\ldots,2)} = \left\{
 \begin{array}{l}
|\overrightarrow{u}_2|^2 + |\overrightarrow{u}_3|^2 = 2 \\
|\overrightarrow{u}_2|^2 + |\overrightarrow{u}_4|^2 = 2 \\
|\overrightarrow{u}_1|^2 + |\overrightarrow{u_2}|^2 + |\overrightarrow{z}_5|^2 = 3\\
|\overrightarrow{u}_1|^2 + |\overrightarrow{u}_2|^2 - |\overrightarrow{u}_6|^2 = 1
 \end{array}
 \right.
 \\
\overrightarrow{u}_r = (u_{r,1}, \ldots, u_{r,2k}) \in \mathbb{R}^{2k}\\
|\overrightarrow{u}_r|^2 = u_{r,1}^2 + \ldots + u_{r,2k}^2.
\end{gathered}
\end{equation}
Let us write the system above in the following form:

\begin{equation}\label{widenarrow3}
\begin{gathered}
\widetilde{\mathcal{Z}}_Q = \widetilde{R}_Q^{(2,\ldots,2)} = \left\{
 \begin{array}{l}
|\overrightarrow{u}_1|^2 + |\overrightarrow{u}_2|^2 +|\overrightarrow{u}_3|^2   +  |\overrightarrow{u}_4|^2 + |\overrightarrow{u}_5|^2 + |\overrightarrow{u}_6|^2   = 6\\
|\overrightarrow{u}_2|^2 + |\overrightarrow{u}_4|^2 = 2 \\
|\overrightarrow{u}_1|^2 + |\overrightarrow{u_2}|^2 + |\overrightarrow{z}_5|^2 = 3\\
|\overrightarrow{u}_1|^2 + |\overrightarrow{u}_2|^2 - |\overrightarrow{u}_6|^2 = 1
 \end{array}
 \right.
\end{gathered}
\end{equation}

We see that system $(\ref{widenarrow3})$ can be written in the form $(\ref{eqmain})$. From Section $\ref{maincostruction}$ we know that there exists a Lagrangian $L \subset \mathbb{C}P^{12k-1}$ associated to system $\ref{widenarrow3}$ (or equivalently to the polytope $P^{(2k,\ldots,2k)}$). From Lemma $\ref{trivialfibration}$ we get that
\begin{equation*}
L = \mathcal{R}_Q^{(2,\ldots, 2)} \times T^3, \quad \;\; \mathcal{R}_Q^{(2,\ldots, 2)} = \widetilde{\mathcal{R}}_Q^{(2,\ldots,2)}/\mathbb{Z}_2.
\end{equation*}

\begin{lemma}
The Lagrangian $L$ is embedded and monotone. The minimal Maslov number $N_L$ is equal to $2k$.
\end{lemma}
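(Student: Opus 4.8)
The plan is to verify the two assertions separately, using the machinery already assembled in the excerpt. For embeddedness and monotonicity, I would apply Theorem~\ref{monotonecondition} directly. I must check that the polytope $P^{(2k,\ldots,2k)}$ (equivalently, the system $(\ref{widenarrow3})$) is Delzant and Fano and that the normal vectors of its facets sum to zero. The polytope $P$ (the hexagon obtained by truncating all vertices of the $2$-simplex) is Delzant; by Lemma~\ref{multiwedgedelzant}, $Q = P^{(k,\ldots,k)}$ and $Q^{(2,\ldots,2)} = P^{(2k,\ldots,2k)}$ remain Delzant. For the Fano property I would invoke Lemma~\ref{equivmon}: it suffices to exhibit a constant $C$ with $\widetilde{\gamma}_1 + \ldots + \widetilde{\gamma}_n = C\delta$, i.e. that the column sum of the coefficient matrix of $(\ref{widenarrow3})$ equals the right-hand side scaled by $C$. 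Reading off the four equations of $(\ref{widenarrow3})$, the right-hand side is $(6,2,3,1)$; a short check shows the column sums are proportional to this vector, and the first equation $\sum_r |\overrightarrow{u}_r|^2 = 6$ together with the form $(\ref{eqmain})$ guarantees $a_1 + \ldots + a_n = 0$ (this is exactly Lemma~\ref{sumzero}). Hence Theorem~\ref{monotonecondition} yields an embedded monotone Lagrangian $L \subset \mathbb{C}P^{12k-1}$; the ambient dimension $12k-1$ is just the total number of variables $6\cdot 2k$ in $(\ref{widenarrow3})$, minus one.

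For the minimal Maslov number I would use Lemma~\ref{simplemimasnumber} together with Theorem~\ref{minmaslovtheorem}. First I note that after taking linear combinations, $(\ref{widenarrow3})$ is of the shape $(\ref{simplemaslov})$, with $\widetilde{\Lambda} = \widetilde{\Lambda}_u$ for all $u$ by Delzantness, and $\widetilde{\gamma}_1 + \ldots + \widetilde{\gamma}_n = C\delta$ with the same $C$ and $\delta = (6,2,3,1)$ as above. I must also check the hypotheses $\pi_1(\widetilde{\mathcal{R}}_P^{(2k,\ldots,2k)}) = 0$ and $\pi_1(L) = \pi_1(\mathcal{R}) \oplus \pi_1(T^{3})$: the first follows since $\widetilde{\mathcal{R}}_P^{(2k,\ldots,2k)} = \widetilde{\mathcal{Z}}_Q$ is a complex moment-angle manifold of a polytope with $H^1 = 0$ (Theorem~\ref{torcohom}, as $Q$ is irredundant), and the second follows from Lemma~\ref{fundgrouplagr} since all the multiwedge multiplicities $2k \geq 2$ and $\gamma_1 + \ldots + \gamma_n = 0$. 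Then Lemma~\ref{simplemimasnumber} gives $N_{\widetilde{L}} = \gcd(C\delta_1,\ldots,C\delta_{n-m}) = C\gcd(6,2,3,1)\cdot(\text{scaling})$; since $\gcd(6,2,3,1) = 1$ I would need to track the precise normalization so that the answer comes out $2k$. Concretely, the constant $C$ produced by the multiwedge is $2k$ (compare the analogous computation $C = 2k$ in the proof of Theorem~\ref{masseylagr}), and after rescaling the $\delta_i$ to be coprime integers one gets $N_{\widetilde{L}} = 2k$. Finally $N_L = N_{\widetilde{L}}$ by Theorem~\ref{minmaslovtheorem}.

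The main obstacle I anticipate is the bookkeeping in the Maslov computation: one has to pass from the raw system $(\ref{widenarrow1})$/$(\ref{widenarrow3})$ through the right linear combinations into the normal form $(\ref{simplemaslov})$, keep careful track of how the multiwedge by $2k$ scales the right-hand side, and confirm that the resulting coprime reduction of $\delta$ together with the factor $C = 2k$ genuinely yields $\gcd = 2k$ rather than some divisor or multiple of it. The topological hypotheses feeding Theorem~\ref{minmaslovtheorem} (connectedness, $\pi_1(\widetilde{\mathcal{R}}) = 0$, splitting of $\pi_1(L)$) are comparatively routine given Theorem~\ref{torcohom}, Lemma~\ref{fundgrouplagr}, and Lemma~\ref{trivialfibration}, so I would state them quickly and concentrate the write-up on the Maslov index normalization. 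Everything else — Delzant, Fano, embeddedness — is a direct citation of Lemmas~\ref{multiwedgedelzant},~\ref{sumzero},~\ref{equivmon} and Theorem~\ref{monotonecondition}.
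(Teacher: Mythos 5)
Your proposal is correct and follows essentially the same route as the paper: Delzantness via Lemma \ref{multiwedgedelzant}, the Fano property via Lemma \ref{equivmon} with $C=2k$, embeddedness and monotonicity via Theorem \ref{monotonecondition}, and the Maslov number via Lemma \ref{simplemimasnumber} together with Theorem \ref{minmaslovtheorem}. The normalization you were worried about works out directly — the column sums of $(\ref{widenarrow3})$ equal $2k\cdot(6,2,3,1)$, so $N_{\widetilde L}=\gcd(12k,4k,6k,2k)=2k$ with no further rescaling needed.
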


\begin{proof}
The polytope $P$ is Delzant and Lemma $\ref{multiwedgedelzant}$ says that $P^{(2k,\ldots, 2k)}=Q^{(2,\ldots,2)}$ is Delzant. Moreover, the conditions of Lemma $\ref{equivmon}$ are satisfied for $C = 2k$. Therefore  $P^{(2k,\ldots, 2k)}$ is  Fano. It follows from Theorem $\ref{monotonecondition}$ that  $L$ is embedded and monotone. From Lemma $\ref{simplemimasnumber}$ and Theorem $\ref{minmaslovtheorem}$ we get that $N_L = 2k$.
\end{proof}

Dimension of the polytope $P^{(2k,\ldots,2k)}$ equals $12k$ and dimension of $Q = P^{(k,\ldots,k)}$ is equal to 6k. It follows from Lemma $\ref{helplemma}$ that $L$ is spin.

Let $G$ be a ring in which $2$ is invertible. Lemma $\ref{vanishquantum}$ says that $QH(L, G[T, T^{-1}]) = 0$.
\\

Let us prove that $QH(L, \mathbb{Z}_2[T, T^{-1}]) = 0$. Recall that
\begin{equation*}
 m(Q) = min\{\ell \in \mathbb{N} \; | \; v_{i_1}\cap \ldots \cap v_{i_\ell} = \emptyset  \}.
\end{equation*}
We see that $m(P) = 2$. It follows from Theorem $\ref{interfacetswedge}$ that
\begin{equation*}
m(Q) = m(P^{(k,\ldots,k)}) = 2k.
\end{equation*}
For example, $v_{1,1}\cap \ldots \cap v_{1,k} \cap v_{3,1} \cap \ldots \cap v_{3,k} = \emptyset$.  By Lemma $\ref{consumquantum}$, $H^{*}(\widetilde{\mathcal{Z}}_Q, \mathbb{Z}_2)$ is isomorphic to
\begin{equation*}
H^{*}(\#9(S^{4k-1} \times S^{8k-3}) \#8(S^{6k-2} \times S^{6k-2}), \; \mathbb{Z}_2).
\end{equation*}
We have the Cartan-Leray spectral sequence
\begin{equation*}
\begin{gathered}
E_2^{p,q} = H^{p}(\mathbb{R}P^{\infty}, H^{q}(\widetilde{\mathcal{Z}}_Q, \mathbb{Z}_2)) \Rightarrow H^{*}(\mathcal{Z}_Q, \mathbb{Z}_2),\\
\mathcal{Z}_Q = \widetilde{\mathcal{Z}}_Q/\mathbb{Z}_2 = \widetilde{\mathcal{R}}_Q^{(2,\ldots,2)}/\mathbb{Z}_2.
\end{gathered}
\end{equation*}
The second page of the spectral sequence is shown in figure $\ref{fig:s3}$.

\begin{figure}[h]
\centering
\includegraphics[width=0.5\linewidth]{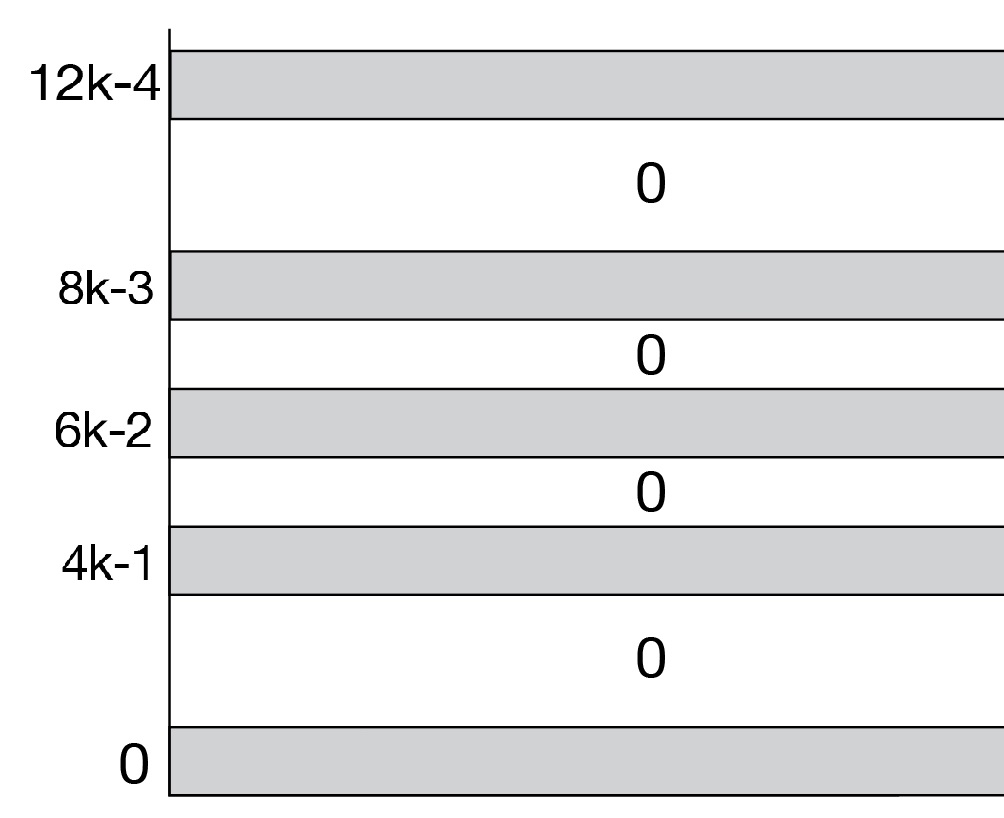}
\caption{$E_2$ page}
  \label{fig:s3}
\end{figure}
Let us denote
\\
\\
The nonzero element of $E^{0, 12k-4}_2$ by $h$;
\\
Generators of $E^{0, 8k-3}_2$ by $s^{8k-3}_i$;
\\
Generators of $E^{0,6k-1}_2$ by $s^{6k-2}_j$ and $\hat{s}^{6k-2}_j$, where $s_j^{6k-2}$ is the Poincare dual to  $\widehat{s}_j^{6k-2}$;
\\
Generators of $E^{0, 4k-1}_2$ by $s^{4k-1}_i$;
\\
Generator of $H^{*}(\mathbb{R}P^{\infty}, \mathbb{Z}_2)$ by $\alpha$;
\\
Assume that $s^{4k-1}_i$ is Poicare dual to $s^{8k-3}_i$.

The first nontrivial differential is $\partial_{2k}$ and $E_2 = E_{2k}$. We have
\begin{equation*}
\begin{gathered}
\partial_{2k}(s_i^{8k-3}) = \alpha^{2k}(\sum\limits_{r=1}^{8}a_{i,r}s_r^{6k-2} + \sum\limits_{r=1}^{8}b_{i,r}\widehat{s}_r^{6k-2}), \\
\partial_{2k}(s_j^{6k-2}) = \alpha^{2k}\sum\limits_{r=1}^{9}c_{j,r}s_r^{4k-1}, \quad \partial_{2k-1}(\widehat{s}_j^{6k-2}) = \alpha^{2k}\sum\limits_{r=1}^{9}\widehat{c}_{j,r}s_r^{4k-1}, \\
a_{i,r}, \; b_{i,r}, \; c_{j,r}, \; \widehat{c}_{j,r} \; \in \mathbb{Z}_2.
\end{gathered}
\end{equation*}
In other words,
\begin{equation*}
\begin{gathered}
\partial_{2k}\begin{pmatrix}
s^{8k-3}_1 \\
.\\
.\\
.\\
s^{8k-3}_9
\end{pmatrix}
=
\begin{pmatrix}
A & B
\end{pmatrix}
\begin{pmatrix}
\alpha^{2k}s^{6k-2}_1 \\
.\\
.\\
.\\
\alpha^{2k}s^{6k-2}_8\\
\alpha^{2k}\widehat{s}^{6k-2}_1 \\
.\\
.\\
.\\
\alpha^{2k}\widehat{s}^{6k-2}_8
\end{pmatrix}
\end{gathered}
\end{equation*}
\begin{equation*}
\begin{gathered}
\partial_{2k}\begin{pmatrix}
s^{6k-2}_1 \\
.\\
.\\
.\\
s^{6k-2}_8
\end{pmatrix}
=
C\begin{pmatrix}
\alpha^{2k}s^{4k-1}_1 \\
.\\
.\\
.\\
\alpha^{2k}s^{4k-1}_9
\end{pmatrix}
, \quad \;\;
\partial_{2k}\begin{pmatrix}
\widehat{s}^{6k-2}_1 \\
.\\
.\\
.\\
\widehat{s}^{6k-2}_8
\end{pmatrix}
=
\widehat{C}\begin{pmatrix}
\alpha^{2k}s^{4k-1}_1 \\
.\\
.\\
.\\
\alpha^{2k}s^{4k-1}_9
\end{pmatrix}
,
\end{gathered}
\end{equation*}
where $A = (a_{i,r})$, $B=(b_{i,r})$, $C=(c_{j,r})$, $\widehat{C}=(\widehat{c}_{j,r})$ are matrices.  Since $s^{8k-3}_{i}  s^{6k-2}_j = 0$, we have
\begin{equation*}
\begin{gathered}
0=\partial_{2k}(s^{8k-3}_{i} s^{6k-2}_j) = (b_{i,j} + c_{j,i})\alpha^{2k}h, \\
0=\partial_{2k}(s^{8k-3}_{i}  \widehat{s}^{6k-2}_j) =  (a_{i,j} + \widehat{c}_{j,i})\alpha^{2k}h, \\
\Rightarrow b_{i,j} = c_{j,i}, \quad a_{i,j} = \widehat{c}_{j,i},
\end{gathered}
\end{equation*}
This means that $A = \widehat{C}^T$, $B = C^T$, where $T$ stands for the transposition. Therefore,
\begin{equation}\label{spec5}
dim(\partial_{2k}(E_{2k}^{0, 8k-3})) = dim(\partial_{2k}(E_{2k}^{0, 6k-2})).
\end{equation}
Also, since $dim(\partial_{2k}(E_{2k}^{0, 8k-3})) \subset ker(\partial_{2k}(E_{2k}^{2k, 6k-2}))$ and $dim(E_{2k}^{2k, 6k-2})=16$, we get that
\begin{equation*}
dim(\partial_{2k}(E_{2k}^{0, 6k-2})) \leqslant 8.
\end{equation*}

Let us note that $E_{2k+1}^{*, 6k-2} = E_{6k-1}^{*, 6k-2}$ and $E_{6k}^{*, 6k-2} = E_{\infty}^{*, 6k-2}$. We see that only $\partial_{6k-1}$ may affect the $(6k-2)$th row of $E_{6k-1}$. The manifold $\mathcal{Z}_Q$ is finite-dimensional and for any $r > 12k-4$ we must have $E_{\infty}^{r, 6k-2} = 0$. Since, $dim(E_{6k-1}^{0, 12k-4}), dim(E_{6k-1}^{6k-1, 0}) \leqslant  1$, we get that $dim(E_{6k-1}^{r, 6k-2}) \leqslant 2$ for all $r \geqslant 2k$. Hence,
\begin{equation*}
dim(\partial_{2k}(E_{2k}^{0, 6k-2})) \geqslant 7.
\end{equation*}

We have two cases:
\\
\\
\textbf{1.} Assume that $dim(\partial_{2k}(E_{2k}^{0, 8k-3})) = 7$. The next nontrivial differential may be $\partial_{4k-1}$ and $E_{2k+1}=E_{4k-1}$.  There exist  elements $x_1, x_2 \in E_{4k-1}^{0, 4k-1}$, $y_1, y_2 \in E_{4k-1}^{0, 8k-3}$, and $t_1, t_2 \in E_{4k-1}^{0, 6k-2}$ such that for any $r \geqslant 2k$ and $m\geqslant 0$ we have $E_{4k-1}^{r, 4k-1} = \mathbb{Z}_2\langle \alpha^rx_1, \alpha^rx_2 \rangle$, $E_{4k-1}^{r, 6k-2} = \mathbb{Z}_2\langle \alpha^rt_1, \alpha^rt_2 \rangle$, and $E_{4k-1}^{m, 8k-3} = \mathbb{Z}_2\langle \alpha^ry_1, \alpha^ry_2 \rangle$. Note that $\partial_{4k-1}$ can not change $E_{4k-1}^{*, 6k-2}$. Since the row $E_{\infty}^{*, 6k-2}$ is finite ($\mathcal{Z}_Q$ is finite-dimensional), we see that the differential $\partial_{6k-1}$ must be nonzero on $E_{6k-1}^{*, 12k-4}$ and $E_{6k-1}^{*, 6k-2}$ (in particular $E_{6k-1}^{0, 12k-4} \neq 0$). Hence, rows $E_{\infty}^{*, 4k-1}$, $E_{\infty}^{*, 8k-3}$ are finite if and only if the map $\partial_{4k-1}: E_{4k-1}^{0, 8k-3} \rightarrow E_{4k-1}^{4k-1, 4k-1}$ is isomorphism. Therefore, $E_{4k}^{4k-1, 4k-1} = 0$. Let us consider $E_{4k}$ and let $x$ be an arbitrary element of $E_{4k}^{0, 4k-1}$. If $\partial_{4k}(x) = \alpha^{4k}$, then  $\partial_{4k}(\alpha^{4k-1}  x) = \alpha^{8k-1} \neq 0$. On the other hand, $\alpha^{4k-1}  x = 0$ because it belongs to  $E_{4k}^{4k-1, 4k-1}=0$. This contradiction shows that
\begin{equation*}
dim(\partial_{2k}(E_{2k}^{0, 8k-3})) = 7 \; \Rightarrow \; \partial_{4k}(E_{4k-1}^{0, 4k-1}) = 0.
\end{equation*}

\textbf{2.} Assume that $dim(\partial_{2k}(E_{2k}^{0, 8k-3})) = 8$. Then, $E_{\infty}^{r, 6k-2} = 0$ for any $r \geqslant 2k$. As in the previous case, the next nontrivial differential may be $\partial_{4k-1}$ and $E_{2k+1}=E_{4k-1}$.  There exist  elements $x \in E_{4k-1}^{0, 4k-1}$ and $y \in E_{4k-1}^{0, 8k-3}$ such that  $E_{4k-1}^{r, 4k-1} = \mathbb{Z}_2\langle \alpha^rx \rangle$, $E_{4k-1}^{m, 8k-3} = \mathbb{Z}_2\langle \alpha^my\rangle$ for any $r \geqslant 2k$ and $m \geqslant 0$.

Suppose that $\partial_{4k-1}(y) = \alpha^{4k-1}x$. Then, $E_{4k}^{r, 4k-1}= 0$ for all $r \geqslant 4k-1$. Let $w$ be an element of $E_{4k}^{0, 4k-1}$ such that $\partial_{4k}(w) = \alpha^{4k}$. We know that $\alpha^{4k-1}  w = 0$   and $ 0 = \partial_{4k}(\alpha^{4k-1}  w) = \alpha^{8k-1} \neq 0$. This yields that
\begin{equation*}
dim(\partial_{2k}(E_{2k-1}^{0, 8k-3})) = 8, \;\;\; \partial_{4k-1}(y) = \alpha^{4k-1}x \; \Rightarrow \; \partial_{4k}(E_{4k-1}^{0, 4k-1}) = 0.
\end{equation*}

Suppose that $\partial_{4k-1}(y) = 0$. We see that $E_{4k}^{m, 8k-3} = \mathbb{Z}_2\langle\alpha^m y\rangle$ for any $m \geqslant 0$. Since $E_{4k}^{0,4k-1} = E_{2}^{0,4k-1}$, we have an element $x \in E_{4k}^{0,4k-1}$ such that $xy = 0$. Then,
\begin{equation*}
0 = \partial_{4k}(xy) = y\partial_{4k}(x) \; \Rightarrow \; \partial_{4k}(x) \neq \alpha^{4k}.
\end{equation*}
We get the following:
\begin{equation*}
dim(\partial_{2k}(E_{2k}^{0, 8k-3})) = 8, \;\;\; \partial_{4k-1}(y) = 0 \; \Rightarrow \; \partial_{4k-1}(E_{4k-1}^{0, 4k-1}) = 0.
\end{equation*}

As a result, we see that $\partial_{4k}(E_{4k-1}^{0,4k-1}) = 0$. It follows from Theorem $\ref{vanishquantum}$ that $QH(L, \mathbb{Z}_2[T, T^{-1}]) = 0$.

\subsection{Proof of Theorem $\ref{wide-narrow2}$}

Let us consider three dimensional cube $I^3$ defined by
\begin{equation*}
I^3= \left\{
 \begin{array}{l}
x_1+1 \geqslant 0 \;\;\; x_2+1 \geqslant 0 \;\;\; x_3 + 1 \geqslant 0 \\
-x_1 + 1 \geqslant 0 \;\;\; -x_2+1 \geqslant 0 \;\;\; -x_3 + 1\geqslant 0 \\
 \end{array}
 \right.
\end{equation*}
Let us cut off the vertices $(1,1,1)$, $(-1,-1,-1)$ and consider the polytope $P$ defined by inequalities (see Figure $\ref{fig:truncube3}$)

\begin{equation*}
P = \left\{
 \begin{array}{l}
x_1+1 \geqslant 0 \;\;\; x_2+1 \geqslant 0 \;\;\; x_3 + 1 \geqslant 0 \\
-x_1 + 1 \geqslant 0 \;\;\; -x_2+1 \geqslant 0 \;\;\; -x_3 + 1\geqslant 0 \\
-x_1 - x_2 - x_3 + 2 \geqslant 0 \quad x_1+x_2+x_3+2 \geqslant 0
 \end{array}
 \right.
\end{equation*}
The complex moment-angle manifold $\widetilde{\mathcal{Z}}_P \subset \mathbb{C}^{8}$ associated to $P$ is given by a system
\begin{equation*}
\widetilde{\mathcal{Z}}_P = \left\{
 \begin{array}{l}
|z_1|^2  +  |z_4|^2 = 2\\
|z_2|^2 + |z_5|^2  =   2 \\
|z_3|^2 + |z_6|^2 = 2  \\
|z_1|^2 + |z_2|^2 + |z_3|^2 + |z_7|^2  =   5 \\
|z_1|^2 + |z_2|^2 + |z_3|^2 - |z_8|^2  =   1
 \end{array}
 \right.
\end{equation*}
Let us apply the multiwedge operation defined in Section $\ref{wedge}$ and consider
\begin{equation*}
Q = P^{(k, \ldots, k, 2k, 2k)}
\end{equation*}
We have the associated complex moment angle manifold $\widetilde{\mathcal{Z}}_Q$. We know that
\begin{equation*}
\widetilde{\mathcal{Z}}_Q = \widetilde{\mathcal{R}}_Q^{(2,\ldots, 2)}
\end{equation*}
and $\widetilde{\mathcal{R}}_Q^{(2,\ldots, 2)} \subset \mathbb{R}^{20k}$ is associated to the polytope $Q^{(2,\ldots,2)}=P^{(2k, \ldots, 2k, 4k, 4k)}$. We repeat $j_i$ times all variables (as in the proof of Theorem $\ref{wide-narrow}$). By $\overrightarrow{u}_i$ denote the vector $(u_{i,1},\ldots, u_{i, j_i})$ and by $|\overrightarrow{u}_i|^2 = u_{i,1}^2 + \ldots + u_{i,j_i}^2$ the squared length of the vector. We obtain that the real moment-angle manifold $\widetilde{\mathcal{R}}_Q^{(2,\ldots, 2)}  \subset \mathbb{R}^{20k}$ is defined by
\begin{equation}\label{widenarrow4}
\begin{gathered}
 \widetilde{\mathcal{R}}^{(2,\ldots,2)}_Q =
 \left\{
 \begin{array}{l}
|\overrightarrow{u}_1|^2  +  |\overrightarrow{u}_4|^2 = 2\\
|\overrightarrow{u}_2|^2 + |\overrightarrow{u}_5|^2  =   2 \\
|\overrightarrow{u}_3|^2 + |\overrightarrow{u}_6|^2 = 2  \\
|\overrightarrow{u}_1|^2 + |\overrightarrow{u}_2|^2 + |\overrightarrow{u}_3|^2 + |\overrightarrow{u}_7|^2  =   5 \\
|\overrightarrow{u}_1|^2 + |\overrightarrow{u}_2|^2 + |\overrightarrow{u}_3|^2 - |\overrightarrow{u}_8|^2  =   1
 \end{array}
 \right.
 \\
 j_1=\ldots=j_6 = 2k, j_7=j_8=4k
\end{gathered}
\end{equation}
Since $P$ is Dlzant, we get that $Q^{(2,\ldots,2)}$ is Delzant (see Lemma $\ref{multiwedgedelzant}$). The conditions of Lemma $\ref{equivmon}$ are satisfied (for $C = 2k$) and we get that $Q^{(2,\ldots,2)}$ is Fano. Therefore, there is embedded monotone Lagrangian $L \subset \mathbb{C}P^{20k-1}$ associated to  $Q^{(2,\ldots,2)}$ (see Theorem $\ref{monotonecondition})$. It follows from Lemma $\ref{trivialfibration}$ that
\begin{equation*}
L = \widetilde{\mathcal{R}}_Q^{(2,\ldots,2)}/\mathbb{Z}_2 \times T^4 = \widetilde{\mathcal{Z}}_Q/\mathbb{Z}_2 \times T^4.
\end{equation*}
From Lemma $\ref{simplemimasnumber}$ we have
\begin{equation*}
N_L = gcd(4k, 10k, 2k) = 2k.
\end{equation*}

Then, Theorem $\ref{cohomisotopic}$ says that $H^{*}(\widetilde{\mathcal{Z}}_Q, \mathbb{Z})$ and $H^{*}(\widetilde{\mathcal{Z}}_P, \mathbb{Z})$ are isomorphic as ungraded rings. Moreover, by Theorem $\ref{vertexcutoff}$ we have
\begin{equation}\label{cubecomplexangle}
\widetilde{\mathcal{Z}}_P = \mathcal{G}(\mathcal{G}((S^3 \times S^3 \times S^3)) \#7(S^3 \times S^8) \#12(S^4 \times S^7) \#10(S^5 \times S^6).
\end{equation}

\vspace*{.1in}

Let us prove that $QH(L, \mathbb{Z}_2[T,T^{-1}]) = 0$.
\\

\begin{lemma}\label{acunacohom}
We have: \\
$H^3(\mathcal{G}(\mathcal{G}(S^3 \times S^3 \times S^3)), \mathbb{Z}_2) = \mathbb{Z}_2^3$; \\
$H^{4}(\mathcal{G}(\mathcal{G}(S^3 \times S^3 \times S^3)), \mathbb{Z}_2) = \mathbb{Z}^6_2$;\\
$H^5(\mathcal{G}(\mathcal{G}(S^3 \times S^3 \times S^3)), \mathbb{Z}_2) = \mathbb{Z}_2^3$.
\end{lemma}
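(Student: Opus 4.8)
The plan is to reduce the whole computation to one Mayer--Vietoris formula for $\mathcal{G}(-)$ of a closed connected manifold and then iterate it twice. I work with $\mathbb{Z}_2$ coefficients throughout, so that no orientability or universal-coefficient issues arise. From the definition $\mathcal{G}(X)=\partial(X_-\times D^2)$, with $X$ a closed connected $n$-manifold and $\partial X_-=S^{n-1}$, one reads off
\[
\mathcal{G}(X)\;=\;\bigl(X_-\times S^1\bigr)\;\cup_{S^{n-1}\times S^1}\;\bigl(S^{n-1}\times D^2\bigr),
\]
since $\partial(X_-\times D^2)=(X_-\times S^1)\cup(\partial X_-\times D^2)$. (Equivalently, $\mathcal{G}(X)$ is $X\times S^1$ surgered along $\{\mathrm{pt}\}\times S^1$.)

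Next I assemble the input cohomology. The long exact sequence of the pair $(X,X_-)$ together with excision, $H^j(X,X_-;\mathbb{Z}_2)\cong H^j(D^n,S^{n-1};\mathbb{Z}_2)$, gives $H^i(X_-;\mathbb{Z}_2)\cong H^i(X;\mathbb{Z}_2)$ for $i<n$ and $H^n(X_-;\mathbb{Z}_2)=0$; by the K\"unneth theorem $H^*(X_-\times S^1)\cong H^*(X_-)\otimes H^*(S^1)$, and likewise for $S^{n-1}\times S^1$. Plugging this into the Mayer--Vietoris sequence of the decomposition above, and using that the restriction $H^1(X_-\times S^1)\to H^1(S^{n-1}\times S^1)$ is onto (it carries the $S^1$-class to the $S^1$-class), so that the connecting maps landing in $H^1$ and $H^2$ vanish, while for $2\le i\le n-2$ the interfering groups $\widetilde H^{\,i-1}(S^{n-1}\times S^1)$, $\widetilde H^{\,i}(S^{n-1}\times S^1)$ and $\widetilde H^{\,i}(S^{n-1})$ all vanish, one obtains for $n\ge 4$
\[
H^1(\mathcal{G}(X);\mathbb{Z}_2)\cong H^1(X;\mathbb{Z}_2),\qquad H^i(\mathcal{G}(X);\mathbb{Z}_2)\cong H^i(X;\mathbb{Z}_2)\oplus H^{i-1}(X;\mathbb{Z}_2)\quad(2\le i\le n-2).
\]

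Finally I apply this twice. Take $X=S^3\times S^3\times S^3$, whose $\mathbb{Z}_2$-Betti numbers in degrees $0,\dots,9$ are $1,0,0,3,0,0,3,0,0,1$; here $n=9$ and $X$ is $2$-connected. The formula gives, for $Y:=\mathcal{G}(X)$ (a closed $10$-manifold), $H^1(Y)=H^2(Y)=0$, $H^3(Y)\cong\mathbb{Z}_2^3$, $H^4(Y)\cong H^4(X)\oplus H^3(X)\cong\mathbb{Z}_2^3$, and $H^5(Y)=0$. Applying the formula once more with $Y$ in place of $X$ (now $n=10\ge 4$, and $H^1(Y)=H^2(Y)=0$) yields, for $\mathcal{G}(Y)=\mathcal{G}(\mathcal{G}(S^3\times S^3\times S^3))$,
\[
H^3(\mathcal{G}(Y))\cong H^3(Y)\oplus H^2(Y)\cong\mathbb{Z}_2^3,\quad H^4(\mathcal{G}(Y))\cong H^4(Y)\oplus H^3(Y)\cong\mathbb{Z}_2^6,\quad H^5(\mathcal{G}(Y))\cong H^5(Y)\oplus H^4(Y)\cong\mathbb{Z}_2^3,
\]
which is the claim.

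The step I expect to be the main obstacle is the middle one: verifying precisely that the relevant Mayer--Vietoris connecting homomorphisms vanish in the range $2\le i\le n-2$. This rests on identifying the restriction map $H^*(X_-\times S^1)\to H^*(S^{n-1}\times S^1)$ well enough to see it is surjective in degree $1$, and on checking that no term of $H^*(S^{n-1}\times S^1)$ or $H^*(S^{n-1})$ interferes in the stated range; once this is done, the two applications are pure bookkeeping. One could instead bypass the general formula and run the two Mayer--Vietoris computations directly for our particular $X$, but factoring through the formula makes transparent why the vanishing $H^1(Y)=H^2(Y)=0$ at the intermediate stage, a consequence of $X$ being $2$-connected, is exactly what the second application needs.
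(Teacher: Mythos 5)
Your proposal is correct, and the numbers it produces agree with the lemma (and with the intermediate groups the paper records for $\mathcal{G}(S^3\times S^3\times S^3)$). Your route is genuinely different in presentation from the paper's: the paper works with the pair $\bigl((S^3\times S^3\times S^3)_-\times D^2,\ \mathcal{G}(S^3\times S^3\times S^3)\bigr)$, combining Lefschetz duality $H^r(W,\partial W)\cong H_{11-r}(W)$ with the long exact sequence of that pair to read off the homology of the boundary, and then repeats this once more for the second application of $\mathcal{G}$; you instead decompose $\mathcal{G}(X)=(X_-\times S^1)\cup_{S^{n-1}\times S^1}(S^{n-1}\times D^2)$ and run Mayer--Vietoris. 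The two computations are essentially dual to one another (the Mayer--Vietoris sequence of the boundary decomposition and the long exact sequence of the pair carry the same information once duality is invoked), so neither is "more elementary," but your packaging has a real advantage: it isolates a reusable formula $H^i(\mathcal{G}(X);\mathbb{Z}_2)\cong H^i(X;\mathbb{Z}_2)\oplus H^{i-1}(X;\mathbb{Z}_2)$ valid for $2\le i\le n-2$, which makes the double iteration pure bookkeeping and makes visible why the $2$-connectivity of $S^3\times S^3\times S^3$ is what keeps the low-degree groups clean at the intermediate stage. The one step you flagged as delicate --- the vanishing of the connecting maps into $H^1$ and $H^2$ --- is fine as you argue it: the restriction $H^0\oplus H^0\to H^0(S^{n-1}\times S^1)$ is onto by connectedness, and $H^1(S^{n-1}\times S^1;\mathbb{Z}_2)\cong\mathbb{Z}_2$ is generated by the $S^1$-class, which is hit from $H^1(X_-\times S^1)$.
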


\begin{proof}
By $(S^3 \times S^3 \times S^3)_{-}$ we denote $S^3 \times S^3 \times S^3$ minus a small $9-$dimensional disk. From Lefschetz duality
\begin{equation*}
H^r((S^3 \times S^3 \times S^3)_{-} \times D^2, \mathcal{G}(S^3 \times S^3 \times S^3), \mathbb{Z}_2) \approx H_{11 - r}((S^3 \times S^3 \times S^3)_{-} \times D^2, \mathbb{Z}_2)
\end{equation*}
and from the exact sequence of the pair $((S^3 \times S^3 \times S^3)_{-} \times D^2, \;\; \mathcal{G}(S^3 \times S^3 \times S^3))$ we get
\begin{equation*}
\begin{gathered}
H_{11-r}((S^3 \times S^3 \times S^3)_{-} \times D^2, \mathbb{Z}_2) \rightarrow \\
 H^{r}((S^3 \times S^3 \times S^3)_{-} \times D^2, \mathbb{Z}_2) \rightarrow H^{r}(\mathcal{G}(S^3 \times S^3 \times S^3), \mathbb{Z}_2) \rightarrow \\
H_{10-r}((S^3 \times S^3 \times S^3)_{-} \times D^2, \mathbb{Z}_2) \rightarrow H^{r+1}((S^3 \times S^3 \times S^3)_{-} \times D^2, \mathbb{Z}_2) \rightarrow \\
\end{gathered}
\end{equation*}
and
\begin{equation*}
\begin{gathered}
0 \rightarrow H^3((S^3 \times S^3 \times S^3)_{-} \times D^2, \mathbb{Z}_2) \rightarrow H^3(\mathcal{G}(S^3 \times S^3 \times S^3), \mathbb{Z}_2) \rightarrow 0 \\
0 \rightarrow H^6((S^3 \times S^3 \times S^3)_{-} \times D^2, \mathbb{Z}_2) \rightarrow H^6(\mathcal{G}(S^3 \times S^3 \times S^3), \mathbb{Z}_2) \rightarrow 0 \\
0 \rightarrow H^4(\mathcal{G}(S^3 \times S^3 \times S^3), \mathbb{Z}_2) \rightarrow H_6((S^3 \times S^3 \times S^3)_{-} \times D^2, \mathbb{Z}_2) \rightarrow 0 \\
0 \rightarrow H^7(\mathcal{G}(S^3 \times S^3 \times S^3), \mathbb{Z}_2) \rightarrow H_3((S^3 \times S^3 \times S^3)_{-} \times D^2, \mathbb{Z}_2) \rightarrow 0
\end{gathered}
\end{equation*}
So, we obtain
\begin{equation*}
\begin{gathered}
H_3(\mathcal{G}(S^3 \times S^3 \times S^3), \mathbb{Z}_2) = \mathbb{Z}^3_2, \quad H_4(\mathcal{G}(S^3 \times S^3 \times S^3), \mathbb{Z}_2) = \mathbb{Z}_2^3, \\
H_6(\mathcal{G}(S^3 \times S^3 \times S^3), \mathbb{Z}_2) = \mathbb{Z}^3_2, \quad H_7(\mathcal{G}(S^3 \times S^3 \times S^3), \mathbb{Z}_2) = \mathbb{Z}_2^3, \\
H_{10}(\mathcal{G}(S^3 \times S^3 \times S^3), \mathbb{Z}_2) = \mathbb{Z}_2, \\
H_k(\mathcal{G}(S^3 \times S^3 \times S^3), \mathbb{Z}_2) = 0 \quad if \;\; k\neq 0,3,4,6,7,10
\end{gathered}
\end{equation*}
Repeating this procedure one more time for the pair $(\mathcal{G}(S^3 \times S^3 \times S^3)_{-} \times D^2, \;\; \mathcal{G}(\mathcal{G}(S^3 \times S^3 \times S^3)))$ we prove the lemma.
\end{proof}

Let us study the face ring of $P$ and use Theorem $\ref{torcohom}$. We denote by $v_1,...,v_6$ the variables of $[R(P), d]$ coming from the cube $I^3$ and by $w_1, w_2$ the variables coming from the new facets. Denote the corresponding exterior variables of $[R(P), d]$ by $y_1, \ldots, y_6, t_1, t_2$ such that $dy_i=v_i$, $dt_j = w_j$ (see Figure $\ref{fig:truncube3}$).

\begin{figure}[h]
\centering
\includegraphics[width=0.6\linewidth]{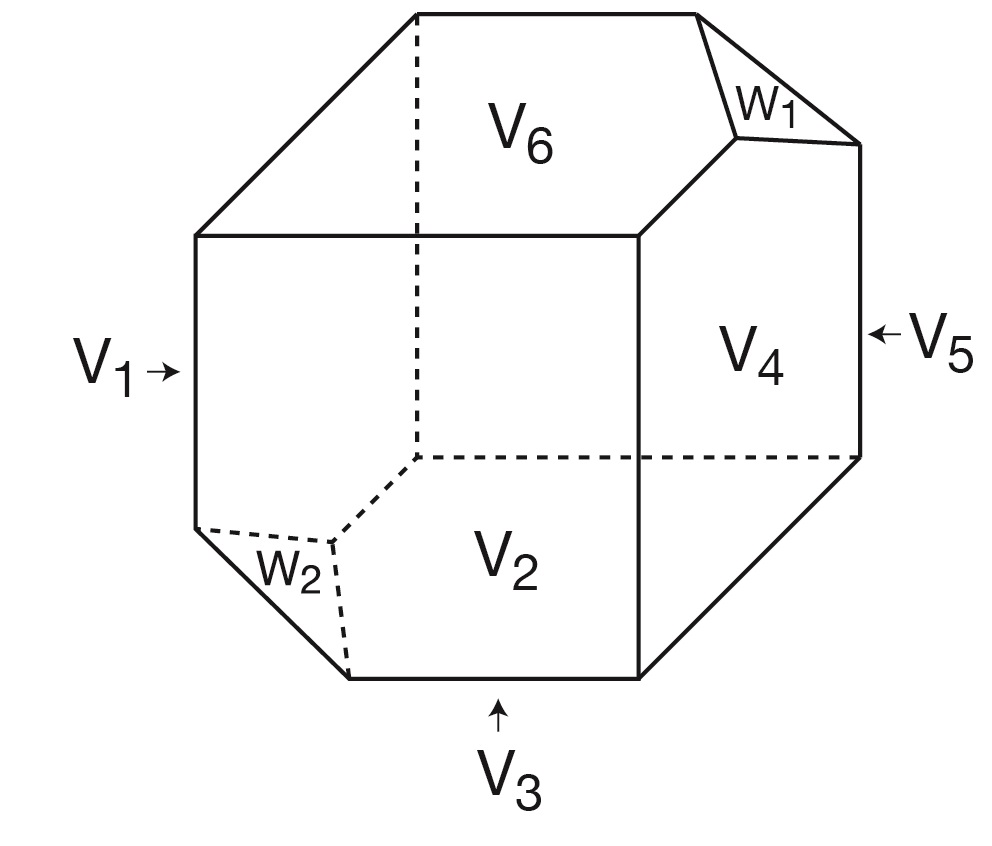}
\caption{truncted cube}
  \label{fig:truncube3}
\end{figure}

We see that
\begin{equation*}
\begin{gathered}
\widetilde{a}_1 = y_1v_{4}, \;\; \widetilde{a}_2 = y_2v_{5}, \;\; \widetilde{a}_3 = y_3v_{6}, \;\; \widetilde{s} = t_2w_1
\end{gathered}
\end{equation*}
represent nontrivial elements in $H^{3}(\widetilde{\mathcal{Z}}_P, \mathbb{Z})$. Since $v_4 \cap v_5 \cap v_6 = \emptyset$, we have
\begin{equation*}
\begin{gathered}
\widetilde{a}_1\widetilde{a}_2\widetilde{a}_3 = y_1y_2y_3v_4v_5v_6 = 0, \\
\widetilde{a}_1\widetilde{a}_2, \;\widetilde{a}_1\widetilde{a}_2, \; \widetilde{a}_2\widetilde{a}_3 \neq 0.
\end{gathered}
\end{equation*}
Therefore, $\widetilde{a}_1,\widetilde{a}_2,\widetilde{a}_3$ represent classes of $H^{*}(\mathcal{G}(\mathcal{G}(S^3\times S^3\times S^3)), \mathbb{Z})$  (see formula $(\ref{cubecomplexangle}))$. Then, elements
\begin{equation*}
\begin{gathered}
\widetilde{b}_1 = y_1t_2v_4, \; \widetilde{b}_2 = y_2t_2v_5, \; \widetilde{b}_3 = y_3t_2v_6, \\
\widetilde{c}_1 = y_4t_1v_1, \; \widetilde{c}_2 = y_5t_1v_2, \; \widetilde{c}_3 = y_6t_1v_3
\end{gathered}
\end{equation*}
belong to $H^{4}(\widetilde{\mathcal{Z}}_P, \mathbb{Z})$. Moreover,
\begin{equation*}
\widetilde{a}_i\widetilde{a}_j\widetilde{b}_m, \; \widetilde{a}_i\widetilde{a}_j\widetilde{c}_m \neq 0 \;\;\; if \; and \; only \; if \;\; i \neq j, i\neq m, j\neq m,
\end{equation*}
Hence, classes $\widetilde{a}_i, \widetilde{b}_j$ belong to cohomology ring  $H^{*}(\mathcal{G}(\mathcal{G}(S^3\times S^3\times S^3)), \mathbb{Z})$.

Since $\widetilde{s} = t_2w_1$ and $t_1w_2$ represent the same classes (because $d(t_1t_2)= t_2w_1 - t_2w_1$), we see that
\begin{equation*}
\widetilde{s}\widetilde{b}_i = \widetilde{s}\widetilde{c}_i = 0 \;\;\; \forall i. \\
\end{equation*}
Let us consider
\begin{equation*}
\widetilde{g}_1 = y_1y_2y_3w_1, \;\; \widetilde{g}_2 =  y_4y_5y_6w_2.
\end{equation*}
Since $v_1 \cap w_1 = v_2 \cap w_1 = v_3 \cap w_1 = v_4 \cap w_2 = v_5 \cap w_2 = v_6 \cap w_2 = \emptyset$, we have
\begin{equation*}
\widetilde{a}_i\widetilde{g}_1 = \widetilde{b}_i\widetilde{g}_1 = \widetilde{b}_i\widetilde{g}_2 = \widetilde{b}_i\widetilde{g}_2 = 0 \;\;\; \forall i.
\end{equation*}

Let us study the face ring of $Q = P^{(k, \ldots, 2k, 2k)}$. Theorem $\ref{cohomisotopic}$ says that $H^{*}(\widetilde{\mathcal{Z}}_P, \mathbb{Z})$ and $H^{*}(\widetilde{\mathcal{Z}}_Q, \mathbb{Z})$ are isomorphic as ungraded rings. We denote the variables of $[R(Q), d]$ by $v_{1,1},\ldots, v_{1,k},\ldots, v_{6,1}, \ldots, v_{6,k}$, $v_{7,1}, \ldots, v_{7,2k}, v_{8,1}, \ldots, v_{8,2k}$. From Lemma $\ref{interfacetswedge}$ and from the compuations above we get that
\begin{equation*}
\begin{gathered}
a_1 = y_{1,1}v_{1,2}\ldots v_{4,1}\ldots v_{4,k}, \;\; a_2 = y_{2,1}v_{2,2}\ldots v_{5,1}\ldots v_{5,k}, \\
a_3 = y_{3,1}v_{3,2}\ldots v_{6,1}\ldots v_{6,k}, \;\; b_1 = y_{1,1}t_{2,1}v_{1,2}\ldots v_{1,k}w_{2,2}\ldots, w_{2,2k}v_{4,1}\ldots v_{4,k}, \\
b_2 = y_{2,1}t_{2,1}v_{2,2}\ldots v_{2,k}w_{2,2}\ldots, w_{2,2k}v_{5,1}\ldots v_{5,k}, \\
b_3 = y_{3,1}t_{2,1}v_{3,2}\ldots v_{3,k}w_{2,2}\ldots, w_{2,2k}v_{6,1}\ldots v_{6,k}, \\
c_1 = y_{4,1}t_{1,1}v_{4,2}\ldots v_{4,k}w_{1,2}\ldots, w_{1,2k}v_{1,1}\ldots v_{1,k}, \\
c_2 = y_{5,1}t_{1,1}v_{5,2}\ldots v_{5,k}w_{1,2}\ldots, w_{1,2k}v_{2,1}\ldots v_{2,k}, \\
c_3 = y_{6,1}t_{1,1}v_{6,2}\ldots v_{6,k}w_{1,2}\ldots, w_{1,2k}v_{3,1}\ldots v_{3,k}, \\
g_1 = y_{1,1}y_{2,1}y_{3,1}v_{1,2}\ldots v_{1,k}v_{2,2}\ldots, v_{2,k}v_{3,2}\ldots, v_{3,k}w_{1,1}, \ldots, w_{1,2k}, \\
g_2 = y_{4,1}y_{5,1}y_{6,1}v_{4,2}\ldots v_{4,k}v_{5,2}\ldots, v_{5,k}v_{6,2}\ldots, v_{6,k}w_{2,1}, \ldots, w_{2,2k}, \\
s = t_{2,1}w_{2,2}\ldots w_{2,2k}w_{1,1}\ldots w_{1,2k}.
\end{gathered}
\end{equation*}
represent nontrivial elements. Moreover,
\begin{equation}\label{widenarrowrelations}
\begin{gathered}
a_1, a_2, a_3 \; \in H^{4k-1}(\widetilde{\mathcal{Z}}_Q, \mathbb{Z}), \\
b_1,b_2, b_3, c_1,c_2, c_3 \; \in H^{8k-2}(\widetilde{\mathcal{Z}}_Q, \mathbb{Z}), \\
g_1, g_2 \; \in H^{10k-3}(\widetilde{\mathcal{Z}}_Q, \mathbb{Z}), \\
a_1a_2, \; a_1a_3, \; a_2a_3 \neq  0, \;\;\; a_1a_2a_3 = 0, \\
a_ia_jb_m, \; a_ia_jc_m \neq 0 \;\;\; if \; and \; only \; if \;\; i \neq j, i\neq m, j\neq m, \\
a_ig_1 = b_ig_1 = b_ig_2 = b_ig_2 = 0, \; sa_i = sb_i = 0 \;\;\; \forall i.
\end{gathered}
\end{equation}

\begin{lemma}\label{vanishcohomth} Assume that $r \leqslant 12k-3$, and $r \neq 0, 4k-1, 6k-1, 8k-2, 8k-1, 10k-3, 12k-3$. If $k \geqslant 2$, then we have: \\
$H^{r}(\widetilde{\mathcal{Z}}_Q, \mathbb{Z}) = 0$;\\
$H^{4k-1}(\widetilde{\mathcal{Z}}_Q, \mathbb{Z})$ is generated by $a_1, a_2, a_3$; \\
$H^{8k-1}(\widetilde{\mathcal{Z}}_Q, \mathbb{Z})$ is generated by $s$; \\
$H^{10k-3}(\widetilde{\mathcal{Z}}_Q, \mathbb{Z})$ is generated by $g_1, g_2$.
\end{lemma}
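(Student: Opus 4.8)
The plan is to turn the statement into an entirely combinatorial computation in the face ring, exploiting that the multiwedge operation only re-grades the cohomology. By Theorem~\ref{torcohom}, both $H^{*}(\widetilde{\mathcal{Z}}_P,\mathbb{Z})$ and $H^{*}(\widetilde{\mathcal{Z}}_Q,\mathbb{Z})$ are computed by the differential algebras $[R(P),d]$ and $[R(Q),d]$, and since the polytopes involved are irredundant (so $H^{1}=H^{2}=0$) every class has a representative that is a squarefree monomial $y_{i_{1}}\cdots y_{i_{a}}\,v_{i_{a+1}}\cdots v_{i_{b}}$; its \emph{support} $\omega=\{i_{1},\dots,i_{b}\}$ is a set of facets of $P$, its number $a$ of exterior variables equals the homological degree, and its total degree in $\widetilde{\mathcal{Z}}_P$ is $2\lvert\omega\rvert-a$. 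The substitution $v_{i}\mapsto v_{i,1}\cdots v_{i,j_{i}}$, $y_{i}\mapsto y_{i,1}v_{i,2}\cdots v_{i,j_{i}}$ is readily checked to be a chain map $[R(P),d]\to[R(Q),d]$ compatible with the relation $v_{i}^{2}=y_{i}v_{i}$, it is injective on cohomology, and by the rank count in Theorem~\ref{cohomisotopic} it carries a basis of $H^{*}(\widetilde{\mathcal{Z}}_P,\mathbb{Z})$ to a basis of $H^{*}(\widetilde{\mathcal{Z}}_Q,\mathbb{Z})$; here $j_{i}=k$ for the six cube facets and $j_{i}=2k$ for the two facets $w_{1},w_{2}$ coming from the vertex cuts. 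Consequently a class of $\widetilde{\mathcal{Z}}_P$ with support $\omega$ and exterior count $a$ is sent to a class of $\widetilde{\mathcal{Z}}_Q$ of total degree $2\sum_{\ell\in\omega}j_{\ell}-a=\bigl(2\lvert\omega\rvert-a\bigr)+2\sum_{\ell\in\omega}(j_{\ell}-1)$.

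The second step is to list, for each basis class of $H^{*}(\widetilde{\mathcal{Z}}_P,\mathbb{Z})$, the pair $(\omega,a)$. From formula~(\ref{cubecomplexangle}) together with Lemma~\ref{acunacohom} one knows the Betti numbers of $\widetilde{\mathcal{Z}}_P$ (generators only in degrees $3,\dots,8$, together with products and the top class in degree $11$), and the adjacency pattern of $P$ is completely explicit: the six cube facets $v_{1},\dots,v_{6}$ satisfy $v_{i}\cap v_{i+3}=\emptyset$; $w_{1}$ meets only $v_{4},v_{5},v_{6}$; $w_{2}$ meets only $v_{1},v_{2},v_{3}$; and $w_{1}\cap w_{2}=\emptyset$. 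This lets one enumerate the supports through the Hochster-type decomposition of the $\mathrm{Tor}$-groups: degree-$3$ classes correspond to the non-adjacent pairs of facets, degree-$4$ classes to triples whose full subcomplex $K_{\omega}$ is disconnected, degree-$5$ classes to the appropriate four-element sets, and so on; each such $\omega$ then produces its total degree in $\widetilde{\mathcal{Z}}_Q$ by the formula above. One then works out which total degrees $\le 12k-3$ actually occur and which classes contribute to each, compares the result with $\{0,4k-1,6k-1,8k-2,8k-1,10k-3,12k-3\}$, and identifies the low-dimensional groups: the only classes landing in degree $4k-1$ are the $\widetilde{a}_{i}$ (their supports are the opposite pairs $\{v_{i},v_{i+3}\}$), the only one in degree $8k-1$ is $\widetilde{s}$ (support $\{w_{1},w_{2}\}$), and those in degree $10k-3$ are $\widetilde{g}_{1},\widetilde{g}_{2}$ (supports $\{v_{1},v_{2},v_{3},w_{1}\}$ and $\{v_{4},v_{5},v_{6},w_{2}\}$); for $k\ge 2$ no product interferes, since a product of two classes has image degree at least $2(4k-1)=8k-2$, and the top-degree class sits far above the range.

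The main obstacle is exactly this enumeration: one must be sure it is exhaustive and that the induced total degrees really do collapse onto the stated values with nothing falling strictly between them. The delicate cases are the classes whose support involves both cut facets $w_{1},w_{2}$, or involves many of the cube facets at once, because these carry the largest degree shift $2\sum_{\ell\in\omega}(j_{\ell}-1)$ and hence control the upper end of the range $r\le 12k-3$; organising the bookkeeping by the non-adjacency graph of $P$ and by the groups $\widetilde{H}^{\,\lvert\omega\rvert-a-1}(K_{\omega})$ is what keeps it manageable, and the hypothesis $k\ge 2$ is used precisely to separate the images of distinct classes and to push all products above $4k-1$, $6k-1$. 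A secondary, more routine, point is to confirm that the substitution of the first paragraph genuinely realises the isomorphism of Theorem~\ref{cohomisotopic} on a basis rather than merely an injection, which follows once the ranks are matched degree by degree.
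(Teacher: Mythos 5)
Your framework---indexing the classes of $H^{*}(\widetilde{\mathcal{Z}}_P,\mathbb{Z})$ by a support $\omega$ and an exterior count $a$ via the Hochster-type decomposition, and tracking the degree shift $2\sum_{\ell\in\omega}j_\ell-a$ under the multiwedge---is sound, and it is essentially an organized version of the paper's own brute-force check of ``all possible combinations of the variables of $[R(Q),d]$''. The gap is that you never carry out the enumeration, and you simply assert the conclusion it is supposed to deliver, namely that the image degrees ``collapse onto the stated values with nothing falling strictly between them''. That assertion fails, and it fails exactly in the ``delicate case'' you yourself flag: supports containing both cut facets. Take $\omega=\{v_1,w_1,w_2\}$. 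Since $v_1\cap w_1=\emptyset$ and $w_1\cap w_2=\emptyset$ but $v_1\cap w_2\neq\emptyset$, the full subcomplex of $K_P$ on $\omega$ is an edge plus an isolated vertex, so its reduced $H^{0}$ is $\mathbb{Z}$ and there is a nonzero class in $H^{4}(\widetilde{\mathcal{Z}}_P,\mathbb{Z})$: explicitly $y_1w_1t_2$ is closed in $[R(P),d]$ (both terms of its differential contain $v_1w_1=0$ or $w_1w_2=0$) and is not exact (the only degree-$3$ monomial whose differential involves $y_1w_1t_2$ is $y_1t_1t_2$, whose differential also contains the nonzero term $v_1t_1t_2$). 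Under your substitution this class becomes
\begin{equation*}
y_{1,1}v_{1,2}\cdots v_{1,k}\,w_{1,1}\cdots w_{1,2k}\,t_{2,1}w_{2,2}\cdots w_{2,2k},
\end{equation*}
of total degree $2(k+2k+2k)-2=10k-2$, which satisfies $10k-2\leqslant 12k-3$ and is not equal to any of $0,4k-1,6k-1,8k-2,8k-1,10k-3,12k-3$ for $k\geqslant 2$. There are six such supports $\{v_i,w_1,w_2\}$, so $H^{10k-2}(\widetilde{\mathcal{Z}}_Q,\mathbb{Z})\neq 0$, contradicting the first assertion of the lemma.

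Consequently the enumeration cannot be completed as you describe. The fourth assertion fails as well: the six supports consisting of five cube facets have reduced $H^{1}$ equal to $\mathbb{Z}$ (the full subcomplex has $5$ vertices, $8$ edges and $3$ triangles, hence reduced Euler characteristic $-1$ and first Betti number $1$), and with $a=3$ they land in degree $2(5k)-3=10k-3$; so $H^{10k-3}(\widetilde{\mathcal{Z}}_Q,\mathbb{Z})$ has rank $8$, not $2$, which is also what Poincar\'e duality against $H^{10k-2}$ forces on the $(20k-5)$-dimensional manifold $\widetilde{\mathcal{Z}}_Q$. So either the statement must be amended (at least $10k-2$ added to the exception list and the description of $H^{10k-3}$ enlarged), or an argument you do not supply must kill these classes. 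Your plan, executed faithfully, refutes rather than proves the statement as written; the part of your outline concerning $H^{4k-1}$ and $H^{8k-1}$ does check out, since the only admissible pairs $(\omega,a)$ there are the non-adjacent pairs of cube facets and the pair $\{w_1,w_2\}$.
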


\begin{proof}
We need to consider all possible combinations of the variables of $[R(Q), d]$. For example, $deg(t_2w_{2,2}\ldots w_{2,2k} w_{1,1}\ldots w_{1,2k} v_{5,1}\ldots v_{5,k}) = 10k-1$ but
\begin{equation*}
\begin{gathered}
d(y_{5,1}t_{2,1}w_{2,2}\ldots w_{2,2k} w_{1,1}\ldots w_{1,2k} \ldots v_{5,k}) = \\
t_2w_{2,2}\ldots w_{2,2k} w_{1,1}\ldots w_{1,2k} v_{5,1}\ldots v_{5,k}
\end{gathered}
\end{equation*}
because $w_{1,1}\cap\ldots w_{1,2k} \cap w_{2,1} \cap \ldots w_{2,2k} = \emptyset$. This means that the considered element is exact and represent trivial element in the cohomology group. Arguing in the same way we can prove the lemma.
\end{proof}

\vspace*{.1in}

Since $\widetilde{\mathcal{Z}}_Q$ does not have torsion we get that
\begin{equation*}
H^{*}(\widetilde{\mathcal{Z}}_Q, \mathbb{Z}_2) = H^{*}(\widetilde{\mathcal{Z}}_Q, \mathbb{Z}) \otimes \mathbb{Z}_2.
\end{equation*}
Let us denote elements in $H^{*}(\widetilde{\mathcal{Z}}_Q, \mathbb{Z}_2)$ corresponding to $a_i, b_i, c_i,g_i, s$ by the same symbols. Denote
\begin{equation*}
 \mathcal{Z}_Q = \widetilde{\mathcal{Z}}_Q/\mathbb{Z}_2.
\end{equation*}

We have the Cartan-Leray spectral sequence
\begin{equation*}
E_2^{p,q} = H^{p}(\mathbb{R}P^{\infty}, H^{q}(\widetilde{\mathcal{Z}}_Q, \mathbb{Z}_2)) \Rightarrow H^{*}(\mathcal{Z}_Q, \mathbb{Z}_2),
\end{equation*}
Denote the differential of $E_{k}$ by $\partial_k$. Recall that
\begin{equation*}
 m(Q) = min\{\ell \in \mathbb{N} \; | \; v_{i_1}\cap \ldots \cap v_{i_\ell} = \emptyset  \}.
\end{equation*}
We see that $m(P) = 2$. It follows from Theorem $\ref{interfacetswedge}$ that
\begin{equation*}
m(Q) = m(P^{(k,\ldots,k, 2k,2k)}) = 2k.
\end{equation*}

Let $\beta$ be the generator of $H^{*}(\mathbb{R}P^{\infty}, \mathbb{Z}_2) = E_{2}^{*, 0}$. Let us show that $\partial_{4k}(E_{4k}^{0,4k-1}) = 0$, First, note that $E_{2}^{0,4k-1} = E_{4k}^{0,4k-1}$. Then, from formulas $(\ref{widenarrowrelations})$ we have (we work over $\mathbb{Z}_2$)
\begin{equation}\label{vanishprovethe}
\begin{gathered}
a_1a_2a_3 = 0 \Rightarrow \\
 0 = \partial_{4k}(a_1a_2a_3) = \partial_{4k}(a_1)a_2a_3 + \partial_{4k}(a_2)a_1a_3 + \partial_{4k}(a_1)a_2a_3,  \\
\partial_{4k}(a_1), \partial_{4k}(a_1), \partial_{4k}(a_1) \in \{0, \beta^{4k} \}.
\end{gathered}
\end{equation}
If $\beta^{4k}a_1a_2, \beta^{4k}a_1a_3, \beta^{4k}a_2a_3$ are nontrivial in $E_{4k}^{4k, 8k-2}$, then $\partial_{4k}(a_i) = 0$ for all $i$.

The second differential $\partial_2: E_{2}^{0, 8k-1} \rightarrow E_{2}^{2, 8k-2}$ may be nontrivial. By Lemma $\ref{vanishcohomth}$, $H^{8k-1}(\widetilde{\mathcal{Z}}_Q, \mathbb{Z}_2)$ is generated by $s$.  Assume that
\begin{equation*}
\partial_{2}(s) = \lambda_1\beta^2a_1a_2 + \lambda_2\beta^2a_1a_3 + \lambda_3\beta^2a_2a_3 + \ldots, \quad \lambda_1,\lambda_2,\lambda_3 \in \mathbb{Z}_2.
\end{equation*}
Since $b_1s = 0$ and $b_1$ represents the cohomology class of $\mathcal{G}(\mathcal{G}(S^3 \times S^3 \times S^3))$, we have
\begin{equation*}
\begin{gathered}
\partial_{2}(b_1) \in E_{2}^{2,8k-3} = 0  \Rightarrow \\
0 = \partial_{2}(b_1s) = b_1\partial_2(s) = \lambda_3\beta^{2}b_1a_2a_3 \Rightarrow \lambda_3 = 0.
\end{gathered}
\end{equation*}
In the same way, using $b_2, b_3$ we can prove that $\lambda_1 = \lambda_2 = 0$. So, $\partial_2$ does not kill $\beta^{4k}a_ia_j$. Since $\partial_2(a_i) = 0$, we get that $\partial_2(a_ia_j) = 0$ for any $i, j$. This means that elements $a_ia_j$ are not trivial in $E_3$.

It follows from Lemma $\ref{vanishcohomth}$ that $E_{3}^{4k, 8k-2} = E_{2k}^{4k, 8k-2}$. Since $H^{10k-3}(\widetilde{\mathcal{Z}}_Q, \mathbb{Z}_2) = E_{2k}^{0, 10k-3}$ is generated by $g_1, g_2$ and $b_ig_j = 0$ for all $i, j$, we have (recall that $b_i \in H^{*}(\mathcal{G}(\mathcal{G}(S^3 \times S^3 \times S^3)), \mathbb{Z}_2)$)
\begin{equation*}
\begin{gathered}
\partial_{2k}(g_1) = \lambda_1\beta^{2k}a_1a_2 + \lambda_2\beta^{2k}a_1a_3 + \lambda_3\beta^{2k}a_2a_3 + \ldots, \quad \lambda_1,\lambda_2,\lambda_3 \in \mathbb{Z}_2,\\
0 = \partial_{2}(b_1g_1) = b_1\partial_2(g_1) = \lambda_3\beta^{2k}b_1a_2a_3 \Rightarrow \lambda_3 = 0.
\end{gathered}
\end{equation*}
Analogously, $\lambda_1=\lambda_2 = 0$. Since $\partial_{2k}(a_i) \in E_{2k}^{2k, 2k} = 0$, we get that $\partial_{2k}(a_ia_j) = 0$. This yields that $\partial_{2k}$ does not kill $\beta^{4k}a_ia_j$.

Finally, we see that  $a_1a_2, a_1a_3, a_2a_3$ are nontrivial in $E_{4k}^{0, 8k-1}$. So, we get from formulas $(\ref{vanishprovethe})$ that
\begin{equation*}
\partial_{4k}(E_{4k}^{0, 4k-1}) = 0.
\end{equation*}
It follows from Theorem $\ref{vanishquantum}$ that $QH(L, \mathbb{Z}_2[T, T^{-1}]) = 0$.
\\

Let $G$ be a ring in which $2$ is invertible. Lemma $\ref{vanishquantum}$ says that $QH(L, G[T, T^{-1}]) = 0$.

\subsection{Proof of Theorem $\ref{fundgroup}$}

Let $P$ be a $6-$gon defined by inequalities
\begin{equation*}
P = \left\{
 \begin{array}{l}
x_1+1 \geqslant 0 \;\;\; x_2 + 1 \geqslant 0 \\
-x_1 + 1\geqslant 0 \;\;\; -x_2 + 1 \geqslant 0 \\
x_1 + x_2  + 1 \geqslant 0 \;\;\; -x_1-x_2 + 1 \geqslant 0
 \end{array}
\right.
\end{equation*}
The real moment-angle manifold associated to $P$ is given by
\begin{equation}\label{systemfundgroup}
\widetilde{\mathcal{R}}_P = \left\{
 \begin{array}{l}
u_1^2 + u_3^2 = 2 \\
u_2^2 + u_4^2 = 2 \\
u_1^2 + u_2^2 - u_5^2 = 1 \\
u_1^2 + u_2^2 + u_6^2 = 3
 \end{array}
\right.
\end{equation}

The manifold $\widetilde{\mathcal{R}}_P = S_{17}$, where $S_{17}$ is an orientable surface of genus $17$ (see \cite{torictop} Proposition $4.1.8$). The sum of normal vectors to the facets is equal to zero. Moreover, $P$ is Fano and Delzant. From Section $\ref{maincostruction}$ and Lemma $\ref{sumzero}$ we get that there is monotone embedded Lagrangian $L \subset \mathbb{C}P^6$ associated to $P$. By our construction $L$ fibers over $T^3$
\begin{equation*}
\begin{gathered}
L \xrightarrow{S_{17}/\mathbb{Z}_2} T^3, \\
(u_1,\ldots, u_6) \sim (-u_1, \ldots, -u_6), \quad (u_1,\ldots,u_6) \in S_{17}.
\end{gathered}
\end{equation*}
From the exact sequence of the fibration we have
\begin{equation*}
1 \rightarrow \pi_1(S_{17}/\mathbb{Z}_2) \rightarrow \pi_1(L) \rightarrow \pi_1(T^3) \rightarrow 1.
\end{equation*}

\subsection{Proof of Theorem $\ref{threespheretheo}$}

We argue as in the previous theorems. Let us consider a submanifold $\widetilde{\mathcal{R}}$ of $\mathbb{R}^n$ defined by a system
\begin{equation}\label{threesphereeq}
\begin{gathered}
\widetilde{\mathcal{R}} = \left\{
 \begin{array}{l}
\sum\limits_{r=1}^{k}u_r^2 + \sum\limits_{r=k+1}^{p_1}u_r^2  = p_1 \\
\sum\limits_{r=1}^{k}u_r^2 + \sum\limits_{r=p_1+1}^{p_2}u_r^2 = p_2 - p_1 + k\\
- \sum\limits_{r=1}^{k}u_r^2 + \sum\limits_{r=p_2+1}^{n}u_r^2 = n-p_2 - k \\
 \end{array}
\right.
\\
k < p_1-1, \quad  p_2 - p_1 + k > p_1, \quad n-p_2 - k > p_1, \\
p_1, \; p_2-p_1, \; n-p_2 > 2
\end{gathered}
\end{equation}

Since $ p_2 - p_1 + k> p_1$ and $n-p_2 - k> p_1$, we have a diffeomorphism
\begin{equation*}
\begin{gathered}
f: \widetilde{\mathcal{R}} \rightarrow S^{p_1-1} \times S^{p_2-p_1 - 1} \times S^{n-p_2-1}, \\
f(u_1,\ldots, u_n) = \bigg(u_1, \ldots, u_{p_1}, \frac{u_{p_1+1}}{\sqrt{p_2 - p_1 + k - \sum\limits_{r=1}^{k}u_r^2} }, \ldots, \frac{u_{p_2}}{\sqrt{p_2 - p_1 + k - \sum\limits_{r=1}^{k}u_r^2 }}, \\
 \frac{u_{p_2+1}}{\sqrt{n-p_2 - k + \sum\limits_{r=1}^{k}u_r^2 }}, \ldots, \frac{u_{n}}{\sqrt{n-p_2 - k + \sum\limits_{r=1}^{k}u_r^2} }\bigg).
\end{gathered}
\end{equation*}
Denote by $\widetilde{\Lambda}$ the lattice generated by columns of $(\ref{threesphereeq})$. Direct computation shows that $\widetilde{\Lambda}_u = \widetilde{\Lambda}$ and conditions of Lemma $\ref{equivmon}$ are satisfied (for $C=1$). This means that the polytope $P$ associated to system $(\ref{threesphereeq})$ is Delzant and Fano. Theorem $\ref{monotonecondition}$ says that there exists an embedded monotone Lagrangian $L \subset \mathbb{C}P^{n-1}$ associated to the polytope $P$.

If $k, p_1, p_2, n$ are even, then from Lemma $\ref{trivialfibration}$ we get that
\begin{equation*}
L = (S^{p_1-1} \times S^{p_2-p_1 - 1} \times S^{n-p_2-1})/\mathbb{Z}_2 \times T^2.
\end{equation*}
It follows from Lemma $\ref{simplemimasnumber}$ that the minimal Maslov number
\begin{equation*}
N_L = gcd(p_1, p_2-p_1+k, n-p_2-k).
\end{equation*}

\section{From symplectic geometry to algebraic topology}\label{sympalg}

In this section we explain how to use symplectic geometry and Lagrangian quantum cohomology to study the singular cohomology of manifolds. We plan to prove a general theorem in a further paper. In this Section we consider an example and show how the new method works.
\\

Let $\mathcal{R} \subset \mathbb{R}P^{n-1}$ be a smooth manifold defined by a system of quadrics
\begin{equation*}
\begin{gathered}
\mathcal{R} = \left\{
 \begin{array}{l}
\gamma_{1,r}u_1^2 + ... + \gamma_{n,r}u_{n}^2 = 0, \quad r=1,...,n-m-1.
 \end{array}
\right.
\\
[u_1:\ldots:u_n] \in \mathbb{R}P^{n-1}.
\end{gathered}
\end{equation*}
We would like to know $H^{*}(\mathcal{R}, \mathbb{Z}_2)$. In general, the topology of $\mathcal{R}$ can be highly complicted (see \cite{Krasnov} for examples). The Agrachev-Lerario spectral sequence( see \cite{Agrachev}) can be used to study the Betti numbers of $\mathcal{R}$. Unfortunately, the Agrachev-Lerario spectral sequence often requires extremely heavy computations.

The manifold $\mathcal{R}$ can be considered as a real toric space. There are explicit formulas for $H^{*}(\mathcal{R}, \mathbb{Q})$ (see \cite{real2}), but formula for $H^{*}(\mathcal{R}, \mathbb{Z}_2)$ is not known.

Let us study a manifold $\widetilde{\mathcal{R}} \subset \mathbb{R}^n$ given by
\begin{equation*}
\begin{gathered}
\widetilde{\mathcal{R}} = \left\{
 \begin{array}{l}
 u_1^2 + \ldots + u_n^2 = 1 \\
\gamma_{1,r}u_1^2 + ... + \gamma_{n,r}u_{n}^2 = 0, \quad r=1,...,n-m-1.
 \end{array}
\right.
\\
(u_1, \ldots, u_n) \in \mathbb{R}^n.
\end{gathered}
\end{equation*}
Easy to see that
\begin{equation*}
\mathcal{R} = \widetilde{\mathcal{R}}/\mathbb{Z}_2, \quad (u_1,\ldots, u_n) \sim (-u_1, \ldots, -u_n).
\end{equation*}
Since the Eilenberg-MacLane space $K(\mathbb{Z}_2, 1) = \mathbb{R}P^{\infty}$, we have the Cartan-Leray spectral sequence (see \cite[p. 206]{Weibel})
\begin{equation*}
E_2^{p,q} = H^{p}(\mathbb{R}P^{\infty}, H^{q}(\widetilde{\mathcal{R}}, \mathbb{Z}_2)) \Rightarrow H^{*}(\mathcal{R}, \mathbb{Z}_2).
\end{equation*}
In general, it is not  easy to find $E_{\infty}$. Moreover, $E_{\infty}$ does not give enough information to find the singular cohomology ring of $\mathcal{R}$. So, we need some luck or extra information to get $H^{*}(\mathcal{R}, \mathbb{Z}_2)$ from $E_{\infty}$.

From Section $\ref{mainconstructionpol}$ we know that there is a polytope $P$ associated to $\widetilde{\mathcal{R}}$. From Section $\ref{maincostruction}$ we know that there is a Lagrangian $L \subset \mathbb{C}P^{n-1}$ associated to the polytope $P$. If $P$ is Delzant and Fano, then the Lagrangian is embedded and monotone. Therefore, $QH^{*}(L, \mathbb{Z}_2[T, T^{-1}])$ is defined. We have the spectral sequence of Oh, where the first page is defined by $H^{*}(L, \mathbb{Z}_2)$. Since the spectral sequence of Oh is multiplicative and converges to $QH^{*}(L, \mathbb{Z}_2[T, T^{-1}])$, we get some restrictions on $\mathcal{R}$ (because $L$ is constructed using $\mathcal{R}$). It follows from Lemma $\ref{helplemma}$ that for a rich set of polytopes we have
\begin{equation*}
L = \mathcal{R} \times T^{n-m-1}.
\end{equation*}
If $N_L > 2$, then we can ``ignore" the torus in the spectral sequence of Oh and work with $H^{*}(\mathcal{R}, \mathbb{Z}_2)$. Theorem $\ref{vanishquantum}$ says that $QH^{*}(L, \mathbb{Z}_2[T, T^{-1}]) = 0$ for some set of polytopes. Hence, the spectral sequence of Oh converges to zero and this gives us some extra information for studying $E_{\infty}$ and $H^{*}(\mathcal{R}, \mathbb{Z}_2)$.
\\

\textbf{Remark.} We consider Delzant and Fano polytopes, but as we can see from proofs of the previous theorems many polytopes can be made Fano by applying simplicial multiwedge operation. So, we can make the polytope Fano, study cohomology ring of its real toric space, and use Theorem $\ref{cohomisotopic}$ to study the cohomology ring of the original real toric space.
\\

First, let us consider a simple artificial example. Assume that $L = \mathcal{R} \times S^1$, where $dim(M) = 6$, $N_L = 4$, and $QH(L, \mathbb{Z}_2[T, T^{-1}]) = 0$. Assume that $E_{\infty}$ page of the Cartan-Leray spectral sequence has the form shown in the figure $\ref{fig:s007}$  Then, $y^2$ equals either $yx^3$, or $0$. We see that $H^{*}(L, \mathbb{Z}_2)$ is generated by $x, y, c$, where $c \in H^{1}(S^1, \mathbb{Z}_2)$. The spectral sequence of Oh is defined in the end of Section $\ref{quantumdefinitions}$. Let $\delta_1$ be the differential on the first page of Oh's spectral sequence. Since $N_L = 4$, we get that the degree of $\delta_1$ equals $3$ and $\delta_1(x)=\delta_1(c) = 0$. If $y^2 = x^3y$, then
\begin{equation*}
0 = 2\delta_1(y) = \delta_1(y^2) = \delta_1(x^3y) = x^3\delta_1(y) \;\; \Rightarrow \;\; \delta_1(y) = 0.
\end{equation*}
This yields that $QH(L, \mathbb{Z}_2[T, T^{-1}])$ is isomorphic to $H^{*}(L, \mathbb{Z}_2)$, but we know that $QH(L, \mathbb{Z}_2[T, T^{-1}]) = 0$. Hence, $y^2 = 0$.

\newpage

\begin{figure}[h]
\includegraphics[width=0.6\linewidth]{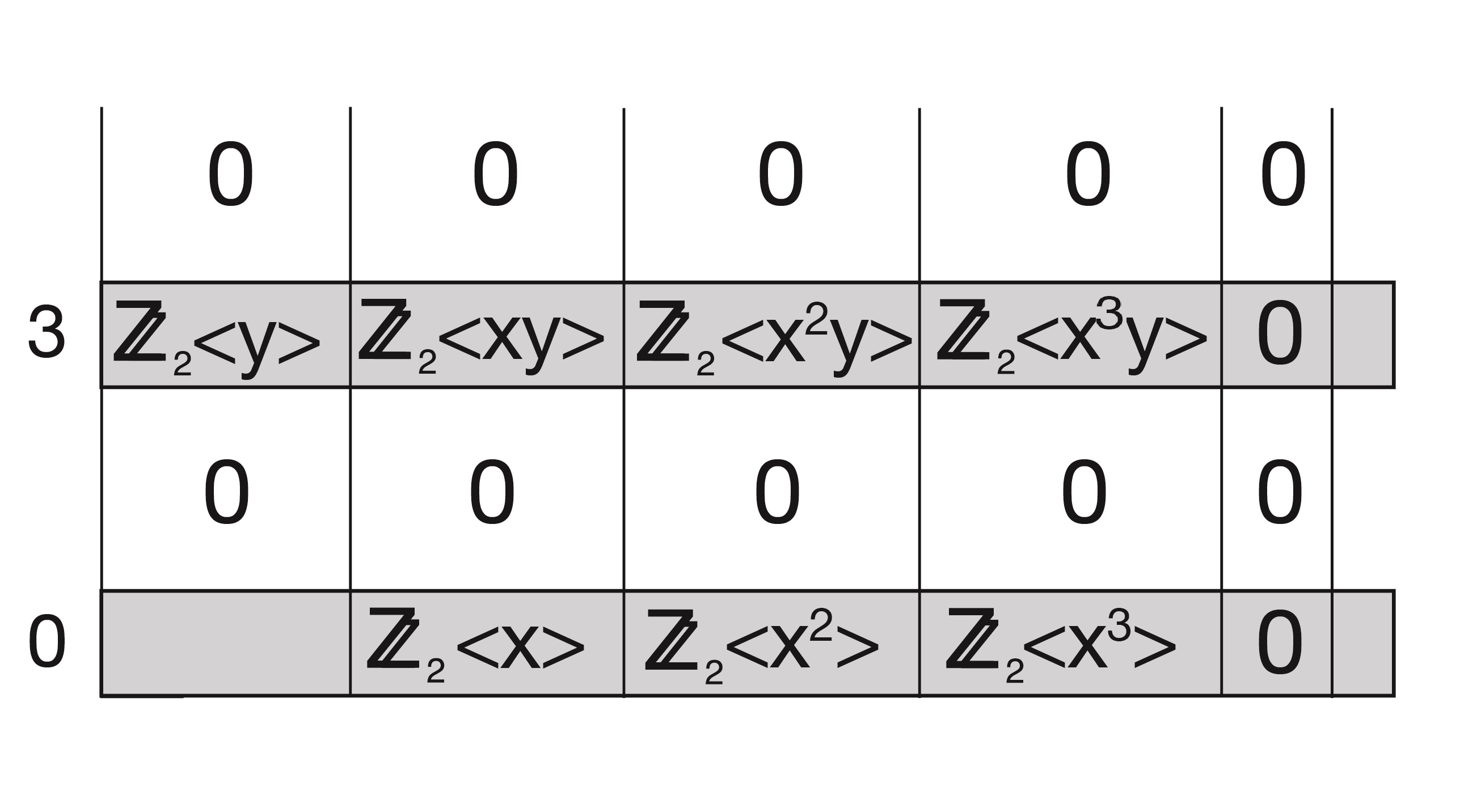}
\caption{ $E_{\infty}$ page }
  \label{fig:s007}
\end{figure}

Let us consider a real example. In Section $\ref{wide-narrow}$ we constructed the Lagrangian (see Lemma \ref{consumquantum})
\begin{equation*}
\begin{gathered}
L = \mathcal{R}_P \times T^{3}, \quad \mathcal{R}_P = \widetilde{\mathcal{R}}_P/\mathbb{Z}_2, \\
H^{*}(\widetilde{\mathcal{R}}_P, \mathbb{Z}_2) \cong H^{*}(\#9(S^{4k-1} \times S^{8k-3}) \#8(S^{6k-2} \times S^{6k-2}), \; \mathbb{Z}_2), \\
QH^{*}(L, \mathbb{Z}_2[T, T^{-1}]) = 0.
\end{gathered}
\end{equation*}
In the end of Section $\ref{wide-narrow}$ we show that $E_{\infty}$ page of the Cartan-Leray spectral sequence has one of the following forms (see the figures below).

\begin{figure}[H]
\includegraphics[width=0.7\linewidth]{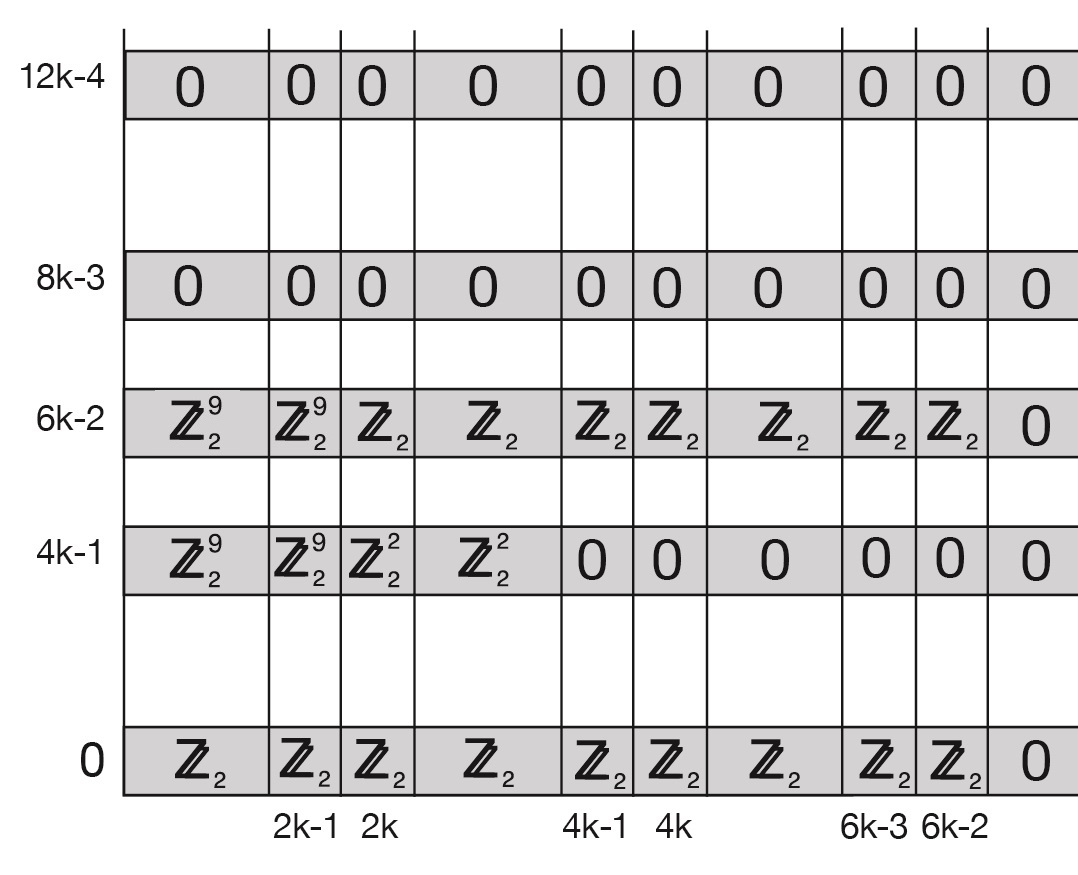}
\caption{ $E_{\infty}$ page }
  \label{fig:s4}
\end{figure}

\begin{figure}[H]
\includegraphics[width=0.7\linewidth]{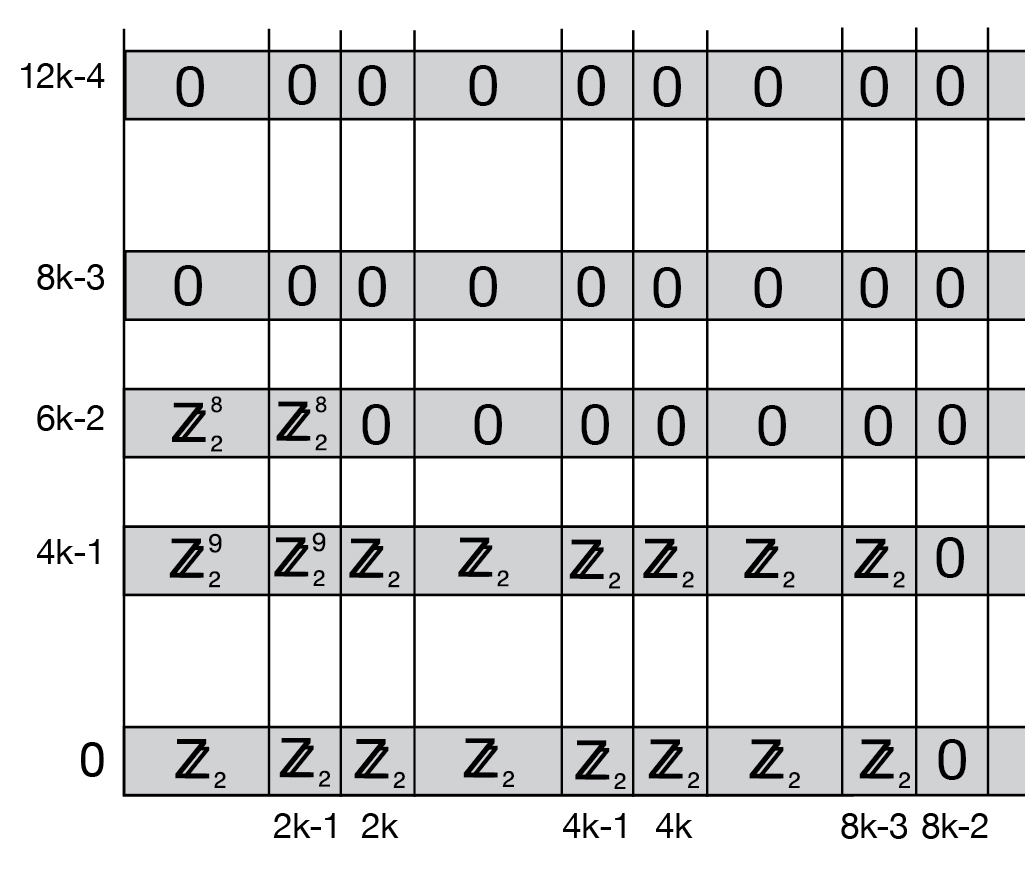}
\caption{ $E_{\infty}$ page}
  \label{fig:s2}
  \end{figure}

  \begin{figure}[H]
  \includegraphics[width=0.7\linewidth]{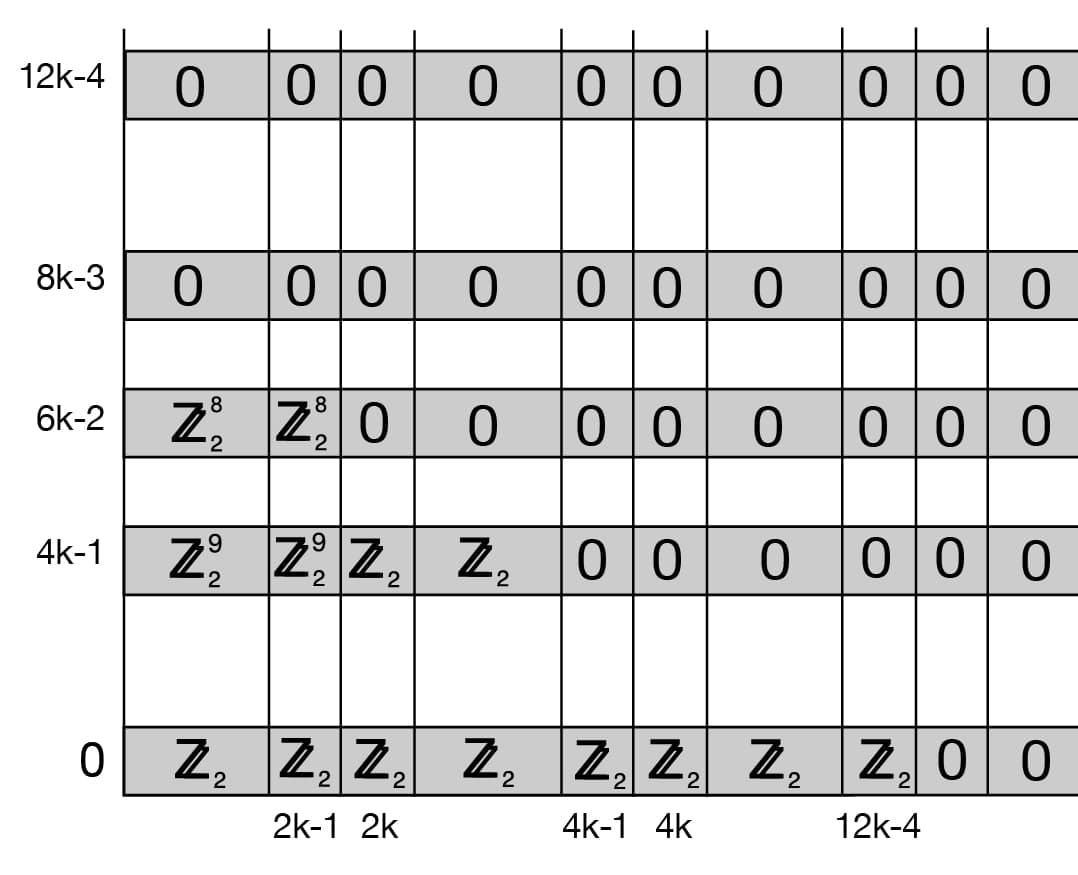}
\caption{ $E_{\infty}$ page}
  \label{fig:s5}
\end{figure}

It is not clear how to determine $E_{\infty}$ page using standard methods of algebraic topology. Let us show that the spectral sequence of Oh helps to find $E_{\infty}$ page and provides some information about $H^{*}(\mathcal{R}_P, \mathbb{Z}_2)$.

\begin{lemma}
Assume that $k \geqslant 3$. Then, Figure $\ref{fig:s4}$ is the $E_{\infty}$ page of the Cartan-Leray spectral sequence.
\end{lemma}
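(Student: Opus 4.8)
The plan is to run the spectral sequence of Oh for $L = \mathcal{R}_P \times T^3$, using that $QH^{*}(L,\mathbb{Z}_2[T,T^{-1}])=0$ (proved in Section $\ref{wide-narrow}$) together with the multiplicative structure, to rule out the two possibilities in Figures $\ref{fig:s2}$ and $\ref{fig:s5}$, leaving Figure $\ref{fig:s4}$ as the only option for $E_{\infty}$. Concretely, the three candidate $E_{\infty}$ pages differ only in the presence or absence of certain classes in the columns coming from $\alpha^{r}\cdot(\text{generators of }H^{*}(\widetilde{\mathcal{R}}_P,\mathbb{Z}_2))$; by the description of $H^{*}(\widetilde{\mathcal{Z}}_Q,\mathbb{Z}_2)$ in Lemma $\ref{consumquantum}$, the ring $H^{*}(\mathcal{R}_P,\mathbb{Z}_2)$ is generated in low degrees by a class $h^0$ of degree $2$ (the image of $\alpha$, i.e. the Poincar\'e dual of $L\cap H_1$), together with the images of the generators $s_i^{4k-1}$, $s_j^{6k-2}$, $\widehat{s}_j^{6k-2}$, $s_i^{8k-3}$. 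I would first record, from the analysis at the end of Section $\ref{wide-narrow}$, which products among these classes are forced to vanish (the relations $s_i^{8k-3}s_j^{6k-2}=0$, $s_i^{8k-3}\widehat{s}_j^{6k-2}=0$, and the Poincar\'e duality pairings), and then transport them via the surjection onto $H^{*}(L,\mathbb{Z}_2)$.

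Next I would set up Oh's spectral sequence: its first page is $H^{*}(L,\mathbb{Z}_2)\otimes A^{*}$, the differential $\delta_1$ has degree $1-N_L = 1-2k$, it is multiplicative and Leibniz, and it converges to $QH^{*}(L,\mathbb{Z}_2[T,T^{-1}])=0$. Because $N_L = 2k \geq 6$ under the hypothesis $k\geq 3$, the low-degree generators $h^0$ and the torus classes $c_1,c_2,c_3\in H^1(T^3,\mathbb{Z}_2)$ are automatically $\delta_1$-closed for degree reasons, and $\delta_1$ on $s_i^{4k-1}$ lands in $H^{2k}(L,\mathbb{Z}_2)$. The key move is the same trick used throughout the paper: if a product of generators vanishes but its ``Leibniz expansion'' contains a term $x\cdot\delta_1(y)$ that cannot cancel, then $\delta_1(y)=0$; iterating forces the relevant differentials to vanish. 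If too many differentials are forced to vanish, then $E_\infty$ of Oh's sequence contains $H^0(L,\mathbb{Z}_2)\otimes A$, contradicting $QH^{*}(L)=0$. I would show that the $E_{\infty}$ shapes in Figures $\ref{fig:s2}$ and $\ref{fig:s5}$ create exactly such a surplus of surviving classes (more high-dimensional products or more elements in the $\alpha^{r}$-towers than Oh's sequence can destroy given the known nonzero products), while Figure $\ref{fig:s4}$ is consistent — so by elimination it must be $E_{\infty}$.

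The main obstacle, as in the analogous arguments of Theorems $\ref{twospheretheorem}$, $\ref{wide-narrow}$, and $\ref{wide-narrow2}$, is the bookkeeping: one must track, degree by degree, how the generators $s_i^{4k-1}, s_j^{6k-2}, \widehat{s}_j^{6k-2}, s_i^{8k-3}$ of $H^{*}(\widetilde{\mathcal{R}}_P,\mathbb{Z}_2)$ interact with the powers $\alpha^r$ of the Cartan–Leray generator and with the torus factor, and determine precisely which cells of the three candidate $E_{\infty}$ diagrams can or cannot be killed. The hypothesis $k\geq 3$ (rather than $k\geq 2$) is presumably needed to separate the relevant degrees $4k-1$, $6k-2$, $8k-3$, $12k-4$ sufficiently far apart so that there is no accidental overlap of differentials and the multiplicative relations pin down a unique diagram; I would check this numerical separation first, since it is what makes the Leibniz-rule elimination clean. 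Once the degrees are confirmed disjoint, the argument reduces to a finite case check of which differentials in the Cartan–Leray sequence are compatible both with $H^{*}(\mathcal{R}_P,\mathbb{Z}_2)$ being a ring and with $QH^{*}(L)=0$, and only Figure $\ref{fig:s4}$ survives.
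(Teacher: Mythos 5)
Your proposal follows essentially the same route as the paper's own sketch: run Oh's spectral sequence for $L=\mathcal{R}_P\times T^3$, invoke multiplicativity and the Leibniz rule together with $QH^{*}(L,\mathbb{Z}_2[T,T^{-1}])=0$, and show that the longer $\alpha^{r}$-towers present in Figures $\ref{fig:s2}$ and $\ref{fig:s5}$ would force a generator $x_1\in E_{\infty}^{0,4k-1}$ to survive Oh's spectral sequence (since $\delta_1(\alpha^{6k-2}x_1)=\alpha^{6k-2}\delta_1(x_1)=0$ forces $\delta_1(x_1)=0$, while $\alpha^{4k-2}x_1\neq 0$ rules out $x_1$ being hit), a contradiction. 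One correction: the leverage comes from the degree-one Cartan--Leray class $\alpha\in H^{1}(\mathcal{R}_P,\mathbb{Z}_2)$ and its tower $\alpha^{\ell}$, $\ell\leqslant 12k-4$, not from the degree-two class $h^{0}$ (which you conflate with $\alpha$) and not primarily from the relations $s_i^{8k-3}s_j^{6k-2}=0$, which were already spent in Section $\ref{wide-narrow}$ to narrow the possibilities down to the three candidate figures.
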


\textbf{Remark.} The goal of this Lemma is to demonstrate how the spectral sequence of Oh can be applied to algebraic topology. The rigorous proof of the lemma above is long. We give general ideas and skip all details.
\\
\\
\emph{Sketch proof.} Denote the nontrivial element of $E_{\infty}^{1,0}$ by $\alpha$. Let us recall that
\begin{equation*}
L=\mathcal{R}_P \times T^3.
\end{equation*}
Let $c_1, c_2, c_3$ be generators of $H^1(T^3, \mathbb{Z}_2)$.
\\
\\
Let us consider Figure $\ref{fig:s5}$. Denote generators of $E_{\infty}^{0,4k-1}$ by $x_1, \ldots, x_9$, generators of $E_{\infty}^{0, 6k-2}$ by $y_1,\ldots, y_8$.
\begin{equation*}
\begin{gathered}
E_{\infty}^{m, 6k-2} = \mathbb{Z}_2\langle \alpha^m y_1,\ldots, \alpha^m y_8 \rangle, \quad m \leqslant 2k-1, \\
E_{\infty}^{m, 4k-1} = \mathbb{Z}_2\langle \alpha^m x_1,\ldots, \alpha^m x_9 \rangle, \quad m \leqslant 2k-1, \\
E_{\infty}^{r, 4k-1} = \mathbb{Z}_2\langle \alpha^r x_1\rangle, \quad 2k \leqslant r \leqslant 4k-2, \\
E_{\infty}^{\ell, 0} = \mathbb{Z}_2\langle \alpha^{\ell}\rangle, \quad  0 \leqslant \ell \leqslant 12k-4.
\end{gathered}
\end{equation*}

Let us denote elements of $H^{*}(\mathcal{R}, \mathbb{Z}_2)$ by the same symbols.

Denote the differentials of the spectral sequence of Oh by $\delta_k$. Since the minimal Maslov number $N_L = 2k$, we get that the differential $\delta_1$ has degree $2k-1$.

We see that $\alpha^{6k-2}x_1$ equals either $\alpha^{10k-3}$, or $0$. Since $k>1$ we have $\delta_1(\alpha) = \delta_1(\alpha^{\ell}) = 0$. Therefore (here we should say more to make the expression below rigorous)
\begin{equation*}
0 = \delta_1(\alpha^{6k-2}x_1) = \alpha^{6k-2}\delta_1(x_1).
\end{equation*}
where $\delta_1(x_1) \in \mathbb{Z}_2\langle \alpha^{2k-2}, \alpha^{2k-3}c_i,  \alpha^{2k-4}c_ic_j, \alpha^{2k-5}c_1c_2c_3  \rangle$, $i,j=1,\ldots,3$. We see that
\begin{equation*}
\alpha^{6k-2}\delta_1(x_1) = 0 \Leftrightarrow  \delta_1(x_1) = 0.
\end{equation*}
We get that $\delta_1(x_1) = 0$. Analogously, let $y \in H^{6k-2}(L, \mathbb{Z}_2)$ be an element such that $\delta_1(y) = x_1$. Then, $\alpha^{4k-2}y$ equals either $\alpha^{10k-4}$, or $0$. We get
\begin{equation*}
0 = \delta_1(\alpha^{4k-2}y) = \alpha^{4k-2}\delta_1(y) = \alpha^{4k-2}x_1 \neq 0.
\end{equation*}
This contradiction proves that there is no $y$ such that $\delta_1(y) = x_1$. Hence, $x_1$ survives and is nontrivial element in the second page of Oh's spectral sequence.

Arguing as before, we can show that $x_1$ survives on all other pages, but we know that $QH^{*}(L, \mathbb{Z}_2[T, T^{-1}]) = 0$ and the spectral sequence of Oh converges to zero. Therefore, Figure $\ref{fig:s5}$ is not $E_{\infty}$ page of $\mathcal{R}$.

In the same way we can show that Figure $\ref{fig:s2}$ is not $E_{\infty}$ page of $\mathcal{R}$.

Department of Mathematics and Statistics, University of Montreal,\\
C.P. 6128 Succ. Centre-Ville Montreal, QC, H3C 3J7, Canada.\\
Department of Mathematics and Mechanics, Novosibirsk State University, Novosibirsk, 630090, Russia. \\
E-mail address: vardan8oganesyan@gmail.com

\end{document}